\renewcommand{\checkmark}{\ding{52}}
\numberwithin{equation}{section}
\def\comment#1{}
\newcommand\numberthis{\addtocounter{equation}{1}\tag{\theequation}}
\newcommand{\beqn}[2]{\begin{equation}\label{eq:#1}#2\end{equation}}
\newcommand{\R}{\mathbf{R}}
\newcommand{\N}{\mathbf{N}}
\renewcommand{\P}{\mathbf{P}_p}
\newcommand{\mN}{\mathcal{N}}
\newcommand{\ba}{A}
\newcommand{\at}{\mathbf{x}}
\newcommand{\rt}{\bullet}
\newcommand{\bm}{\bar{\mu}}
\newcommand{\tmu}{\widetilde{\mu}}
\newcommand{\wbm}{\widetilde{\mu}}
\newcommand{\Fbm}{F_{\bm}}
\newcommand{\Fbmd}{F_{\bm,\alpha,\delta}}
\newcommand{\I}{\mathbf{1}}
\newcommand{\E}{\mathbf{E}}
\newcommand{\In}{I}
\newcommand{\Out}{O}
\newcommand{\G}{\Gamma}
\newcommand{\T}{\mathcal{T}}
\newcommand{\mF}{\mathcal{F}}
\newcommand{\mcM}{\mathcal{M}}
\newcommand{\mC}{\mathcal{C}}
\newcommand{\tX}{\widetilde{X}}
\newcommand{\wX}{\widetilde{X}}  
\newcommand{\tnu}{\widetilde{\nu}}  
\newcommand{\cP}{\mathcal{P}}
\newcommand{\tR}{\widetilde{R}}
\newcommand{\lcr}{\lambda_{cr}}
\newcommand{\lo}{\preccurlyeq}
\newcommand{\go}{\succcurlyeq}
\newcommand{\tG}{\widetilde{G}}
\newcommand{\io}{I\to O}
\newcommand{\eps}{\varepsilon}
\newcommand{\law}{\mathop{\mathrm{law}}} 
\newcommand{\kal}{\kappa_{\alpha}}
\newcommand{\kone}{\kappa_{(1-\alpha)}}
\newcommand{\Pc}{\Phi_{\lcr}}
\newcommand{\ddelta}{\mathrm{Dirac}}
\newcommand{\bDe}{\overline{\Delta}}
\newcommand{\hf}{\frac{1}{2}}
\newcommand{\MG}{\mathbf{M}} 
\newcommand{\bF}{\mathbf{F}}
\newcommand{\mX}{\mathcal{X}}
\newcommand{\mXi}{\mathbf{\Upsilon}}
\newcommand{\mR}{\mathcal{R}}
\newcommand{\mPhi}{\mathbf{\Phi}}
\newcommand{\mm}{\mathbf{m}}
\newcommand{\mbm}{\overline{\mathbf{m}}}
\newcommand{\wmu}{\widetilde{\mu}}
\newcommand{\diam}{\mathop{\mathrm{diam}}}
\newcommand{\BRW}{\mathrm{BRW}}
\newcommand{\vs}{B}
\newtheorem{thm}{Theorem}
\newtheorem*{thm*}{Theorem}
\newtheorem{lem}{Lemma}
\newtheorem{prop}{Proposition}
\newtheorem*{qn*}{Question}
\theoremstyle{definition}
\newtheorem{dfn}{Definition}
\theoremstyle{remark}
\newtheorem{rem}{Remark}
\newtheorem{ex}{Example}
\title{
Stationary random metrics on hierarchical graphs via $(\min,+)$-type recursive distributional equations}
\date{\today}
\author{Mikhail Khristoforov
\footnote{
Chebyshev Laboratory, St.~Petersburg University,
14th Line 29B, Vasilyevsky Island, St.~Petersburg 199178, Russia. E-mail: micvog@mail.ru}
\footnote{
	Universit\'e de Gen\`eve, Section de Math\'ematiques,
	2-4 rue du Li\`evre, CH-1227 Gen\`eve-Acacias, Suisse.} \and
 Victor Kleptsyn
\footnote{
	Institut de Recherches Math\'ematiques de Rennes, UMR 6625 CNRS,
	Campus Beaulieu, 35042 Rennes, France. E-mail: victor.kleptsyn@univ-rennes1.fr} \and
 Michele Triestino
\footnote{
  Departamento de Matem\'atica PUC-Rio, Rua Marqu\^es de S\~ao Vicente, 225, G\'avea, Rio de Janeiro 
  CEP 22451-900, Brasil. E-mail: mtriestino@mat.puc-rio.br}
}
\begin{document}

\maketitle

\begin{abstract}
This paper is inspired by the problem of understanding in a mathematical sense the Liouville quantum 
gravity on surfaces. Here we show how to define a stationary random metric on self-similar 
spaces which are the limit of nice finite graphs: these are the so-called hierarchical graphs. They possess a 
well-defined level structure and any level is built using a simple recursion. Stopping the construction at any 
finite level, we have a discrete random metric space when we set the edges to have random length 
(using a multiplicative cascade with fixed law~$m$). 

We introduce a tool, the cut-off process, by means of which one finds that renormalizing the sequence of metrics by an 
exponential factor, they converge in law to a non-trivial metric on the limit space. Such 
limit law is stationary, in the sense that glueing together a certain number of copies of the random limit 
space, according to the combinatorics of the brick graph, the obtained random metric has the same 
law 
when rescaled by a random factor of law~$m$. In other words, the stationary random metric is the solution of a 
distributional equation.
When the measure~$m$ has continuous positive density on~$\R_+$, the stationary 
law is unique up to rescaling and any other distribution tends to a rescaled stationary law under the 
iterations of the hierarchical transformation. We also investigate topological and geometric properties of the
random space when~$m$ is $\log$-normal, detecting a phase transition influenced by the branching random walk
associated to the multiplicative cascade.
\end{abstract}

\newpage

\tableofcontents

\section{Presentation}
\subsection{Introduction}
The main motivation for this work is to make a little step towards the mathematical 
understanding of Liouville quantum gravity: the problem of giving a meaning
to the ``metric tensor'' defined on a surface $\Omega$ by the exponential of the 
Gaussian Free Field (GFF for short), has been drawing the attention of many mathematicians 
up to very recent and important works (we shall give a concise review in \S\ref{ssc:lft}). 

For planar domains 
$\Omega$ carrying a particular geometrical structure, it is possible to (attempt to) define a 
similar ``metric tensor'' using \emph{multiplicative cascades}. To give a concrete
picture, let~$\Omega$ be the unit square in the plane, then using  dyadic coordinates, 
it is possible to identify it with the ends of a rooted quaternary tree
(with the little care that this identification is clearly not one-to-one). 
If we assign positive random weights to the edges of this tree 
(with the random factors that are i.i.d.), we can define a \emph{formal} weight for almost 
any end, namely the infinite product of the factors that we read along the path 
connecting the root to the end.
The purpose is to understand whether it is possible to define a measurable 
(pseudo-)metric, whose metric tensor at a point $z$ is the corresponding infinite product, properly renormalized. 

In a different formulation, we want to understand the limit of the sequence of piecewise-flat 
Riemannian metrics, obtained by considering the factors on the tree up to a certain depth. 
There is no evident reason for this limit to be defined and non-degenerate. Though, notice that
the resulting limit random metric $d$, if it exists, must be \emph{stationary} (or stochastically self-similar): 
considering four different independent samplings of the random metric space $(Q_i,d_i)$ and a new independent
random factor $\xi$, we can construct a new random metric space~$(\tilde Q,\tilde d)$ which must have
the same law. More precisely, we glue the four squares~$(Q_i,\xi\cdot d_i)$ together along their 
sides and obtain a new square $\tilde Q$, defining the pseudo-metric $\tilde d$ on it by looking at the
shortest path between points.

The task of defining of a random \emph{measure}, associated to a multiplicative cascade, 
is much easier due to the additivity and martingale type arguments, and 
has been successfully studied since the pioneering work by Kahane and Peyri\`ere \cite{KP,K}. 
In the 1D setting, where the notion of metric and measure almost coincide, the problem of defining a random metric 
turns out to be much simpler; the random geometry of the limit metric has been studied by Benjamini and Schramm 
in~\cite{BS}. Pursuing these ideas, the different approaches by Duplantier and Sheffield \cite{DS} and Garban, 
Rhodes and Vargas \cite{LBM} are successful attempts to try to retrieve information about the metric from the 
random measure. Finally, Durrett and Liggett in~\cite{DL} were the pioneers in applying the fixed point technique 
to the problem on the interval, and this approach is quite close to the one we will be using in the present work.

One of the main results of this article is the construction of a stationary random metric on fractal objects that are
limits of finite graphs, that is \emph{hierarchical graphs}, whose nice self-similar structure allows to
define multiplicative cascades.
This is a challenging problem: contrary to an
interval, the number of possible geodesic paths joining any two points is infinite (as in the ``complete'' 2D problem). 
Though, these objects still save some peculiarity of the one-dimensional world, and thus are easier to be treated. 

\begin{figure}[ht]
\[\includegraphics[width=.4\textwidth]{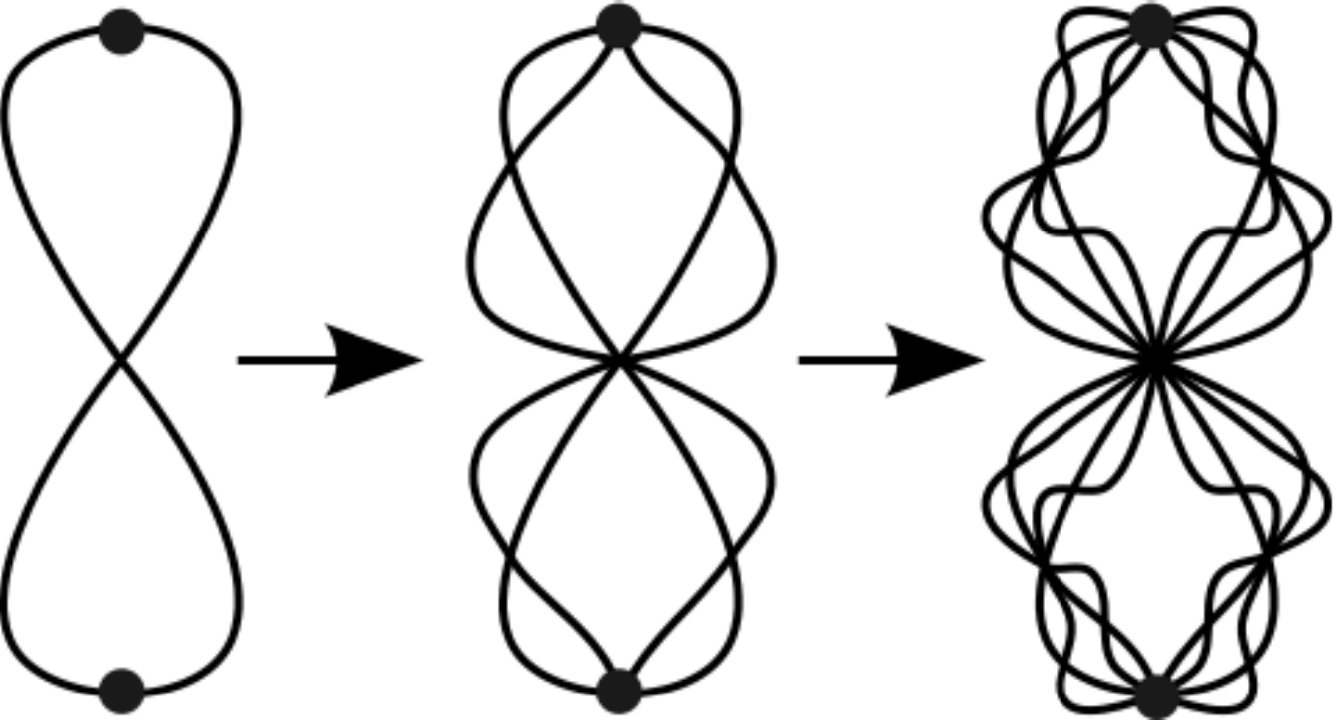}\]
\caption{Hierarchical figure eight construction procedures.} \label{f:eight}
\end{figure}

Hierarchical graphs and models have been widely
studied in physics and engineering for many years, often as successful toy models, and we are highly indebted to
Benjamini who focused our attention on such examples. To make the problem more
definite, he asked whether it is possible to solve it for a not too complicated object: start
with the \emph{figure eight-graph} $\G$ and build the associated hierarchical graph.
This means the following: we mark the top and bottom vertices $I$ (\emph{in}) and $O$ (\emph{out}) of the figure 
eight-graph and recursively build the sequence of combinatorial graphs $(\G_n,I,O)$, replacing every edge in 
$\G_{n-1}$ by a copy of $\G$. The \emph{hierarchical figure eight-graph} is the limit combinatorial 
object~$\G_{\infty}$ (see Figure~\ref{f:eight}). We will recall the precise definition in 
Section~\ref{s:hierarchical}, where we discuss hierarchical graphs in broader generality.

\subsection{Random metrics defined by Mandelbrot multiplicative cascades}\label{ss:definition}

Our model for studying random metrics on the hierarchical figure eight-graph reflects its 
self-similar structure, like in the example of the square. Assume that we are given a probability
distribution $m$ on the positive real numbers $\R_+=(0,+\infty)$. 
We start with the graph $\Gamma_0=(V_0,E_0)$ which is an interval of length~$1$, and
make the process of construction step by step. At each step $n\ge 1$, for each edge $I\in E_{n-1}$ of length $|I|$ 
in the graph $\Gamma_{n-1}=(V_{n-1},E_{n-1})$, we take a random variable~$\xi_I$, distributed with respect to~$m$;
these variables are mutually independent, for all intervals and for all steps. We then replace
$I$ by four edges of length $|I|\cdot \xi_I$ each, arranged in eight-shaped figure between
the original endpoints of~$I$. There is a natural metric $d_n$ on the set of vertices $V_n$, defined by taking 
the length of the shortest path between any two points.

It is natural to expect (and try to prove) that the
appropriately normalized metrics on these graphs converge to a random limit metric on
the limit object. In fact the normalizing constant behaves as $\lambda^{-n}$ for some $\lambda\ge 0$; 
the following intuitive argument for the case when the law $m$ has finite first moment has been shown to us by 
Nicolas Curien. Let $d_n(\In,\Out)$ be the random variable denoting the random $d_n$-distance between the vertices 
$\In$ and $\Out$ in the graph $\Gamma_n$ equipped with the random metric. It is easy then to remark that the 
sequence~$\E[d_n(\In,\Out)]$ is sub-multiplicative: for any positive integers $i$ and $j$
\[\E[d_{i+j}(\In,\Out)]\le \E[d_{i}(\In,\Out)]\,\E[d_{j}(\In,\Out)]\, .\]
This implies that the limit $\lambda=\lim_{n\rightarrow \infty} \E[d_{n}(\In,\Out)]^{-1/n}$ exists.
The point is to show that $\lambda$ is finite and no correction term appears for the exponential 
growth of the normalizing constant.

\medskip

As in the example of the square, we remark that this multiplicative cascade procedure can be reversed,
at least to study the law of the limit random metric.
Namely, a level~$n$ figure is glued out of four independent samples of level~$n-1$ figures,
that replace the intervals in the eight shape, and the obtained new metric is multiplied by a
random constant~$\xi$ (or by~$\lambda\xi$ if we want to include the normalizing constant). Passing to the limit 
(if it exists) we observe that the limit random metric must be stationary and the existence of stationary 
random metric can be translated into a \emph{fixed point problem}. To do so, we need some preliminary notations 
and definitions.

\subsection{The renormalization operator}

 Denote by $\Gamma_{\infty}$ the Gromov--Hausdorff limit of the 
graphs $\Gamma_n$, equipped with the ``Euclidean'' metrics (that is, each edge has length $1/2^n$). 
There is a natural inclusion $V_n\hookrightarrow V_{n+1}$ for the sets of vertices, allowing to define 
$V_{\infty}=\bigcup_n V_n$, that is then identifiable with a subset of the space~$\Gamma_{\infty}$ 
(borrowing the concept from the interval, one can consider~$V_{\infty}$ as the set of ``dyadic rational'' 
points of~$\Gamma_{\infty}$).

Consider the set $\MG$ of complete metric spaces $(\mX,d)$ that contain $V_{\infty}$ as a dense subset.
Given~\mbox{$\lambda >0$}, we can define a \emph{glueing map}
\[\mR_\lambda  : \MG^4 \times \R_+ \to \MG\]
which takes four metric spaces $(\mX_1,d_1),\ldots,(\mX_4,d_4)$, a positive factor $\xi$ and gives a 
new metric space~$(\mX,d)=\mR_\lambda (\mX_1,\ldots,\mX_4;\xi )$ defined as follows: 
\begin{itemize}
\item the space $\mX$ is obtained topologically by glueing the four spaces $\mX_i$ in a figure-eight shape;
\item the metric $d$ is the metric obtained by the glueing of the piecewise-defined metric $\{ d_i\}$ on 
$\mX$ and rescaling by multiplication by~$\lambda \xi$. 
\end{itemize}
Here by the \emph{glued metric}, we mean that we define the distance between any two given points as the
length of the shortest ``discrete path'' connecting them, see e.g.~\cite[\S{}3.1]{BBI}. There is actually 
no need of supposing that the spaces $\mX_i$ are path-connected: since we glue the four spaces at single points 
(namely at their~$\In$ and~$\Out$ vertices), the shortest distance is well-defined.
 
Remark that the glueing map commutes with scalar rescaling of distances: let $\xi\in\R_+$ and 
$\{(\mX_i,d_i)\}_{i=1}^4$ be four metric spaces, then for any constant $c>0$ the glued space
\[(\mX,c\cdot d)=\mR_\lambda\left ((\mX_1,c\cdot d_1),\ldots,(\mX_4,c\cdot d_4);\xi\right ),\]
carries the distance of
$(\mX,d)=\mR_\lambda\left ((\mX_1,d_1),\ldots,(\mX_4,d_4);\xi\right )$ rescaled by the factor~$c$. 
Hence, it is worth introducing the \emph{rescaling map} 
\[\mathbf{r}_c:\MG\to\MG\]
which takes the space $(\mX,d)$ to $(\mX,c\cdot d)$.

We will often use the notion of \emph{push-forward} of a measure. For a measure $\mu$ on some 
(Borel) space $X$ and a Borel map $T:X\to Y$, the push-forward $T_*\mu$ is a measure on $Y$, that 
is formally defined by the relation
\[
(T_*\mu)(A)=\mu(T^{-1}(A))
\]
for any Borel set $A\subset Y$. This is a way of saying that we are taking a $\mu$-distributed mass on $X$ 
and then transporting it via $T$ in order to obtain a measure on $Y$.

\smallskip

Given a probability measure $m$ on $\R_+$, the map $\mR_\lambda$  defines a transformation 
$\mPhi_\lambda$ as a push-forward on the space of Radon probability measures on $\MG$:
\[\mPhi_\lambda [\mm ] := (\mR_\lambda )_*  \bigl( \mm^{ \otimes 4} \otimes  m\bigr).\]
This is indeed the formal way of defining ``the law of the new glued metric'' for a given law $\mm$, 
four ``old'' ones and a given rescaling factor $\lambda$.

Then the stationarity condition for the law of the limit random metric (that is a measure on the space $\MG$) 
reads as the condition that this measure is a fixed point of this operator:
\begin{equation}\label{eq:Pl-stat}
\mPhi_\lambda [\mm ]=\mm.
\end{equation}
Further on, we will say that a measure $\mm$ satisfying~\eqref{eq:Pl-stat} is \emph{$\mPhi_{\lambda}$-stationary 
random metric} (even though, technically speaking, $\mm$ is a probability measure on the space of metrics).

A naive example is the Euclidean distance on $\Gamma_\infty$ (or, to be more precise, the Dirac measure concentrated 
at this point), that is a stationary ``random'' metric when $m$ is the Dirac mass at~$1/2$. 

Finally, note that the rescaling maps $\mathbf{r}_c$ define \emph{rescaling operators} $\mXi_c$ on the space of 
Radon probability measures on $\MG$ by 
\[\mXi_c[\mm]:=(\mathbf{r}_c)_*\mm\]
(simply rescaling the random metric by~$c$).
It is easy to remark that for any $c>0$, if $\mm$ is a $\mPhi_\lambda$-stationary random metric, then so is its 
$c$-rescaled image $\mXi_c[\mm]$, since the glueing $\mPhi_\lambda$ and the $c$-rescaling $\mXi_c$ commute.

\subsection{The main result}

Our principal result claims the existence of non-trivial stationary random metrics. 

\begin{thm}\label{p:metric}
For any non-atomic, fully supported probability measure $m$ on $\R_+$ there exists a
normalizing constant $\lcr>0$ for which there is a non-atomic $\mPhi_{\lcr}$-stationary random metric $\mbm$.
\end{thm}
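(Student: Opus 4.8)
The plan is to realize the stationary law as a limit point of the sequence of discrete metrics, using a compactness argument in the spirit of Furstenberg's construction of stationary measures, combined with a cut-off procedure to control the growth rate.

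\textbf{Step 1: Setting up the iteration.} Start from the Dirac measure $\mm_0$ concentrated at the ``Euclidean'' metric on $\Gamma_\infty$ (or any reasonable initial law), and consider the iterates $\mm_n^{(\lambda)} := \mPhi_\lambda^n[\mm_0]$. Each $\mm_n^{(\lambda)}$ is the law of the level-$n$ random metric, rescaled by $\lambda^{-n}$ (up to the bookkeeping of whether we put the factor $\xi$ or $\lambda\xi$ in the glueing). The fixed-point equation $\mPhi_{\lcr}[\mbm] = \mbm$ will be obtained by extracting a limit of Ces\`aro averages $\frac 1N\sum_{n=0}^{N-1}\mm_n^{(\lcr)}$ along a subsequence, for the correct choice of $\lcr$.

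\textbf{Step 2: Choosing $\lcr$ and proving tightness --- the cut-off process.} This is the heart of the matter and the main obstacle. One must show that there is a single value $\lcr>0$ such that the family $\{\mm_n^{(\lcr)}\}$ is tight on $\MG$ (with the Gromov--Hausdorff topology on pointed spaces containing $V_\infty$), and moreover does not escape to the degenerate metric $d\equiv 0$ nor to $d\equiv\infty$. The natural candidate is $\lcr = \lim_n \E[d_n(\In,\Out)]^{-1/n}$ as suggested by the sub-multiplicativity argument of Curien quoted above, but one must upgrade the control of the single scalar $d_n(\In,\Out)$ to control of the whole metric. The cut-off process is the tool for this: roughly, one truncates the recursion whenever a sub-figure becomes ``too large'' or ``too small'' relative to its scale, proves that the truncated metrics are comparable to the genuine ones with high probability, and deduces two-sided bounds $0 < c \le \E[\,d_n(x,y)\,] \le C$ uniformly in $n$ for a dense set of pairs $x,y\in V_\infty$, after normalization by $\lcr^{-n}$. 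Non-atomicity and full support of $m$ enter here: they guarantee that the rescaling factors fluctuate enough that $\lcr$ is strictly between the two trivial regimes, and that the limiting law inherits non-atomicity (e.g.\ because $d(\In,\Out)$ is a non-trivial multiple of $\xi$ times something independent, hence has no atoms).

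\textbf{Step 3: Passing to the limit and checking stationarity.} Given tightness, Prokhorov's theorem yields a subsequential limit $\mbm$ of the Ces\`aro averages. Continuity (or at least sufficient semicontinuity) of the glueing map $\mR_{\lcr}$, hence of $\mPhi_{\lcr}$, on the relevant space of measures --- which should follow from the fact that the glued distance is a shortest-path infimum, a $1$-Lipschitz-type operation in each coordinate --- lets us pass $\mPhi_{\lcr}$ through the limit, and the telescoping of the Ces\`aro average gives $\mPhi_{\lcr}[\mbm] - \mbm = \lim_N \frac 1N(\mm_N^{(\lcr)} - \mm_0) = 0$. It remains to verify that $\mbm$ is genuinely supported on \emph{metrics} (the triangle inequality and finiteness pass to the limit, and positivity/non-degeneracy on distinct points of $V_\infty$ comes from the lower bound in Step 2) and that $\mbm$ is non-atomic, which again reduces to showing that some distance random variable under $\mbm$ has no atom, using the distributional self-similarity $d(\In,\Out)\overset{d}{=}\lcr\,\xi\cdot(\text{function of four i.i.d.\ copies})$ and the non-atomicity of $m=\law(\xi)$. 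I expect Step 2 --- making the cut-off process actually deliver uniform two-sided estimates, and thereby both pinning down $\lcr$ and ruling out degeneration --- to be by far the most delicate part; Steps 1 and 3 are then fairly soft functional-analytic arguments.
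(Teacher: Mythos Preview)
Your telescoping argument in Step~3 contains a fatal gap: the operator $\mPhi_{\lambda}$ is \emph{not linear} in $\mm$, since $\mPhi_{\lambda}[\mm] = (\mR_\lambda)_*(\mm^{\otimes 4}\otimes m)$ involves the fourth tensor power. The Krylov--Bogolyubov/Furstenberg scheme you invoke relies essentially on linearity to obtain
\[
\mPhi_{\lcr}\!\left[\tfrac1N\sum_{n=0}^{N-1}\mm_n\right]=\tfrac1N\sum_{n=0}^{N-1}\mPhi_{\lcr}[\mm_n]=\tfrac1N\sum_{n=0}^{N-1}\mm_n+\tfrac1N(\mm_N-\mm_0),
\]
and this identity simply fails here. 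A Ces\`aro limit of iterates has no reason to be $\mPhi_{\lcr}$-stationary for a nonlinear operator. This is why recursive distributional equations are typically attacked via monotonicity or contraction arguments rather than averaging.

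The paper's route is genuinely different and bypasses this obstruction. First it reduces to a one-dimensional problem: by Proposition~\ref{p:equiv}, a $\mPhi_{\lcr}$-stationary random metric can be reconstructed (via the invariant recursive tree process) from the marginal law $\bm$ of the single scalar $d(\In,\Out)$, so one only needs a fixed point of the operator $\Phi_{\lambda}$ on $\cP(\R_+)$. Second, the cut-off is implemented concretely as the operator $\Phi_{A,\lambda}[\mu]=\law(\min(R_\lambda(X_1,\dots,X_4;\xi),A))$; starting from $\ddelta_{\infty}$ and using that $\Phi_{A,\lambda}$ preserves stochastic domination, one gets a \emph{monotone} sequence converging to an honest fixed point $\nu_{A,\lambda}$ of the cut-off operator (no averaging needed). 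Third, $\lcr$ is defined as the infimum of the supercritical set $\Lambda=\{\lambda:\nu_{A,\lambda}\neq\ddelta_0\}$, and the Key Lemma~\ref{l:open} shows $\Lambda$ is open, so $\nu_{A,\lambda}\to\ddelta_0$ as $\lambda\searrow\lcr$. Finally, the stationary measure $\bm$ is obtained as a \emph{diagonal} subsequential limit of $\nu_{A(\lambda),\lambda}$ with $A(\lambda)\to\infty$ chosen to pin the median at~$1$; stationarity follows by continuity of $\Phi_{A,\lambda}$ in all three arguments, with $A=\infty$ recovering $\Phi_{\lcr}$. Your Step~2 intuition about the cut-off is in the right spirit, but the actual mechanism (monotone limits giving exact fixed points for each $\lambda>\lcr$, then letting $\lambda\searrow\lcr$) is what replaces the failed averaging.
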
 

We can go further when the probability distribution $m$ is absolutely continuous
with respect to the Lebesgue measure on $\R_+$. In this case, under some additional assumptions on 
the density 
(we write $m(dx)=\rho(\log x) \frac{dx}{x}$, so that the function $\rho$ describes the density in the 
logarithmic coordinates), we show that any two 
stationary random metrics~$\mbm$,~$\mbm'$ are essentially the
same and in fact, any starting distribution converges to a stationary random metric under the
iterations of $\mPhi_{\lcr}$:
\begin{thm}\label{t:uniqueness}
Let $m(dx)=\rho(\log x) \frac{dx}{x}$ be an absolutely continuous probability measure on $\R_+$, where the 
function $\rho$ is strictly positive and continuous on $\R$ and tends to zero  as $x$ tends to $\pm\infty$.

Let $\mbm$ be a $\mPhi_{\lcr}$-stationary random metric. Then, for any probability measure $\mm$ on $\MG$ there 
exists a constant $c>0$ such that the iterations of this measure weakly-$*$ converge to the $c$-rescaled
stationary random metric $\mXi_c[\mbm]$, where we equip $\MG$ with the topology of pointwise convergence 
on $V_{\infty}\times V_{\infty}$.

In particular the $\mPhi_{\lcr}$-stationary random metric is unique up to a rescaling.
\end{thm}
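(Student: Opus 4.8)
The plan is to reduce the statement to a scalar recursive distributional equation of $(\min,+)$ type and then prove uniqueness and global attraction for that equation. By the self-similar construction, the random metric produced by any number of glueings is, in law, a deterministic functional of the single random variable $X=d(\In,\Out)$: if $x,y$ both belong to a fixed level $V_N$, then for $n\ge N$ the distance $d(x,y)$ in $\mPhi_{\lcr}^{n}[\mm]$ is the $(\min,+)$-length of the shortest path in the finite graph $\Gamma_N$ whose edge $e$ carries the random length $w_e\,Z_e$, with $w_e$ the cascade weight read along $e$ and the $Z_e$ independent copies of the $\In$-$\Out$ distance of $\mPhi^{\,n-N}_{\lcr}[\mm]$ (a path cannot gain by entering a block and leaving it through the same vertex, since blocks are glued at single points). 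The figure-eight combinatorics give
\[
X_{\mPhi_{\lcr}[\mm]}\ \stackrel{d}{=}\ \lcr\bigl(\min(\xi_1 X_1,\xi_2 X_2)+\min(\xi_3 X_3,\xi_4 X_4)\bigr),
\]
with the $X_i$ i.i.d.\ as $X_\mm$ and the $\xi_i$ i.i.d.\ of law $m$, so the law of the $\In$-$\Out$ marginal of $\mPhi^{k}_{\lcr}[\mm]$ is $\mathcal{S}^{k}$ applied to $\law(X_\mm)$, where $\mathcal{S}$ is the $(\min,+)$ smoothing operator on the right. Since the $(\min,+)$ shortest-path functional on $\Gamma_N$ is continuous in the edge lengths, weak-$*$ convergence of $\mPhi^{n}_{\lcr}[\mm]$ in the pointwise topology on $V_\infty\times V_\infty$ is equivalent to weak convergence of $\mathcal{S}^{k}(\law(X_\mm))$, and any limit law, being $\mathcal{S}$-stationary, yields a $\mPhi_{\lcr}$-stationary metric. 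So it suffices to prove: \emph{(i)} the fixed points of $\mathcal{S}$ are unique up to the scalings $x\mapsto cx$, and \emph{(ii)} for every starting law $\nu_0$ on $\R_+$ the iterates $\mathcal{S}^{k}(\nu_0)$ converge weakly — necessarily to the rescaling of $\law(X_{\mbm})$ prescribed by $\nu_0$.

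I would carry out \emph{(i)}--\emph{(ii)} in logarithmic coordinates $Y=\log X$, $W=\log\xi$ (of density $\rho$), where $\mathcal{S}$ becomes an operator $\mathcal{T}$ on probability laws on $\R$ that (a)~is monotone for stochastic domination $\lo$; (b)~commutes with translations $\tau_a\colon Y\mapsto Y+a$, which is precisely the source of the one-parameter family of solutions; and (c)~is the composition of a $1$-Lipschitz, coordinatewise nondecreasing combination of the four arguments with convolution by the fixed kernel $\rho$. Two consequences of (c) drive the argument. First, $\mathcal{T}\nu_0$ is absolutely continuous with strictly positive continuous density, so one may assume $\nu_0$ is already of this form. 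Second, and crucially, the $\min$ squares upper tails ($\mathbf{P}(\min(U_1,U_2)>t)=\mathbf{P}(U>t)^2$) while the ``$+$'' inside the logarithm squares lower tails (a small sum forces both summands small); hence a bounded number of iterations pushes both tails of $\mathcal{T}^{k}\nu_0$, after recentering, below a universal profile independent of $\nu_0$, and — using that $\lcr$ is the \emph{critical} normalization, together with the quantitative control provided by the cut-off process in the proof of Theorem~\ref{p:metric} — the recentered family neither drifts nor spreads indefinitely and is tight. Combining this with monotonicity, translation-equivariance and a fixed point $\mu_*=\law(\log X_{\mbm})$ of $\mathcal{T}$ yields constants $a\le b$ and an index $k_0$ with $\tau_a\mu_*\lo\mathcal{T}^{k}\nu_0\lo\tau_b\mu_*$ for all $k\ge k_0$: the iterates are eventually trapped between two fixed translates of the stationary law.

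It remains — and this is the main obstacle — to shrink this trapping interval to a point, which simultaneously gives convergence in \emph{(ii)} and uniqueness in \emph{(i)}. The plan is to show that $\mathcal{T}$ is a strict contraction on the trapped (tight) set for a Wasserstein-type distance $\bar d$ read modulo translations, $\bar d(\mathcal{T}\mu,\mathcal{T}\nu)\le(1-\delta)\,\bar d(\mu,\nu)$ with $\delta>0$ uniform on that set. The gain comes from the $\min$: in an optimal coupling of $\mu$ and $\nu$, each of the four slots fails to realize its $\min$ with probability bounded below uniformly on the trapped set, and on that event whatever discrepancy that slot carried is erased; the strict positivity and continuity of $\rho$ exclude the degenerate configurations (atoms, ties) where this gain could collapse, and the convolution by $\rho$ provides additional mixing — a Doeblin-type minorization from $\inf_{[-R,R]}\rho>0$ being the natural tool. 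Once this contraction is in hand, $(\mathcal{T}^{k}\nu_0)$ is $\bar d$-Cauchy, hence converges modulo translation to the unique $\bar d$-fixed point $[\mu_*]$; reversing the reduction of the first paragraph gives $\mPhi_{\lcr}^{n}[\mm]\to\mXi_c[\mbm]$ with $c$ the scalar that fixes the translation, and in particular any two stationary random metrics differ by a rescaling. The delicate point throughout will be to make the contraction genuinely uniform over the non-compact, modulo-translation trapped set and insensitive to the recentering — which is where the hypotheses on $\rho$ and the cut-off machinery do the essential work.
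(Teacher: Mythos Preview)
Your reduction to the one-dimensional $\In\Out$-distance is correct and is exactly the paper's route (Proposition~\ref{p:equiv} and Theorem~\ref{t:conv}). One slip: your displayed RDE carries four independent factors $\xi_i$, whereas in the paper's glueing a \emph{single} $\xi$ multiplies the whole block, $R_{\lcr}(X_1,\dots,X_4;\xi)=\lcr\,\xi\,(\min(X_1,X_2)+\min(X_3,X_4))$. The two operators happen to be conjugate via multiplicative convolution by $m$, so this is harmless for the dynamics, but your own preceding description (``$w_e$ the cascade weight read along $e$'') already matches the paper's version rather than your displayed formula.

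The genuine gap is the contraction step, which you rightly flag as the main obstacle and then leave unresolved. The heuristic that a slot ``failing to realize its $\min$'' has its discrepancy erased does not produce a Wasserstein contraction: when slot~$1$ is beaten by slot~$2$, slot~$2$'s discrepancy survives intact, and since $\min$ is only $1$-Lipschitz one has $|\min(U_1,U_2)-\min(V_1,V_2)|\le|U_1-V_1|+|U_2-V_2|$, giving no factor below~$1$. In logarithmic coordinates the sum inside the $\log$ is not additive, and convolution by $\rho$ is merely non-expansive for any Wasserstein distance, so neither ingredient supplies a gain; a Doeblin minorization from $\inf_{[-R,R]}\rho>0$ mixes \emph{locations} but does not shrink coupling \emph{cost}. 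The trapping paragraph is likewise only a sketch: the tail-squaring heuristic is correct in spirit (and is made precise in~\S\ref{app:tails} for the \emph{stationary} law), but a bound on \emph{arbitrary} iterates uniform in $\nu_0$ already presupposes comparison with a fixed point, so it cannot serve as an independent first step.

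The paper bypasses Wasserstein entirely and works with distribution functions. It introduces one-sided neighbourhoods $\mC_{\alpha,\delta}=\{F:F\le\Fbm\text{ on }[\kal,\kone],\ F\le\Fbm+\delta\text{ elsewhere}\}$ of the stationary $\Fbm$, and proves (Proposition~\ref{l:shift}, via the explicit coupling of Lemma~\ref{l:delta2}) that a log-shift of size $L\delta$ composed with $\Pc$ sends $\mC_{\alpha,\delta}$ into $\mC_{\alpha,\delta/2}$. Iterating halves $\delta$ at each step while the accumulated shift sums geometrically, yielding $\Pc^n[F]\le T_{-2L\delta}[\Fbm]+\delta/2^n$ (Proposition~\ref{c:upper}); the symmetric argument gives a lower envelope. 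The limiting rescaling is then identified as $r_0=\sup R_+(F)$ and the possibility of a gap between the upper and lower envelopes is ruled out by a separate ``a strict inequality at one point propagates everywhere'' argument (Lemma~\ref{l:lower2}). Thus the single uniform contraction you seek is replaced by a shape-improvement $\delta\mapsto\delta/2$ whose price is a controlled drift, together with a compactness argument in the tails that uses only that $\rho$ is continuous, positive, and decays at~$\pm\infty$.
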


\begin{rem}
Note that the convergence in Theorem~\ref{t:uniqueness} is weaker than the one we would dispose if we were 
using the Gromov--Hausdorff distance. Though, under some additional assumptions one can also claim this 
stronger version of convergence. This happens in the subcritical case (see~\S{}\ref{s:geometry}) for 
$\log$-normal law~$m$ with the initial law $\mm$, supported on a bounded set of metrics.
\end{rem}

\subsection{Geometric properties of the stationary random metric space}\label{s:geometry}

The next important step is to study the properties of the stationary random metric~$\mbm$: for instance, 
is it concentrated on the spaces homeomorphic to~$\Gamma_{\infty}$? Our next result answers to this 
question in a particular case, when the measure $m$ is $\log$-normal 
($m=\exp_*\mN(0,\sigma^2)$), or has a tail behaviour  of this type (both at $0$ and $\infty$, 
see Remark~\ref{r:linear}). To state it, for $m=\exp_*\mN(0,\sigma^2)$ we write 
$\gamma_{\BRW}=\gamma_{\BRW}(m):=\sqrt{2 \log 4} \cdot \sigma$. Then we have the following theorem, 
depicting \emph{two} different regimes:
\begin{thm}\label{t:metric}
Let $m=\exp_*\mN(0,\sigma^2)$ be a $\log$-normal distribution, and~$\lcr$ and
a stationary random metric $\mbm$ be given by Theorem~\ref{p:metric}. Then:
\begin{itemize}
\item If $\gamma_{\BRW}+\log \lcr<0$, then the stationary random metric $\mbm$ is supported on the set of spaces for 
which the inclusion map $V_{\infty}\to\mX$ extends continuously to a homeomorphism between 
$\Gamma_{\infty}$ and $\mX$. Moreover, $\mbm$-almost surely, the Hausdorff dimension of $\mX$ is at most
\[\frac{2\log 2}{|\gamma_{\BRW}+\log\lcr|}\,.\]
\item If $\gamma_{\BRW}+\log \lcr>0$, then the diameter of the space $\mX$ is $\mbm$-almost surely 
infinite.
\end{itemize}
\end{thm}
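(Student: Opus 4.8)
The two regimes are governed by a competition between the exponential normalizing factor $\lcr^{-n}$ and the fluctuations of the multiplicative cascade, which are controlled by the associated branching random walk (BRW). The BRW here has $4$ children per vertex, each displacement being an independent copy of $\log\xi\sim\mathcal N(0,\sigma^2)$; its right-most particle at generation $n$ grows like $\gamma_{\BRW}\,n$ where $\gamma_{\BRW}=\sqrt{2\log 4}\,\sigma$ is the speed (the value of $\theta\mapsto \tfrac1\theta\log(4\,\E[e^{\theta\log\xi}])=\tfrac1\theta\log 4 + \tfrac{\theta\sigma^2}{2}$ at its minimizer $\theta=\sqrt{2\log 4}/\sigma$), and symmetrically the left-most particle drifts like $-\gamma_{\BRW}\,n$. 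The relevant quantity for the metric is the typical size, after renormalization, of an edge sitting at combinatorial depth $k$ inside $\Gamma_n$: it is of order $\lcr^{-n}\prod_{j} \xi_{i_j}$ along the length-$k$ ancestral path, and when $k$ is comparable to $n$ the logarithm of this quantity behaves like $(-\log\lcr)\,n$ plus a cascade term whose extremes are governed by $\pm\gamma_{\BRW}n$. Hence $\gamma_{\BRW}+\log\lcr$ is exactly the exponent deciding whether the longest edges shrink or the shortest-path lengths can blow up.

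\emph{The subcritical case $\gamma_{\BRW}+\log\lcr<0$.} Here I would show that with overwhelming probability \emph{every} edge at level $n$ of the renormalized graph $\Gamma_n$ has length at most $C\,q^{\,n}$ for some deterministic $q<1$, by a first-moment (union bound) estimate over the $4^n$ edges: the number of edges grows like $4^n$, while the probability that a given edge has renormalized length exceeding $q^n$ decays like $4^{-n}$ precisely when $q>e^{\gamma_{\BRW}+\log\lcr}$, by the Chernoff/Cram\'er bound for the BRW maximum applied at the optimal exponent $\theta=\sqrt{2\log4}/\sigma$. Borel--Cantelli then gives, almost surely, a uniform geometric bound on edge lengths at all large levels. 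Summing a geometric series over levels shows that the inclusion $V_n\hookrightarrow V_{n+1}$ is Cauchy in the sup-metric on the ``new'' points introduced at step $n$, so the identity on $V_\infty$ extends to a uniformly continuous — indeed H\"older — surjection $\Gamma_\infty\to\mX$; injectivity (hence that it is a homeomorphism) follows because the glueing points $\In,\Out$ of each sub-copy separate $\Gamma_\infty$, so distinct limit points in $\Gamma_\infty$ are separated in $\mX$ by a cut-vertex at some finite level, forcing positive distance. The Hausdorff dimension bound comes from the same covering: $\mX$ is covered at scale $\asymp e^{-n|\gamma_{\BRW}+\log\lcr|}$ by the $4^n=e^{2n\log 2}$ sub-copies of level $n$, giving $\dim_H\mX \le \frac{2\log 2}{|\gamma_{\BRW}+\log\lcr|}$. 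I would run all of this under $\mPhi_{\lcr}$-stationarity of $\mbm$, or equivalently under the explicit level-$n$ cascade description, invoking stationarity to transfer the statement from finite levels to the limit space.

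\emph{The supercritical case $\gamma_{\BRW}+\log\lcr>0$.} Here the aim is the opposite: the renormalized distance $d_n(\In,\Out)$ does not stay bounded. The clean way is a second-moment / change-of-measure argument. Fix an exponent $\theta$ with $4\,\E[\xi^\theta]\,\lcr^{\theta}>1$, which is possible exactly when $\gamma_{\BRW}+\log\lcr>0$. Along a single ancestral ray the ``additive martingale'' $W_n=\lcr^{\theta n}\sum_{|i|=n}\prod_j\xi_{i_j}^{\theta}$ is then a submartingale-type object whose normalization blows up; more directly, one observes that any path from $\In$ to $\Out$ in $\Gamma_n$ must traverse $2^{n}$ edges of level $n$ (the eight-shape doubles the combinatorial length at each step), and the renormalized length is $\lcr^{-n}$ times a sum of $2^n$ terms, each a product of $n$ i.i.d. copies of $\xi$. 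Using that $\log\xi$ is Gaussian, the expected renormalized length of the ``straight'' path is $2^n\,\lcr^{-n}\,\E[\xi]^n = (2\E[\xi]/\lcr)^n$; one checks $2\E[\xi] = 2e^{\sigma^2/2}$ and, comparing with the definition of $\lcr$ and $\gamma_{\BRW}$, that this base exceeds $1$ in the supercritical phase, so $\E[d_n(\In,\Out)]\to\infty$ at least along this path — but of course the geodesic could be shorter, so one must bound \emph{all} paths from below. The honest argument: every path crosses a cut-edge of $\Gamma_1$, i.e.\ decomposes through the recursive structure; iterating the RDE satisfied by $\mbm$ (Theorem statement's stationarity, applied $n$ times) realizes $d(\In,\Out)$ under $\mbm$ as $\lcr^n \cdot (\text{min over the }2^n\text{ monomial paths of sums of }2^n\text{ rescaled sub-distances})$, and one shows the minimum is still of exponential order $\gg 1$ because even the smallest product along a depth-$n$ ray is $e^{-\gamma_{\BRW}n+o(n)}$ by the left-most BRW particle, and $e^{-\gamma_{\BRW}n}\cdot\lcr^{-n}\to\infty$. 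Combining: $d_\mbm(\In,\Out)\ge e^{(\gamma_{\BRW}+\log\lcr-o(1))n}$ for the copy of $\Gamma_\infty$ refined to level $n$, but $\In,\Out\in\mX$ are fixed points, so letting $n\to\infty$ forces $d_\mbm(\In,\Out)=+\infty$, hence $\diam\mX=\infty$ $\mbm$-a.s.

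\emph{Main obstacle.} The delicate point is the lower bound in the supercritical case: one must control the \emph{minimum over all geodesic paths}, not just one path, and a path may gain by routing through many short edges at the cost of length. The right tool is a ``many-to-one'' / spine computation for the BRW combined with the observation that the combinatorial length of \emph{any} $\In$–$\Out$ path in $\Gamma_n$ is at least $2^n$ while the number of such monomial paths is only $2^n$, so a union bound on the rare event that some path is short is affordable precisely in the regime $\gamma_{\BRW}+\log\lcr>0$. Making the $o(n)$ uniform (independent of which path) and upgrading the finite-level estimate to the $\mbm$-a.s.\ statement via stationarity is where the care is needed; the subcritical direction, by contrast, is a routine union bound plus Borel--Cantelli.
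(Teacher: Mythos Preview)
Your subcritical argument is in the right spirit and close to the paper's, but it has a real gap. Under $\mbm$, the level-$n$ sub-copy $\mX_t$ (for $\|t\|=n$) has $\In\Out$-distance
\[
Y_t \;=\; X_t \cdot \prod_{j=1}^{n}\bigl(\lcr\,\xi_{p^j(t)}\bigr),
\]
where $X_t\sim\bm$ is \emph{independent} of the cascade product. Your union bound controls only the cascade factor $\prod(\lcr\xi)$ via the BRW maximum; it says nothing about $\max_{\|t\|=n}\log X_t$. The paper has to prove separately that this latter maximum is $o(n)$ almost surely (their Proposition on $L_n$), and this requires a nontrivial input: a superexponential tail bound for $\bm$, obtained by bootstrapping the stationarity relation $\log_*\bm=\nu*\mN(\log\lcr,\sigma^2)$ together with the ``two parallel edges'' inequality $g(s+\log 2)\le c\,f(s)^2$. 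Your phrase ``invoking stationarity to transfer'' hides exactly this step. Once that lemma is in place, your covering/H\"older argument for the dimension bound is the same as the paper's.

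Your supercritical argument, however, is aimed at the wrong target and does not work. The distance $d(\In,\Out)$ has law $\bm$, which is a probability measure on $(0,\infty)$; it is almost surely \emph{finite}, so you cannot hope to show $d(\In,\Out)=+\infty$. The sign is also inverted: you invoke the \emph{leftmost} BRW particle ($e^{-\gamma_{\BRW}n}$), but $e^{-\gamma_{\BRW}n}\cdot\lcr^{-n}=e^{-(\gamma_{\BRW}+\log\lcr)n}\to 0$, not $\infty$, when $\gamma_{\BRW}+\log\lcr>0$. Finally, the number of combinatorially shortest $\In\Out$-paths in $\Gamma_n$ is $4^{2^n-1}$, not $2^n$, so a union bound over paths is hopeless. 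The paper's approach is both simpler and correct: lower-bound the diameter of $\mX$ by $\min(Y_{t_1},\dots,Y_{t_4})$ for the four children of \emph{any} vertex $t$, then choose $t=t^n_{\max}$ at depth $n$ maximizing the cascade product $\prod_{j}(\lcr\xi_{p^j(t)})$. That product is $e^{(\gamma_{\BRW}+\log\lcr)n+o(n)}\to\infty$ by the BRW \emph{maximum}, while the independent factor $\min(X_{t_1},\dots,X_{t_4})$ has a fixed law not depending on $n$; hence $\diam(\mX)=\infty$ almost surely. The point is that infinite diameter is witnessed not by $\In$ and $\Out$ but by points deep inside an exceptionally inflated sub-copy.
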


The reason behind this result is the following. Our random metric space $\mX$ is glued out of  four 
$\lcr \xi$-rescaled independent random spaces, each one glued out of four rescaled random spaces, and so on. 
This descent can be described alternatively in this formal way: we have an  infinite rooted quaternary tree 
$\T$ with root $\bullet$, and independent random variables $\{\xi_t\}_{t\in \T}$ are associated to its vertices, 
all distributed with respect to~$m$. In order to group $n$ glueing steps at once, we take $4^n$ independent 
spaces $\mX_t$, indexed by vertices $t$ of depth $n$ and following the same law $\mbm$. Then we rescale each 
space $\mX_t$ by the factor $\prod_{j=0}^{n-1} \lcr \xi_{t_j}$, where $t_0,\dots,t_{n-1},t$ is a path from the 
root vertex $t_0=\bullet$ to $t$, and glue the rescaled spaces together.

The behaviour of the rescaling factors influences the geometry of the resulting space, dictating in particular 
the two regimes in Theorem~\ref{t:metric}. Namely, the first case occurs if the maximum of the factors tends 
to $0$ exponentially as $n$ tends to infinity, and the second one if such maximum explodes.

Using terms which are more familiar to the probabilists, we note that as $n$ increases, the collection of 
logarithms of the factors behaves like a branching random walk ($\BRW$) with increments given by the 
law $\log_*m$, shifted by an additional linear drift with speed $\log \lcr$.

The drift of the maximum of a $\BRW$ (under some mild assumptions on the law of the increments) is itself 
described by a classical result in the theory of branching random walks: the Hammersley-Kingman-Biggins 
Theorem~\cite{hammersley,kingman,biggins} (see also~\cite{lig,zeitouni_BRW}).
Namely, let $M_n$ be the maximum of this $\BRW$ after $n$ steps, then there exists 
a constant $\gamma_{\BRW}=\gamma_{\BRW}(m)$ such that 
\begin{equation}\label{eq:BRWconstant}
\lim_{n\rightarrow\infty}\frac{M_n}{n}=\gamma_{\BRW} \quad \text{almost surely}.
\end{equation}
When the increments follow the normal law $\mN(0,\sigma^2)$, the speed of the drift of the maximum is equal 
to $\gamma_{\BRW}(m)=\sqrt{2\log 4}\cdot \sigma$. 

\begin{rem}\label{r:linear}
In fact, as the reader can check, the arguments of the proofs of Theorems~\ref{t:metric}, as well as of 
Theorem~\ref{t:geodesic} below, will only use the behaviour of the density of~$m$ at infinity (for instance, 
supposing the density asymptotically equivalent to a $\log$-normal one is enough). These theorems thus stay 
valid for other measures $m$ with log-normal tails, with $\gamma_{\BRW}$ the drift speed for the maximum of 
the $\BRW$ associated to~$\log_* m$.
\end{rem}

\begin{rem}
As a consequence of Proposition~\ref{prop:dirac-perturbed} below, the first possibility takes place for sufficiently
small values of $\sigma$.\end{rem}

\begin{rem}\label{r:critical}
The bound given on the Hausdorff dimension is far to be optimal (see the discussion at the end of \S\ref{ssc:lft}) 
but it is a glimpse of the  r\^ole of the parameter $\gamma_{\BRW}+\log\lcr$ in distinguishing the two scenarios,
which have to be considered as \emph{subcritical} and \emph{supercritical} respectively. What happens at 
\emph{criticality} (\emph{i.e.}~when $\gamma_{\BRW}+\log \lcr=0$) seems to be a more delicate problem: 
indeed, it is classical \cite{BRW} that the asymptotic behaviour of~$M_n$ shows an additional negative 
logarithmic term, with a factor depending on the critical value of~$\sigma$, and so on the corresponding~$\lcr$. 
This correction suggests that the $\In\Out$-distances at depth~$n$ should decrease, but it could be not fast enough 
to ensure finite diameter at criticality.
\end{rem}

The random geometry in the supercritical case of Theorem~\ref{t:metric} has to be better understood. 
We discuss our intuition on it in Section~\ref{s:metric}. We can however state one precise result
that underlines the difference between stationary random metrics on hierarchical graphs like the 
figure eight and simpler ones, like the interval.
Observe that in the latter case, in the supercritical regime (e.g.~$\sigma>\sqrt{2\log 2}$ 
if~$m=\exp_*\mN(0,\sigma^2)$) the random metric space is almost surely homeomorphic to a Cantor set, even 
though there is no canonical way of constructing this so-called ``atomic multiplicative chaos''
\cite{DL,BJRV}. Surprisingly enough, for the hierarchical graphs the behaviour of random metric in the 
supercritical regime is quite different from what is known in the interval case:

\begin{thm}\label{t:geodesic}
Let $m=\exp_*\mN(0,\sigma^2)$ be a $\log$-normal distribution. Take the value~$\lcr$ and 
a stationary random metric $\mbm$ given by Theorem~\ref{p:metric}. Then the  
stationary random metric $\mbm$ is supported on connected spaces.
\end{thm}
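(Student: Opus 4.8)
The plan is to show that in the supercritical log-normal regime the random space $\mX$, though of infinite diameter, cannot be disconnected: any "gap" would have to appear at some finite level of the hierarchical construction, and I will argue this happens with probability zero. More precisely, I would first fix a stationary law $\mbm$ and describe $\mX$ via the quaternary tree picture from the discussion preceding the theorem: $\mX$ is glued from the four rescaled subspaces $\mX_{(1)},\dots,\mX_{(4)}$, each itself glued from four rescaled subspaces, etc., with rescaling factors $\lcr\xi_t$ along the tree $\T$. A point of $\mX\setminus V_\infty$ is encoded by an infinite path $t_0=\bullet,t_1,t_2,\dots$ in $\T$ (its sequence of nested "sub-bricks"), and the key quantity is, for each $n$, the $\mbm$-diameter of the level-$n$ piece containing that point, namely $D_n:=\bigl(\prod_{j=0}^{n-1}\lcr\xi_{t_j}\bigr)\cdot\diam(\mX_{t_n})$.

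Next I would set up the connectedness criterion. Since the figure-eight brick has the property that its $\In$ and $\Out$ vertices are joined to the rest of the graph (there are no cut structures isolating a positive-diameter chunk from $\{\In,\Out\}$ at a single level), a disconnection of $\mX$ can only come from the limiting procedure: $\mX$ fails to be connected precisely when one can split the set of ends of $\T$ into two nonempty pieces whose closures in $\mX$ do not meet, i.e.\ when the diameters of the level-$n$ cells shrink fast enough along \emph{some} branches while the "glueing points" $V_\infty$ do not accumulate. The crucial observation is that $V_\infty$ \emph{is} dense in $\mX$ by construction (it is built into the definition of $\MG$), so $\mX$ is connected if and only if $V_\infty$ together with the limit of the construction forms a connected set, and this in turn holds as soon as, for $\mbm$-almost every realization, \emph{every} point $x\in\mX\setminus V_\infty$ is a limit of points of $V_\infty$ that lie in arbitrarily small connected neighborhoods — equivalently, as soon as $D_n\to 0$ along every infinite branch while consecutive level-$n$ cells along a branch always share a vertex of $V_\infty$. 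The second condition is automatic (nested bricks share endpoints); so connectedness reduces to: almost surely, $\sup_t \diam(\mX_t)\cdot\prod_j\lcr\xi_{t_j}$ restricted to any fixed branch tends to $0$, \emph{and} that the branch diameters do not have to sum up — here I use that a geodesic-type path inside $\mX$ between two points in the same level-$n$ cell stays in that cell, so the relevant bound on distances between nearby points of $V_\infty$ is controlled by a single $D_n$, not a sum.

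Then comes the probabilistic heart. Along a \emph{fixed} infinite branch, $\log D_n = \sum_{j=0}^{n-1}(\log\lcr+\log\xi_{t_j}) + \log\diam(\mX_{t_n})$ is a random walk with i.i.d.\ increments of mean $\log\lcr + \E[\log\xi] = \log\lcr$ (the log-normal mean is $0$) plus a stationary term; since in the regime $\gamma_{\BRW}+\log\lcr>0$ we have in particular — after possibly first establishing the complementary sign information via the submultiplicativity/Hammersley–Kingman–Biggins input recalled in the text — that $\log\lcr<0$, so by the strong law $\log D_n\to-\infty$ a.s.\ along that branch, hence $D_n\to 0$. A union bound over the countable collection of branches with eventually-constant coordinates, combined with a continuity/monotonicity argument (diameters of cells are monotone under refining the branch, so it suffices to control a countable dense set of branches and pass to the limit using that any branch is approximated by eventually-constant ones from "below"), upgrades this to: almost surely $D_n\to0$ along \emph{every} branch. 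This yields that $V_\infty$ is dense and locally connecting at every point, so $\mX$ is connected.

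The main obstacle, I expect, is the uniformity over \emph{all} branches: a single infinite branch is easy via the law of large numbers, but there are uncountably many branches and the naive union bound fails. The fix — and the delicate point to get right — is that we do \emph{not} need the maximum of $D_n$ over all depth-$n$ cells to go to zero (it does not, since the \emph{supremum} of the BRW-type walk along the worst branch grows; that is exactly the supercritical phenomenon producing infinite diameter). We only need that \emph{each individual point} sits in a shrinking cell, which is a statement about fixed branches, not a supremum. So the real work is to argue cleanly that "connectedness" is equivalent to this weaker branchwise statement and does not require the stronger uniform one — in other words, to separate the topological notion (connectedness) from the metric one (finite diameter), exploiting that the figure-eight glueing never separates the $\In$–$\Out$ pair. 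Making this equivalence rigorous, rather than the probability estimate, is where the care is needed.
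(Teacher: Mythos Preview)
Your approach has a fatal gap in the supercritical regime (the only case of interest, since for $\gamma_{\BRW}+\log\lcr<0$ Theorem~\ref{t:metric} already gives a homeomorphism to the connected space~$\Gamma_\infty$). You set $D_n=\bigl(\prod_{j=0}^{n-1}\lcr\xi_{t_j}\bigr)\cdot\diam(\mX_{t_n})$ and aim to prove $D_n\to 0$ along every branch. But by stationarity the unrescaled brick at level~$n$ has the same law as~$\mX$, and Theorem~\ref{t:metric} says precisely that $\diam(\mX)=\infty$ almost surely when $\gamma_{\BRW}+\log\lcr>0$. Hence $D_n\equiv+\infty$ for every~$n$ and every branch, and your reduction is vacuous. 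This also breaks the proposed connectedness criterion at its root: when the level-$n$ cells do not shrink, infinite branches of~$\T$ no longer encode single points of~$\mX\setminus V_\infty$, so the statement ``every point sits in a shrinking cell'' has no content. Two secondary problems: even replacing $\diam(\mX_{t_n})$ by the finite $\In\Out$-distance~$X_{t_n}$, your LLN step needs $\log\lcr<0$, which does not follow from $\gamma_{\BRW}+\log\lcr>0$ and is nowhere established; and the passage from fixed branches to all branches via ``eventually-constant'' ones fails, since those correspond exactly to points of~$V_\infty$, not to general points.

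The paper proceeds on an entirely different idea: rather than shrinking cells, it constructs an explicit $\In\Out$-geodesic. Starting from the finite distance~$X_\bullet=d(\In,\Out)$, one recursively selects at each level the two edges realising the shorter half of the current shortest path; this picks out a \emph{dyadic} subtree of~$\T$ and yields nested partitions of $[0,X_\bullet]$ into $2^n$ pieces of lengths~$\tilde Y_s$. The geodesic exists iff these partitions become dense, i.e.\ $\tilde Y_s\to 0$ along every branch of the dyadic subtree. Failure of this forces a certain stationary Markov chain $(Z_n)$ (the larger of the two halves at each step) to drift to~$0$ or~$\infty$; this is ruled out by showing $(Z_n)$ returns to a compact interval infinitely often, which in turn rests on tail estimates for~$\bm$ (superexponential decay of $\log_*\bm$, derived from the stationarity equation and the Gaussian tails of~$m$). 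Once the $\In\Out$-geodesic exists, self-similarity produces local geodesics between adjacent vertices of~$V_\infty$; their union is path-connected and dense, so its closure~$\mX$ is connected. The technical weight lies in the Markov-chain recurrence and the tail bounds, not in a cell-shrinking argument.
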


\section{Strategy of the proof}

\subsection{Reduction to a one-dimensional problem}\label{ss:reduction}

In order to prove Theorem~\ref{p:metric} we highly exploit the particular geometry of the hierarchical figure 
eight-graph: it allows to recover the law $\mbm$ of the stationary random metric from the only knowledge of 
the ``marginal'' law $\bm$ of the distance between the vertices $\In$ and $\Out$. Such marginal law should 
then satisfy an analogous stationarity relation, that turns out to be much simpler to analyse as it concerns 
now usual (and not metric-space-valued) random variables.

Indeed, if there is any stationary random metric $\mbm$, it is easy to see that the $\In\Out$-distance also 
ought to be stationary under the glueing operation. Given $\lambda>0$, the map $\mR_\lambda$ projects to the map
\[R_\lambda\,:\,\R_+^4\times \R_+\to\R_+\]
which assigns to any four positive numbers $X_1,\ldots,X_4$ (thought as the $\In\Out$-distances inside some four metric 
spaces $(\mX_1,d_1),\ldots,(\mX_4,d_4)$ respectively) and positive factor $\xi$ the quantity
\begin{equation}\label{eq:Phi}
R_{\lambda}(X_1,X_2,X_3,X_4;\xi)=\lambda\,\xi\cdot (\min(X_1,X_2)+\min(X_3,X_4)).
\end{equation}
The geometrical interpretation of this projected glueing map is straightforward: any path going from $\In$ 
to $\Out$ in the glued graph must pass through the middle point and so we select the fastest way to go from 
the vertex $\In$ to the middle point and add its length to the one of the fastest way from $\Out$ to the 
same middle point.

Also the rescaling maps $\mathbf{r}_c$ project to maps $r_c$ defined on the space of $\In\Out$-distances 
$\R_+$, acting as plain scalar multiplication: $r_c(X)=c\cdot X$. 

In the core part of this paper we show that such marginal law $\bm$ exists (and is non-trivial). Before 
going further, it is more convenient to state also this latter problem as a \emph{fixed point problem} 
for a ``marginal'' renormalization operator $\Phi_\lambda$.

Let $\cP$ be the space of Radon probability distributions
on the compactified half-line~$[0,+\infty]$ equipped with the weak-$*$ topology. The ``marginal'' 
renormalization operator is defined as the push-forward
\[\Phi_\lambda[\mu]:=(R_\lambda)_*\left (\mu^{\otimes 4}\otimes m\right ),\]
while the ``marginal'' rescaling operator is
\[\Upsilon_c[\mu]:=(r_c)_*\mu.\]
Obviously, for any $\lambda$, $c>0$, the operators $\Phi_\lambda$ and $\Upsilon_c$ commute.
Theorem~\ref{p:metric} will be easily deduced from the following simpler statement:

\begin{thm}\label{t:stat}
For any non-atomic, fully supported probability measure $m$ on $\R_+$ there exists a
normalizing constant $\lcr>0$ and a non-atomic probability measure $\bm$ on
$\R_+$ such that $\bm$ is a fixed point for the operator $\Pc$:
\begin{equation}
\label{eq:iter}
\Pc[\bm]=\bm.
\end{equation}
\end{thm}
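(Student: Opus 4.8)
The plan is to realize $\bm$ as a fixed point of a suitably normalized operator by combining a compactness argument with a renormalization that kills the scaling degeneracy. First I would study the one-step map $R_\lambda$ of \eqref{eq:Phi} and the associated $\Phi_\lambda$. Since the rescaling operators $\Upsilon_c$ commute with $\Phi_\lambda$, a fixed point can only exist for one special value $\lambda=\lcr$; morally, $\lcr$ must be the reciprocal of the exponential growth rate $\lambda$ of $\E[d_n(\In,\Out)]^{-1/n}$ discussed in the introduction. So the first step is to fix a normalization of the random variable — I would work on the logarithmic side, setting $Y=\log X$, so that $R_\lambda$ becomes $Y\mapsto \log\lambda+\log\xi+\log(e^{\min(Y_1,Y_2)}+e^{\min(Y_3,Y_4)})$, and pin down the law by a constraint such as fixing a quantile (e.g. the median of $\bm$ equal to $1$, or $\E[\log X]=0$), which removes the action of $\Upsilon_c$.

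Next I would set up the iteration. Start from a reasonable initial law $\mu_0$ (for instance the law of $d_1(\In,\Out)=\xi_1\min(\cdot,\cdot)+\dots$, or just $m$ itself pushed through one step), and consider the renormalized iterates $\mu_n = \Upsilon_{c_n}\Phi_\lambda^n[\mu_0]$, where $c_n$ is chosen at each step so that the normalization constraint (fixed quantile) is preserved; the product $c_n$ then encodes the correction $\lcr^{-n}$, and one shows $c_n^{1/n}\to\lcr$ for the right $\lcr$. The key analytic inputs are: (i) a uniform tightness estimate — the lower tail is controlled because $\min(X_1,X_2)+\min(X_3,X_4)$ is bounded below by $\min$ of four copies, and the upper tail is controlled because $R_\lambda$ is $1$-Lipschitz in each $X_i$ in the sup sense and the $+$ of two $\min$'s contracts the right tail — so the sequence $\{\mu_n\}$ is precompact in $\cP$ for the weak-$*$ topology on $[0,+\infty]$; (ii) continuity of $\Phi_\lambda$ on $\cP$, so that any weak-$*$ limit point $\bm$ of a subsequence satisfies $\Phi_\lambda[\bm]=\Upsilon_{c}[\bm]$ for the corresponding limiting scale $c$, hence is a genuine fixed point of $\Pc$ after absorbing $c$. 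Equivalently, one can phrase this as a Schauder–Tychonoff fixed-point argument: the map $\mu\mapsto \Upsilon_{c(\mu)}\Phi_\lambda[\mu]$, with $c(\mu)$ the normalizing factor, sends a suitable weak-$*$ compact convex set of laws (those satisfying the quantile constraint together with uniform tail bounds) into itself and is continuous, so it has a fixed point $\bm$, and then $\lcr := 1/c(\bm)$ works.

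Two points require genuine care. The first is showing $\lcr>0$ and, above all, $\lcr<\infty$, i.e. that there is no anomalous (sub-exponential or super-exponential) correction to the normalization — this is exactly the ``no correction term'' issue flagged after Curien's argument, and I expect it to be the main obstacle: the sub-multiplicativity gives existence of $\lambda$ but one must rule out $\lambda\in\{0,\infty\}$, which requires a two-sided bound on $\E[d_n(\In,\Out)]$ with matching exponential rates, presumably via comparing the eight-graph recursion to a simpler branching recursion and using the hypotheses that $m$ is non-atomic and fully supported (so that with positive probability all four factors are simultaneously small, resp. large, giving a crude but exponentially correct two-sided control). The second point is that the limit $\bm$ be \emph{non-atomic}: an atom could in principle form at $0$ or $+\infty$ or at an interior point under iteration. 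Non-atomicity at interior points should propagate from non-atomicity of $m$ through the convolution-type structure of $R_\lambda$ (the $\xi$-multiplication smooths out any interior atom); the possibility of escape of mass to $0$ or $\infty$ is precisely what the quantile normalization and the uniform tail estimates in (i) are designed to prevent, so once those are in place the limiting measure is a non-atomic probability measure on $\R_+=(0,+\infty)$ as required, and Theorem~\ref{t:stat} follows.
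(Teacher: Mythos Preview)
Your proposal sketches a plausible-sounding Schauder-type argument, but it has a genuine gap at exactly the point where the real work lies: the uniform tightness of the renormalized iterates. You assert that ``the lower tail is controlled because $\min(X_1,X_2)+\min(X_3,X_4)$ is bounded below by $\min$ of four copies'' and ``the upper tail is controlled because $R_\lambda$ is $1$-Lipschitz\dots and the $+$ of two $\min$'s contracts the right tail'', but these are heuristics, not estimates. Nothing prevents mass from leaking simultaneously to $0$ and to $+\infty$ while the median stays at $1$; the quantile normalization only pins down one point of the distribution function, not its tails. Concretely, for your Schauder argument you need a compact convex $K\subset\cP$ invariant under $\mu\mapsto\Upsilon_{c(\mu)}\Phi_\lambda[\mu]$, and you have not exhibited one. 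The tail behaviour under $\Phi_\lambda$ is in fact delicate (cf.\ the paper's \S\ref{app:tails}, where the tail estimates for the actual stationary measure already require significant work even \emph{a posteriori}).

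The paper circumvents this difficulty entirely by a different mechanism: the \emph{cut-off} operator $\Phi_{A,\lambda}[\mu]=\law(\min(R_\lambda(X_1,\dots,X_4;\xi),A))$. This forces compact support in $[0,A]$ for free, and because $\Phi_{A,\lambda}$ preserves the stochastic order $\lo$, iterating from $\ddelta_\infty$ gives a \emph{monotone} sequence converging to a fixed point $\nu_{A,\lambda}$ --- no tightness or Schauder argument needed. The parameter $\lcr$ is then identified not via growth rates of normalizing constants but as the boundary of the set $\Lambda=\{\lambda:\nu_{A,\lambda}\neq\ddelta_0\}$; the \emph{openness} of $\Lambda$ (the Key Lemma) is the technical heart, proved via the percolation function $\theta(p)=p^2(2-p)^2$ and a zooming-out construction. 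The stationary $\bm$ is obtained as a diagonal limit $\nu_{A(\lambda),\lambda}$ with $\lambda\searrow\lcr$ and $A(\lambda)\to\infty$. Your treatment of non-atomicity is also too quick: the paper must separately rule out the possibility $p_0(\bm)=p_{cr}$ (the interior fixed point of $\theta$), and doing so requires going back to the Key Lemma to derive a contradiction --- this is not something a median constraint alone can exclude.
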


The uniqueness result of Theorem~\ref{t:uniqueness} is also a direct consequence of the analogue result 
for ``marginal'' random $\In\Out$-distances:

\begin{thm}\label{t:conv}
Let $m(dx)=\rho(\log x) \frac{dx}{x}$ be an absolutely continuous probability measure on $\R_+$, where the 
function $\rho$ is strictly positive and continuous on $\R$ and tends to zero as $x$ tends to~$\pm\infty$.

Let $\bm$ be a  probability measure on $\R_+$ such that
$\bm=\Pc[\bm]$. Then, for any probability measure $\mu$ on $\R_+$ there exists a
constant $c>0$ such that the iterations of this measure converge to the $c$-rescaled
measure~$\Upsilon_c[\bm]$.

In particular the $\Pc$-stationary probability measure is unique up to a rescaling.
\end{thm}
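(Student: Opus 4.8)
The plan is to pass to logarithmic coordinates, where the rescaling operators $\Upsilon_c$ become translations, and to prove convergence by a \emph{monotone sandwiching} argument in the spirit of Durrett--Liggett: one traps the iterates $\Pc^n[\mu]$ between two rescaled copies of the stationary law $\bm$ and shows that the two rescaling factors merge.

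Two structural observations form the backbone. First, \emph{monotonicity}: the map $R_{\lcr}$ is non-decreasing in each of its arguments, and the stochastic order $\lo$ is preserved under independent minima, under convolution, and under multiplication by a positive scalar; hence $\mu\lo\mu'$ implies $\Pc[\mu]\lo\Pc[\mu']$. Second, \emph{commutation}: $\Pc\circ\Upsilon_c=\Upsilon_c\circ\Pc$ for all $c>0$, which combined with $\bm=\Pc[\bm]$ gives $\Pc[\Upsilon_c[\bm]]=\Upsilon_c[\bm]$, so that every rescaled copy of $\bm$ is itself stationary. Given an arbitrary $\mu\in\cP$ supported on $\R_+$, set
\[
c^+_n=\inf\{c>0:\ \Pc^n[\mu]\lo\Upsilon_c[\bm]\},\qquad
c^-_n=\sup\{c>0:\ \Upsilon_c[\bm]\lo\Pc^n[\mu]\},
\]
with $c^+_n=+\infty$ (resp.\ $c^-_n=0$) if no such $c$ exists. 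Monotonicity together with $\Pc[\Upsilon_c[\bm]]=\Upsilon_c[\bm]$ shows at once that $(c^+_n)_n$ is non-increasing and $(c^-_n)_n$ is non-decreasing, while $c^-_n\le c^+_n$ whenever both are finite and positive. It therefore remains to prove: \textbf{(a)} $c^+_{n_0}<+\infty$ and $c^-_{n_0}>0$ for some finite $n_0$; and \textbf{(b)} $c^+_n/c^-_n\to1$. Granting these, $\Pc^n[\mu]$ lies between $\Upsilon_{c^-_n}[\bm]$ and $\Upsilon_{c^+_n}[\bm]$ and hence converges weakly-$*$ to $\Upsilon_c[\bm]$ with $c=\lim_n c^\pm_n\in(0,+\infty)$; feeding a second stationary law into the argument then yields the uniqueness statement.

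For step \textbf{(a)}, the \emph{regularization}, one uses the hypotheses on $\rho$ together with the tail-contracting effect of the minimum: writing $\bar F$ for a survival function, one has $\mathbf{P}(\min(X_1,X_2)+\min(X_3,X_4)>2t)\le 2\bar F(t)^2$ and, symmetrically, $\mathbf{P}(\min(X_1,X_2)+\min(X_3,X_4)<\eps)\le(1-\bar F(\eps)^2)^2$, so that both the right tail and the behaviour near $0$ of $\Pc^n[\mu]$ improve at each step. Since $m$ is absolutely continuous, after one step the iterate is also non-atomic on $\R_+$; the full support and the decay of $\rho$ at $\pm\infty$ (preventing, respectively, atoms from appearing at $0$ or $\infty$ and mass from escaping there), together with the truncation supplied by the cut-off process, bring $\Pc^n[\mu]$, after finitely many steps, into the class of laws whose tails are, up to rescaling, comparable to those of $\bm$ --- which is exactly the finiteness of $c^+_{n_0}$ and the positivity of $c^-_{n_0}$; the same estimates give tightness of the rescaled orbit.

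Step \textbf{(b)}, the \emph{contraction}, is the heart of the matter and the one I expect to be the main obstacle. Monotonicity alone gives nothing here: since $\Upsilon_{c^-_n}[\bm]$ and $\Upsilon_{c^+_n}[\bm]$ are fixed points, the band between them is only preserved, not shrunk. One must show that a law $\nu$ confined to such a band of logarithmic width $\delta_n=\log(c^+_n/c^-_n)$ is mapped by $\Pc$ into a band of width at most $\theta\,\delta_n$, with $\theta=\theta(\rho)<1$, once $\delta_n$ is below a threshold (which holds for $n\ge n_0$ by the monotonicity of $c^\pm_n$). Here the continuity and strict positivity of $\rho$ enter as a Doeblin-type overlap: in logarithmic coordinates the output of $\Pc$ is the min-plus combination of four \emph{independent} copies of $\nu$ followed by an additive shift with density $\rho$, and this outer averaging genuinely mixes the ``high'' and ``low'' parts of the band while any coincidence produced among the four inner samples is propagated upward through $\min$ and $+$; a coupling of two laws of the band thus lets them agree with probability bounded below, producing a definite loss $\delta_{n+1}\le\theta\,\delta_n$. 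Iterating, $\delta_n\to0$, so $c^+_n$ and $c^-_n$ share a common limit $c\in(0,+\infty)$, $\Pc^n[\mu]\to\Upsilon_c[\bm]$ weakly-$*$, and applying the whole scheme to a second stationary measure $\bm'$ gives $c'>0$ with $\bm'=\Upsilon_{c'}[\bm]$, i.e.\ uniqueness up to rescaling.
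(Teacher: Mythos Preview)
Your sandwiching strategy via full stochastic domination between rescaled copies of $\bm$ is natural, but both steps have genuine gaps that the paper's actual argument is designed to circumvent.

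\medskip

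\textbf{Step (a).} For $c^+_{n_0}<\infty$ you need $F_{\Pc^{n_0}[\mu]}(x)\ge F_{\bm}(x/c)$ for \emph{every} $x>0$, i.e.\ the tails of the iterate must be uniformly lighter than those of some rescaled~$\bm$. Your quadratic tail bounds show that tails improve, but $\bm$ itself satisfies the same recursion, so its tails are already the ``fixed-point'' tails; there is no reason the iterates of an arbitrary~$\mu$ should drop strictly below them at every~$x$ after finitely many steps. (For instance, a measure whose right tail is a constant multiple of $\bar F_{\bm}$ will have this multiplicative excess roughly preserved, not killed, by one application of~$\Pc$.) The cut-off process you invoke belongs to the \emph{existence} argument and does not help here.

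\medskip

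\textbf{Step (b).} This is the more serious issue. A Doeblin-type minorisation for the convolution with $\rho$ gives contraction in \emph{total variation}; it does not give contraction of the stochastic-order band width $\delta_n=\log(c^+_n/c^-_n)$. The quantities $c^\pm_n$ are determined by the \emph{tails}: $c^+_n$ is governed by the points where $F_{\Pc^n[\mu]}$ comes closest to $F_{\Upsilon_{c^+_n}[\bm]}$, and these points typically escape to $0$ or $+\infty$, precisely where any coupling overlap has vanishing effect. Concretely, even if $\nu$ lies strictly between $\Upsilon_{c^-}[\bm]$ and $\Upsilon_{c^+}[\bm]$ on every compact set, the difference $F_{\Upsilon_{c^+}[\bm]}(x)-F_{\Pc[\nu]}(x)$ will still tend to~$0$ as $x\to 0^+$ or $x\to+\infty$, so you cannot in general find $c'<c^+$ with $\Pc[\nu]\lo\Upsilon_{c'}[\bm]$. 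The sentence ``a coupling \dots\ lets them agree with probability bounded below, producing a definite loss $\delta_{n+1}\le\theta\,\delta_n$'' conflates two unrelated metrics.

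\medskip

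The paper avoids exactly this obstruction by \emph{not} demanding full stochastic domination. It introduces the classes $\mC_{\alpha,\delta}$: distribution functions satisfying $F\le F_{\bm}$ on a fixed ``core'' interval $[\kappa_\alpha,\kappa_{1-\alpha}]$ but allowed to exceed $F_{\bm}$ by $\delta$ in the tails. The key contraction (Proposition~\ref{l:shift}) is that $T_{L\delta}\Pc$ maps $\mC_{\alpha,\delta}$ into $\mC_{\alpha,\delta/2}$: one \emph{pays} a small rescaling $T_{L\delta}$ to \emph{halve} the tail deviation~$\delta$. Iterating, the total rescaling cost $\sum_k L\delta/2^k=2L\delta$ is finite while $\delta\to 0$, which gives the asymptotic upper bound; a symmetric argument gives the lower bound, and a final lemma (Lemma~\ref{l:lower2}) rules out a gap. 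The crucial technical point --- that on a compact $x$-window the difference between $\Pc[F_{\bm,\alpha,\delta}]$ and $F_{\bm}$ is only $O(\delta)$ (indeed $\le\delta/2$ near $0$ and $\infty$, via the ``one-collapsed-edge'' coupling estimate of Lemma~\ref{l:delta2}) --- is exactly what replaces your missing band-contraction mechanism.
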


\begin{rem}
Equation~\eqref{eq:iter} is an example of \emph{recursive distributional equation} (RDE) of
$(\min,+)$-type (here we refer to~\cite{RDE} for an introduction to RDEs). 
Formulated in other words, Theorem~\ref{t:stat} gives a (non-atomic) solution 
for this RDE.
\end{rem}

\subsection{The cut-off method}\label{s:cut-sketch}

The main idea in the proof of Theorem~\ref{t:stat} is to use the following cut-off process: assume that after the
replacement and the multiplication, the distance $\In\Out$ is shortcut by
an ``exterior'' path of length $\ba$, where $\ba\in \R_+$ is a fixed constant. In other
words, instead of the map $R_{\lambda}$ in~\eqref{eq:Phi} consider the
map $R_{\ba,\lambda}:=\min(R_\lambda,\ba)$, defining the corresponding operator
$\Phi_{\ba,\lambda}$. 

For sufficiently large $\lambda$, there should be a 
$\Phi_{\ba,\lambda}$-stationary measure. Indeed, for such $\lambda$, it is natural to expect that 
measures $\mu$, concentrated ``away'' from $A$ on the interval $(0,A)$, will -- in a sense -- drift to the right; 
meanwhile, their support stays uniformly bounded above by~$A$, so the iterations of a starting measure will 
not go to $+\infty$.

Moreover one can construct it starting with a Dirac measure
concentrated on the infinite distance: it is easy to see that the distribution functions of its
images form a pointwise monotonely increasing sequence. 
The only question for such modified procedure is whether this limit becomes a Dirac
measure concentrated on the zero distance, in other words, whether the metric collapses.

It turns out that there exists a critical value $\lcr$ such that for $\lambda>\lcr$ the process
$\Phi_{\ba,\lambda}$ admits a stationary measure $\bm_{\ba,\lambda}$, while for
$\lambda \le \lcr$ the whole metric space collapses into a point. A key remark here is that the
collapse happens even for $\lambda = \lcr$. Then, the stationary measure for the process~$\Pc$ can be 
found as a limit of $\bm_{\ba(\lambda),\lambda}$ as $\lambda$
approaches $\lcr$ from the right, where the length~$\ba(\lambda)$ tending to $\infty$ is
chosen by the normalizing restriction $\bm_{\ba(\lambda),\lambda}([0,1])=1/2$. In other
words, we are at the same time decreasing the scaling parameter $\lambda$ to the critical
value (that would lead to the collapse if~$\ba$ stayed fixed) and making the shortcut length
$\ba$ tend to infinity (that would explode the metric if $\lambda$ stayed fixed). In the limit
(that we force to be non-trivial by our normalizing condition), we find the desired stationary
measure, that does not ``feel'' the shortcuts any more.

\subsection{Related models}\label{ssc:related}

It is important to say that the cut-off method can 
be adapted to some other situations, concerning random metrics or different problems
that can be stated in terms of solutions of RDEs of $(\min,+)$-type.
A very simple generalization is the content of Section~\ref{s:hierarchical}: our results extend
to a huge class of hierarchical graphs. 

A more involved example, still finite-dimensional, is the one of the hierarchical model on the Sierpi\'nski
Gasket (and fractals with similar properties): the information that we are keeping on the metric in this 
case are three distances between the vertices of the triangle. However, working with the distributions on 
the non one-dimensional space of such distances becomes more elaborate (there are no more partition functions 
that sometimes simplify the coupling). 

Though, there is an important property of the figure eight-graph: it does not have neither straight 
$\In\Out$-edges (``shortcuts''), nor edges through which any $\In\Out$-path is obliged to path (``bridges''). 
In presence of such edges, the behaviour becomes slightly different (we only deal briefly with such graphs 
in Section~\ref{s:hierarchical}), though our technique (with some slight modifications) is still applicable. 
It turns out (see Section~\ref{s:sierpinski}) that the properties of the Sierpi\'nski Gasket are closer to 
those of the hierarchical graphs with bridge edges.

\section{Open questions and overview}

\subsection{Critical parameter value $\lcr$ as a function of the measure $m$}

Consider the critical parameter $\lcr$ as a function of the probability distribution $m$, verifying the 
assumptions of Theorem~\ref{p:metric}. Contrary to the case of a Mandelbrot multiplicative cascade (MMC) 
on the interval, for any other graph there seems to be no analytic expression neither for the stationary 
measure~$\mbm$, nor for the critical parameter $\lcr$ as a function of $m$ even for the case of $\log$-normal 
measures~$m$ (there is, however, one case when such an expression can be found: see Example~\ref{e:Ch}). 
Our result in this direction is the following proposition that is useful when considering random perturbations 
of the ``Euclidean'' metric (that satisfies the stationarity equation with $\lcr(\ddelta_1):=\frac{1}{2}$).

\begin{prop}\label{prop:dirac-perturbed}
The function $\lcr=\lcr(m)$, defined according to Theorem~\ref{p:metric}, satisfies 
\[\lim_{m\rightarrow \ddelta_1}\lcr(m)=\tfrac{1}{2}.\]
\end{prop}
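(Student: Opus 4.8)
The plan is to control $\lcr(m)$ by producing, for $m$ close to $\ddelta_1$, approximate solutions of the fixed-point equation at scales $\lambda$ slightly above and slightly below $1/2$, thereby squeezing $\lcr(m)$ towards $1/2$. The main idea is that when $m$ is concentrated near $1$, the operator $\Phi_\lambda$ is a small random perturbation of the deterministic map underlying the Euclidean metric; the latter collapses precisely at $\lambda=1/2$, and we want to show the perturbation moves the collapse threshold by only a small amount. Concretely, I would estimate the behaviour of $\E[d_n(\In,\Out)]$ (equivalently, moments of the $n$-th iterate of a starting law under $\Phi_\lambda$) and use the characterization $\lcr = \lim_n \E[d_n(\In,\Out)]^{-1/n}$ together with the monotonicity of $\lcr$ in a suitable stochastic order on $m$.

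\textbf{Upper bound $\limsup_{m\to\ddelta_1}\lcr(m)\le 1/2$.} The Euclidean metric is $\Phi_{1/2}$-stationary when $m=\ddelta_{1/2}$; rescaling, the deterministic dynamics $X\mapsto \lambda(\min(X_1,X_2)+\min(X_3,X_4))$ applied to the constant $X=1$ with all $\xi_t\equiv 1$ gives $X\mapsto 2\lambda X$, so for $\lambda>1/2$ the constant-$1$ initial condition grows geometrically and cannot collapse. For $m$ near $\ddelta_1$ I would fix $\lambda=1/2+\eps$ and show, by a coupling/comparison argument, that with the random factors $\xi_t$ close to $1$ the quantity $\E[d_n(\In,\Out)]$ is still bounded below by $(2\lambda-o(1))^n$; since $\lcr$ is the reciprocal exponential growth rate, this gives $\lcr(m)\le \tfrac{1}{2\lambda-o(1)}$, which is $\le 1/2$ in the limit. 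The technical point is that the minimum of two near-$1$ random variables is still near $1$ (no catastrophic cancellation), so a second-moment or Jensen-type estimate suffices.

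\textbf{Lower bound $\liminf_{m\to\ddelta_1}\lcr(m)\ge 1/2$.} Here I would fix $\lambda=1/2-\eps$ and show the metric collapses, i.e.\ the iterates of any starting law tend to $\ddelta_0$. Starting from, say, the Dirac mass at $1$ (or at $\infty$, using the cut-off as in \S\ref{s:cut-sketch}), one tracks an upper bound for the typical $\In\Out$-distance: since $\min(X_1,X_2)+\min(X_3,X_4)\le X_1+X_3$, the expectation satisfies $\E[d_{n}(\In,\Out)]\le 2\lambda\cdot \E[\xi]\cdot \E[d_{n-1}(\In,\Out)]$, so if $2\lambda\,\E_m[\xi]<1$ the expectations decay geometrically and the metric collapses. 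As $m\to\ddelta_1$ we have $\E_m[\xi]\to 1$, so $2\lambda\,\E_m[\xi]\to 2\lambda=1-2\eps<1$; hence for $m$ close enough to $\ddelta_1$ the collapse occurs at $\lambda=1/2-\eps$, which forces $\lcr(m)\ge 1/2-\eps$. Combining the two bounds and letting $\eps\to 0$ yields the claim.

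\textbf{Main obstacle.} The routine ingredients are the two inequalities $\min(X_1,X_2)+\min(X_3,X_4)\le X_1+X_3$ (for the upper/collapse estimate) and a matching lower estimate for the minimum of near-constant random variables. The delicate part is the interplay with the cut-off parameter $\ba$: the actual object produced by Theorem~\ref{t:stat} is obtained as a limit $\bm_{\ba(\lambda),\lambda}$ with $\ba(\lambda)\to\infty$ as $\lambda\downarrow\lcr$, so to conclude I must argue that the critical value defined through this cut-off scheme varies continuously (or at least semicontinuously on each side) as $m\to\ddelta_1$. I expect this to follow from the monotonicity of $\bm_{\ba,\lambda}$ in $\ba$ and $\lambda$ established in the proof of Theorem~\ref{t:stat}, combined with a stochastic-domination argument comparing $\Phi_{\ba,\lambda}$ for nearby $m$'s; making this comparison uniform in $\ba$ is the step that will require the most care.
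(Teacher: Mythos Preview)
Your lower-bound argument is essentially the moment comparison used in the paper for bridge-type graphs (Theorem~\ref{t:pivotal-bridge}): from $\min(X_1,X_2)+\min(X_3,X_4)\le X_1+X_3$ one gets $\E[\mu^{A,\lambda}_{n+1}]\le 2\lambda\,\E[m]\cdot\E[\mu^{A,\lambda}_n]$, and if $2\lambda\,\E[m]<1$ the cut-off iterates collapse to~$\ddelta_0$, so $\lambda\notin\Lambda$. This is correct, but it proves a weaker statement than the proposition: you need $\E_m[\xi]\to 1$, and weak-$*$ convergence $m\to\ddelta_1$ does \emph{not} give this (put a mass $1/n$ near $n$). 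The paper's Lemma~\ref{l:lower} avoids moments entirely: it takes the test measure $\mu_-=q\,\ddelta_1+(1-q)\,\ddelta_\infty$ with $q\in(p_{cr},1)$ and checks the purely weak-$*$ condition $m((0,\tfrac{1}{2\lambda}))>q/\theta(q)$, which holds near $\ddelta_1$ since $\tfrac{1}{2\lambda}>1$.

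The upper bound is where your plan has a genuine gap. You invoke the characterisation ``$\lcr$ is the reciprocal exponential growth rate of $\E[d_n(\In,\Out)]$'', but in the paper this is presented only as a heuristic (the submultiplicativity argument of~\S\ref{ss:definition} gives existence of the limit, not its equality with the cut-off--defined~$\lcr$). Even granting that identification, the claimed lower bound $\E[d_n]\ge(2\lambda-o(1))^n$ is not justified: each application of $\min$ strictly decreases expectations unless the arguments are a.s.\ equal, and as the law of $d_n$ spreads out under iteration there is no reason the loss per step stays $o(1)$ uniformly in~$n$. A ``second-moment or Jensen-type estimate'' does not obviously close this; the paper itself flags the upper bound as the trickier direction precisely because glueing $\{0,1\}$-valued distances produces $\{0,1,2\}$-valued ones, so there is no direct $\lo$-comparison. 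The paper's Lemma~\ref{l:upper} instead constructs an explicit $\lambda_0$-zooming-out measure $\mu_p=p\,\mu'+(1-p)\,\ddelta_1$, where $\mu'$ is supported on the finite orbit $\{x_j=f^j(0)\}$ of $f(x)=\lambda(1+x)$ with geometrically growing weights; the point is that $f_*\mu'$ matches $\mu'$ up to a controlled defect at $0$ and at $x_{N+1}\ge 1$, yielding~\eqref{eq:p-zoom-ineq} with a uniform $\delta>0$ that survives perturbation of~$m$. Your ``main obstacle'' paragraph correctly identifies that you must return to the cut-off definition of~$\lcr$, but the route you sketch (uniformity in $\ba$ via monotonicity) does not substitute for producing an actual zooming-out measure at $\lambda=\tfrac12+\eps$.
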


We have made some numerical computations of the critical parameter. In Figure~\ref{f:exp}, 
the results of such simulations for two different hierarchical graphs, the interval and the figure eight ones, 
for the $\log$-normal measure $m$, are presented. Recall~\cite{DL,BJRV} that for the interval, 
$\log (2\lcr)=-\frac{\sigma^2}{2}$ if $\sigma<\sigma_{cr}=\sqrt{2\log 2}$, while for $\sigma>\sigma_{cr}$ 
the associated MMC provides an almost surely \emph{atomic} measure, with $\log (2\lcr)=\log 2-\sigma_{cr} \sigma$; 
the corresponding theoretical values are shown in Figure~\ref{f:exp} on the left by a (red) curve. Note that in the 
supercritical regime $\sigma>\sigma_{cr}=\sqrt{2\log 2}$, the numerical approximation becomes quite unstable; at 
the same time, the simulations for the eight-shaped graph is apparently quite stable even for much larger values 
of~$\sigma$: according to Theorem~\ref{t:metric}, the critical parameter~$\sigma_{8}$ corresponds to the intersection of the 
dotted curve with the blue line $y=-\sqrt{2\log 4}\cdot\sigma+\log 2$, which gives a numerical estimate for $\sigma_{8}$ close to $0.30$.
The stability in this latter case seems to come from the fact that the figure eight-graph has no \emph{pivotal} edge 
(cf.~\S\ref{ssc:related} and \S\ref{ssc:pivotal}): the tails of the distribution function have a very small 
influence on the shape of its $\mPhi_{\lcr}$-image. Indeed, the effect of a single small distance will most 
probably be negligible, as any $\In\Out$-path passes by at least two edges, as well as the effect of a single 
large distance (contrary to the interval case!) as any edge can be by-passed by following the other edges. 

Note also that~$\log \lcr(\sigma)$ for the figure eight-graph seems to have a linear asymptotics as~$\sigma$ 
goes to~$\infty$. Indeed, for large values of~$\sigma$, the distances become so much dispersed in the 
logarithmic scale, that one can approximate the sum of lengths of edges along the path by the maximum of 
these lengths; in other words, we can replace~\eqref{eq:Phi} by 
\[
\tR_{\lambda}(X_1,X_2,X_3,X_4;\xi)=\lambda\,\xi\cdot \max(\min(X_1,X_2),\min(X_3,X_4)).
\]
Taking the logarithm, we get
\[
\log \tR_{\lambda}(X_1,X_2,X_3,X_4;\xi)=\log \lambda + \log \xi + 
\max(\min(\log X_1,\log X_2),\min(\log X_3,\log X_4)).
\]
This (approximated) problem, viewed in the logarithmic scale, is completely linear in~$\sigma$. Namely, 
suppose that for a certain~$\sigma>0$, the probability measure~$\log_* \mu$ is stationary for this modified 
problem, with associated drift~$\log\lcr(\sigma)$. Then it is easy to see that for any other value~$\sigma'$, 
the rescaled measure~$\Upsilon_{\sigma'/\sigma}[\log_* \mu]$ 
will be stationary with drift $\log \lcr(\sigma')=\frac{\sigma'}{\sigma}\cdot \log \lcr(\sigma)$.

Surely, these arguments are non-rigorous, and to formally prove the asymptotic linear behaviour 
of~$\log \lcr(\sigma)$, one has to prove that such an approximation has an error that is indeed 
sublinear in $\sigma$ (as well as to establish an analogue of Theorem~\ref{t:stat} for the modified problem).

\begin{figure}[ht]
\[
\includegraphics[width=.47\textwidth]{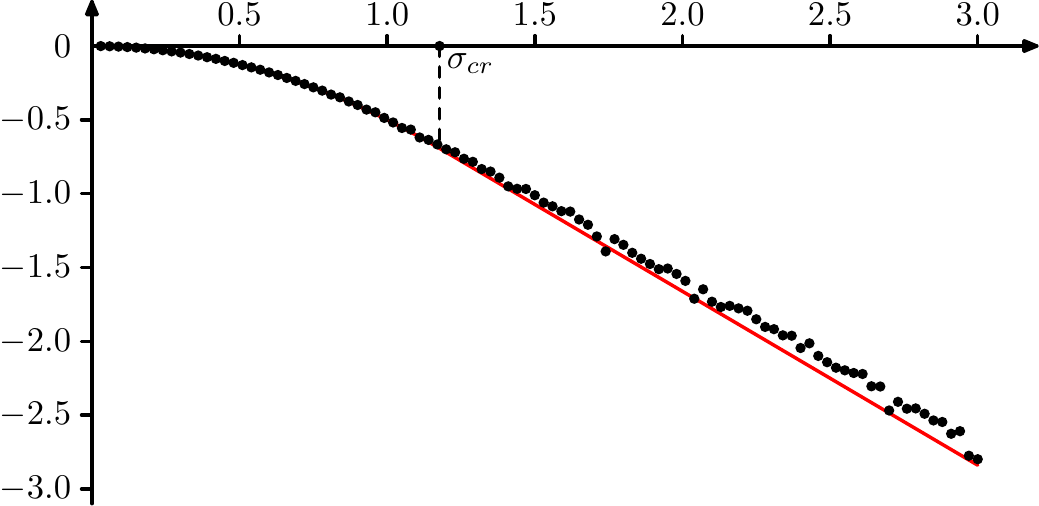} 
\qquad 
\includegraphics[width=.47\textwidth]{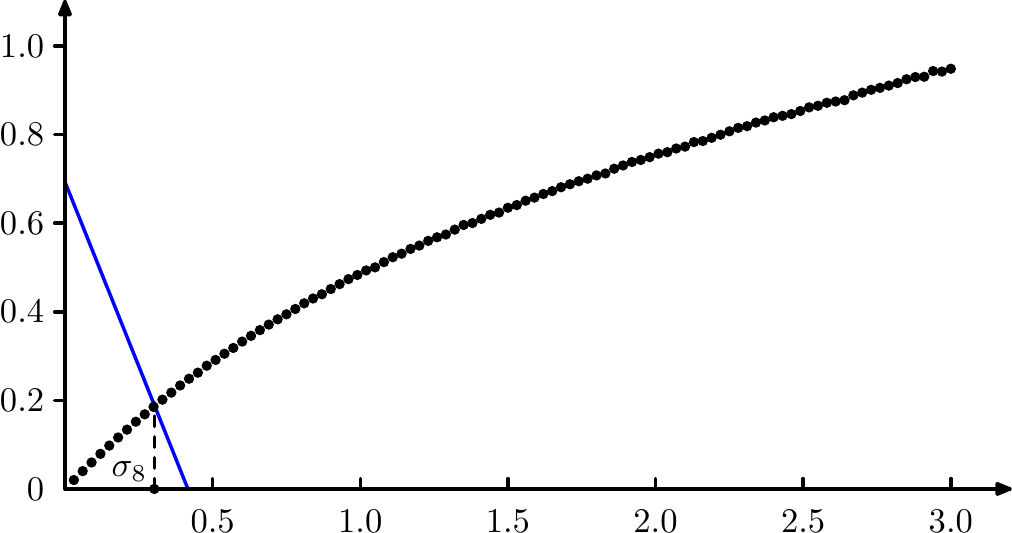} 
\]
\caption{\emph{Numerical modelling of the glueing processes for $m=\exp_*\mN(0,\sigma^2)$}. The random measure 
is modelled by a sample of size $N=50\,000$; the new sample is glued out of intervals chosen uniformly among 
the previous ones. The mean drift of the logarithm of the median of the sample after $k=50$ iterations is 
taken as an approximation for $\log(\lcr)$. Then the new sample is used as the starting one for the next slightly increased value of $\sigma$.
\emph{Left:} the numerical data for $\log(2\lcr)$ for the 
two-edge interval (black dots) and the theoretical (red) curve; notice the instability that starts in 
the region $\sigma>\sigma_{cr}$. \emph{Right:} the numerical data for $\log(2\lcr)$ for the figure eight-graph (black dots) and the (blue) line $y=-\sqrt{2\log 4}\cdot\sigma+\log 2$ that locates the critical parameter $\sigma_{8}\sim 0.30$; 
note the stability even for  large values of $\sigma$.}
\label{f:exp}
\end{figure}

\subsection{The inverse problem}

Instead of looking for a stationary law $\bm$ and a rescaling factor $\lcr$ for a given measure~$m$, one can 
reverse the problem: starting with the desired stationary distribution $\bm$, try to find the probability 
measure $m$ and the value $\lcr$ (or simply  the rescaled measure $m(\lcr^{-1}\cdot)$ that is the distribution 
of the random variable $\lcr \xi$, where $\xi$ has law $m$). In such a situation, we know the law of one of the 
two independent factors in~\eqref{eq:Phi}, the second one, as well as the law of the product (as it should 
coincide with~$\bm$). Thus, one can determine the law of $\lcr\xi$ by means of standard probabilistic tools. 
Namely, passing to the logarithmic coordinates transforms the product to the sum, and hence one can reconstruct 
the characteristic function of $\log(\lcr\xi)$ (which is the Mellin transform of $\lcr\xi$) as a quotient of 
the characteristic functions of $\log X$ and of $\log (\min(X_1,X_2)+\min(X_3,X_4))$, where $X$, $X_i$ have law 
$\bm$. We leave aside the evident eventual difficulties in the application of this procedure, such as treating 
the zeroes of the characteristic function or the fact that it is not  guaranteed at all that the quotient will 
be a characteristic function of a probability measure. 

The following example has been shown to us by Christophe Sabot:

\begin{ex}\label{e:Ch}
Take the measure~$\bm$ to be an exponential distribution, say for example of parameter~$\tfrac12$. 
Following the steps described above and using classical properties of Gamma distributions, one can 
find that it is stationary with respect to the random factor~$\lcr\xi/2$ that has the uniform law on~$[0,1]$.
\end{ex}

\subsection{Origins: a toy model for Liouville Field Theory}
\label{ssc:lft}
Mainly based on the work of Polyakov \cite{polyakov1,polyakov2}, the study of \emph{2D-quantum gravity} 
has been increasingly drawing  attention in physics and mathematics during the past three  decades.
For mathematicians, this often means studying random Riemannian surfaces: in fact, the intuition of 
Polyakov was to ``replace the old-fashioned (and extremely useful) sums over random paths'' with sums 
over random surfaces \cite{polyakov1}. Presented like this, there is much ambiguity on the significance 
of ``random''. Formally it becomes more definite when introducing the \emph{Liouville action} $S_L$ for 
a Riemannian surface $(\Omega,g_0)$ ($g_0$ is a fixed \emph{background metric}):
\[
S_L(h)=\frac{1}{4\pi}\int_\Omega\left ( g_0( \nabla_{g_0}h ,\nabla_{g_0}h )+Q\,R_{g_0}h
+4\pi\mu e^{\gamma h} \right )d\mathrm{vol}_{g_0},
\]
where $R_{g_0}$ is the curvature tensor, $\gamma\in (0,2]$, $Q=\frac{2}{\gamma}+\frac{\gamma}{2}$, and 
$\mu$ is the \emph{cosmological constant}. Like for path-integrals, the random surface $(\Omega,e^{\gamma h} g_0)$ 
will be chosen with a probability proportional to~$e^{-S_L(h)}dh$. When the cosmological constant $\mu$ is zero, 
we lose the interaction with matter (pure gravity situation). When $g_0$ is a flat metric, in pure gravity the 
action reduces to
\begin{equation}\label{eq:action-GFF}
S_L(h)=\frac12 \| h\|^2_{\nabla_{g_0}},
\end{equation}
where $\|h\|_{\nabla_{g_0}}=\left (\frac{1}{2\pi}\int_{\Omega}g_0( \nabla_{g_0} h,\nabla_{g_0}h)d\mathrm{vol}_{g_0}
\right )^{1/2}$ is the Dirichlet energy of $h\in H^1_0(\Omega)$ ($H^1$-functions orthogonal to constants). 
Mathematicians call the random field $h$, distributed according to this action, the \emph{Gaussian Free Field} 
(GFF for short), whereas it has different names in the physical literature. 

The link between 
2D-quantum gravity and GFF was made explicit in the independent works by David and Distler, Kawai 
\cite{david,distler-kawai} which extended to the usual conformal gauge the previous results of Knizhnik, 
Polyakov and Zamolodchikov (KPZ) \cite{KPZ} (dealing with the light-cone gauge, which means very roughly 
that they were considering a 2D world with many symmetries). For some very good introductions to 2D-Liouville 
quantum gravity we suggest the reading of \cite{ginsparg,nakayama,teschner,erbin}, although they are addressed 
to people with a background knowledge in quantum field theory and string theory.

Mathematically, there is a huge obstacle in considering random ``functions'' distributed according to the action 
\eqref{eq:action-GFF}, as the random field $h$ is almost surely only a distribution (in the sense of Schwartz). 
The GFF can be defined in a few different ways (for a good introduction, though not exhaustive, we recommend 
\cite{sheffield-GFF}). The one which will be relevant for our model is the following: let $\Delta$ be the 
Laplace operator on the (flat) Riemannian surface $(\Omega,g_0)$ and let $\left (\Phi_n\right )_{n\in \N}$ 
be an orthonormal basis of the Sobolev space $\left(H^1_0(\Omega),\langle\,\cdot\,,\,\cdot\rangle_{\nabla_{g_0}}\right)$,
 then the series
\[h=\sum_{n\in\N}a_{n}\Phi_n\]
defines the GFF when the coefficients $a_n$ are independent normally distributed random variables.

When $\Omega$ is the unit square, using the classical \emph{Haar basis} of wavelets $\left\{\Phi_n^{(0)}\right\}$, 
leads to the dyadic GFF, already introduced ``by hand'' at the very beginning of this work. Passing from the Haar basis
to $\left\{\Phi_n\right\}$, with $\Phi_n=(-\Delta)^{-1/2}\Phi_n^{(0)}$, we have an orthonormal basis of $H^1_0(\Omega)$
that gives the true GFF.

Though the problem of defining rigorously the random Riemannian metric ``$e^{\gamma h}|dz|$'' is still out 
of reach, the multiplicative chaos approach \cite{KP,K} works fine for the random \emph{measure} 
``$e^{\gamma h}d^2z$''. In particular, the celebrated results of Duplantier and Sheffield \cite{DS}, as well 
as the work of Rhodes and Vargas \cite{RV-KPZ}, established mathematically the famous KPZ relation for 
measures, relation first stated in \cite{KPZ} and relating, in a very simple formula, the scaling exponents 
of the random metric (\emph{measure} here) with the Euclidean one (for further reading, we suggest the 
review \cite{bourbaki}). These results have been the starting point for many important others in recent 
years \cite{BJRV,DRSV,BKNSW,QLE,DMS,DKRV} (this list is certainly non-exhaustive).

\subsection{Random metrics and random measures}

Consider the Hausdorff dimension of the obtained random metric space. If (how it is quite natural to 
expect) the random metric is endogenous (in the sense of~\cite{RDE}), the $0$--$1$ law implies 
that this dimension is constant almost surely; denote it by~$\alpha_0$. Consider then 
the associated Hausdorff measure. As a multiplication by $\lcr \xi$ multiplies the 
$\alpha_0$-dimensional Hausdorff measure by the factor of $(\lcr \xi)^{\alpha_0}$, we see
that the total random volume $\mu_H(\Gamma_{\infty})$ satisfies the RDE 
\begin{equation}\label{eq:V}
V=(\lcr \xi)^{\alpha_0} \sum_{i=1}^4 V_i'.
\end{equation}

This is a very classical RDE  (related to Galton-Watson processes). Assume, in addition, that the total 
random volume is almost surely positive, finite and has a finite expectation (it seems reasonable 
assuming that the law of~$\xi$ is not too strongly dispersed). Then, taking the expectation on 
the both sides, we see that a necessary condition for the existence of a stationary 
solution to~\eqref{eq:V} is 
\[
4\, \E (\lcr\xi)^{\alpha_0}=1.
\]
This gives us a (formally speaking, conjectural) relation between the rescaling constant $\lcr$, the rescaling 
law~$m$ and the Hausdorff dimension~$\alpha_0$ of the resulting space. Moreover, it allows us 
to construct the Hausdorff measure via the Mandelbrot Multiplicative Cascade (MMC); again, here 
we have to assume that the measure $m$ is not too strongly dispersed. 

A precise understanding of this problem is very practical for example for establishing the KPZ relation for 
the stationary random metric.

\smallskip

Consider now the RDE
\[
Y=\lambda_{\mathrm{MMC}}(\alpha) \xi^{\alpha} \sum_{i=1}^4 X^i
\]
for other values of $\alpha$. For $\alpha$ sufficiently small, choosing $\lambda_{\mathrm{MMC}}(\alpha)=\frac{1}{4 \E \xi^{\alpha}}$, 
one can construct its stationary solution via the MMC technique, and consider the associated random measure on~$\Gamma_{\infty}$.

Thus we obtain a family of measures on the same space $\Gamma_{\infty}$; they are the analogues of 
the MMC measure for the Liouville quantum gravity measures given 
by $e^{\gamma h} d^2 z$ for different values of $\gamma<2$ for the same Gaussian Free Field~$h$. 
Also, these (``$\alpha$-conformal) measures should be the Hausdorff measures associated to 
different metrics, the ones that appear when we consider glueing with the rescaling by~$\xi^{\gamma}$ 
(for different values of~$\gamma$) instead of~$\xi$.

\subsection{Perspective: a path to the two dimensional world}
In order to give further motivations for the need for a good understanding of the properties of the random 
metrics on the simple examples that we are examining in this paper, let us explain where we are aiming to. 

The passage from the 1D to the higher dimensional setting (see Section~\ref{s:sierpinski}) places us in front of a 
path that could lead to obtain a non-trivial random metric on a surface. As a \emph{caveat}, nothing suggests 
that the analytic difficulties encountered in other well-known approaches (via the 2D GFF or the Brownian map) 
would be overcome.

To be consistent with the hierarchical structure that helped us so far, we shall take the illustrative example 
of the 2D square already treated at the very beginning of our introduction (which is, conformally, a closed 
disk in the plane).

Let us start with a self-similar fractal $\Gamma^{(1)}$, which looks like a square at first approximation, 
and for which the renormalization/cut-off method guarantees the existence of a non-trivial stationary random 
metric $d^{(1)}$. Let us consider then another self-similar fractal $\Gamma^{(2)}$, closer to be a square, 
and a non-trivial stationary random metric $d^{(2)}$.

And so on: if we are able to choose ``consistently'' the sequence of fractal spaces $\Gamma^{(i)}$ 
``approximating'' the square, in such a way that the family of stationary random metrics $d^{(i)}$ is 
relatively compact, then we can extract a limit random metric for the square. This analytic step is by 
far the most difficult.

\smallskip

In Figure~\ref{fig:squares} we exhibit the first steps of what we consider a good candidate for the 
sequence of fractals approximating the square: instead of glueing side-by-side the four squares, we 
only use bridges connecting an increasing finite number of points on different sides (the self-similarity 
forces this to happen at every scale).

\begin{figure}[ht]
\[\includegraphics[width=.4\textwidth]{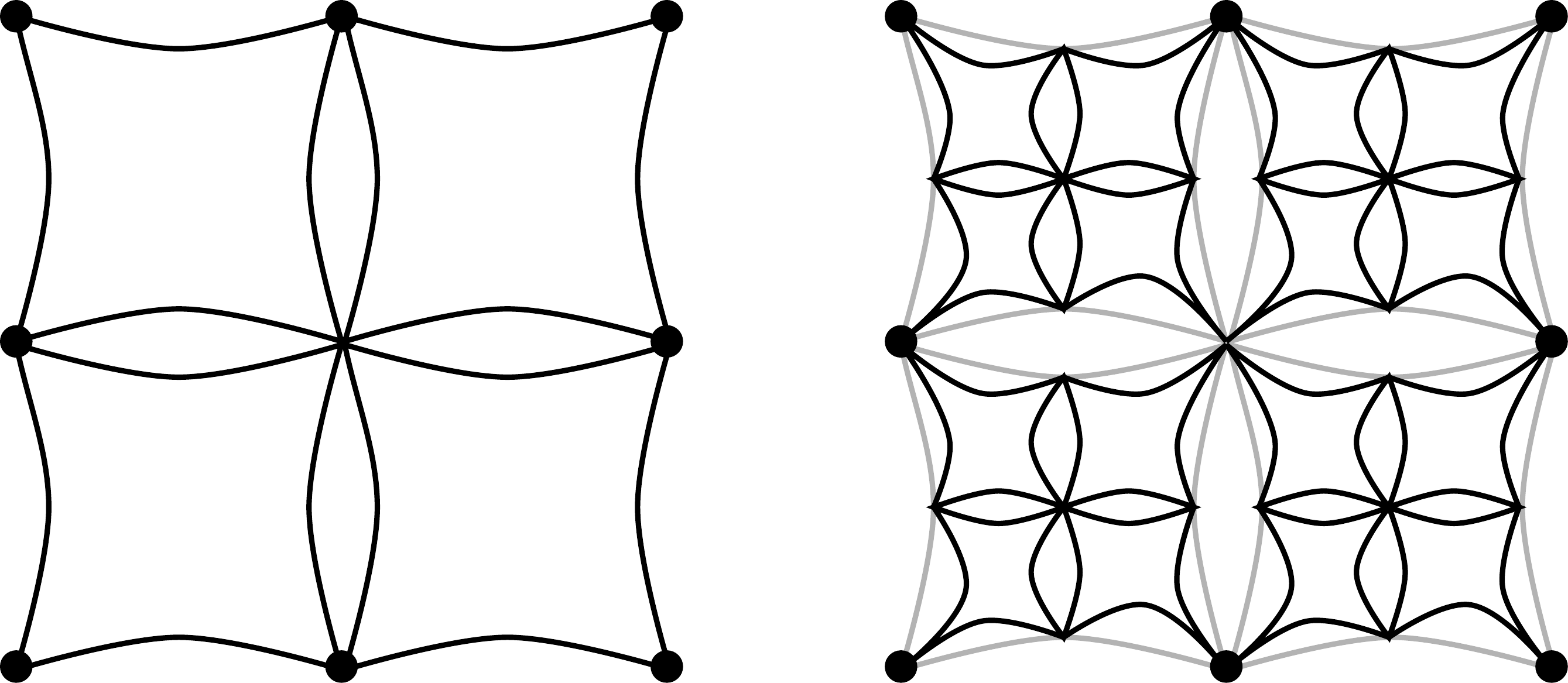}\]
\[\includegraphics[width=.4\textwidth]{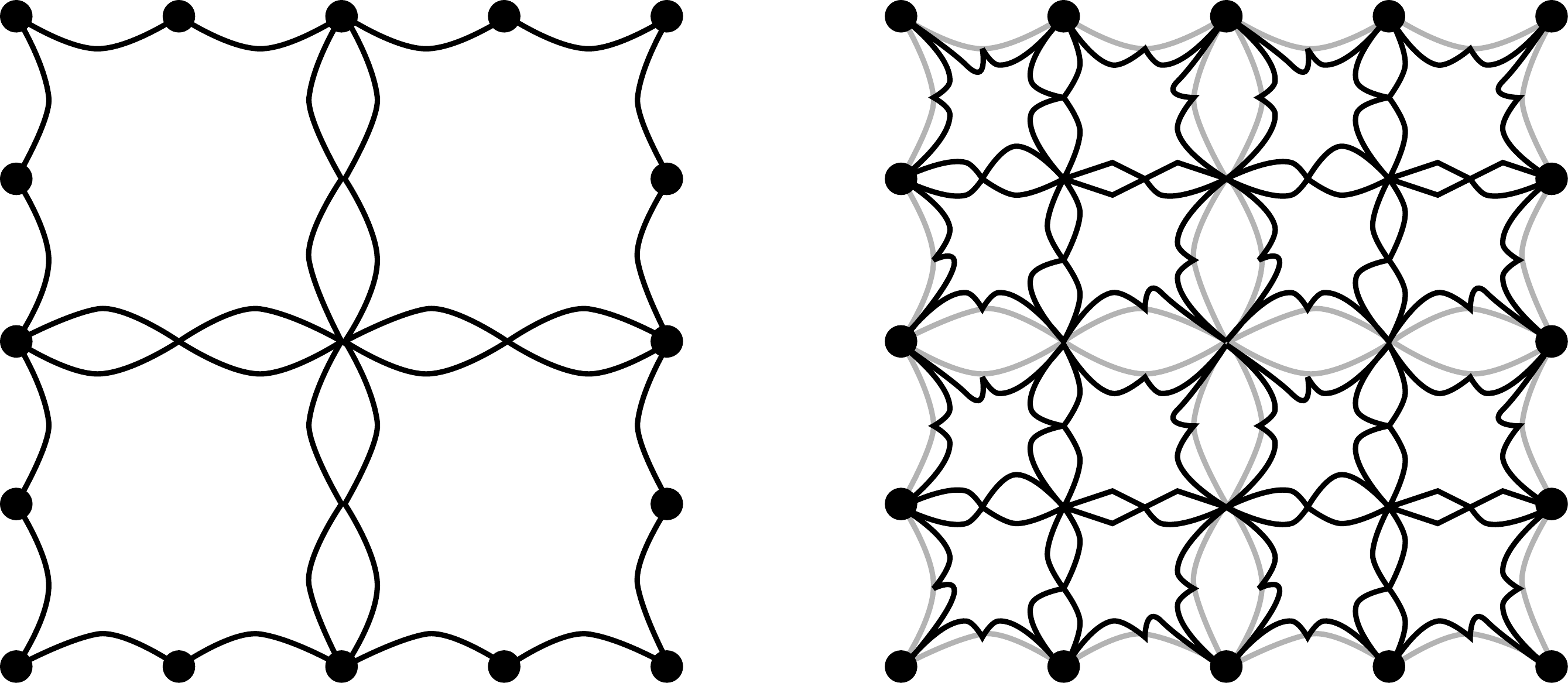}\]
\caption{The construction of square-like self-similar fractals: the sequences defining $\Gamma^{(1)}$ 
and $\Gamma^{(2)}$.}\label{fig:squares}
\end{figure}

\section{Existence of a stationary random metric}\label{s:existence}

\subsection{Construction of the random metric space from the marginal distance}\label{ss:random-metric}

As explained in \S\ref{ss:reduction}, we shall prove that the existence of a stationary random $\In\Out$-distance 
(Theorem~\ref{t:stat}) implies the existence of stationary random metric (Theorem~\ref{p:metric}). We keep the 
notations previously introduced.

\begin{prop}\label{p:equiv}
Let $m$ be a probability measure on $\R_+$. Then the following statements are equivalent:
\begin{enumerate}
\item\label{i:left} There exists a normalizing constant $\lcr>0$ and a non-atomic probability measure $\mbm$ 
on $\MG$ which is a fixed point for the operator $\mPhi_{\lcr}$.
\item\label{i:right} There exists a normalizing constant $\lcr>0$ and a non-atomic probability measure $\bm$ 
on $\R_+$ which is a fixed point for the operator $\Phi_{\lcr}$.
\end{enumerate}
\end{prop}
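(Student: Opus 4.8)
The plan is to establish both implications, the direction (\ref{i:left})$\Rightarrow$(\ref{i:right}) being a straightforward projection and the converse (\ref{i:right})$\Rightarrow$(\ref{i:left}) being the substantive one, where I construct the metric-space-valued stationary law by an explicit multiplicative-cascade construction driven by the marginal law $\bm$.

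For (\ref{i:left})$\Rightarrow$(\ref{i:right}): given a $\mPhi_{\lcr}$-stationary $\mbm$ on $\MG$, consider the measurable ``marginal'' map $\pi:\MG\to\R_+$ sending a metric space $(\mX,d)$ to its $\In\Out$-distance $d(\In,\Out)$, and set $\bm:=\pi_*\mbm$. Since the projected glueing $R_\lambda$ of~\eqref{eq:Phi} is by construction the push-forward of $\mR_\lambda$ under $\pi$ (any $\In\Out$-geodesic in the glued space passes through the central vertex, which is exactly the geometric content already noted after~\eqref{eq:Phi}), one has $\pi\circ\mR_\lambda=R_\lambda\circ(\pi^{\times4}\times\mathrm{id})$, and applying $\pi_*$ to $\mPhi_{\lcr}[\mbm]=\mbm$ yields $\Phi_{\lcr}[\bm]=\bm$. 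Non-atomicity of $\bm$ follows because if $\bm$ had an atom at some $a$, the stationarity relation $\Phi_{\lcr}[\bm]=\bm$ would force, via~\eqref{eq:Phi}, a corresponding atom structure that one rules out using that $m$ is non-atomic — in fact it is cleaner to note that the $\In\Out$-distance being a.s.\ atomic for $\mbm$ contradicts $\mbm$ non-atomic only if one knows the metric is determined by that distance, which we do not; so the correct route is to deduce non-atomicity of $\bm$ directly from~\eqref{eq:iter}: writing $X\sim\bm$ as $\lcr\xi(\min(X_1,X_2)+\min(X_3,X_4))$ with the $X_i$ i.i.d.\ $\bm$ and $\xi\sim m$ independent, the presence of the independent non-atomic factor $\xi$ immediately forbids atoms in the product (away from~$0$; the value $0$ is excluded since $m$ is supported on $\R_+$ and so is $\bm$ if it is for one level, bootstrapping from the construction). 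I would spell this last point out carefully, as it is the one place subtlety hides.

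For (\ref{i:right})$\Rightarrow$(\ref{i:left}), the idea is to build $\mbm$ as the law of a random metric space obtained by the infinite cascade described in~\S\ref{s:geometry}. Take the rooted quaternary tree $\T$ with i.i.d.\ weights $\{\xi_t\}_{t\in\T}$ of law $m$ on the vertices, and to each vertex $t$ at depth $n$ assign an independent random $\In\Out$-distance $Y_t\sim\bm$. The finite-level approximation $\mX^{(n)}$ is the figure-eight graph $\Gamma_n$ in which the edge indexed by $t$ (depth $n$) is declared to be a single interval of length $Y_t\cdot\prod_{j=0}^{n-1}\lcr\xi_{t_j}$ along the root path; equivalently, $\mX^{(n)}=\mR_{\lcr}$ applied $n$ times with the leaves being one-edge intervals of random length $Y_t$. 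Because $\bm$ is $\Phi_{\lcr}$-stationary, the $\In\Out$-distance of $\mX^{(n)}$ has law $\bm$ for every $n$, and more importantly the laws of the $\mX^{(n)}$ are consistent: $\mathrm{law}(\mX^{(n+1)})=\mPhi_{\lcr}[\mathrm{law}(\mX^{(n)})]$, and the marginal distances along a compatible family are controlled. One then argues that the sequence of (random) pseudo-metrics on $V_\infty\times V_\infty$ is a.s.\ Cauchy — this uses that the distance between two fixed dyadic vertices $u,v\in V_\infty$ stabilizes once $n$ exceeds the depth at which $u,v$ are separated, because further subdivisions only refine edges strictly inside sub-figure-eights and the glued-metric formula shows the $u$--$v$ distance is a finite min-plus combination of the $Y_t$'s at a fixed finite set of vertices. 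Hence there is a well-defined limit pseudo-metric $d_\infty$ on $V_\infty$; completing $(V_\infty,d_\infty)$ gives a point of $\MG$, and the law of this completion is the desired $\mbm$. Stationarity $\mPhi_{\lcr}[\mbm]=\mbm$ is then immediate from the self-similar description of the cascade (re-rooting the tree at the four depth-one vertices), and non-atomicity of $\mbm$ follows from non-atomicity of $\bm$ via the marginal projection $\pi_*\mbm=\bm$ established in the other direction.

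The main obstacle is the Cauchy/stabilization argument producing a genuine limit metric rather than just a projective family of finite ones: one must check that refining an edge of $\Gamma_n$ does \emph{not} shorten distances between already-present vertices below a positive limit (i.e.\ the metric does not degenerate to zero on $V_\infty$) and that the limit pseudo-metric is a.s.\ finite on all of $V_\infty$. Finiteness is easy (each $d_\infty(u,v)$ is a fixed finite min-plus expression in finitely many $Y_t$), but non-degeneracy — that $d_\infty(u,v)>0$ for $u\ne v$ — requires an argument that the infinite product of rescalings along any edge does not kill the contribution: here one invokes that subdividing an edge replaces a length $\ell$ by $\lcr\xi\cdot(\min(\cdots)+\min(\cdots))$ whose law is again $\bm$, so the per-edge length is a positive martingale-like quantity and does not vanish, plus the observation that any $u$--$v$ path in $\Gamma_\infty$ must traverse at least one whole edge of $\Gamma_N$ for $N$ the separation level of $u,v$. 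Making this last estimate quantitative (a uniform-in-$n$ lower bound on the refined $\In\Out$-distance of an edge, in probability) is exactly the kind of control that the cut-off machinery of Theorem~\ref{t:stat} is designed to give, so I would lean on it — or, more cheaply, simply \emph{define} $\mbm$ on the possibly-degenerate completion and note that the proposition as stated only asks for a non-atomic fixed point on $\MG$, with degeneracy being a separate (and in fact false, by Theorem~\ref{t:geodesic}) concern that need not be resolved here.
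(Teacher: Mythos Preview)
Your construction for (\ref{i:right})$\Rightarrow$(\ref{i:left}) is the same as the paper's, but you have not isolated the observation that makes it work cleanly and are therefore worrying about non-issues. The point is to couple the levels via an \emph{invariant recursive tree process}: sample $\{(X_t,\xi_t)\}_{t\in\T}$ so that the recursion $X_t=R_{\lcr}(X_{t_1},\dots,X_{t_4};\xi_t)$ holds \emph{almost surely} at every vertex (not merely in law), and set $Y_t:=X_t\prod_{j=1}^{\|t\|}\lcr\,\xi_{p^j(t)}$. One then checks the identity
\[
Y_t=R_1(Y_{t_1},\dots,Y_{t_4};1),
\]
which says precisely that the FPP metric on $\Gamma_{n+1}$ with edge-lengths $\{Y_s:\|s\|=n+1\}$ restricts \emph{identically} to the FPP metric on $\Gamma_n$ with edge-lengths $\{Y_s:\|s\|=n\}$. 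Hence for $u,v\in V_n$ the value $d_{n'}(u,v)$ is literally constant in $n'\ge n$: there is no Cauchy argument, no ``positive limit'' to protect, no appeal to the cut-off machinery or to Theorem~\ref{t:stat}. Non-degeneracy of $d_\infty$ on $V_\infty$ is then automatic --- each $d_\infty(u,v)$ is a fixed finite $(\min,+)$-expression in finitely many strictly positive $Y_t$'s --- and completing gives a point of~$\MG$. Your last paragraph is fighting a phantom: once the RTP coupling is in place the stabilization is exact, not asymptotic, and the ``infinite product of rescalings'' never enters the distance between two fixed dyadic points.

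On (\ref{i:left})$\Rightarrow$(\ref{i:right}): your projection argument for stationarity is correct and is what the paper intends. Your unease about deducing non-atomicity of $\bm$ from that of $\mbm$ is legitimate --- a non-atomic measure can certainly push forward to an atomic one --- and your proposed fix via non-atomicity of $m$ is not licensed by the proposition's stated hypotheses. The paper does not resolve this either: it declares this direction ``clear'' and proves only the converse; in all applications $m$ is non-atomic and the issue evaporates.
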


From what we have explained in \S\ref{ss:reduction} it should be clear that the existence of a stationary 
random metric implies the existence of a fixed point for the marginal renormalization operator, so that we 
only have to prove one half of the statement.

Before going on, it is convenient to set up some more notation. Let~$m$ be a probability measure on~$\R_+$ 
and suppose that~$\bm$ is a fixed point for the operator $\Pc$. Consider the quaternary rooted tree $(\T,\rt)$;
let $p:\T\setminus \{\rt\}\to \T$ be the map that associates to each non-root vertex of the tree 
$\T$ its parent, and let $\|\cdot\|:\T\to\N$ be the distance to the root. 
Following \cite[\S 2.3]{RDE}, we can consider 
the so-called \emph{invariant recursive tree process} (RTP): we have a tree of pairs of positive random 
variables $\left \{\left (X_{t},\xi_{t}\right )\right \}_{t\in\T}$, such that
\begin{itemize}
\item for every $t\in \T$ the law of $\xi_t$ is $m$,
\item for every $t\in \T$ the law of $X_t$ is $\bm$,
\item for every $t\in \T$ the equality 
\[ X_t=R_{\lcr}(X_{t_1},\dots,X_{t_4};\xi_t)\]
holds, where the $t_i$'s are the four descendants of $t$ in $\T$,
\item for every $n\in\N$ the random variables $\{\xi_t\}_{\|t\|\le n}$ and $\{X_t\}_{\|t\|=n}$ are independent 
altogether (in particular the random variables $\{\xi_t\}_{t\in \T}$ are all independent).
\end{itemize}

\smallskip

For any $n$, we can see the limit space $\Gamma_{\infty}$ as glued out of the $4^n$ its $2^{-n}$--rescaled 
copies $\left \{\Gamma_t\right \}_{\|t\|=n}$, corresponding to the edges of~$\Gamma_n$ (which are naturally 
indexed by the vertices in the quaternary tree at distance $n$ from the root). We call them the 
\emph{level $n$ copies} of $\Gamma_{\infty}$.

Given the invariant RTP defined above, we can reverse the bottom-to-top arrow and consider the tree process 
$\{Y_t\}_{t\in\T}$, defined as
\begin{equation}\label{eq:Y-rec}
Y_t=X_t\cdot \prod_{j=1}^{\|t\|} (\lcr \xi_{p^j(t)}).
\end{equation}
These random variables satisfy 
\begin{equation}\label{eq:Y-rec2}
Y_t=R_{1}(Y_{t_1},\dots,Y_{t_4};1),
\end{equation}
and we can interpret them as ``$\In\Out$-distances \emph{inside} $\Gamma_t$''.

\begin{proof}[Proof of \ref{i:right}) $\Rightarrow$ \ref{i:left})]
At the beginning of \S\ref{ss:definition} we used the multiplicative cascade on the hierarchical graph to 
define the sequence of random distance functions $d_n$ on $V_n$ as in First Passage Percolation models 
(\emph{i.e.}~considering the shortest path in the weighted graph). Furthermore, these distances have the 
nice \emph{inductive} property
\begin{equation}\label{eq:Y-rec3}
d_{n'} |_{V_n\times V_n} = d_n\quad\text{a.s.~for any }n'>n,
\end{equation}

As a matter of fact, the tree process $\{Y_t\}_{t\in\T}$ allows to define an inductive sequence of distances 
$\{d_n:V_n\times V_n\to \R_+\}_{n\ge 0}$ in a similar way: for any $t\in\T$ with $\|t\|=n$, set the length of 
the edge in~$\Gamma_n$ corresponding to $t$, to be equal to~$Y_t$, and consider the resulting (FPP) metric 
induced on $\Gamma_n$. Restricting it to $V_n$, we obtain the desired random distance function~$d_n$. Moreover, 
the relation~\eqref{eq:Y-rec2} implies that these distances agree with each other (in the sense of \eqref{eq:Y-rec3}).

Thus, there is a random distance function $d_{\infty}:V_{\infty}\times V_{\infty}\to \R_+$ such that 
$d_{\infty} \vert_{V_n\times V_n}=d_n$ almost surely for any $n$, and completing the random metric space 
$(V_{\infty},d_{\infty})$, we obtain the desired random metric space $\mX$, which belongs to~$\MG$ by 
construction. Let us write $\mbm$ for the law of~$\mX$: such a measure has to be $\mPhi_{\lcr}$-stationary 
because the RTP that we are considering is invariant.
\end{proof}

\begin{rem}
From the previous proof, we can observe that the law of any finite-dimensional restriction 
$d_{\infty} \vert_{V_n\times V_n}$ is uniquely determined as function of $n$ and the law $\bm$ only, 
so that we deduce that there is a unique measure $\mbm$ such that the distance $d_\infty(\In,\Out)$ 
has marginal distribution~$\bm$.
\end{rem}

The remaining part of this section deals with the proof of Theorem~\ref{t:stat}.

\subsection{More notations and definitions}\label{sc:morenotations}
The operator $\Phi_\lambda$ defined by \eqref{eq:Phi} induces a 
continuous dynamics on the 
space $\cP$ of probability measures on the extended half-line $[0,\infty]$.
The space $\cP$ carries a natural partial order $\preccurlyeq$, which is the well-known \emph{stochastic domination}:
\begin{dfn}
Given $\mu$ and $\nu$ in $\cP$, we write $\mu\preccurlyeq \nu$ if there is a coupling $(X,Y)$ of these measures (that 
is, a random vector $(X,Y)$ with marginal laws $\mu$ and $\nu$ respectively), in such a way that $X\le Y$ almost 
surely.
\end{dfn}
We also define $\cP_0\subset \cP$ to be subspace of probability measures which have no atoms at $0$ nor $\infty$. 
Then every probability measure $\mu\in \cP$ can be uniquely decomposed into the convex combination
\begin{equation}\label{eq:convex-combination}
\mu=p_0(\mu)\ddelta_0+p_\infty(\mu)\ddelta_\infty+\left (1-p_0(\mu)-p_\infty(\mu)\right )\mu_0,
\end{equation}
of its (eventual) atoms at $0$ and $\infty$ with the respective weights $p_0(\mu), p_{\infty}(\mu)\in [0,1]$ and 
of the remaining component $\mu_0\in \cP_0$.

The stochastic domination turns out to be helpful in many cases when dealing with RDEs (see~\cite[\S 2.2]{RDE}). 
It is a crucial remark that this partial order is well adapted to the dynamics induced by the operator $\Phi_\lambda$.

\begin{lem}\label{lem:order}~\begin{enumerate}
\item For any $\lambda>0$, the operator $\Phi_\lambda$ is \emph{order-preserving}: if
$\mu\preccurlyeq \nu$ then $\Phi_\lambda[\mu]\lo \Phi_\lambda[\nu]$.
\item For any $\lambda \le\lambda'$ and 
$\mu\in \cP$ we have 
$\Phi_\lambda[\mu]\preccurlyeq \Phi_{\lambda'}[\mu]$.
\end{enumerate}
\end{lem}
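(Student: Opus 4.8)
The plan is to verify both monotonicity statements by exhibiting explicit couplings, exploiting the fact that the map $R_\lambda$ in~\eqref{eq:Phi} is monotone (non-decreasing) in each of its four ``distance'' arguments $X_1,\dots,X_4$ and in the factor $\xi$, and that it is non-decreasing in $\lambda$ for fixed arguments. Both $\min$ and $+$ preserve the coordinatewise order, and multiplication by a positive constant preserves the order, so $R_\lambda$ is a coordinatewise non-decreasing function $\R_+^4\times\R_+\to\R_+$ (extended to $[0,\infty]$ in the obvious way, still monotone).

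For part (1), suppose $\mu\lo\nu$. By definition there is a coupling $(X,Y)$ with law of $X$ equal to $\mu$, law of $Y$ equal to $\nu$, and $X\le Y$ almost surely. Take four independent copies $(X_i,Y_i)_{i=1}^4$ of this coupled pair, and an independent factor $\xi$ of law $m$; these data are all defined on one probability space. Set $U=R_\lambda(X_1,\dots,X_4;\xi)$ and $W=R_\lambda(Y_1,\dots,Y_4;\xi)$. Since $X_i\le Y_i$ almost surely for every $i$ and $R_\lambda$ is coordinatewise non-decreasing, we get $U\le W$ almost surely. On the other hand, the law of $U$ is exactly $(R_\lambda)_*(\mu^{\otimes 4}\otimes m)=\Phi_\lambda[\mu]$, because $(X_1,\dots,X_4,\xi)$ has law $\mu^{\otimes 4}\otimes m$; likewise the law of $W$ is $\Phi_\lambda[\nu]$. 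Thus $(U,W)$ is a coupling witnessing $\Phi_\lambda[\mu]\lo\Phi_\lambda[\nu]$.

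For part (2), fix $\mu$ and $\lambda\le\lambda'$. Here no nontrivial coupling of the inputs is needed: take a single sample $(X_1,\dots,X_4;\xi)$ of law $\mu^{\otimes 4}\otimes m$ and set $U=R_\lambda(X_1,\dots,X_4;\xi)$ and $U'=R_{\lambda'}(X_1,\dots,X_4;\xi)$. Since $R_{\lambda'}=\frac{\lambda'}{\lambda}R_\lambda$ pointwise and $\lambda'/\lambda\ge 1$ while $R_\lambda\ge 0$, we have $U\le U'$ almost surely, and by construction $\law(U)=\Phi_\lambda[\mu]$, $\law(U')=\Phi_{\lambda'}[\mu]$, giving $\Phi_\lambda[\mu]\lo\Phi_{\lambda'}[\mu]$.

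There is essentially no serious obstacle here; the only point requiring a little care is bookkeeping with the extended half-line $[0,\infty]$ (checking that $\min$, $+$, and multiplication by a constant remain monotone and well-defined when some arguments equal $0$ or $\infty$, with the usual conventions $0\cdot\infty$ not arising since $\lambda,\xi>0$ and the remaining products are of the form ``finite or $\infty$'' times a positive constant), and observing that the push-forward description of $\Phi_\lambda$ in terms of the law $\mu^{\otimes 4}\otimes m$ is precisely what lets us read off the marginal laws of $U$ and $W$ from the coupling. I would state the coordinatewise monotonicity of $R_\lambda$ as a one-line preliminary remark and then give the two couplings above.
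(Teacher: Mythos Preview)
Your proof is correct and follows essentially the same coupling argument as the paper: four independent coupled pairs together with an independent factor~$\xi$, combined with the coordinatewise monotonicity of~$R_\lambda$. The paper omits the proof of part~(2) as ``rather evident'', whereas you spell it out and also address the extension to~$[0,\infty]$, but the underlying idea is identical.
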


\begin{proof}
Let us prove the first statement only, since the second one is rather evident.
Suppose that~$X_i^\mu$ and 
$X_i^\nu$, $i=1,\ldots,4$ are respectively four independent random variables distributed 
according to $\mu$ and $\nu$ and defined on the same probability space 
$(\Omega,\mathbf{P})$, in such a way that for every $i$ and almost every 
$\omega\in \Omega$ the relation $X_i^\mu(\omega)\le X_i^\nu(\omega)$ holds. Then the claimed 
inequality easily follows: given another independent random variable $\xi$ of law $m$,  for 
almost every $\omega\in \Omega$ we have
\begin{align*}
& \lambda\xi(\omega) \,\left (\min\left (X_1^\mu(\omega),X_2^\mu(\omega)\right )
+\min\left ( X_3^\mu(\omega),X_4^\mu(\omega)\right )\right )  \\
\le \,& \lambda\xi(\omega) \,\left (\min\left (X_1^\nu (\omega),X_2^\nu (\omega)\right )
+\min\left ( X_3^\nu (\omega),X_4^\nu (\omega)\right )\right )  .
\end{align*}
In other words,  
$\Phi_{\lambda}[\mu]\lo \Phi_{\lambda}[\nu]$ as desired.
\end{proof}

\begin{rem}
The reader can verify in the same way that also the rescaling operators $\Upsilon_c$ preserve the stochastic
domination.
\end{rem}

What is commonly known as \emph{Strassen's Theorem} \cite{strassen,kamae}, asserts that the 
condition $\mu\lo\nu$ is equivalent to the fact that for any \emph{increasing} bounded real valued-function~$f$ 
on~$[0,+\infty]$,
\[
\int_{[0,\infty]} f\,d\mu\le \int_{[0,\infty]} f\,d\nu.
\]
Taking functions $f$ of the form $\mathbf{1}_{(x,\infty]}$ ($x\in \R_+$) it is easy to see that the inequality 
$\mu\lo\nu$ coincides with the \emph{reversed} inequality $F_\mu\ge F_\nu$ for distribution functions.
The  point of view of distribution functions will be sometimes very useful, as well as the following function 
$\theta:[0,1]\to [0,1]$ that is naturally associated to the figure eight-graph $\Gamma$.

\begin{dfn}\label{dfn:psi}
Take a figure-eight graph and for each of its four edges decide randomly and independently, whether to keep it or to 
remove it. Denote by~$\theta(p)$ the probability that there is at least one $\In\Out$-path, if the edges are kept with 
probability~$p$.
\end{dfn}
An easy computation shows that $\theta(p)=p^2(2-p)^2$. Note that the map $\theta$ on 
$[0,1]$ has two attracting fixed points, $0$ and $1$ (moreover, these points are 
\emph{super-attracting}: $\theta'(0)=\theta'(1)=0$), and one repelling fixed point 
$p_{cr}\in (0,1)$. Actually, it is easy to find that $p_{cr}=\varphi^{-2}$, where 
$\varphi\,=\frac{1+\sqrt{5}}{2}$ is the golden ratio.

The following lemma immediately relates this function to the dynamics of $\Phi_{\lambda}$:
\begin{lem}\label{l:psi}
For any $\lambda>0$ and $\mu\in \cP$ we have
\[
p_0\left ( \Phi_{\lambda}[\mu] \right )=\theta\left (p_0( \mu)\right ), \quad 1-p_\infty\left ( 
\Phi_{\lambda}[\mu] \right )=\theta\left (1-p_\infty( \mu)\right ).
\]
\end{lem}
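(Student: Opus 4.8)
The plan is to track where the mass at $0$ (resp.~at $\infty$) goes under a single application of $\Phi_\lambda$, and to observe that this is governed entirely by an independent percolation on the four edges of the figure eight. Fix $\mu\in\cP$ and let $X_1,\dots,X_4$ be i.i.d.~with law $\mu$, and $\xi$ an independent variable with law $m$. Recall $R_\lambda(X_1,X_2,X_3,X_4;\xi)=\lambda\xi\,(\min(X_1,X_2)+\min(X_3,X_4))$, so the law of this random variable is exactly $\Phi_\lambda[\mu]$. Since $\lambda\xi>0$ almost surely, the event $\{R_\lambda=0\}$ coincides with $\{\min(X_1,X_2)+\min(X_3,X_4)=0\}$, which (both summands being nonnegative) is $\{\min(X_1,X_2)=0\}\cap\{\min(X_3,X_4)=0\}$.

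First I would compute $p_0(\Phi_\lambda[\mu])$. Declare edge $i$ ``open'' if $X_i=0$; these four events are independent, each of probability $p_0(\mu)$. Then $\min(X_1,X_2)=0$ iff edge $1$ or edge $2$ is open, and similarly for $\{3,4\}$; so $\{R_\lambda=0\}$ is the event that in each of the two pairs $\{1,2\}$, $\{3,4\}$ at least one edge is open. In the figure eight the $\In\Out$-paths are precisely ``edge $1$ or edge $2$, followed by edge $3$ or edge $4$'' (every $\In\Out$-path passes through the middle vertex), so this is exactly the event that an $\In\Out$-path survives when edges are kept independently with probability $p:=p_0(\mu)$. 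By Definition~\ref{dfn:psi} this probability is $\theta(p_0(\mu))$, giving $p_0(\Phi_\lambda[\mu])=\theta(p_0(\mu))$. (One may also verify this directly: $\mathbf{P}(\min(X_1,X_2)=0)=1-(1-p)^2=p(2-p)$, and squaring for the two independent pairs gives $p^2(2-p)^2=\theta(p)$, matching the stated formula for $\theta$.)

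For the atom at $\infty$, I would argue by the dual (complementary) percolation. We have $R_\lambda<\infty$ iff $\lambda\xi<\infty$ and $\min(X_1,X_2)+\min(X_3,X_4)<\infty$; since $\xi<\infty$ a.s., and a sum of two nonnegative extended reals is finite iff both are, this is $\{\min(X_1,X_2)<\infty\}\cap\{\min(X_3,X_4)<\infty\}$. Now declare edge $i$ ``open'' if $X_i<\infty$, an event of probability $1-p_\infty(\mu)$, the four being independent. Then $\min(X_1,X_2)<\infty$ iff edge $1$ or edge $2$ is open, etc., so $\{R_\lambda<\infty\}$ is again the $\In\Out$-connectivity event for independent percolation with retention parameter $1-p_\infty(\mu)$; its probability is $\theta(1-p_\infty(\mu))$. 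Hence $1-p_\infty(\Phi_\lambda[\mu])=\theta(1-p_\infty(\mu))$, which completes the proof.

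There is essentially no obstacle here: the only point requiring a word of care is the bookkeeping of extended-real arithmetic (that a nonnegative sum vanishes iff both terms vanish, and is finite iff both terms are finite), so that the events $\{R_\lambda=0\}$ and $\{R_\lambda<\infty\}$ decouple across the two pairs exactly as the figure-eight combinatorics demands; once this is noted, both identities fall out of the defining property of $\theta$. Note also that the statement is uniform in $\lambda>0$ precisely because the factor $\lambda\xi$ is a.s.~strictly positive and finite and thus never creates or destroys an atom at $0$ or $\infty$.
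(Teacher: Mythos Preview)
Your proof is correct and follows essentially the same approach as the paper's: both identify the event $\{R_\lambda=0\}$ (resp.\ $\{R_\lambda<\infty\}$) with the existence of an $\In\Out$-path through zero-length (resp.\ finite-length) edges, and then invoke the definition of~$\theta$. Your version is simply more explicit about the extended-real bookkeeping and includes the direct verification $p^2(2-p)^2=\theta(p)$, which the paper leaves implicit.
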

\begin{proof}
Considering the figure eight-graph $\Gamma$, we look for the probability that there exists a zero-length 
$\In\Out$-path when assigning random distances to the edges according to the probability distribution~$\mu$.
This means that we look for a path from $\In$ to $\Out$ passing from zero-length edges only. The definition 
of the function $\theta$ then shows the first equality. 
For the second one, we can argue in the same way, replacing the word \emph{zero} by \emph{finite}.
\end{proof}

\subsection{Cut-off process}

The main difficulty in finding a stationary measure is that the metric could 
blow up or collapse at different scales. In order to tame such problems, we artificially impose a 
``regular'' behaviour to the random metric, using cut-off procedures.

\smallskip

For this reason we introduce the following 
operator that forces the support to be included in a subinterval. For any~$A\in\R_+$, define the cut-off 
operator~$\Phi_{A,\lambda}$ by assigning to any probability measure $\mu\in \cP$, the law of
\[
\min(R_{\lambda}(X_1,\dots,X_4;\xi),A),
\]
where the $X_i$'s are i.i.d.~variables with law $\mu$, and 
$\xi$ is distributed with respect to~$m$ and independent of them. Geometrically, we are adding an $\In\Out$-shortcut
of length $A$, when glueing together four independent samples of the space. The reader will remark that also 
the new operator~$\Phi_{A,\lambda}$ preserves the partial order on $\cP$ 
due to the same coupling arguments as those used in the proof of Lemma~\ref{lem:order}.

\begin{rem}\label{rem:coupling-cutoff}
It should be evident that for any $\lambda>0$ and $A\in \R_+$ we have $\Phi_\lambda\go \Phi_{A,\lambda}$. 
To check this, we construct the following natural coupling: fix a measure $\mu\in \cP$ and let $Y$ be a 
random variable of law $\Phi_\lambda[\mu]$ on the probability space $(\Omega,\mathbf{P})$. We can define 
the new random variable
\[Y^A(\omega)=\min(A, Y(\omega) ),\]
whose law is $\Phi_{A,\lambda}[\mu]$. The inequality $Y\ge Y^A$ implies $\Phi_\lambda\go \Phi_{A,\lambda}$, as claimed.
\end{rem}

An important property of the cut-off process is that, as it is very natural to expect and as we 
will show later (see Lemma~\ref{l:Lambda}), it has a non-trivial stationary distribution for 
all sufficiently large~$\lambda$'s. This makes the situation more ``flexible'': for this 
particular process, we do not have to look for a precise value of $\lambda$ where it does 
neither explode nor collapse. Instead (as we will show later) non-trivial stationary measures for 
this process exist for all sufficiently large values of~$\lambda$.

\subsubsection{Definition of $\nu_{A,\lambda}$}

Consider the following sequence $\{\mu^{A,\lambda}_{n}\}_{n\in\N}$ in 
$\cP$. Start with $\mu^{A,\lambda}_{0}:=\ddelta_\infty$, which is the greatest probability measure in $\cP$. Then,
apply $\Phi_{A,\lambda}$ repeatedly in order to get a monotone 
decreasing sequence in $\cP$.
That is, we set
\begin{equation}\label{eq:def-G}
\begin{cases}
\mu^{A,\lambda}_0
= \ddelta_\infty \\
\mu^{A,\lambda}_{n+1} := \Phi_{A, \lambda}[\mu^{A,\lambda}_{n}] & \textrm{for }n\in \N.
\end{cases}
\end{equation}
\begin{lem}\label{l:nu}
The sequence $\{\mu^{A,\lambda}_{n}\}_{n\in\N}$ is monotone decreasing and hence converges (in the weak-$*$ topology) 
to a certain probability measure.
\end{lem}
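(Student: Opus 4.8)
The plan is to establish the two assertions of Lemma~\ref{l:nu} separately: monotonicity of the sequence, and then convergence as a consequence of monotonicity together with compactness of $\cP$.

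First I would prove by induction on $n$ that $\mu^{A,\lambda}_{n+1}\lo\mu^{A,\lambda}_{n}$. The base case is immediate: $\mu^{A,\lambda}_0=\ddelta_\infty$ is the greatest element of $\cP$ (every random variable is almost surely $\le+\infty$), so in particular $\mu^{A,\lambda}_1\lo\mu^{A,\lambda}_0$. For the inductive step, assume $\mu^{A,\lambda}_{n}\lo\mu^{A,\lambda}_{n-1}$. Since the cut-off operator $\Phi_{A,\lambda}$ is order-preserving for $\lo$ --- this was observed right before Remark~\ref{rem:coupling-cutoff}, using the same coupling argument as in Lemma~\ref{lem:order}, noting additionally that $t\mapsto\min(t,A)$ is non-decreasing --- applying $\Phi_{A,\lambda}$ to both sides gives $\mu^{A,\lambda}_{n+1}=\Phi_{A,\lambda}[\mu^{A,\lambda}_{n}]\lo\Phi_{A,\lambda}[\mu^{A,\lambda}_{n-1}]=\mu^{A,\lambda}_{n}$, which closes the induction. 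Hence $\{\mu^{A,\lambda}_n\}_{n\in\N}$ is a $\lo$-decreasing sequence.

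Next I would deduce convergence. The space $\cP$ of Radon probability measures on the compact space $[0,+\infty]$ is itself compact in the weak-$*$ topology (Prokhorov, or simply Banach--Alaoglu on the dual of $C([0,+\infty])$), and metrizable; so the sequence $\{\mu^{A,\lambda}_n\}$ has at least one weak-$*$ accumulation point. To see it actually converges, I would work with distribution functions: recall from the discussion after Strassen's Theorem that $\mu\lo\nu$ is equivalent to the pointwise inequality $F_\mu\ge F_\nu$. Thus, writing $F_n:=F_{\mu^{A,\lambda}_n}$, monotonicity gives that $F_n(x)$ is non-decreasing in $n$ for every fixed $x$, and bounded above by $1$, hence converges pointwise to some limit $F_\infty(x)\in[0,1]$. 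The pointwise limit of distribution functions is non-decreasing, and right-continuity can be recovered by passing to the right-continuous modification; standard facts (e.g.\ the Helly--Bray type argument, or the portmanteau theorem applied at continuity points of $F_\infty$) then show that the $\mu^{A,\lambda}_n$ converge weakly-$*$ on $[0,+\infty]$ to the probability measure determined by $F_\infty$. Since $[0,+\infty]$ is compact, no mass escapes and the limit is genuinely a probability measure in $\cP$.

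There is no serious obstacle here; this lemma is an essentially soft monotone-convergence statement, and the only point requiring a little care is the bookkeeping at the endpoints $0$ and $\infty$ (making sure the limiting distribution function still corresponds to a probability measure on the \emph{compactified} half-line, with no loss of mass), which is precisely why $\cP$ was set up on $[0,+\infty]$ rather than on $\R_+$. The substantive work --- identifying this limit as a non-trivial stationary measure $\nu_{A,\lambda}$ for $\Phi_{A,\lambda}$, and controlling how it behaves as $\lambda\downarrow\lcr$ and $A\to\infty$ --- is deferred to the later lemmas (Lemma~\ref{l:Lambda} and what follows), not to this statement.
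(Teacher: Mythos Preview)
Your proposal is correct and follows essentially the same approach as the paper: induction using that $\ddelta_\infty$ is $\lo$-maximal and that $\Phi_{A,\lambda}$ is order-preserving, then convergence via pointwise monotonicity of the distribution functions. If anything, your citation for the monotonicity of $\Phi_{A,\lambda}$ (the remark just before Remark~\ref{rem:coupling-cutoff}) is slightly more precise than the paper's reference to Lemma~\ref{lem:order}, which literally only covers~$\Phi_\lambda$.
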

\begin{dfn}\label{def:nu}
We denote the limit of the sequence $\{\mu^{A,\lambda}_{n}\}_{n\in\N}$ by $\nu_{A,\lambda}$:
$$
\nu_{A,\lambda}:=\lim_{n\to\infty} \mu^{A,\lambda}_{n}.
$$
\end{dfn}
\begin{proof}[Proof of Lemma~\ref{l:nu}]
Since $\mu_0^{A,\lambda}$ is the greatest probability measure in $\cP$, we must have 
$\mu^{A,\lambda}_0 \go \mu^{A,\lambda}_1$. Then for any~$n\in\N$, the relation 
$\mu^{A,\lambda}_n \go \mu^{A,\lambda}_{n+1}$ recursively holds, applying 
Lemma~\ref{lem:order}.
This implies that the weak-$*$ limit $\nu_{A,\lambda}$ of the sequence $\left \{
\nu^{A,\lambda}_n\right \}_{n\in\N}$ exists, for the sequence of distribution functions of $\mu^{A,\lambda}_{n}$ 
is pointwise monotone, and hence pointwise converges.
\end{proof}
\begin{rem}This monotone construction 
is somehow classical and gives the greatest $\Phi_{A,\lambda}$-invariant measure. The reader can compare for 
example \cite[\S 2.2]{RDE}.
\end{rem}
To verify that $\nu_{A,\lambda}$ is a fixed point for the operator $\Phi_{A,\lambda}$, we only need the additional 
elementary property.
\begin{lem}
The operator $\Phi_{A,\lambda}$ is continuous.
\end{lem}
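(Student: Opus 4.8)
The plan is to realize $\Phi_{A,\lambda}$ as a push-forward by a single map and then invoke the standard stability of weak-$*$ convergence under products and under push-forward by (almost everywhere) continuous maps. Since $\cP$ is the space of probability measures on the compact metrizable space $[0,\infty]$, the weak-$*$ topology on $\cP$ is metrizable, so it suffices to check sequential continuity: given $\mu_n\to\mu$ weakly-$*$ in $\cP$, I must show $\Phi_{A,\lambda}[\mu_n]\to\Phi_{A,\lambda}[\mu]$.

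First I would record that $\mu_n\to\mu$ implies $\mu_n^{\otimes 4}\to\mu^{\otimes 4}$ weakly-$*$ on $[0,\infty]^4$; this is routine, since integrals of product functions $\prod_i f_i(x_i)$ factor, and such functions span a dense subspace of $C([0,\infty]^4)$ by Stone--Weierstrass. Consequently $\mu_n^{\otimes 4}\otimes m\to\mu^{\otimes 4}\otimes m$ weakly-$*$ on $[0,\infty]^5$ (same product argument, $m$ being fixed). Next, introduce the map
\[
T(x_1,x_2,x_3,x_4,\xi)=\min\bigl(A,\ \lambda\xi\,(\min(x_1,x_2)+\min(x_3,x_4))\bigr),
\]
so that $\Phi_{A,\lambda}[\mu]=T_*(\mu^{\otimes 4}\otimes m)$ by definition. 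This $T$ is continuous on $[0,\infty]^4\times(0,\infty)$: the inner sum is continuous $[0,\infty]^4\to[0,\infty]$, multiplication by $\lambda\xi$ is continuous on $[0,\infty]\times(0,\infty)$ (it stays continuous near any $\xi\in(0,\infty)$ even where the first factor equals $\infty$, since then the product is forced to be large), and $\min(A,\cdot)$ is continuous on $[0,\infty]$. The only possible discontinuities of $T$ sit over $\xi\in\{0,\infty\}$, where the indeterminate product $0\cdot\infty$ occurs, i.e.\ in $[0,\infty]^4\times\{0,\infty\}$.

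The key point is that $m$, being a probability measure on $\R_+=(0,\infty)$, gives no mass to $\{0,\infty\}$. So for any $\eps>0$ I can pick $0<a<b<\infty$ with $m([0,\infty]\setminus[a,b])<\eps$, and on the compact set $[0,\infty]^4\times[a,b]$ the restriction of $T$ is continuous. Splitting each integral $\int (f\circ T)\,d(\mu_n^{\otimes 4}\otimes m)$, for $f\in C([0,A])$, into the part over $[0,\infty]^4\times[a,b]$ and its complement, the complementary parts are bounded in absolute value by $\|f\|_\infty\,\eps$ uniformly in $n$ (and likewise for $\mu$), while over $[0,\infty]^4\times[a,b]$ I may extend $f\circ T$, restricted to this compact set, to a function in $C([0,\infty]^5)$ by Tietze, and use $\mu_n^{\otimes 4}\otimes(m|_{[a,b]})\to\mu^{\otimes 4}\otimes(m|_{[a,b]})$. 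Letting $n\to\infty$ and then $\eps\to 0$ gives $\int f\,d\Phi_{A,\lambda}[\mu_n]\to\int f\,d\Phi_{A,\lambda}[\mu]$ for all $f\in C([0,A])$, i.e.\ $\Phi_{A,\lambda}[\mu_n]\to\Phi_{A,\lambda}[\mu]$ weakly-$*$, as desired. (Equivalently, one can simply quote the continuous mapping theorem for weak convergence, applicable because the discontinuity set of $T$ is $(\mu^{\otimes 4}\otimes m)$-null.)

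I do not expect a genuine obstacle: the statement is elementary. The only point requiring a little care is the behaviour of $T$ at $\xi=0,\infty$, i.e.\ the $0\cdot\infty$ ambiguity, which is harmless precisely because $m$ charges neither endpoint; everything else is the standard machinery of weak-$*$ convergence on compact spaces.
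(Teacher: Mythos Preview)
Your proof is correct and follows essentially the same approach as the paper: realize the operator as a push-forward by an (almost everywhere) continuous map and use the stability of weak-$*$ convergence under products and push-forwards. The paper is terser, simply decomposing $\Phi_{A,\lambda}$ as $\Phi_\lambda$ followed by push-forward by $\min(\cdot,A)$ and noting that $+$ and $\min$ are continuous on $[0,\infty]$; you are in fact more careful than the paper in explicitly handling the $0\cdot\infty$ indeterminacy in the multiplication, which the paper glosses over.
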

\begin{proof}
It is a composition of the map $\Phi_\lambda$, associating to a measure $\mu$ the measure 
$(R_\lambda)_*\left (\mu^{\otimes 4} \otimes m\right )$, and the pushforward by the continuous map 
$
\min (\,\cdot\, , A).
$
Both these operations are clearly continuous in the sense of $*$-weak convergence for measures on~$[0,\infty]$ 
(note that the ``$+$'' and ``$\min$'' operations are continuous even on the compactified half-line~$[0,\infty]$).
\end{proof}
\begin{rem}\label{r:cont-three}
In fact, the same argument shows that the map $(A,\lambda,\mu)\mapsto \Phi_{A,\lambda}[\mu]$ is continuous in 
all the three variables $A\in [0,\infty]$, $\lambda\in \R_+$, $\mu\in\cP$, with $A=\infty$ corresponding to 
$\Phi_{\infty,\lambda}[\mu]=\Phi_{\lambda}[\mu]$.
\end{rem}
Note that for every $n\ge 1$, the support of $\mu^{A,\lambda}_{n}$ and hence of $\nu_{A,\lambda}$
is contained in $[0,A]$. We remark also that since 
$\nu_{A,\lambda} = \Phi_{A, \lambda}[\nu_{A,\lambda}]$, the measure $\nu_{A,\lambda}$ has no atom on the 
open interval~$(0,A)$.

\begin{rem}\label{r:scaling}
The definition of $\nu_{A,\lambda}$ certainly depends  on the cut-off value $A$, but the condition  
$\nu_{A,\lambda}=\ddelta_0$ does not.
Indeed, for any $A$, $A'\in\R_+$, the rescaling operator $\Upsilon_{A/A'}$ fixes the probability measure 
$\ddelta_0$ and \emph{conjugates} $\Phi_{A,\lambda}$ to $\Phi_{A',\lambda}$: since
\[
\min\left (R_\lambda\left (\tfrac{A}{A'}X_1,\ldots,\tfrac{A}{A'}X_4;\xi\right ),A\right )=
\tfrac{A}{A'}\min \left (R_\lambda\left (X_1,\ldots,X_4;\xi\right ),A'\right ),
\]
we can write
\[\Phi_{A,\lambda}\Upsilon_{A/A'}=\Upsilon_{A/A'}\Phi_{A',\lambda}.\]
Iterating this equality and taking $\mu=\ddelta_\infty$, we get
\beqn{scaling}
{
\Upsilon_{A/A'}[\nu_{A',\lambda}]=\nu_{A,\lambda}.
}
\end{rem}

\subsubsection{Geometrical construction via RTP}
Relying on the notion of invariant RTP, we have a nice interpretation for the construction of the measure 
$\nu_{A,\lambda}$, whose main consequence at this stage is the possibility to apply Kolmogorov's $0$--$1$ 
law in order to get a dichotomy for the measures $\nu_{A,\lambda}$ (see Lemma~\ref{l:G-0-1}).

As before, let $\T$ be the rooted quaternary tree, $\rt$ its root, and let $\at:\T\to \R_+$ be a 
function associating to any  vertex a positive number. Let us put $\at$ into correspondence 
with a family of functions $D_n=D_n(\at):\T\to (0,\infty]$ defined inductively backwards as 
\[
D_n(t)=\begin{cases}
\small{\infty} & \footnotesize{\textrm{if }t\textrm{ is at distance at least }n
\textrm{ from the root, otherwise}}\\
\\
\small{\min(R_\lambda(D_n(t_1),\dots,D_n(t_4);\at(t)),A)}& 
\footnotesize{\textrm{where }t_1,\dots,t_4\in \T\textrm{ are the four descendants of~}t.}
\end{cases}
\]
In other words, $\left \{D_n\right \}_{n\in\N}$ is the family of distances obtained in the following way: everything 
deeper than~$n$ levels is declared to be infinite, while closer to the root we are applying the 
procedure indicated by the figure~eight-graph with the factor $\at$ at the corresponding vertex, 
adding a shortcut of length~$A$. Then we have the following 
\begin{prop}\label{p:cut-off}
The family of functions $D_n(\at)$ is pointwise monotonely decreasing with respect to~$n$, and their limit 
$D_{\infty}(\at)(t)=\lim_{n\to\infty}D_n(\at)(t)$ is, for every $\at$, a 
map from $\T$ to $[0,A]$, satisfying the following properties:
\begin{itemize}
\item Let $t\in \T$ be a vertex, and let $\at_t$ be the restriction of $\at$ on the subtree 
$\T_t$ of $\T$ rooted at $t$. Then the restriction of $D_{\infty}(\at)$ on $\T_t$ coincides 
with $D_{\infty}(\at_t)$. Moreover one has
\beqn{tree}{
D_{\infty}(\at)(\rt)=\min( R_\lambda(D_\infty(\at_1)(\rt_1),\dots,D_\infty(\at_4)(\rt_4);
\at(\rt)), A),
}
where $\at_i$ is the function $\at_{\rt_i}$, with $\rt_1,\ldots,\rt_4$ the direct descendants 
of the root $\rt$.
\item If $\left \{\at_t \right \}_{t\in\T}$ are i.i.d.~random variables with law $m$, then the law
of $D_{\infty}(\rt)$ is~$\nu_{A,\lambda}$.
\end{itemize}
\end{prop}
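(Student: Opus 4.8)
The plan is to verify the three claims by induction on $n$ and then pass to the limit, using only the definition of $D_n$ and the order structure on $\cP$. First I would establish pointwise monotonicity of $\{D_n(\at)\}_n$ in $n$. For $n=0$ versus $n=1$ this is immediate, since $D_0\equiv\infty$ is the largest possible value, while $D_1(t)\le A<\infty$ at vertices within distance $0$ (and equals $\infty$ otherwise, where $D_0$ also is $\infty$). For the inductive step, if $D_n(t)\ge D_{n+1}(t)$ for all $t$, then applying the monotone operation $(y_1,\dots,y_4)\mapsto\min(R_\lambda(y_1,\dots,y_4;\at(t)),A)$—which is non-decreasing in each $y_i$ because $R_\lambda$ is built from $\min$, $+$ and multiplication by the positive factor $\lcr\xi$—at each vertex $t$ preserves the inequality, giving $D_{n+1}(t)\ge D_{n+2}(t)$. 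Hence $D_\infty(\at)(t):=\lim_n D_n(\at)(t)$ exists in $[0,A]$ for every $t$ (the upper bound $A$ holds for all $D_n$ with $n\ge 1$ and hence for the limit, while the lower bound $0$ is trivial).

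Next I would prove the self-similarity identity. Fix $t\in\T$ and note that the inductive definition of $D_n(\at)$ is \emph{local} in the following sense: for a vertex $s$ in the subtree $\T_t$ at distance $k$ from $t$ (so at distance $\|t\|+k$ from the root), the value $D_n(\at)(s)$ depends only on $\at$ restricted to the part of $\T_s$ reachable within $n-\|t\|-k$ further levels. Comparing this with the defining recursion for $D_{n-\|t\|}(\at_t)(s)$, one checks by a straightforward induction on $n$ that $D_n(\at)(s)=D_{n-\|t\|}(\at_t)(s)$ for all $s\in\T_t$ and all $n\ge\|t\|$. Letting $n\to\infty$ yields $D_\infty(\at)|_{\T_t}=D_\infty(\at_t)$. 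In particular, applying this to the four children $\rt_1,\dots,\rt_4$ of the root and then using continuity of $\min(R_\lambda(\,\cdot\,;\at(\rt)),A)$ in its four arguments to pass the limit inside, we obtain
\[
D_\infty(\at)(\rt)=\lim_n D_n(\at)(\rt)=\lim_n \min\!\big(R_\lambda(D_{n-1}(\at_1)(\rt_1),\dots,D_{n-1}(\at_4)(\rt_4);\at(\rt)),A\big)
\]
\[
=\min\!\big(R_\lambda(D_\infty(\at_1)(\rt_1),\dots,D_\infty(\at_4)(\rt_4);\at(\rt)),A\big),
\]
which is exactly~\eqref{eq:tree}.

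Finally, for the distributional statement, suppose $\{\at_t\}_{t\in\T}$ are i.i.d.~with law $m$. I would argue that the law of $D_n(\at)(\rt)$ is precisely $\mu^{A,\lambda}_n$ from~\eqref{eq:def-G}, again by induction on $n$: for $n=0$ the value is $\infty$ deterministically, matching $\mu^{A,\lambda}_0=\ddelta_\infty$; and the recursion~\eqref{eq:def-G} together with the identity $D_{n+1}(\at)(\rt)=\min(R_\lambda(D_n(\at_1)(\rt_1),\dots,D_n(\at_4)(\rt_4);\at(\rt)),A)$ propagates the claim, using that the four random variables $D_n(\at_i)(\rt_i)$ are independent (they are measurable functions of the disjoint families $\{\at_s\}_{s\in\T_{\rt_i}}$) and each has law $\mu^{A,\lambda}_n$, and that $\at(\rt)$ is independent of them with law $m$. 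Passing to the limit, the law of $D_\infty(\at)(\rt)$ is the weak-$*$ limit of $\mu^{A,\lambda}_n$, which is $\nu_{A,\lambda}$ by Definition~\ref{def:nu}. (Weak-$*$ convergence of the laws follows from pointwise—hence almost sure—convergence of the random variables $D_n(\at)(\rt)\downarrow D_\infty(\at)(\rt)$.)

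The routine parts are the two inductions; the only point requiring a little care—and the closest thing to an obstacle—is the locality argument underpinning the self-similarity identity, namely making precise that $D_n(\at)$ on a subtree $\T_t$ genuinely ignores the values of $\at$ outside $\T_t$, so that the restriction coincides with the $D_\infty$ built from the subtree alone. Once that bookkeeping is in place, everything else is an application of monotonicity, the order-preservation already recorded for the cut-off operator, and continuity of the operations $\min$ and $+$ on the compactified half-line.
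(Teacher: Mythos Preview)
Your proof is correct and follows essentially the same approach as the paper: monotonicity from the order-preserving nature of the cut-off operation, locality of $D_n$ on subtrees, and the inductive identification of $\law(D_n(\rt))$ with $\mu^{A,\lambda}_n$. One small wrinkle: your monotonicity step is phrased as an induction on $n$ (``if $D_n\ge D_{n+1}$ everywhere then $D_{n+1}\ge D_{n+2}$ everywhere''), but applying the monotone operation to the hypothesis does not literally yield the conclusion, since $D_{n+1}$ is not obtained from $D_n$ by a single global application of that operation. The paper instead fixes $n$ and inducts \emph{upward on depth}: at depths $\ge n$ the inequality $D_n(t)\le D_{n-1}(t)$ is trivial, and each step toward the root preserves it by monotonicity of $\min(R_\lambda(\cdot\,;\at(t)),A)$. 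Your locality identity $D_n(\at)|_{\T_t}=D_{n-\|t\|}(\at_t)$ would also repair the $n$-induction (it gives $D_{n+1}(\rt_i)=D_n(\at_i)(\rt_i)$), but you prove it only afterwards; reordering, or simply switching to the depth induction, closes the gap.
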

\begin{proof}
Let us fix $n\in \N$, we want to prove that $D_n(t)\le D_{n-1}(t)$ for every $t\in \T$. If the 
depth of $t$ is larger than or equal to $n$, this is evident; proceeding inductively 
upwards, if $t$ is at level $n-k$, we have
\[
\min( R_\lambda(D_n(t_1),\dots,D_n(t_4);\at(t)),A)\le
\min(R_\lambda(D_{n-1}(t_1),\dots,D_{n-1}(t_4);\at(t)),A),
\]
as desired. Hence, the pointwise limit $D_{\infty}(\at)(t)$ exists. The first claimed property 
may be easily verified, since the definition of $D_n(t)$ only depends on the values of $\at$ 
on the subtree rooted at $t$.

To prove the second one, we observe that the sequence of random variables 
$\left \{D_n(\bullet)\right \}_{n\in \N}$ is related to the sequence of  
measures $\left \{\mu^{A,\lambda}_{n}\right \}_{n\in\N}$. Indeed, 
the law of $D_0(\bullet)$ is the atomic mass $\ddelta_\infty$ and we can argue 
by induction, using the relation \eqref{eq:tree}, to see that $\mu^{A,\lambda}_{n}$ is the law of $D_n(\bullet)$.
\end{proof}

An immediate corollary of this representation and the $0$--$1$ law is the following

\begin{lem}\label{l:G-0-1}
For any $\lambda>0$ the mass $p_0(\nu_{A,\lambda})$ equals either zero or one, and this value does not depend 
on the cut-off value $A\in\R_+$.
\end{lem}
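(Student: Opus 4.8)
The plan is to exploit the geometrical RTP construction from Proposition~\ref{p:cut-off}. We know that $\nu_{A,\lambda}$ is the law of the random variable $D_\infty(\at)(\rt)$, where $\{\at_t\}_{t\in\T}$ are i.i.d.\ of law $m$. The quantity $p_0(\nu_{A,\lambda})$ is then simply the probability that $D_\infty(\at)(\rt)=0$. The key observation is that the event $\{D_\infty(\at)(\rt)=0\}$ is a \emph{tail event} with respect to the i.i.d.\ family $\{\at_t\}_{t\in\T}$: the value $D_\infty(\at)(\rt)$ does not change if we modify finitely many of the $\at_t$'s. Indeed, using the self-similarity relation and the first bullet of Proposition~\ref{p:cut-off}, one has $D_\infty(\at)(\rt)=\min(R_\lambda(D_\infty(\at_1)(\rt_1),\dots,D_\infty(\at_4)(\rt_4);\at(\rt)),A)$, and $R_\lambda(Y_1,\dots,Y_4;\xi)=\lambda\xi(\min(Y_1,Y_2)+\min(Y_3,Y_4))$, so $D_\infty(\at)(\rt)=0$ if and only if one of $\{D_\infty(\at_1)(\rt_1),D_\infty(\at_2)(\rt_2)\}$ vanishes \emph{and} one of $\{D_\infty(\at_3)(\rt_3),D_\infty(\at_4)(\rt_4)\}$ vanishes (the factor $\lambda\at(\rt)$ is positive, hence irrelevant, and $A>0$). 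Iterating, $\{D_\infty(\at)(\rt)=0\}$ can be expressed, for every fixed $n$, in terms of the events $\{D_\infty(\at_t)(t)=0\}$ over vertices $t$ at depth $n$; since each of these depends only on $\{\at_s : s\in\T_t\}$, the event belongs to the tail $\sigma$-algebra $\bigcap_n \sigma(\at_t : \|t\|\ge n)$.

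Once this is established, Kolmogorov's $0$--$1$ law applies directly and yields that $p_0(\nu_{A,\lambda})=\mathbf{P}(D_\infty(\at)(\rt)=0)\in\{0,1\}$.

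It remains to argue that this value is independent of the cut-off parameter $A\in\R_+$. This follows from the scaling relation~\eqref{eq:scaling}: for any $A,A'\in\R_+$ we have $\Upsilon_{A/A'}[\nu_{A',\lambda}]=\nu_{A,\lambda}$, and the rescaling operator $\Upsilon_{A/A'}$, being multiplication by the positive constant $A/A'$, fixes the atom at $0$ and preserves its mass: $p_0(\Upsilon_{A/A'}[\mu])=p_0(\mu)$ for every $\mu\in\cP$. Hence $p_0(\nu_{A,\lambda})=p_0(\nu_{A',\lambda})$, which proves that the common value does not depend on $A$. Alternatively, one can read this off directly from Lemma~\ref{l:psi} together with the construction~\eqref{eq:def-G}: since $p_0(\Phi_{A,\lambda}[\mu])=\theta(p_0(\mu))$ (the cut-off at a positive length $A$ does not create zero-length paths, so the same computation as in Lemma~\ref{l:psi} applies verbatim to $\Phi_{A,\lambda}$), and $p_0(\mu^{A,\lambda}_0)=p_0(\ddelta_\infty)=0$ independently of $A$, the whole sequence $p_0(\mu^{A,\lambda}_n)=\theta^{\circ n}(0)=0$ does not depend on $A$, and neither does its limit $p_0(\nu_{A,\lambda})$.

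The main obstacle is the careful verification that $\{D_\infty(\at)(\rt)=0\}$ is genuinely a tail event rather than merely invariant under changing the root factor: one must check that the recursion can be unfolded to arbitrary depth $n$ so that the event depends only on the factors at vertices of depth $\ge n$, using that $\min(R_\lambda(\cdot),A)=0$ forces the corresponding $D_\infty$-values at the two ``parallel branches'' to vanish. This is where the precise form~\eqref{eq:Phi} of $R_\lambda$ and the positivity of $A$ and of the cascade factors are used.
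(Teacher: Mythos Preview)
Your proof is correct and follows essentially the same route as the paper: identify $p_0(\nu_{A,\lambda})$ with the probability of the event $\{D_\infty(\at)(\rt)=0\}$, verify that this is a tail event for the i.i.d.\ family $\{\at_t\}_{t\in\T}$ (your unfolding to depth $n$ makes this explicit, where the paper simply remarks that changing finitely many factors cannot change whether the distance vanishes), apply Kolmogorov's $0$--$1$ law, and invoke the scaling relation~\eqref{eq:scaling} for the $A$-independence.

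One caveat: your \emph{alternative} argument for the $A$-independence is flawed. You observe that $p_0(\mu^{A,\lambda}_n)=\theta^{\circ n}(0)=0$ for every $n$ and then claim that ``its limit $p_0(\nu_{A,\lambda})$'' inherits this. But $p_0(\,\cdot\,)$ is not continuous for weak-$*$ convergence: the point $0$ can be an atom of the limit measure even if it is never charged along the sequence (think of $\ddelta_{1/n}\to\ddelta_0$). In fact this alternative would ``prove'' $p_0(\nu_{A,\lambda})=0$ for all $\lambda$, which is false precisely in the subcritical regime $\lambda\le\lcr$, where $\nu_{A,\lambda}=\ddelta_0$. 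So keep the scaling argument and drop the alternative.
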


\begin{proof}
The event $\{D_{\infty}(\bullet)=0\}$ is a tail event for the family of random 
variables~$\left \{\at_t\right \}_{t\in \T}$; indeed, altering the values of the multipliers $\at_t$ on 
any finite set of vertices can change the value of the distance $D_{\infty}(\bullet)$, but not 
the fact that this distance vanishes. Hence, due to the Kolmogorov's $0$--$1$ law, the 
probability of this event, which is exactly $\nu_{A,\lambda}(\{0\})=p_0(\nu_{A,\lambda})$,
equals either~$0$ or~$1$.
The last statement is a consequence of Remark~\ref{r:scaling}.
\end{proof}

\subsubsection{Definition of the supercritical set $\Lambda$} 
We now begin to implement the approach described in~\S{}\ref{s:cut-sketch}. For this,
we consider the set $\Lambda$ formed by all $\lambda>0$ for which we can obtain a 
non-trivial limit measure~$\nu_{A,\lambda}$,
for some $A\in\R_+$ (and hence for every, cf.~Remark~\ref{r:scaling}):
\begin{align*}
\Lambda &= \{\lambda\mid \exists\, A\in\R_+ \textrm{ such that } \nu_{A,\lambda}
\neq \ddelta_0\}\\
&=\{\lambda\mid \forall\, A\in\R_+ \textrm{ one has }\nu_{A,\lambda} 
\neq \ddelta_0\}.
\end{align*}
We call \emph{supercritical} such values of $\lambda$. An easy application of the second statement in 
Lemma~\ref{lem:order} shows (quite naturally) that  for any $\lambda'>\lambda$ we have 
$\nu_{A,\lambda} \lo \nu_{A,\lambda'}$, and hence if $\lambda\in\Lambda$, then $\lambda'\in \Lambda$. So 
$\Lambda$ can be either empty, or~$(0,+\infty)$, 
or $[\lcr,\infty)$ or $(\lcr,\infty)$ for some $\lcr>0$.

Intuitively, we know that for large values of $\lambda$ the distances in the limit graph~$\Gamma_\infty$ explode, 
and therefore for such $\lambda$, there should be a non-trivial measure $\nu_{A,\lambda}$, generated by a 
balance between the expansion and 
the cut-offs. The opposite situation should arise when $\lambda$ is very close to $0$, as in this case the 
diameter of $\Gamma_\infty$ should be equal to~$0$, and hence~$\nu_{A,\lambda}=\ddelta_0$. Thus it is 
natural to look for a \emph{phase-transition parameter}~$\lambda_{cr}$.

\begin{lem}\label{l:Lambda}
The set $\Lambda$ is nonempty, as well as its complement. In other words, 
$\Lambda$ is a nonempty half-line, starting at a positive real number $\lcr$.
\end{lem}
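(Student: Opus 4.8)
The plan is to establish the two required facts separately: that $\Lambda$ is nonempty (i.e.~some large $\lambda$ is supercritical), and that its complement is nonempty (i.e.~some small $\lambda$ is not supercritical). Since we have already observed that $\Lambda$ is an upward-closed subset of $\R_+$, these two facts together force $\Lambda$ to be a nonempty proper half-line with a positive left endpoint, which we name $\lcr$.

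First I would show $\Lambda\neq\emptyset$. Fix a cut-off value, say $A=1$; by the scaling Remark~\ref{r:scaling} the choice is immaterial. We want to find $\lambda$ large enough that the decreasing sequence $\mu^{1,\lambda}_n$ does not collapse to $\ddelta_0$. The natural approach is to exhibit a measure $\mu_*$ supported on $[0,1]$ which is \emph{not} $\ddelta_0$ and which is a sub-fixed point in the sense $\Phi_{1,\lambda}[\mu_*]\go\mu_*$; then, starting from $\ddelta_\infty\go\mu_*$ and using that $\Phi_{1,\lambda}$ is order-preserving, every $\mu^{1,\lambda}_n\go\mu_*$, hence $\nu_{1,\lambda}\go\mu_*\neq\ddelta_0$. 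A convenient candidate is a measure giving mass $1/2$ to a small point $\eps$ and mass $1/2$ to the cut-off value $1$ (or, more robustly, any measure concentrated near $1$). Applying $R_\lambda$ to four independent such samples produces $\lambda\xi(\min(X_1,X_2)+\min(X_3,X_4))$, which — since $\min$ of two i.i.d.~samples is $\eps$ only with probability $\le 3/4$ and otherwise $\ge$ something of order $1$ — has, for $\lambda$ large and after the $\min(\cdot,1)$ cut-off, a distribution function lying below that of $\mu_*$ on all of $(0,1)$; here one uses that $m$ is fully supported, so $\xi$ exceeds any given threshold with positive probability. I would choose the parameters ($\eps$, the weights, and $\lambda$) so this comparison of distribution functions holds, which is the content of $\Phi_{1,\lambda}[\mu_*]\go\mu_*$. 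Alternatively one can argue directly with the RTP picture of Proposition~\ref{p:cut-off}: for $\lambda$ large the random variable $D_\infty(\rt)$ dominates the corresponding quantity of a suitable supercritical percolation-type process on $\T$, which survives with positive probability by Lemma~\ref{l:psi} and the fact that $p=1$ is a super-attracting fixed point of $\theta$.

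Next I would show that small $\lambda$ are not supercritical, i.e.~$\nu_{A,\lambda}=\ddelta_0$. Again fix $A=1$. The cleanest route is a first-moment bound: from \eqref{eq:Phi}, $\E[\min(R_\lambda(X_1,\dots,X_4;\xi),1)]\le \lambda\,\E[\xi]\,\E[\min(X_1,X_2)+\min(X_3,X_4)] = 2\lambda\,\E[\xi]\,\E[\min(X_1,X_2)]\le 2\lambda\,\E[\xi]\,\E[X_1]$, so writing $m_n$ for the expectation of $\mu^{1,\lambda}_n$ we get $m_{n+1}\le 2\lambda\,\E[\xi]\,m_n$ (all finite, since everything is bounded by $1$). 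If $\E[\xi]<\infty$ and $\lambda<\frac{1}{2\E[\xi]}$, then $m_n\to 0$, forcing $\nu_{1,\lambda}=\ddelta_0$. To handle a general $m$ (possibly without finite mean), I would instead truncate: since $m$ is a probability measure, for any $\eta>0$ there is $K$ with $m([0,K])\ge 1-\eta$; replacing $\xi$ by $\min(\xi,K)$ can only decrease $R_\lambda$, and one estimates the contribution of the event $\{\xi>K\}$ crudely by $1$ (the cut-off). More efficient is to note that by Lemma~\ref{l:G-0-1} the value $p_0(\nu_{A,\lambda})\in\{0,1\}$ is independent of $A$, so it suffices to make it equal $1$ for \emph{some} small $A$; with $A$ tiny the shortcut dominates and a direct estimate on $\Phi_{A,\lambda}$ shows the iterates collapse. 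Combining: $\Lambda$ omits a neighbourhood of $0$, so its complement is nonempty, and since $\Lambda$ is upward closed and nonempty it is a half-line $[\lcr,\infty)$ or $(\lcr,\infty)$ with $\lcr>0$.

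The main obstacle is the first part — producing a genuinely non-collapsing stationary measure for large $\lambda$ — because it requires more than a one-sided moment inequality: one must certify that mass does \emph{not} escape to $0$ under iteration. The sub-fixed-point comparison (or, equivalently, the stochastic domination by a surviving branching process built from $\theta$) is the device that makes this work, and getting the comparison of distribution functions to hold on all of $(0,A)$ for a concrete test measure $\mu_*$ is the step that needs the fully supported hypothesis on $m$ and some care with the parameters.
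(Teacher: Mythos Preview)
Your argument for $\Lambda\neq\emptyset$ is essentially the paper's: build a compactly supported measure $\mu_*\neq\ddelta_0$ with $\mu_*\lo\Phi_{A,\lambda}[\mu_*]$ for large~$\lambda$, and use monotonicity to trap $\nu_{A,\lambda}$ above it. The paper's choice $\mu_*=p\,\ddelta_0+(1-p)\,\ddelta_1$ with $p<p_{cr}$ is slightly slicker (the value at $0$ is handled by $\theta(p)<p$ directly), but your candidate with atoms at $\eps$ and $1$ works for the same reason once $\lambda$ is large.

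The gap is in the second half. Your primary tool---the first-moment recursion $m_{n+1}\le 2\lambda\,\E[\xi]\,m_n$---requires $\E[\xi]<\infty$, which is \emph{not} assumed (only that $m$ is non-atomic and fully supported). Your truncation fix does not close this: splitting on $\{\xi\le K\}$ and bounding the complementary event by the cut-off value~$1$ gives $m_{n+1}\le 2\lambda K\, m_n+\eta$ with $\eta=m((K,\infty))$, whose iterates converge to $\eta/(1-2\lambda K)>0$, not to~$0$. A small positive expectation is compatible with $\nu_{A,\lambda}$ being a non-trivial measure concentrated near~$0$, so this does not force $\nu_{A,\lambda}=\ddelta_0$. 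Your alternative, taking $A$ small so that ``the shortcut dominates'', is also incorrect: by Remark~\ref{r:scaling} the operators $\Phi_{A,\lambda}$ for different~$A$ are conjugate by rescaling, so the condition $\nu_{A,\lambda}=\ddelta_0$ is literally independent of~$A$ and no choice of~$A$ makes the problem easier.

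The paper's approach to the complement is quite different and uses no moment hypothesis. One takes $q\in(p_{cr},1)$ and the two-point measure $\mu_-=q\,\ddelta_1+(1-q)\,\ddelta_\infty$. A direct computation (using only that $m$ is fully supported) shows $\Phi_\lambda[\mu_-]\lo\mu_-$ for all sufficiently small~$\lambda$; by scale-invariance the same holds for every rescaled $\mu_c=\Upsilon_c[\mu_-]$. One then checks that this family of ``barriers'' rules out the existence of any $\lambda$-zooming-out measure with no atom at~$0$: given such a $\mu$, take $c$ its $q$-quantile, compare $\mu\lo\mu_c$, and derive the contradiction $q\ge\Phi_\lambda[F_\mu](c^-)\ge\Phi_\lambda[F_{\mu_c}](c^-)>q$. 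The key input is $\theta(q)>q$ for $q>p_{cr}$, mirroring the use of $\theta(p)<p$ in the first half; this is what replaces the moment bound and works for arbitrary~$m$. (The paper does note, in a remark, that under finite-moment assumptions your expectation argument would suffice---that is the content of the later pivotal-graph sketches---so your instinct is not wrong, just insufficient under the present hypotheses.)
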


\begin{rem}
This result can be achieved more directly when the law $\mu$ has finite first positive and negative 
moments (see the sketches of the proofs of Theorems~\ref{t:pivotal-bridge} and~\ref{t:pivotal-shortcut}).
\end{rem}

Before passing to the proof of Lemma~\ref{l:Lambda}, we introduce the following tool. Roughly speaking, 
it is a measure that ``prevents'' the sequence $\mu^{A,\lambda}_n$ from concentrating at $0$ (and hence 
$\nu_{A,\lambda}$ from being~$\ddelta_0$):
\begin{dfn}\label{def:down}
We say that a probability measure $\mu\in \cP$ is $\lambda$-\emph{zooming out} if:
\begin{itemize}
\item 
$\mu \lo\Phi_{\lambda}[\mu]$,
\item $\mu$ is not the Dirac mass at $0$,
\item $\mu$ is supported on some closed subinterval $[0,x]$, with $x\in \R_+$.
\end{itemize}
\end{dfn}

Then, we have the following
\begin{lem}\label{l:out}
The following three conditions are equivalent:
\begin{enumerate}
\item\label{i:supercr} $\lambda$ is supercritical: $\nu_{A,\lambda}\neq\ddelta_0$;
\item\label{i:z-o} there exists a $\lambda$-zooming-out measure;
\item\label{i:z-o-strict} there a $\lambda$-zooming-out measure, having no atom at~$0$.
\end{enumerate}
\end{lem}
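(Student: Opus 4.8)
The plan is to close a cycle of implications. The implication \ref{i:z-o-strict}$\Rightarrow$\ref{i:z-o} is immediate, since a $\lambda$-zooming-out measure with no atom at~$0$ is in particular a $\lambda$-zooming-out measure; so it remains to prove \ref{i:supercr}$\Rightarrow$\ref{i:z-o-strict} and \ref{i:z-o}$\Rightarrow$\ref{i:supercr}.

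For \ref{i:supercr}$\Rightarrow$\ref{i:z-o-strict}, I would check that, whenever $\lambda$ is supercritical, the measure $\nu_{A,\lambda}$ itself (for any fixed $A\in\R_+$) is already a $\lambda$-zooming-out measure with no atom at~$0$. Indeed, $\nu_{A,\lambda}$ is supported on $[0,A]$ by construction and is not $\ddelta_0$ by hypothesis. The zooming-out inequality $\nu_{A,\lambda}\lo\Phi_\lambda[\nu_{A,\lambda}]$ follows from the fixed-point identity $\nu_{A,\lambda}=\Phi_{A,\lambda}[\nu_{A,\lambda}]$ combined with the domination $\Phi_\lambda\go\Phi_{A,\lambda}$ of Remark~\ref{rem:coupling-cutoff} (applied to $\mu=\nu_{A,\lambda}$). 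Finally, to rule out an atom at~$0$ I would invoke the dichotomy of Lemma~\ref{l:G-0-1}: there $p_0(\nu_{A,\lambda})\in\{0,1\}$, and the value~$1$ is excluded because $\nu_{A,\lambda}$ puts no mass at~$\infty$ (its support lies in $[0,A]$), so $p_0(\nu_{A,\lambda})=1$ would mean $\nu_{A,\lambda}=\ddelta_0$, contradicting supercriticality; hence $p_0(\nu_{A,\lambda})=0$.

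For \ref{i:z-o}$\Rightarrow$\ref{i:supercr}, let $\mu$ be a $\lambda$-zooming-out measure, supported on $[0,x]$ with $x\in\R_+$, and choose the cut-off value $A:=x$ (any larger value works just as well, cf.~Remark~\ref{r:scaling}). The first step is to upgrade the zooming-out inequality from $\Phi_\lambda$ to the cut-off operator $\Phi_{A,\lambda}$: from a coupling $(X,Y)$ with $X\sim\mu$, $Y\sim\Phi_\lambda[\mu]$ and $X\le Y$ almost surely, the pair $(X,\min(Y,A))$ couples $\mu$ with $\Phi_{A,\lambda}[\mu]$ and satisfies $X=\min(X,A)\le\min(Y,A)$ because $X\le x=A$ almost surely; thus $\mu\lo\Phi_{A,\lambda}[\mu]$. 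Iterating, and using that $\Phi_{A,\lambda}$ is order-preserving (Lemma~\ref{lem:order}), we get $\mu\lo\Phi_{A,\lambda}^{n}[\mu]$ for every $n$; on the other hand $\mu\lo\ddelta_\infty=\mu^{A,\lambda}_0$, so the same order-preservation gives $\Phi_{A,\lambda}^{n}[\mu]\lo\mu^{A,\lambda}_n$. Letting $n\to\infty$ in the sandwich $\mu\lo\Phi_{A,\lambda}^{n}[\mu]\lo\mu^{A,\lambda}_n$, one concludes $\nu_{A,\lambda}\neq\ddelta_0$: otherwise the distribution functions $F_{\mu^{A,\lambda}_n}$ would increase to the constant function~$1$ while staying below $F_\mu$, forcing $F_\mu\equiv1$ and hence $\mu=\ddelta_0$, which is excluded. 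Therefore $\lambda$ is supercritical.

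I do not expect a genuine obstacle: the lemma is essentially a bookkeeping consequence of the monotone construction of $\nu_{A,\lambda}$ (Lemma~\ref{l:nu}), of the order-preservation properties collected in Lemma~\ref{lem:order} and Remark~\ref{rem:coupling-cutoff}, and of the $0$--$1$ dichotomy of Lemma~\ref{l:G-0-1}. The only point requiring a little care is the passage between $\Phi_\lambda$ and $\Phi_{A,\lambda}$ in \ref{i:z-o}$\Rightarrow$\ref{i:supercr}: the truncation at level~$A$ must not alter the $\mu$-variable, which is exactly why one picks $A$ at least as large as the support of the zooming-out measure --- and this is precisely why the boundedness of the support has been built into the definition of a $\lambda$-zooming-out measure.
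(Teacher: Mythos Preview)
Your proof is correct and follows essentially the same route as the paper's: both use $\nu_{A,\lambda}$ itself (together with Lemma~\ref{l:G-0-1}) for \ref{i:supercr}$\Rightarrow$\ref{i:z-o-strict}, and both deduce \ref{i:z-o}$\Rightarrow$\ref{i:supercr} by showing $\mu\lo\mu^{A,\lambda}_n$ via monotonicity and passing to the limit. Your explicit choice $A=x$ and the coupling $(X,\min(Y,A))$ to pass from $\mu\lo\Phi_\lambda[\mu]$ to $\mu\lo\Phi_{A,\lambda}[\mu]$ is in fact a bit more careful than the paper's terse induction step, which writes ``$\mu\lo\Phi_\lambda[\mu]\lo\Phi_\lambda[\mu^{A,\lambda}_n]=\mu^{A,\lambda}_{n+1}$'' where the last equality should really involve $\Phi_{A,\lambda}$.
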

\begin{proof}
As the measure $\nu_{A,\lambda}$ is supported on $[0,A]$, when it is not equal to $\ddelta_0$, it is
a $\lambda$-zooming out measure. Also, by Lemma~\ref{l:G-0-1}, in this case $p_0(\nu_{A,\lambda})=0$, 
and hence \ref{i:supercr}) implies both \ref{i:z-o}) and~\ref{i:z-o-strict}). The implication 
\ref{i:z-o-strict}) $\Rightarrow$ \ref{i:z-o}) is immediate. Finally, assume~\ref{i:z-o}): let there 
be a $\lambda$-zooming out measure $\mu$. 
Then it is easy to show that the sequence of measures $\{\mu_n^{A,\lambda}\}$ 
defined by~\eqref{eq:def-G} satisfies
\begin{equation}\label{eq:mu-nu}
\mu\lo \mu_n^{A,\lambda}\quad\text{ for every }n.
\end{equation}
Indeed, for $n=0$ we have $\mu \lo \ddelta_{\infty} = \mu_0^{A,\lambda}$.
Now, if~\eqref{eq:mu-nu} holds for some $n$, applying $\Phi_{A,\lambda}$ we get
\[
\mu\lo \Phi_\lambda[\mu] \lo \Phi_\lambda[\mu_n^{A,\lambda}]=\mu_{n+1}^{A,\lambda}.
\]
The induction argument then shows that~\eqref{eq:mu-nu} holds for all $n$, and passing to the limit 
as $n\to\infty$ we obtain $\mu\lo \nu_{A,\lambda}$. Hence, $\nu_{A,\lambda}\neq \ddelta_0$.
\end{proof}

\smallskip

\begin{proof}[Proof of Lemma~\ref{l:Lambda}]
Instead of using directly the definition of supercriticality, we will use different equivalent conditions 
from Lemma~\ref{l:out}.

\paragraph{Non-emptiness of $\Lambda$.} Choose any $p\in (0,p_{cr})$ and 
consider the probability measure $\mu\in \cP$
defined as \mbox{$\mu=p\cdot \ddelta_0+(1-p)\cdot \ddelta_1$.}
We want to show that such $\mu$ is $\lambda$-zooming out
for a certain~$\lambda>0$, thus showing that $\lambda\in\Lambda$.

This time it is more convenient to work with the distribution function of $\mu$
\[F_\mu=p\cdot \I_{x<1}+\I_{x\ge 1}.\]
In order to keep notations not too heavy, we prefer writing $F$ for $F_\mu$ and $\Phi_\lambda[F]$ instead 
of $F_{\Phi_\lambda[\mu]}$.
\begin{rem}\label{r:distribution}
This abuse of notation will appear often in the rest of the paper: we will specify the probability measure 
associated to a distribution function only when this choice could generate some confusion.
\end{rem}
Using Lemma~\ref{l:psi} we remark that $\Phi_{1}[F](0)=\theta(F(0))=\theta(p)$; 
since $p$ is smaller than~$p_{cr}$ and~$0$ is an attracting fixed point for $\theta$, we get 
$\Phi_{1}[F](0)<F(0)$, for $F(0)$ is equal to $p$. We observe also that for any $\lambda>0$,  
$\Phi_{\lambda}[F](1)$ is equal to $\Phi_1[F](\lambda^{-1})$ and as $\lambda$ goes to 
infinity,~$\Phi_{\lambda}[F](1)$ tends to~$\theta(p)=\Phi_1[F](0)$. This means that for some 
sufficiently large $\lambda$ we must have~$\Phi_{\lambda}[F](1)< p$. Taking such 
a $\lambda$, we notice that for $x<1$ we have 
\[
\Phi_\lambda[F](x)\le \Phi_\lambda[F](1)<p=F(x),
\]
and similarly, when $x\ge 1$, we find 
\[
\Phi_\lambda[F](x)\le 1=F(x).
\]
Therefore, we conclude that $\lambda$ belongs to $\Lambda$, and hence that $\Lambda$ is nonempty.

\begin{figure}[ht]
\[
\includegraphics[width=.5\textwidth]{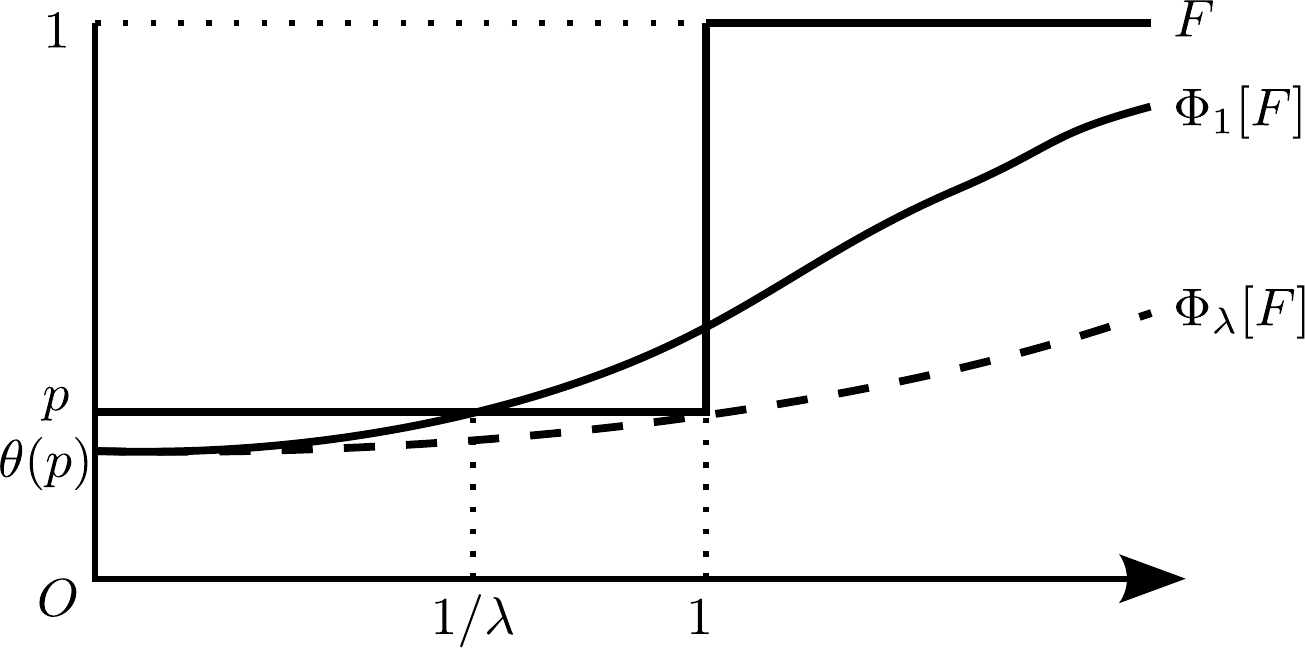}
\]
\caption{The proof of the existence of a $\lambda$ such that there is a 
$\lambda$-zooming out measure.}
\end{figure}

\paragraph{Non-emptiness of $(0,+\infty)\setminus\Lambda$.} Take any 
$q\in (p_{cr},1)$ and consider the probability measure
\[\mu_-=q\cdot \ddelta_1+(1-q)\cdot \ddelta_\infty,\]
whose distribution function is $F_-=q\cdot \I_{x\ge 1}+(1-q)\cdot \I_{\infty}$. 

Again Lemma~\ref{l:psi} gives $\lim_{x\rightarrow+\infty}\Phi_1[F_-](x)=\theta(q)$ and so 
$\lim_{x\rightarrow+\infty}\Phi_1[F_-](x)$ is 
larger than~$q$. Proceeding as before, we see that for some sufficiently small $\lambda$ 
we have $\Phi_{\lambda}[F_-](1)>q$. Hence, by the very expression of $F_-$, we have just 
shown that $F_-$ moves up under $\Phi_{\lambda}$:
\[
\Phi_{\lambda}[F_-]\ge F_-\quad\text{(or }\Phi_{\lambda}[\mu_-]\lo \mu_-\text{)}.
\]
Moreover due to the scale-invariance, given any positive $c$, we have 
$\Phi_{\lambda}[F_c]\ge F_c$ for the function~$F_c$ defined 
by the rescaling~\mbox{$F_c(x)=F_-(x/c)$} (and associated to the probability measure $\mu_c$ defined 
by~$\mu_c=\Upsilon_c[\mu_-]$):
\[\Phi_\lambda[\mu_c]=\Phi_{\lambda}\Upsilon_c[\mu_-]=\Upsilon_c\Phi_{\lambda}[\mu_-]\lo \Upsilon_c[\mu_-]=\mu_c.
\]

We want to show that any sufficiently small $\lambda$ does not belong to 
$\Lambda$, by showing that the condition~\ref{i:z-o-strict}) of Lemma~\ref{l:out} cannot be satisfied. 
To do so, fix any probability measure $\mu\in \cP_0$ and denote by $F=F_\mu$ its distribution function.
We shall show that $\mu$ is not $\lambda$-zooming out. Define $c$ to be its $q$-quantile: 
\[
c=\min\{x\,:\,F(x)=\mu([0,x])\ge q\}.
\]
Then for such $c$, we tautologically have $F(x)\ge F_c(x)$; by the monotonicity of 
$\Phi_\lambda$, we get the relation $\Phi_{\lambda}[F] \ge \Phi_{\lambda}[F_c]$. 
Now, if $\mu$ is $\lambda$-zooming out, we shall have $q\ge \Phi_\lambda[F](c^-)$. But this 
is not possible, since $\Phi_{\lambda}[F](c^-)$ is larger than $\Phi_{\lambda}[F_c](c^-)$ 
which is larger than $q$.
\end{proof}

\begin{figure}[ht]
\[
\includegraphics[width=\textwidth]{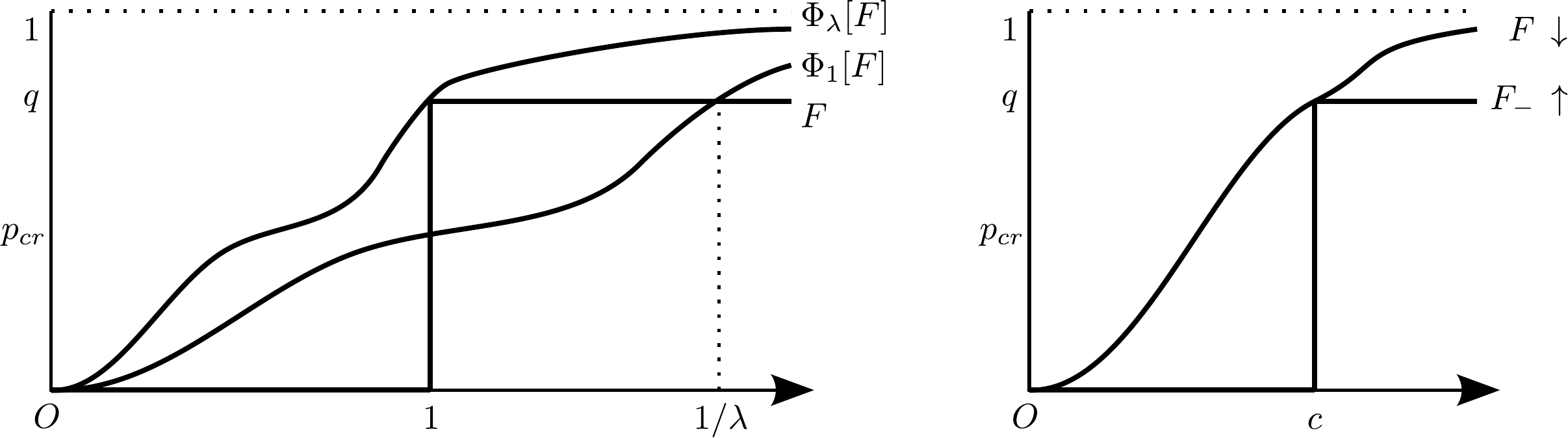}
\]
\caption{The proof that the complementary set of $\Lambda_2$ is nonempty.}
\end{figure}

The following lemma completes the description of the set $\Lambda$.

\begin{lem}[Key Lemma]\label{l:open}
The set $\Lambda$ is open; thus, $\Lambda = (\lcr, +\infty)$ for some 
$\lambda_{cr}>0$\,. 
\end{lem}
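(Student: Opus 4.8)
The plan is to show that $\Lambda$ is open by proving that if $\lambda\in\Lambda$, then every sufficiently close $\lambda'<\lambda$ also lies in $\Lambda$ (openness from the right is automatic since $\Lambda$ is an up-set, as already remarked). By Lemma~\ref{l:out} it suffices to produce, for $\lambda'$ slightly below $\lambda$, a $\lambda'$-zooming-out measure. The natural candidate is a (possibly slightly modified) version of a $\lambda$-zooming-out measure: since $\lambda\in\Lambda$, the non-trivial fixed point $\nu_{A,\lambda}$ is itself $\lambda$-zooming out, with no atom at $0$ and supported on $[0,A]$. The difficulty is that for $\lambda'<\lambda$ the operator $\Phi_{\lambda'}$ contracts distances relative to $\Phi_\lambda$, so we cannot expect $\nu_{A,\lambda}\lo\Phi_{\lambda'}[\nu_{A,\lambda}]$ directly; the inequality goes the wrong way. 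The key idea I would use is to exploit that $\nu_{A,\lambda}$ is a \emph{strict} fixed point in the interior, so there is ``room to spare,'' and to combine this with the fact that the bad behaviour ($p_0$ jumping to $1$) is governed by the function $\theta$ from Definition~\ref{dfn:psi}, which has $0$ as a \emph{super-attracting} fixed point.

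First I would record that, since $\lambda\in\Lambda$, we have $p_0(\nu_{A,\lambda})=0$ (Lemma~\ref{l:G-0-1}), so $\nu_{A,\lambda}\in\cP_0$ with support in $[0,A]$. The goal is to find some $\eps>0$ and some measure $\mu'$ with $\mu'\lo\Phi_{\lambda-\eps}[\mu']$, $\mu'\ne\ddelta_0$, compactly supported. I would try the following: fix a point $x_0\in(0,A)$ in the interior of the support, and consider truncating/conditioning $\nu_{A,\lambda}$ so as to push a little extra mass to the right, i.e. replace $\nu_{A,\lambda}$ by a measure $\mu'$ with $\mu'\go\nu_{A,\lambda}$ obtained by moving a small amount of mass from small values up toward $A$ (equivalently, lowering the distribution function slightly near the left). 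Since $\Phi_\lambda$ is order-preserving and $\Phi_\lambda[\nu_{A,\lambda}]=\nu_{A,\lambda}$, one gets $\Phi_\lambda[\mu']\go\nu_{A,\lambda}$; the hope is that the ``gain'' from the upward perturbation dominates the ``loss'' of passing from $\lambda$ to $\lambda-\eps$, so that $\Phi_{\lambda-\eps}[\mu']\go\mu'$ still holds, at least after possibly scaling by $\Upsilon_c$ (which commutes with everything and preserves the zooming-out property). I expect this to require a quantitative estimate comparing $\Phi_\lambda$ and $\Phi_{\lambda'}$: on the region away from $0$ and $A$, $R_{\lambda'}=\tfrac{\lambda'}{\lambda}R_\lambda$, a uniform contraction by a factor $\lambda'/\lambda$ close to $1$, and one must check that a uniformly-small left-shift of the distribution function of a fixed point cannot create an atom at $0$ — which is exactly where super-attractingness of $\theta$ at $0$ enters: $p_0$ stays $0$ as long as the perturbation keeps $F(0)$ below $p_{cr}$, which costs nothing since $F_{\nu_{A,\lambda}}(0)=0$.

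A cleaner route, which I would pursue in parallel, is a soft compactness/continuity argument: by Remark~\ref{r:cont-three}, the map $(A,\lambda,\mu)\mapsto\Phi_{A,\lambda}[\mu]$ is continuous in all three variables on the compact space $\cP$ (probability measures on $[0,\infty]$). Consider the function $\lambda\mapsto\nu_{A,\lambda}$ for fixed $A$; it is monotone in $\lambda$ (as noted after Definition of $\Lambda$), hence has left and right limits at $\lcr$ in the weak-$*$ topology, and by continuity of $\Phi_{A,\lambda}$ every such limit is a fixed point of $\Phi_{A,\lcr}$. The set $\Lambda$ fails to be open precisely when $\lcr\in\Lambda$, i.e. when $\nu_{A,\lcr}\ne\ddelta_0$. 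So I would argue by contradiction: suppose $\nu_{A,\lcr}$ is non-trivial; I want to derive that then $\nu_{A,\lambda}$ is non-trivial for some $\lambda<\lcr$ too, contradicting minimality of $\lcr$. To do this I would take the non-trivial $\lcr$-zooming-out measure $\nu_{A,\lcr}$ and perturb it upward as above to build a \emph{strict} zooming-out measure for $\lcr$, meaning $\mu'\lo\Phi_{\lcr}[\mu']$ with the inequality strict in the sense that $\Phi_{\lcr}[F_{\mu'}](x)<F_{\mu'}(x)$ uniformly on a suitable interval. Strictness, together with continuity of $\lambda\mapsto\Phi_\lambda$ and compact support, then lets $\lcr$ drop slightly to $\lcr-\eps$ while preserving the (now non-strict) zooming-out inequality, giving $\lcr-\eps\in\Lambda$ by Lemma~\ref{l:out} — the desired contradiction. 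I would also need to handle the passage from the left-limit $\nu_{A,\lcr}$ back to a genuine fixed point and check it is not $\ddelta_0$; here I would use the $0$--$1$ dichotomy (Lemma~\ref{l:G-0-1}) and the monotone convergence $\nu_{A,\lambda}\uparrow$ as $\lambda\uparrow\lcr$ together with $\nu_{A,\lambda}\ddelta_0$ being stable under weak-$*$ limits only from one side.

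The main obstacle I anticipate is precisely manufacturing a \emph{strictly} zooming-out measure at the critical parameter, or equivalently showing that at $\lambda=\lcr$ the inequality $\mu\lo\Phi_{\lcr}[\mu]$ cannot hold with any slack for a compactly supported non-trivial $\mu$ — this is a "no exact balance" statement that is the heart of the Key Lemma and is what rules out $\lcr\in\Lambda$. I expect the proof to extract slack from the super-attracting behaviour of $\theta$ at $0$ (so the mass near $0$ is strictly contracted, not neutral) and from the strict convexity/strict monotonicity of the $\min$-$+$ recursion away from the boundary; combining these should show that a fixed point at $\lcr$ would already be a \emph{strict} zooming-out measure, hence survives lowering $\lambda$, contradiction. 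The remaining steps — monotonicity of $\lambda\mapsto\nu_{A,\lambda}$, existence of weak-$*$ limits, continuity passing to fixed points, invariance of the zooming-out property under $\Upsilon_c$ — are routine given the lemmas already in hand.
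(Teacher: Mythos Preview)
Your overall strategy is the paper's: produce a \emph{strictly} $\lambda$-zooming-out measure (one with $F_{\Phi_\lambda[\mu]}(x)<F_\mu(x)$ on a compact interval containing the support), then use continuity in $\lambda$ to pass to $\lambda-\eps$. You also correctly locate the obstacle at $x=0$. But your proposed fix has a genuine gap. Pushing mass to the right yields $\mu'\go\nu_{A,\lambda}$ and hence $\Phi_\lambda[\mu']\go\Phi_\lambda[\nu_{A,\lambda}]\go\nu_{A,\lambda}$, which does not compare $\Phi_\lambda[\mu']$ with $\mu'$ (both dominate $\nu_{A,\lambda}$, which says nothing about their mutual order). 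More fundamentally, as long as $F_{\mu'}(0)=0$ one also has $F_{\Phi_\lambda[\mu']}(0)=0$, so no strict gap can exist at $x=0$, and there is no compact interval reaching down to $0$ on which the inequality is uniform and would survive $\lambda\mapsto\lambda-\eps$. Your remark that ``$p_0$ stays $0$'' is true but addresses the wrong issue: what is needed is a \emph{positive} gap between the two partition functions at $0$, not merely that both vanish there.

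The paper supplies two specific ingredients you are missing. First, the correct upward perturbation is to enlarge the cut-off: with $A'>A$ set $\tilde\nu:=\Phi_{A',\lambda}[\nu_{A,\lambda}]$; using the full-support hypothesis on $m$ one checks that $F_{\Phi_\lambda[\tilde\nu]}(x)<F_{\tilde\nu}(x)$ for every $x\in\R_+$ (the mass formerly collapsed at $A$ now spreads over $(A,A')$, and the multiplicative convolution propagates this strict drop everywhere). Second --- and this is the counterintuitive step --- the neighbourhood of $0$ is handled by mixing \emph{downward}: set $\tilde\nu_p:=(1-p)\,\tilde\nu+p\,\ddelta_0$. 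Then $F_{\tilde\nu_p}(0)=p>\theta(p)=F_{\Phi_\lambda[\tilde\nu_p]}(0)$ precisely because $\theta$ is super-attracting at $0$. A first-order expansion in $p$ shows that adding this atom does not destroy strictness on $(0,A']$: the increment of $F_{\Phi_\lambda[\tilde\nu_p]}$ near $0$ is $4p$ times a factor that tends to $0$ as $x\to 0$ (one collapsed edge alone cannot make the $\In\Out$-distance short), while $F_{\tilde\nu_p}$ gains a full $p\cdot(1-F_{\tilde\nu}(x))$. With the strict inequality now holding on the compact $[0,A']$ and $F_{\tilde\nu_p}\equiv 1$ on $[A',\infty)$, one can decrease $\lambda$ to $\lambda-\eps$ and conclude via Lemma~\ref{l:out}.
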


We first deduce Theorem~\ref{t:stat} from it:

\begin{proof}[Proof of Theorem~\ref{t:stat}]
The proof consists of two parts: we begin showing the existence of a non-trivial stationary measure 
(and the technical arguments are hidden in the Key Lemma \ref{l:open}), then we explain that such 
non-trivial stationary measure cannot have atoms on $[0,+\infty]$. The latter part of the proof is 
longer and rather technical (rather due to the need of handling the point $0$), but contains 
essentially the arguments that we shall use when proving Lemma \ref{l:open}.

\paragraph{Stage 1:} \textit{Existence of a non-trivial stationary measure.} The first 
observation is that the measures~$\nu_{A,\lambda}$ degenerate as $\lambda\in \Lambda$ approaches the 
boundary value $\lcr$:
\begin{lem}\label{l:to-zero}
For any given $A\in\R_+$ we have 
\beqn{collapse}
{\nu_{A,\lambda}\to\ddelta_0 \, \text{ as } \lambda\searrow\lcr.
}
\end{lem}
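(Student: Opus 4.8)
The plan is to argue by contradiction, using the continuity of $(A,\lambda,\mu)\mapsto\Phi_{A,\lambda}[\mu]$ (Remark~\ref{r:cont-three}) together with the monotonicity built into the construction of $\nu_{A,\lambda}$. Fix $A\in\R_+$. Suppose the conclusion fails: then there is a sequence $\lambda_k\searrow\lcr$ such that the measures $\nu_{A,\lambda_k}$ do \emph{not} converge to $\ddelta_0$. Each $\nu_{A,\lambda_k}$ is supported on the compact interval $[0,A]$, so the space of probability measures on $[0,A]$ is weak-$*$ compact; passing to a subsequence we may assume $\nu_{A,\lambda_k}\to\nu_*$ for some probability measure $\nu_*$ on $[0,A]$, and by our assumption $\nu_*\neq\ddelta_0$. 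The first thing I would check is that $\nu_*$ is a fixed point of $\Phi_{A,\lcr}$: indeed $\nu_{A,\lambda_k}=\Phi_{A,\lambda_k}[\nu_{A,\lambda_k}]$ for every $k$, and letting $k\to\infty$ the joint continuity of $\Phi$ in $(\lambda,\mu)$ (Remark~\ref{r:cont-three}) gives $\nu_*=\Phi_{A,\lcr}[\nu_*]$.

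Next I would upgrade this to a contradiction with $\lcr\notin\Lambda$. Since $\nu_*\neq\ddelta_0$ and $\nu_*$ is $\Phi_{A,\lcr}$-invariant, in particular $\nu_*\lo\Phi_{A,\lcr}[\nu_*]\lo\Phi_{\lcr}[\nu_*]$ (the last inequality by Remark~\ref{rem:coupling-cutoff}), and $\nu_*$ is supported on $[0,A]\subset\R_+$; thus either $\nu_*$ is already a $\lcr$-zooming-out measure, or — if $\nu_*$ has an atom at $0$ of mass $p_0(\nu_*)$ — we note that $p_0(\nu_*)=\theta(p_0(\nu_*))$ by Lemma~\ref{l:psi} applied to the fixed-point relation, so $p_0(\nu_*)\in\{0,p_{cr},1\}$; the value $1$ is excluded because $\nu_*\neq\ddelta_0$, and the value $p_{cr}$ can be excluded by a small perturbation/monotonicity argument (or one can directly extract the $\lcr$-zooming-out witness from $(\nu_*)_0$ using $\theta(p)<p$ just below $p_{cr}$). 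Either way Lemma~\ref{l:out} gives $\lcr\in\Lambda$. But by the Key Lemma~\ref{l:open}, $\Lambda=(\lcr,+\infty)$ is open and does not contain its endpoint $\lcr$ — a contradiction. (Alternatively, and perhaps more cleanly, since $\Lambda$ is open we have $\lcr\notin\Lambda$ directly, hence $\nu_{A,\lcr}=\ddelta_0$; then one shows $\nu_{A,\lambda}\to\nu_{A,\lcr}$ along \emph{any} sequence $\lambda_k\searrow\lcr$ by the same compactness-plus-continuity argument, identifying every subsequential limit as a $\Phi_{A,\lcr}$-fixed point dominated by $\nu_{A,\lcr}=\ddelta_0$.)

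Let me spell out this alternative, which I would actually prefer to present. By the Key Lemma, $\lcr\notin\Lambda$, so $\nu_{A,\lcr}=\ddelta_0$. Take any sequence $\lambda_k\searrow\lcr$; by compactness of probability measures on $[0,A]$, it suffices to show every weak-$*$ subsequential limit $\nu_*$ equals $\ddelta_0$. For such a limit, continuity gives $\nu_*=\Phi_{A,\lcr}[\nu_*]$. On the other hand, for each fixed $n$, the $n$-th iterate $\mu_n^{A,\lambda}$ depends continuously on $\lambda$ (it is a finite composition of continuous maps), so $\mu_n^{A,\lambda_k}\to\mu_n^{A,\lcr}$ as $k\to\infty$; combined with the domination $\nu_{A,\lambda_k}\lo\mu_n^{A,\lambda_k}$ (from Lemma~\ref{l:nu}, the decreasing sequence lies below each of its terms) and the fact that stochastic domination passes to weak-$*$ limits, we get $\nu_*\lo\mu_n^{A,\lcr}$ for every $n$, hence $\nu_*\lo\nu_{A,\lcr}=\ddelta_0$, i.e.\ $\nu_*=\ddelta_0$. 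Since the limit is the same along every subsequence, $\nu_{A,\lambda}\to\ddelta_0$ as $\lambda\searrow\lcr$, as claimed.

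The main obstacle I anticipate is purely bookkeeping around the boundary point $0$ (and the point $\infty$, though here everything is already confined to $[0,A]$): one must make sure that the weak-$*$ convergence really forces the limit's atom at $0$ to be full, and not merely that the limit is $\Phi_{A,\lcr}$-invariant with some intermediate atom at $0$. The cleanest way around this is exactly the domination route above — $\nu_*\lo\mu_n^{A,\lcr}$ for all $n$ and $\mu_n^{A,\lcr}\to\ddelta_0$ — which sidesteps any delicate analysis of $\theta$ at $p_{cr}$ and only uses that stochastic domination is closed under weak-$*$ limits, a standard fact. If one instead wants to avoid invoking the Key Lemma's openness, the contradiction argument in the first two paragraphs works, but then the $p_{cr}$ case genuinely has to be ruled out, which is the only slightly subtle point.
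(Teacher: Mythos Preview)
Your preferred (second) argument is correct and follows essentially the same path as the paper: both start from $\lcr\notin\Lambda$ (hence $\nu_{A,\lcr}=\ddelta_0$), use the domination $\nu_{A,\lambda}\lo\mu_n^{A,\lambda}$ together with the continuity of $\mu_n^{A,\lambda}$ in $\lambda$, and pass to the limit. The paper packages this a touch more directly by observing that $\mu_n^{A,\lambda}$ is $\lo$-monotone in \emph{both} $n$ and $\lambda$, so the double limit $\lim_{n\to\infty,\,\lambda\searrow\lcr}\mu_n^{A,\lambda}$ exists and equals each iterated limit; this bypasses the compactness/subsequence step but is otherwise the same idea. (Your first, contradiction-based route is indeed less clean: the case $p_0(\nu_*)=p_{cr}$ is not a triviality---the paper later spends Lemmas~\ref{l:x-delta} and~\ref{l:comp} on the analogous issue for $\bm$---so you are right to prefer the domination argument.)
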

\begin{proof}
As $\lcr\notin\Lambda$, for any given $A\in\R_+$ we have $\nu_{A,\lcr}=\ddelta_0$. On the other hand, 
for any given~$A\in \R_+$ the family of measures $\mu^{A,\lambda}_n$ is $\lo$-decreasing \emph{both} 
as $n$ tends to~$\infty$ and as $\lambda$ tends to~$\lcr$ from the right. Hence, the double limit 
$\lim_{n\to\infty, \, \lambda \searrow\lcr} \, \mu^{A,\lambda}_n$ exists, and is equal to each of the 
two repeated limits:
\begin{equation}\label{eq:double}
\lim_{n\to\infty, \, \lambda\searrow\lcr} \, \mu^{A,\lambda}_n = 
\lim_{n\to\infty} \lim_{\lambda \searrow\lcr} \mu^{A,\lambda}_n = 
\lim_{\lambda \searrow\lcr} \lim_{n\to\infty} \mu^{A,\lambda}_n.
\end{equation}
Recall that by definition $\mu^{A,\lambda}_n= \Phi_{A,\lambda}^n[\ddelta_{\infty}]$. 
Due to the continuity of the transformation $\Phi_{A,\lambda}[\mu]$ in both $\lambda$ and
$\mu$ (Remark~\ref{r:cont-three}), for any fixed $n$ one has $\mu^{A,\lambda}_n\to \mu^{A,\lcr}_n$ as
$\lambda\searrow\lcr$. Hence, the first of the two repeated limits is equal to $\nu_{A,\lcr}=\ddelta_0$:
\[
\lim_{n\to\infty} \lim_{\lambda \searrow\lcr} \mu^{A,\lambda}_n = \lim_{n\to\infty} \mu^{A,\lcr}_n = \nu_{A,\lcr}
\]
The other repeated limit in~\eqref{eq:double} is 
\[
\lim_{\lambda\searrow\lcr} \lim_{n\to\infty} \mu^{A,\lambda}_n = \lim_{\lambda\searrow\lcr} \nu_{A,\lambda}.
\]
Hence, for any given $A$ we have $\nu_{A,\lambda}\to\ddelta_0$ as $\lambda\searrow\lcr$. 
\end{proof}

Now, let us properly rescale these measures, in order to keep them non-trivial in the (subsequential) limit. 
Namely, consider these measures for the particular choice $A=1$, and take the $\hf$-quantile 
\[
\kappa_{\hf}(\lambda) := \min\left \{x \,:\, \nu_{1,\lambda}([0,x])\ge \tfrac12\right \}.
\]
\begin{rem}
The value $\hf$ is chosen here for simplicity; as the reader will see later, we could replace it by any value 
in the interval $(p_{cr},1)$.
\end{rem}

From Lemma~\ref{l:to-zero} we have
\begin{prop}
\begin{equation}\label{eq:median}
\lim_{\lambda\searrow\lcr}\kappa_{\hf}(\lambda)= 0.
\end{equation} 
\end{prop}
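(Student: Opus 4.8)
The plan is to read off~\eqref{eq:median} directly from Lemma~\ref{l:to-zero}: the weak-$*$ convergence $\nu_{1,\lambda}\to\ddelta_0$ as $\lambda\searrow\lcr$ forces essentially all of the mass of $\nu_{1,\lambda}$ to concentrate near $0$, and in particular drags the $\hf$-quantile down to $0$. First I would fix $\eps>0$ and choose a continuous test function $f\colon[0,\infty]\to[0,1]$ with $f(0)=1$ and $f\equiv 0$ on $[\eps,\infty]$ --- for instance the piecewise-linear ``tent'' supported on $[0,\eps]$. Since $f\le\I_{[0,\eps)}$ pointwise on $[0,\infty]$, one has
\[
\int_{[0,\infty]} f\,d\nu_{1,\lambda}\ \le\ \nu_{1,\lambda}([0,\eps)),
\]
while Lemma~\ref{l:to-zero} gives $\int f\,d\nu_{1,\lambda}\to f(0)=1$ as $\lambda\searrow\lcr$. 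Hence there is $\delta>0$ such that $\nu_{1,\lambda}([0,\eps))>\hf$ for every $\lambda\in(\lcr,\lcr+\delta)$.

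The second step is to turn this mass estimate into a bound on the quantile. For $\lambda$ as above, let $F$ be the distribution function of $\nu_{1,\lambda}$. Then $F(\eps^-)=\nu_{1,\lambda}([0,\eps))>\hf$, so by the definition of the left limit there is a point $x_0\in(0,\eps)$ with $F(x_0)>\hf$; consequently $x_0$ lies in the set $\{x:\nu_{1,\lambda}([0,x])\ge\tfrac12\}$, whence $\kappa_{\hf}(\lambda)\le x_0<\eps$. This shows $\limsup_{\lambda\searrow\lcr}\kappa_{\hf}(\lambda)\le\eps$, and letting $\eps\to 0$ yields $\lim_{\lambda\searrow\lcr}\kappa_{\hf}(\lambda)=0$, which is exactly~\eqref{eq:median}.

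I do not expect a genuine obstacle here: the statement is a soft consequence of the portmanteau-type description of weak-$*$ convergence, once Lemma~\ref{l:to-zero} is in hand. The only point deserving a moment of care is the passage from the test-function inequality to the quantile estimate --- that is, correctly handling the half-open interval $[0,\eps)$ together with the monotonicity and right-continuity of $F$ --- but this is entirely routine.
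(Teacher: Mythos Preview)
Your proof is correct and follows essentially the same route as the paper: both arguments derive~\eqref{eq:median} directly from Lemma~\ref{l:to-zero} by noting that weak-$*$ convergence $\nu_{1,\lambda}\to\ddelta_0$ forces $\nu_{1,\lambda}([0,\eps])\to 1$ (or at least eventually $>\tfrac12$), which pins the $\tfrac12$-quantile below~$\eps$. Your version is simply more explicit about the portmanteau step, using a continuous test function to justify the mass estimate that the paper states in one line.
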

\begin{proof}
For any $\eps>0$, the weak convergence~\eqref{eq:collapse} implies that $\nu_{1,\lambda}([0,\eps])\to 1$ 
as $\lambda\searrow \lcr$, and hence~$\kappa_{\hf}(\lambda)<\eps$ for all $\lambda$ in some right 
neighbourhood of $\lcr$.
\end{proof}

The absence of atoms for $\nu_{1,\lambda}$ on $[0,1)$ implies that $\kappa_{\hf}(\lambda)>0$ for 
any $\lambda$. Therefore, once $\kappa_{\hf}(\lambda)<1$ (which holds in some right neighbourhood 
of~$\lcr$), we have the equality $\nu_{1,\lambda}([0,\kappa_{\hf}(\lambda)])=\hf$. 

Take $A(\lambda):=\frac{1}{\kappa_{\hf}(\lambda)}$; the scaling relation~\eqref{eq:scaling} implies 
that for $\lambda$ in the same right neighbourhood of~$\lcr$,
\begin{equation}\label{eq:1-med}
\nu_{A(\lambda),\lambda}([0,1]) = \nu_{1,\lambda} ([0,\tfrac{1}{A(\lambda)}]) = 
\nu_{1,\lambda} ([0,\kappa_{\hf}(\lambda))]) = \tfrac12,
\end{equation}
while~\eqref{eq:median} implies that $A(\lambda)\to +\infty$ as $\lambda\searrow \lcr$. 

\smallskip

The family $\nu_{A(\lambda),\lambda}$ is a family of probability measures on the compactified 
half-line $[0,\infty]$, hence there exists a convergent subsequence $\nu_{A(\lambda_j),\lambda_j}$ 
for some subsequence $\lambda_j\searrow \lcr$. 

\begin{rem}
Note that this family is no longer $\lo$-monotone, as increasing $A$ and decreasing  $\lambda$ lead 
to $\lo$-inequalities in opposite directions; apparently, there is no easy and direct way to prove 
the convergence of the whole family $\left \{\nu_{A(\lambda),\lambda}\right \}_{\lambda}$.
\end{rem}

We conclude the first step of this proof with the following Lemma.
\begin{lem}\label{l:subseq-diagonal}
Any subsequential limit $\bm=\lim_{j\to \infty} \nu_{A(\lambda_j),\lambda_j}$, with $\lambda_j\searrow \lcr$, 
is $\Pc$-stationary:
\[
\bm=\Pc[\bm].
\]
\end{lem}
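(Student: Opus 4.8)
The plan is to show that $\bm$ is a fixed point of $\Pc$ by combining the near-stationarity of the cut-off measures $\nu_{A(\lambda_j),\lambda_j}$ with a diagonal argument that lets the cut-off escape to infinity while $\lambda_j$ descends to $\lcr$. First I would recall that each $\nu_{A(\lambda_j),\lambda_j}$ is exactly $\Phi_{A(\lambda_j),\lambda_j}$-stationary, so
\[
\nu_{A(\lambda_j),\lambda_j}=\Phi_{A(\lambda_j),\lambda_j}\left[\nu_{A(\lambda_j),\lambda_j}\right].
\]
Letting $j\to\infty$, the left-hand side converges to $\bm$ by hypothesis. For the right-hand side I want to invoke the joint continuity of $(A,\lambda,\mu)\mapsto \Phi_{A,\lambda}[\mu]$ in all three variables on $[0,\infty]\times\R_+\times\cP$ from Remark~\ref{r:cont-three}, with the crucial point that $A=\infty$ corresponds precisely to $\Phi_{\infty,\lambda}=\Phi_{\lambda}$. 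Since $A(\lambda_j)\to+\infty$, $\lambda_j\to\lcr$ and $\nu_{A(\lambda_j),\lambda_j}\to\bm$, the triple $(A(\lambda_j),\lambda_j,\nu_{A(\lambda_j),\lambda_j})$ converges to $(\infty,\lcr,\bm)$ in $[0,\infty]\times\R_+\times\cP$, and joint continuity gives
\[
\Phi_{A(\lambda_j),\lambda_j}\left[\nu_{A(\lambda_j),\lambda_j}\right]\longrightarrow \Phi_{\infty,\lcr}[\bm]=\Pc[\bm].
\]
Equating the two limits yields $\bm=\Pc[\bm]$.

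The key steps in order are therefore: (i) write down the exact fixed-point identity for each cut-off measure; (ii) observe that the relevant triples converge in the compactified parameter space, using that $A(\lambda_j)\to\infty$ as established just before the lemma and that $\lambda_j\searrow\lcr$ by the choice of subsequence; (iii) apply the joint continuity statement of Remark~\ref{r:cont-three}, being careful that the limiting cut-off value $\infty$ is an allowed point and that it degenerates the cut-off operator to the genuine operator $\Phi_{\lcr}=\Pc$; (iv) conclude by uniqueness of weak-$*$ limits. It is worth noting explicitly that the normalization $\nu_{A(\lambda),\lambda}([0,1])=\tfrac12$ from~\eqref{eq:1-med} is only needed to guarantee that the subsequential limit $\bm$ is \emph{non-trivial} (in particular not $\ddelta_0$ nor $\ddelta_\infty$), and does not enter the stationarity argument itself; however I would include the remark that this normalization passes to the limit along the subsequence, so $\bm([0,1])=\tfrac12$ and hence $\bm\neq\ddelta_0,\ddelta_\infty$ — this is what makes the statement a genuine existence result rather than a vacuous one.

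The main obstacle is making sure the continuity of $\Phi_{A,\lambda}[\mu]$ at the boundary value $A=\infty$ is used correctly: one must check that the half-line is compactified consistently (so that $\min(\,\cdot\,,A)$ converges pointwise to the identity on $[0,\infty]$ as $A\to\infty$, uniformly enough for weak-$*$ convergence), and that the operator $\Phi_\lambda$ itself is continuous at $\lambda=\lcr$, both of which are granted by Remark~\ref{r:cont-three}. A secondary subtlety is that the family $\{\nu_{A(\lambda),\lambda}\}$ is not $\lo$-monotone (as already flagged in the Remark preceding the lemma), so one genuinely needs to pass to a subsequence to get convergence of $\nu_{A(\lambda_j),\lambda_j}$ itself; but compactness of $\cP$ in the weak-$*$ topology on $[0,\infty]$ takes care of this, and the lemma is stated for an arbitrary subsequential limit precisely to sidestep the question of convergence of the whole family. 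Once these continuity points are in place, the proof is a short two-line limit computation.
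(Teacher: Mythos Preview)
Your proof is correct and essentially identical to the paper's: both write the fixed-point identity $\nu_{A(\lambda_j),\lambda_j}=\Phi_{A(\lambda_j),\lambda_j}[\nu_{A(\lambda_j),\lambda_j}]$ and pass to the limit using the joint continuity of $(A,\lambda,\mu)\mapsto\Phi_{A,\lambda}[\mu]$ at $(\infty,\lcr,\bm)$ from Remark~\ref{r:cont-three}. One small caveat on your side remark: the claim that the normalization $\bm([0,1])=\tfrac12$ passes to the weak limit requires knowing that $\bm$ has no atom at $1$, which the paper justifies \emph{after} the lemma by first using stationarity and the non-atomicity of $m$ to rule out atoms on $(0,+\infty)$.
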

\begin{proof}
Recall that for any $\lambda$ the measure $\nu_{A(\lambda),\lambda}$ is $\Phi_{A(\lambda),\lambda}$-stationary
\begin{equation}\label{eq:nu-stat}
\nu_{A(\lambda),\lambda}=\Phi_{A(\lambda),\lambda}[\nu_{A(\lambda),\lambda}].
\end{equation}
Passing in~\eqref{eq:nu-stat} to the limit along the subsequence $\lambda_j$, and recalling that the 
operator on the right hand side is continuous in all the three arguments (Remark~\ref{r:cont-three}), we have 
\[
\bm=\Phi_{\infty,\lcr}[\bm]=\Phi_{\lcr}[\bm].
\]
That is, the measure $\bm$ is $\Phi_{\lcr}$-stationary. 
\end{proof}

Now, the measure $m$ is non-atomic, and hence the measure $\bm$ has no atoms on $(0,+\infty)$ due to 
its $\Phi_{\lcr}$-stationarity. Hence, the relation~\eqref{eq:1-med} gives us $\bm([0,1])=\hf$ when 
passing to the weak limit. This shows the non-triviality of~$\bm$.

\paragraph{Stage 2:} \textit{The constructed non-trivial stationary measure has no atoms.}
As we have just noticed, the measure $\bm$ has no atoms on $(0,+\infty)$, so in order to conclude the
proof of Theorem~\ref{t:stat}, we have to show that the constructed measure $\bm$ is supported on 
$(0,+\infty)$ (and does not charge neither $0$ nor~$\infty$).

Lemma~\ref{l:psi} implies that both $p_0(\bm)$ and $1-p_{\infty}(\bm)$ (\emph{i.e.}~the probabilities 
of  zero and finite lengths respectively) are fixed points of the map~$\theta$, and so belong to 
$\{0,p_{cr},1\}$. The two inequalities
\[
\begin{cases}
p_0(\bm)\le \bm([0,1]) \le 1-p_{\infty}(\bm), \\
\bm([0,1])=\frac{1}{2}>p_{cr},
\end{cases}
\]
imply $1-p_{\infty}=1$ and hence the measure $\bm$ does not charge $+\infty$. 

\smallskip

More involved is to prove that the measure $\bm$ does not charge the point~$0$. The above argument implies 
that there are two possible values for $p_0(\bm)$: either $p_0(\bm)=0$ or $p_0(\bm)=p_{cr}$. We shall assume 
for the rest of this proof 
that~$p_0(\bm)=p_{cr}$, and try to get a contradiction.

Under this standing assumption we can decompose the probability law $\bm$ according to the convex 
combination~\eqref{eq:convex-combination}: 
\[
\bm=(1-p_{cr})\,\bm_0+p_{cr}\,\ddelta_0\,.
\]

\smallskip

We want to find a measure $\mu$ which $\lcr$-zooms out, contradicting the Key Lemma. We will construct 
the measure $\mu$ in two steps:
\begin{enumerate}[i)]
\item We reduce the weight of the atom at zero: that is, we consider a family of measures
\[\bm_p=(1-p)\,\bm_0+p\,\ddelta_0.\]
We prove then that for sufficiently small $p$, this measure satisfies the condition $\bm_p\lo \Pc[\bm_p]$. 
Moreover, we obtain here a stronger strict inequality for their partition functions (see Lemma~\ref{l:x-delta} below):
\beqn{pcr2}{
F_{\bm_p}(x)> F_{\Pc [\bm_p]}(x)\quad\text{for every }x\in [0,+\infty).}
In particular, for any $x\in \R_+$, there exists some $\delta=\delta(x)$ such that
\begin{equation}\label{eq:weak-ineq}
F_{\Pc[\bm_p]}(y)+\delta<F_{\bm_p}(y)\quad \textrm{for every }y\in [0,x].
\end{equation}
If the measure $\bm_p$ was supported on some finite interval, this would immediately mean that $\bm_p$ zooms out; 
as it is not, we have to modify it to make it compactly supported (while not losing the inequality $\bm_p\lo\Pc[\bm_p]$ 
during this process). From the inequality~\eqref{eq:weak-ineq} one can figure out why such a modification can be done.
\item We  modify the measure $\bm_p$ ``near infinity'', so that it becomes compactly supported, without destroying 
the inequality~\eqref{eq:pcr2}. The problem here is that the partition functions $F_{\bm_p}$ and its image are 
approaching each other at infinity, so there is no immediate cut-off-like argument. Such a modification is given 
by Lemma~\ref{l:comp} below, and it provides us with a $\lcr$-zooming out measure and hence with the desired 
contradiction.
\end{enumerate}

Following the road-map above, let us decompose the $\Pc$-image of the measure $\bm$ into several components. 
Namely  we have the following three possibilities for the four lengths $X_1,\dots,X_4$:
\begin{enumerate}[(a)]
\item There is an $\In\Out$-path of zero length. This happens exactly with probability 
$\theta(p_{cr})=p_{cr}$.
\item All of them are non-zero. This happens with probability $(1-p_{cr})^4$ and 
conditionally on this, the law of $R_{\lcr}(X_1,\dots,X_4;\xi)$ is the $\Pc$-image of the conditional measure~$\bm_0$.
\item There is no $\In\Out$-path of zero length but there is at least one collapsed length.
Conditionally on this case, the law of $R_{\lcr}(X_1,\dots,X_4;\xi)$ is given by the law 
of~$\lcr\xi\cdot \min (X'_1,X'_2)$ where the independent random variables $X'_1$, $X'_2$ 
are both distributed with respect to~$\bm_0$.
\end{enumerate}
The last case suggests to define an operator $\Phi'_{\lambda}$ on $\cP_0$ ($\lambda>0$), assigning to 
any measure $\mu_0\in \cP_0$ the law of $\lambda\xi\cdot \min (X_1,X_2)$, where the independent random 
variables $X_1$, $X_2$ are both distributed with respect to~$\mu_0$.

Immediately, we observe:
\begin{lem}\label{l:Pc-prime}
\[
\Pc[\mu_0]\lo \Phi'_{\lcr}[\mu_0]
\]
for any probability measure $\mu_0\in \cP_0$.
\end{lem}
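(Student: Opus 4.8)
The plan is to reduce the statement to an elementary pointwise comparison carried out on a single probability space, exactly as in the coupling argument already used in the proof of Lemma~\ref{lem:order}; there is essentially no obstacle here. Fix $\mu_0\in\cP_0$ and take four i.i.d.\ random variables $X_1,X_2,X_3,X_4$ of law $\mu_0$ together with an independent $\xi$ of law $m$, all defined on one probability space. By definition the variable $Z:=R_{\lcr}(X_1,X_2,X_3,X_4;\xi)=\lcr\,\xi\,\bigl(\min(X_1,X_2)+\min(X_3,X_4)\bigr)$ has law $\Pc[\mu_0]$, while $Z':=\lcr\,\xi\,\min(X_1,X_2)$ has law $\Phi'_{\lcr}[\mu_0]$, since $X_1$ and $X_2$ are i.i.d.\ of law $\mu_0$ and $\xi$ is independent of them (the variables $X_3,X_4$ play no role in $Z'$ and may simply be forgotten). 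As $\min(X_3,X_4)\ge 0$ we get $Z\ge Z'$ almost surely, so the pair $(Z',Z)$ is a coupling of $\Phi'_{\lcr}[\mu_0]$ and $\Pc[\mu_0]$ in which the first coordinate is dominated by the second; this is the asserted stochastic ordering between $\Phi'_{\lcr}[\mu_0]$ and $\Pc[\mu_0]$. Equivalently, for every $x\in[0,\infty]$ the inclusion of events $\{Z\le x\}\subseteq\{Z'\le x\}$ gives $F_{\Pc[\mu_0]}(x)\le F_{\Phi'_{\lcr}[\mu_0]}(x)$, which by the characterisation of $\lo$ through reversed distribution functions recalled above is the same conclusion.

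Two remarks, neither needed for the proof itself. First, since $\mu_0$ has no atom at $0$ one has $\min(X_3,X_4)>0$ almost surely, so the domination is in fact strict on $(0,\infty)$ on the event $\{\xi>0\}$; only the non-strict form is used later. Second, it is worth keeping in mind why this direction is the useful one: combined with the three-case decomposition (a)--(c) of $\Pc[\bm]$ set up above, it allows one to bound from below the case-(b) contribution $\Pc[\bm_0]$ by the case-(c) law $\Phi'_{\lcr}[\bm_0]$, and that is precisely the inequality feeding into the relation $\bm_0\go\Phi'_{\lcr}[\bm_0]$ used to build the $\lcr$-zooming-out measure in Stage~2.
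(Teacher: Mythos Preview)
Your argument is correct and is essentially the paper's own proof: both construct the same coupling on a single probability space, set $Z=\lcr\xi\bigl(\min(X_1,X_2)+\min(X_3,X_4)\bigr)$ and $Z'=\lcr\xi\min(X_1,X_2)$, and conclude from the pointwise inequality $Z\ge Z'$. (Both your proof and the paper's establish the direction $\Phi'_{\lcr}[\mu_0]\lo\Pc[\mu_0]$, which is exactly what is used afterwards in the chain $\Pc'[\bm_0]\lo\bm_0\lo\Pc[\bm_0]$; the displayed relation in the lemma statement has the $\lo$ symbol reversed.)
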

\begin{proof}
Indeed, this means geometrically that we have a longer path if we 
have to pass along two edges instead of one only. More formally, we can use a coupling 
argument: we  draw four i.i.d.~random variables $X_1,\ldots,X_4$ distributed 
with respect to $\mu_0$
and $\xi$ distributed with respect to~$m$ and independent 
of the previous variables, defined on a probability space $(\Omega,\mathbf{P})$. Then there 
is an evident inequality
\[
\lambda\,(\min(X_1(\omega),X_2(\omega))+\min(X_3(\omega),X_4(\omega) ))
\ge \lambda\,\min(X_1(\omega),X_2(\omega))
\]
for almost every $\omega\in (\Omega,\mathbf{P})$ and hence
\[
\lambda\xi(\omega)\,(\min(X_1(\omega),X_2(\omega))+\min(X_3(\omega),X_4(\omega) ))
\ge \lambda\xi(\omega)\,\min(X_1(\omega),X_2(\omega)).
\]
\end{proof}

We concentrate the remaining arguments for the first step in the next Lemma.
\begin{lem}\label{l:x-delta}
Assume that 
\[\bm=p_{cr}\, \ddelta_0 + (1-p_{cr})\, \bm_0\]
is $\Pc$-invariant, and let $\bm_p:=p\,\ddelta_0+(1-p)\,\bm_0$. 
Then, for any sufficiently small $p$, we have~$\bm_p\lo \Pc[\bm_p]$, and moreover there is a strict inequality 
for the partition functions, bounded away from zero on compact intervals: for any $x\in \R_+$ there exists 
$\delta=\delta(x)>0$ such that
\[
F_{\Pc[\bm_p]}(y)+\delta<F_{\bm_p}(y)\quad \text{for every }y\in [0,x].
\]
\end{lem}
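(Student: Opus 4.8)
The plan is to exploit the decomposition of the $\Pc$-image of $\bm$ into the three cases (a), (b), (c) listed above, together with the stationarity $\bm=\Pc[\bm]$, to write down an explicit formula for $F_{\Pc[\bm_p]}$ as a function of $p$, and then to compare it term-by-term with $F_{\bm_p}$. Concretely, for a measure $\bm_q = q\,\ddelta_0+(1-q)\,\bm_0$, the law of $R_{\lcr}(X_1,\dots,X_4;\xi)$ (the four $X_i$ i.i.d.\ $\bm_q$) splits according to how many of the four $X_i$ vanish. Writing $\vf=1-p_{\infty}$ is irrelevant here; the relevant bookkeeping is: with probability $\theta(q)$ there is a zero-length $\In\Out$-path, contributing an atom $\theta(q)\,\ddelta_0$; otherwise, conditionally, the law is a mixture of the $\Pc$-image of $\bm_0$ (no $X_i$ is zero, weight $(1-q)^4$) and of laws of the form $\lcr\xi\cdot\min(X'_1,X'_2)$ with $X'_j\sim\bm_0$ (at least one $X_i$ zero but no full zero-path). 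So $\Pc[\bm_q]$ has an atom of mass exactly $\theta(q)$ at $0$, and its restriction to $(0,+\infty]$ is an explicit convex combination, with coefficients that are polynomials in $q$, of $\Pc[\bm_0]$ and of a fixed measure $\mu_{\mathrm{col}}$ built from $\Phi'_{\lcr}[\bm_0]$. Setting $q=p_{cr}$ and using $\Pc[\bm]=\bm$ pins down $\Pc[\bm_0]$ in terms of $\bm_0$ and $\mu_{\mathrm{col}}$; in particular, by Lemma~\ref{l:Pc-prime}, the ``collapsed'' contribution $\mu_{\mathrm{col}}$ stochastically dominates (pieces of) $\bm_0$, which is the source of the strict inequality we want.

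Next I would plug in $q=p$ and compare $F_{\Pc[\bm_p]}$ with $F_{\bm_p}=p\,\I_{x\ge 0}+(1-p)F_{\bm_0}$ pointwise on $[0,+\infty)$. At $x=0$ (more precisely on the atom at $0$), the mass of $\Pc[\bm_p]$ is $\theta(p)$ while that of $\bm_p$ is $p$; since $0$ is a super-attracting fixed point of $\theta$ and $p<p_{cr}$, we have $\theta(p)<p$ with a definite gap, so $F_{\Pc[\bm_p]}(0)<F_{\bm_p}(0)$. For $x>0$, I expect the difference $F_{\bm_p}(x)-F_{\Pc[\bm_p]}(x)$ to be, after using the $q=p_{cr}$ identity to eliminate $\Pc[\bm_0]$, a sum of: (i) a positive term of order $(p_{cr}-p)$ coming from the shrinking of the zero-path probability, and (ii) terms controlled by $F_{\bm_0}-F_{\Phi'_{\lcr}[\bm_0]}\ge 0$ (Lemma~\ref{l:Pc-prime} in distribution-function form) with coefficients that, at $p=p_{cr}$, reproduce the equality $F_{\bm}=F_{\Pc[\bm]}$ and, for $p<p_{cr}$, are perturbed in the favourable direction. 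The continuity/monotonicity in $p$ near $p_{cr}$ — the coefficients are polynomials — ensures that for $p$ slightly below $p_{cr}$ all the ``error'' terms (those that could have the wrong sign) are $O(p_{cr}-p)$ and are dominated by the strictly positive leading term; this yields $F_{\bm_p}(x)>F_{\Pc[\bm_p]}(x)$ for every $x\in[0,+\infty)$, i.e.\ $\bm_p\lo\Pc[\bm_p]$ together with the strict inequality~\eqref{eq:pcr2}.

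Finally, the ``bounded away from zero on compacts'' clause is then a soft consequence: the function $x\mapsto F_{\bm_p}(x)-F_{\Pc[\bm_p]}(x)$ is the difference of two non-increasing-in-the-complement (i.e.\ non-decreasing) càdlàg functions, hence upper semicontinuous from the appropriate side, it is strictly positive on the compact $[0,x]$, and $F_{\bm_p}$ has no atoms on $(0,+\infty)$ while the only atom of $F_{\Pc[\bm_p]}$ is at $0$ where we already have a strict gap; so its infimum over $[0,x]$ is attained and positive, giving the desired $\delta(x)>0$. The main obstacle I anticipate is purely bookkeeping: correctly identifying the conditional law in case (c) as a genuine (possibly non-symmetric) mixture of $\min$ of two $\bm_0$-variables — one must check that when exactly one, two, or three of the $X_i$ vanish but no full $\In\Out$-path of zeros is formed, the resulting $R_{\lcr}$ always reduces to $\lcr\xi\cdot\min(X'_1,X'_2)$ with $X'_j\sim\bm_0$ — and then tracking the polynomial-in-$p$ coefficients carefully enough to see that the sign at $p=p_{cr}$ (equality, by stationarity) tips strictly the right way for $p<p_{cr}$. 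Getting the combinatorics of the figure-eight graph right in case (c), and confirming the coefficient of the leading $(p_{cr}-p)$ term is genuinely positive uniformly on compacts, is where I would be most careful.
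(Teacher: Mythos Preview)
Your route differs from the paper's. The paper sends $p\to 0$ (not $p\nearrow p_{cr}$) and observes that the weight $q(p)=\dfrac{(1-p)^4}{1-\theta(p)}$ in the decomposition
\[
\Pc[\bm_p]=\theta(p)\,\ddelta_0+(1-\theta(p))\bigl((1-q(p))\,\Phi'_{\lcr}[\bm_0]+q(p)\,\Pc[\bm_0]\bigr)
\]
tends to $1>q(p_{cr})$. Since $\Phi'_{\lcr}[\bm_0]\lo\bm_0\lo\Pc[\bm_0]$ and $\bm_0$ is exactly this convex combination at weight $q(p_{cr})$, increasing the weight on $\Pc[\bm_0]$ gives $\Pc[\bm_p]\go\bm_{\theta(p)}\go\bm_p$; the explicit bound $F_{\bm_p}(y)-F_{\Pc[\bm_p]}(y)\ge(p-\theta(p))\bigl(1-F_{\bm_0}(y)\bigr)$ then yields $\delta(x)$ directly, with no compactness/semicontinuity step.

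Your plan is workable, but step~(ii) has a sign error: Lemma~\ref{l:Pc-prime} (whose displayed inequality is a typo; its proof shows $\Pc[\bm_0]\go\Phi'_{\lcr}[\bm_0]$) together with the convex-combination identity gives $\Phi'_{\lcr}[\bm_0]\lo\bm_0$, hence $F_{\bm_0}-F_{\Phi'_{\lcr}[\bm_0]}\le 0$, not $\ge 0$. This does not kill the argument; it reverses its mechanism. If you actually carry out the elimination of $F_{\Pc[\bm_0]}$ via $F_{\bm_0}=(1-q_c)\,F_{\Phi'_{\lcr}[\bm_0]}+q_c\,F_{\Pc[\bm_0]}$ with $q_c=(1-p_{cr})^3$, you obtain for every $p\in(0,p_{cr})$
\[
F_{\bm_p}-F_{\Pc[\bm_p]}=(p-\theta(p))\bigl(1-F_{\Phi'_{\lcr}[\bm_0]}\bigr)+\Bigl[(1-p)-\tfrac{(1-p)^4}{(1-p_{cr})^3}\Bigr]\bigl(F_{\bm_0}-F_{\Phi'_{\lcr}[\bm_0]}\bigr),
\]
and \emph{both} summands are nonnegative: the second is a product of two nonpositive factors. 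So no ``leading term dominates error'' balancing is needed, and the conclusion in fact holds for every $p\in(0,p_{cr})$, not only near $p_{cr}$; the $\delta(x)$ clause follows from the first summand alone, with $\delta(x)=(p-\theta(p))\bigl(1-F_{\Phi'_{\lcr}[\bm_0]}(x)\bigr)$. Finally, your soft argument for the last clause is not quite right as stated (the difference of two c\`adl\`ag functions need not be semicontinuous); use the explicit bound just derived, or the genuine continuity of both $F_{\bm_p}$ and $F_{\Pc[\bm_p]}$ on $(0,\infty)$ together with the explicit gap $p-\theta(p)$ at $0$.
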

\begin{proof}
The analysis of the different possibilities (a-c) shows the equality
\beqn{pcr}{
\Pc[\bm_p]
=\theta(p)\,\ddelta_0
+(1-p)^4\,\Pc[\bm_0]
+(1-\theta(p)-(1-p)^4)\,\Phi'_{\lcr}[\bm_0].
}
Let us rewrite~\eqref{eq:pcr} in the following way:
\beqn{pcr1}{
\Pc[\bm_p]=\theta(p)\,\ddelta_0 + (1-\theta(p))\cdot \left( (1-q(p))\, \Pc'[\bm_0]+q(p)\,\Pc[\bm_0] \right),
}
with $q(p)=\frac{(1-p)^4}{1-\theta(p)}$. The stationarity of $\bm=\bm_{p_{cr}}$ then implies
\[
p_{cr}\,\ddelta_0+ (1-p_{cr})\bm_0= 
\theta(p_{cr})\,\ddelta_0 + (1-\theta(p_{cr}))\cdot \left( (1-q(p_{cr})) \,\Pc'[\bm_0]+q(p_{cr})\,\Pc[\bm_0] \right),
\]
and as $p_{cr}=\theta(p_{cr})$, we have the convex combination
\beqn{convexbm0}{
\bm_0=(1-q(p_{cr})) \,\Pc'[\bm_0]+q(p_{cr})\,\Pc[\bm_0].
}
Due to Lemma~\ref{l:Pc-prime}, we have
\[
\Pc'[\bm_0]\lo \bm_0\lo \Pc[\bm_0].
\]
Now, using the expression \eqref{eq:convexbm0}, we see that $q(p)\to 1>q(p_{cr})$ as $p\to 0$; hence, for any 
sufficiently small $p$ we have 
\beqn{pcr3}{
\bm_0 \lo  (1-q(p))\, \Pc'[\bm_0]+q(p)\,\Pc[\bm_0],
}
and thus, using \eqref{eq:pcr1} and \eqref{eq:pcr3},
\begin{equation}\label{eq:p-theta}
\Pc[\bm_p]  \go \theta(p)\,\ddelta_0 + (1-\theta(p))\, \bm_0 = \bm_{\theta(p)}.
\end{equation}

As $\theta(p)<p$ for $0<p<p_{cr}$, we have $\bm_p\lo \bm_{\theta(p)} \lo \Pc[\bm_p]$ for all sufficiently small $p$. 
Moreover, for any sufficiently small $p$, using~\eqref{eq:p-theta}, we have for any $x\in \R_+$ 
\begin{align*}
\inf_{y\in[0,x]} (F_{\bm_p}(y) - F_{\Pc[\bm_p]}(y)) \ge\,
&
\inf_{y\in [0,x]} (p-\theta(p)) \cdot (F_{\ddelta_0}(y) - F_{\bm_0} (y)) \\
=\,& (p-\theta(p)) \cdot (1- F_{\bm_0}(x)) =:\delta(x)>0.
\end{align*}
This concludes the proof of the lemma.
\end{proof}

\smallskip

Let us fix $p>0$ given by Lemma~\ref{l:x-delta}, and let $\mu:=\bm_p$. The second step of the proof is  to 
modify~$\mu$ so that it becomes compactly supported, without destroying the inequality $\mu\lo\Pc[\mu]$, 
and thus to obtain a $\lcr$-zooming out measure. The following construction allows to do so.

\begin{lem}\label{l:comp}
Let $\mu\neq\ddelta_0$ be a probability measure on $[0,+\infty)$ with no atoms on $(0,+\infty)$ such that 
$\mu\lo \Phi_{\lambda}[\mu]$ for some $\lambda>0$. Let $X$ be a random variable (defined on some probability 
space~$(\Omega,\mathbf{P})$) distributed according to~$\mu$ and denote by $x_1$ its $\tfrac34$-quantile. 
Assume that there exists $\delta>0$ such that
\[F_{\Phi_{\lambda}[\mu]}(y)+\delta<F_{\mu}(y) \quad \text{for all }y\in [0,x_1],\]
where $F_{\Phi_{\lambda}[\mu]}$ and $F_{\mu}$ stand for the partition functions of $\Phi_{\lambda}[\mu]$ 
and $\mu$ respectively.

Take $x_2$ to be the $(1-\delta)$-quantile of $\mu$, and take the measure $\wbm$ to be the law of the 
random variable $\wX$ defined (on the same probability space) as
\begin{equation}\label{eq:tX}
\widetilde{X}(\omega)=
\begin{cases}
X(\omega) & \textrm{if }X(\omega)\le x_2,\\
x_1 & \textrm{otherwise.} 
\end{cases}
\end{equation}
Then the measure $\wbm$ is $\lambda$-zooming out:
$\Phi_{\lambda}[\widetilde{\mu}]\go \widetilde{\mu}$. 
\end{lem}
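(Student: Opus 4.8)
The plan is to exploit the fact that the cut-off at the level of the random variable $\wX$ replaces large values by the specific value $x_1$, and that the quantile $x_2$ is so far to the right that the truncation event is rare (probability at most $\delta$). First I would compare the partition functions $F_{\wbm}$ and $F_\mu$ directly. By construction $\wX \le x_1$ whenever $X > x_2$, and $\wX = X$ otherwise, so $\wX \le X$ pointwise, giving $F_{\wbm} \ge F_\mu$; more precisely, for $y < x_1$ we have $F_{\wbm}(y) = F_\mu(y)$ (truncated mass lands at $x_1 > y$), for $x_1 \le y < x_2$ we have $F_{\wbm}(y) = F_\mu(x_2^-) \le 1$ but in any case $F_{\wbm}(y)$ differs from $F_\mu(y)$ by at most the truncated mass $\mathbf{P}(X > x_2) \le \delta$, and for $y \ge x_2$ the two functions agree and equal $1$ (so $\wbm$ is supported on $[0,x_2]$, hence compactly supported). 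The upshot I want to record is the two-sided estimate $F_\mu(y) \le F_{\wbm}(y) \le F_\mu(y) + \delta$ for all $y$, together with $F_{\wbm} \equiv 1$ on $[x_2,\infty)$.

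Next I would use monotonicity of $\Phi_\lambda$ (Lemma~\ref{lem:order}) on the inequality $F_{\wbm} \ge F_\mu$, which gives $\Phi_\lambda[\wbm] \lo \Phi_\lambda[\mu]$, i.e.\ $F_{\Phi_\lambda[\wbm]} \le F_{\Phi_\lambda[\mu]}$ pointwise. Combining this with the hypothesis $F_{\Phi_\lambda[\mu]}(y) + \delta < F_\mu(y)$ on $[0,x_1]$ and with $F_\mu \le F_{\wbm}$, we obtain on the interval $[0,x_1]$ the chain
\[
F_{\Phi_\lambda[\wbm]}(y) \le F_{\Phi_\lambda[\mu]}(y) < F_\mu(y) - \delta \le F_{\wbm}(y) - \delta < F_{\wbm}(y).
\]
So the desired inequality $F_{\Phi_\lambda[\wbm]} \le F_{\wbm}$ holds, with room to spare, for $y \le x_1$. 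It remains to treat $y > x_1$.

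The main obstacle is exactly this second range $y \in (x_1, \infty)$, where the hypothesis gives nothing and where $F_{\wbm}$ is no longer pinned to $F_\mu$. Here the idea is that $x_1$ was chosen as the $\tfrac34$-quantile, so $F_{\wbm}(y) \ge F_\mu(x_1) \ge \tfrac34$ for all $y \ge x_1$ (using $F_{\wbm} \ge F_\mu$ and monotonicity of $F_{\wbm}$), whereas $\Phi_\lambda[\wbm]$, being obtained by the $(\min,+)$-glueing of four independent copies of $\wbm$, has a mass near zero controlled by $\theta$: by Lemma~\ref{l:psi}, $p_0(\Phi_\lambda[\wbm]) = \theta(p_0(\wbm)) = \theta(0) = 0$, and more usefully the whole lower tail is pushed to the right because each of $\min(X_1,X_2)$ and $\min(X_3,X_4)$ is stochastically below the sum. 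Concretely I would argue that $F_{\Phi_\lambda[\wbm]}(y)$ stays below $\tfrac34$ for all $y$ up to some threshold: since $\wbm$ is supported on $[0,x_2]$ with $F_{\wbm}(x_1) \ge \tfrac34$, one can estimate the probability that $R_\lambda(\wX_1,\dots,\wX_4;\xi) \le y$ in terms of $\theta$ evaluated at $F_{\wbm}(y/(2\lambda\xi))$ or similar, and conclude $F_{\Phi_\lambda[\wbm]}(y) \le \theta(\text{something} \le \tfrac34\text{-type quantity})$; the point is that crossing from the regime $y\le x_1$ (already handled) to large $y$, the function $F_{\wbm}$ has already jumped above $\tfrac34 > p_{cr}$, while $F_{\Phi_\lambda[\wbm]}$ cannot catch up that fast. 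For $y$ beyond $x_2$ there is nothing to prove since $F_{\wbm}(y) = 1 \ge F_{\Phi_\lambda[\wbm]}(y)$. Assembling the three ranges $[0,x_1]$, $(x_1,x_2)$, $[x_2,\infty)$ yields $F_{\Phi_\lambda[\wbm]} \le F_{\wbm}$ everywhere, i.e.\ $\wbm \lo \Phi_\lambda[\wbm]$; since $\wbm \neq \ddelta_0$ (it dominates $\mu \neq \ddelta_0$) and is supported on the bounded interval $[0,x_2]$, it is $\lambda$-zooming out in the sense of Definition~\ref{def:down}. I expect the delicate quantitative bookkeeping in the middle range — making precise the comparison between $F_{\wbm}$ crossing $p_{cr}$ and the slower growth of $F_{\Phi_\lambda[\wbm]}$, and choosing $x_1$ (the $\tfrac34$-quantile, with $\tfrac34 > p_{cr} = \varphi^{-2}$) and $x_2$ consistently — to be where the real work lies.
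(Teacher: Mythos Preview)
Your argument has a fatal sign error at the very start. From $\wbm\lo\mu$ (equivalently $F_{\wbm}\ge F_\mu$) and monotonicity of $\Phi_\lambda$ you get $\Phi_\lambda[\wbm]\lo\Phi_\lambda[\mu]$, which means $F_{\Phi_\lambda[\wbm]}\ge F_{\Phi_\lambda[\mu]}$, not $\le$. So your chain on $[0,x_1]$,
\[
F_{\Phi_\lambda[\wbm]}(y)\le F_{\Phi_\lambda[\mu]}(y)<F_\mu(y)-\delta\le F_{\wbm}(y)-\delta,
\]
fails at the first step. The paper is explicit about this: ``the direction of the latter inequality does not help, so we want to \emph{quantify} it.'' Monotonicity alone cannot close the argument; one must control how much larger $F_{\Phi_\lambda[\wbm]}$ can be than $F_{\Phi_\lambda[\mu]}$.

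The missing idea is a coupling estimate that exploits the parallel-edge structure of the figure eight. Couple $(X_j,\wX_j)$ as in the statement and set $Y=R_\lambda(X_1,\dots,X_4;\xi)$, $\widetilde Y=R_\lambda(\wX_1,\dots,\wX_4;\xi)$. The naive bound $\mathbf{P}(Y\neq\widetilde Y)\le 4\delta$ is too weak. But if $X_j\neq\wX_j$ then both $X_j,\wX_j\ge x_1$, and this edge can only influence the $\In\Out$-distance if its \emph{parallel} edge also has length $\ge x_1$; since $x_1$ is the $\tfrac34$-quantile, that conditional probability is $\tfrac14$. Summing over the four edges gives $\mathbf{P}(Y\neq\widetilde Y)\le 4\cdot\delta\cdot\tfrac14=\delta$, hence $F_{\Phi_\lambda[\wbm]}(y)-F_{\Phi_\lambda[\mu]}(y)\le\delta$ for every $y$. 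With this in hand the three ranges fall out cleanly: on $[0,x_1)$ one has $F_{\wbm}=F_\mu>F_{\Phi_\lambda[\mu]}+\delta\ge F_{\Phi_\lambda[\wbm]}$; on $[x_1,x_2)$ one has $F_{\wbm}=F_\mu+\delta\ge F_{\Phi_\lambda[\mu]}+\delta\ge F_{\Phi_\lambda[\wbm]}$ (here the standing hypothesis $\mu\lo\Phi_\lambda[\mu]$, i.e.\ $F_\mu\ge F_{\Phi_\lambda[\mu]}$, is what is used, not the $\delta$-gap); and on $[x_2,\infty)$, $F_{\wbm}\equiv 1$.

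Your proposed middle-range argument via $\theta$ and $p_{cr}$ is also not salvageable as stated: you cannot have $F_{\Phi_\lambda[\wbm]}(y)\le\tfrac34$ for all $y\in(x_1,x_2)$, since $F_{\Phi_\lambda[\wbm]}(y)\to 1$ as $y\to\infty$ and $x_2$ can be arbitrarily large. The choice of $x_1$ as the $\tfrac34$-quantile is not about comparison with $p_{cr}$; it is precisely calibrated so that the $\tfrac14$ tail probability cancels the factor $4$ in the union bound above.
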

\begin{proof}
We start with the technical observation that $x_2\ge x_1$: indeed, the inequality 
$F_{\Phi_{\lambda}[\mu]}(x_1)> F_{\mu}(x_1)+\delta$ implies that $F_{\mu}(x_1)<1-\delta$. In particular, 
this implies $\wX\le X$ almost surely, and hence~$\wbm\lo\mu$. It is also not difficult to find an 
explicit expression for the partition function of $\wbm$: 
\[
F_{\wbm}(x) = 
\begin{cases} 
F_{\mu}(x) & \text{if }x\in [0,x_1), \\
F_{\mu}(x)+\delta &\text{if }x\in [x_1,x_2), \\
1&\text{if }x\in [x_2,+\infty).
\end{cases}
\]
Indeed, the coupling~\eqref{eq:tX} says that the measure $\mu$ is transformed in the following way: 
all the mass from $[x_2,+\infty)$ is collapsed into a single atom put at the point~$x_1$. 
Thus, the partition function of $\wbm$ coincides with that of $\mu$ on $[0,x_1)$, differs from it by a 
constant on $[x_1,x_2)$, and is identically equal to~$1$ on~$[x_2,+\infty)$; this easily implies the 
above representation.

Since $\widetilde{\mu}\lo \mu$, applying $\Phi_{\lambda}$ gives the relation 
$\Phi_\lambda[\widetilde{\mu}]\lo \Phi_{\lambda}[\mu]$. Though, the direction of the latter inequality 
does not help, so we want to \emph{quantify} it: 
\begin{equation}\label{eq:tX-diff}
F_{\Phi_{\lambda}[\wbm]}(x) -F_{\Phi_{\lambda}[\mu]}(x) \le \delta\quad\text{for every }x\in [0,+\infty).
\end{equation}

To prove such an estimate, take four independent random vectors $(X_i,\tX_i)$, $i=1,\dots, 4$, 
coupling the measures $\mu$ and $\wbm$ in the way described by~\eqref{eq:tX}, as well as a 
random variable~$\xi$, independent of them all. The random variables $R_{\lambda}(X_1,\dots, X_4; \xi)$ 
and $R_{\lambda}(\tX_1,\dots, \tX_4; \xi)$ are distributed according to $\Phi_{\lambda}[\mu]$ 
and $\Phi_{\lambda}[\wbm]$ respectively; in particular, the probability of the event
\beqn{event}{
\left \{R_{\lambda}(X_1,\dots, X_4; \xi) \neq R_{\lambda}(\tX_1,\dots, \tX_4; \xi)\right \}
}
is an upper bound for the left hand side of the inequality in~\eqref{eq:tX-diff}.
A first remark is that, as for any~$i=1,\dots, 4$ one has 
\[
\mathbf{P}(X_i\neq \tX_i) = \mathbf{P}(X_i > x_2) = \delta,
\]
the probability of the event~\eqref{eq:event} does not exceed 
\[
\mathbf{P}(\exists\, i : \, X_i\neq \tX_i) \le \sum_{i=1}^4 \mathbf{P}(X_i \neq \tX_i) = 4\delta.
\]
This estimate is weaker than the one we want to prove (and with such a weaker estimate, the 
arguments at the end of this proof would not work). So we need an additional idea: for the 
shortest $\In\Out$-paths to have different lengths in graphs with edges of length $X_j$'s and 
$\tX_j$'s respectively, not only there should exist $j$ such that $X_j\neq \tX_j$, but also the 
shortest path in one of the graphs should pass through such edge $j$. And as the inequality
$X_j\neq \tX_j$ implies that both these lengths are relatively large (no less than $x_1$ which 
was chosen as the $\tfrac34$-quantile), this is even less likely. 

Formalizing this idea, we note that the event~\eqref{eq:event} is covered by a union of four events of the kind:
``on a given edge $j$ we have $X_j> \tX_j$, and on the edge $i$ parallel to $j$ the associated $X_{i}$-length 
is not smaller than~$x_1$''. Indeed, if the length of the parallel edge is smaller than $x_1$, 
its presence erases both the larger lengths $X_j$ and $\widetilde X_j$.

The probability of each of these events is equal to
\[
\mathbf{P}(X_j\neq \tX_j)\cdot \mathbf{P}(X_i\ge x_1) =  \delta \cdot \tfrac{1}{4} .
\]
Hence, the total probability does not exceed $4\cdot \tfrac{1}{4}  \delta = \delta$,
as claimed. 

Let us now deduce from~\eqref{eq:tX-diff} the conclusion of the lemma. Within the interval $[0,x_1)$ we have 
the chain of inequalities
\[
F_{\wbm}(y)=F_{\mu}(y)>F_{\Phi_\lambda[\mu]}(y)+\delta\ge F_{\Phi_{\lambda}}[\wbm](y)
\]
and similarly on the interval $[x_1,x_2]$ we have 
\[
F_{\wbm}(y)=F_{\mu}(y)+\delta> F_{\Phi_\lambda[\mu]}(y)+\delta\ge F_{\Phi_{\lambda}}[\wbm](y)
\]
Lastly, for $y>x_2$ we have $F_{\wbm}(y)=1>F_{\Phi_{\lambda}[\wbm]}(y)$.

Joining the three
estimates together, we obtain $F_{\wbm}(y)>F_{\Phi_{\lambda}[\widetilde\mu]}(y)$
for all $
y\in\R_+$, hence 
$\wbm\lo \Phi_{\lambda}[\wbm]$.
By its definition, $\wbm$ is supported on a finite interval $[0,x_2]$, so
we conclude that $\wbm$ is a $\lambda$-zooming out probability measure.
\end{proof}

With the previous lemma we have shown that $\lcr$ is a supercritical value, providing us with the desired 
contradiction, that comes from the assumption $p_0(\bm)\neq 0$. We can then conclude that~$p_0(\bm)=\bm(\{0\})$
is equal to~$0$ and hence $\bm$ is non-atomic. This concludes the proof of the theorem.
\end{proof} 

\subsection{Openness of the supercritical set~$\Lambda$}\label{ss:openness}

We prove now the Key Lemma. The main idea here will be the following: assume 
$\lambda\in\Lambda$ and take the measure $\nu_{A,\lambda}$ which is not trivial. Then 
increasing slightly the cut-off value $A$ and iterating this measure, we obtain a measure $\widetilde\nu$ verifying 
$F_{\Phi_{\lambda}[\widetilde\nu]}(x)<F_{\widetilde\nu}(x)$ for every $x\in \R_+$\,.
It is then natural to use the previous strict inequality to find a $(\lambda-\eps)$-zooming out measure.
The main problem on this path is to control the inequality in a neighbourhood of $0$; this will require 
arguments which are analogous to those of Lemma~\ref{l:x-delta}.

\begin{proof}[Proof of Lemma~\ref{l:open}]
Assume $\lambda\in\Lambda$. Our goal is to show that for sufficiently small 
$\eps>0$ we have $(\lambda-\eps)\in\Lambda$, thus establishing the openness 
of~$\Lambda$. 

The measure $\nu_{A,\lambda}$ is a fixed point for the cut-off 
operator $\Phi_{A,\lambda}$ and hence, denoting by $G_{A,\lambda}$ its distribution function, we have
\[
G_{A,\lambda}=\max(\Phi_\lambda[G_{A,\lambda}], \I_{x\ge A}).
\]
Moreover $G_{A,\lambda}\not\equiv 1$ (as~$\nu_{A,\lambda}\neq \ddelta_0$) implies that $\Phi_\lambda[G_{A,\lambda}]$ 
is everywhere strictly less than one (due to the full support assumption for the measure~$m$). 
Hence, $G_{A,\lambda}(x)=\Phi_\lambda[G_{A,\lambda}](x)$ for $x<A$ and 
$G_{A,\lambda}(x)>\Phi_\lambda[G_{A,\lambda}](x)$ for~$x\ge A$ (see the picture on the 
left in Figure~\ref{f:Co}).

Take now $A'>A$ and define $\tG:=\max(\Phi_\lambda[G_{A,\lambda}], \I_{x\ge A'})$ (corresponding to 
$\widetilde\nu = \Phi_{A',\lambda}[\nu_{A,\lambda}]$). Then we have $\tG\le G_{A,\lambda}$ and moreover 
$\tG(x)<G_{A,\lambda}(x)$ for~$x\in (A,A')$. Again due to the full support assumption for the measure~$m$, 
this implies 
\begin{equation}\label{eq:tnu-strict}
\Phi_\lambda[\tG](x)<\Phi_\lambda[G_{A,\lambda}](x) \le \Phi_{A',\lambda}[G_{A,\lambda}](x)= 
\tG(x)\quad\text{for every }x\in \R_+\,.
\end{equation}
\begin{figure}[ht]
\[
\includegraphics[width=\textwidth]{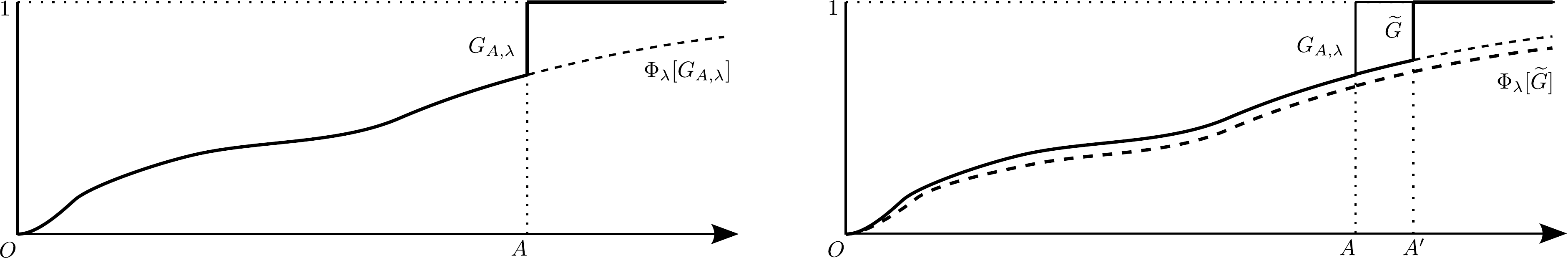}
\]
\caption{The fixed function $G_{A,\lambda}$ for the cut-off operator $\Phi_{A,\lambda}$, 
its modification $\tG$ and their~$\Phi_\lambda$-images} \label{f:Co}
\end{figure}

We would like to decrease $\lambda$, keeping a zooming out measure. For any compact interval $J\subset (0,+\infty)$ 
the strict inequality in~\eqref{eq:tnu-strict} between the continuous functions $\Phi_{\lambda}[\tG]$ and 
$\Phi_{\lambda}[G_{A,\lambda}]$ on~$J$ is preserved under a small perturbation: considering $J=[x_0,A']$, 
$x_0>0$, it is possible to find $\eps_0$ such that for any $\eps<\eps_0$ and $x_0\le x\le A'$,
\[
\Phi_{\lambda-\eps}[\tG](x)<\Phi_{\lambda}[G_{A,\lambda}](x)\le \tG(x).
\]
The inequality $\Phi_{\lambda-\eps}[\tG](x)\le\tG(x)$ also automatically holds for all $x\ge A'$, as 
$\tG(x)=1$ for any such~$x$. The only problem is thus to handle the neighbourhood of $x=0$.

To do so, let us modify the measure $\tnu$ by mixing it with a Dirac mass at zero: consider the 
family of measures $\tnu_p:=(1-p)\,\tnu+p\,\ddelta_0$. We claim that for all sufficiently small $p$, 
we still save the strict inequality between the partition functions of $\tnu_p$ and of its image 
$\Phi_{\lambda}[\tnu_p]$, and that, moreover, it extends to $x=0$. That is, that there exists 
$p_0$ such that for every $p<p_0$ and $x\in [0,+\infty)$,
\begin{equation}\label{eq:tnu-p}
F_{\Phi_{\lambda}[\tnu_p]}(x)<F_{\tnu_p}(x).
\end{equation}

\smallskip

Once~\eqref{eq:tnu-p} is established (and this can be done following the proof of Lemma~\ref{l:x-delta}), 
concluding the proof of the lemma is almost immediate. Indeed, observe that for any $p\in (0,p_{cr})$ 
both partition functions $F_{\tnu_p}$ and $F_{\Phi_{\lambda}[\tnu_p]}$ are continuous on $[0,+\infty)$ 
(in particular right-continuous at $x=0$), moreover the strict inequality 
$F_{\tnu_p}(0)=p>\theta(p)=F_{\Phi_{\lambda}[\tnu_p]}(0)$ between their values is satisfied. 
A strict inequality between continuous functions on a compact set is preserved by a small perturbation, 
hence for any sufficiently small $\eps>0$
\[
F_{\Phi_{\lambda-\eps}[\tnu_p]}(x)\le F_{\tnu_p}(x)\quad \text{for every }x\in [0,A'].
\]
Once again the inequality $F_{\tnu_p}(x)\ge F_{\Phi_{\lambda-\eps}[\tnu_p]}(x)$ is automatically 
satisfied as $x\ge A'$, thus implying the desired
\[\tnu_p\lo \Phi_{\lambda-\eps}[\tnu_p].\]

\smallskip

We focus now on deriving the inequality~\eqref{eq:tnu-p}. The equality can be obtained outside any 
arbitrarily small neighbourhood of $0$, using the arguments of the last paragraph: for any $x_0>0$, 
the strict inequality $F_{\tnu}>F_{\Phi_{\lambda}[\tnu]}$ between the two continuous functions on 
the compact interval $[x_0,A']$ implies the same inequality for $\tnu_p$, for any $p<p_1=p_1(x_0)$, 
while for $x>A'$ there is nothing to check.
 
In the same way as in~\eqref{eq:pcr}, we have
\[
\Phi_{\lambda}[\tnu_p]
=\theta(p)\,\ddelta_0
+(1-p)^4\,\Phi_{\lambda}[\tnu]
+(1-\theta(p)-(1-p)^4)\,\Phi'_{\lambda}[\tnu].
\]
and hence, as $p\to 0$, we have
\begin{align*}
F_{\Phi_{\lambda}[\tnu_p]}(x)=\,&\theta(p) + (1-p)^4 F_{\Phi_{\lambda}[\tnu]}(x) + 
(1-\theta(p)-(1-p)^4)\, F_{\Phi'_{\lambda}[\tnu]}(x)
\\
=\,&F_{\Phi_{\lambda}[\tnu]}(x) + 4p\, ( F_{\Phi'_{\lambda}[\tnu]}(x) - F_{\Phi_{\lambda}[\tnu]}(x) ) + O(p^2),
\end{align*}
where the $O(p^2)$ is uniform in $x\in [0,+\infty)$. Roughly speaking, the main correction term 
$4p\, ( F_{\Phi'_{\lambda}[\tnu]}(x) - F_{\Phi_{\lambda}[\tnu]}(x) )$ corresponds to the situation 
when one of the four edges becomes collapsed, and this happens with probability approximatively $4p$. 
Notice also that the coefficient $( F_{\Phi'_{\lambda}[\tnu]}(x) - F_{\Phi_{\lambda}[\tnu]}(x) )$ tends 
to zero as $x\to 0$. Indeed, this corresponds to the fact that even conditionally on that one of the edges 
is collapsed, we need at least one more edge to have a short $\In\Out$-distance.

At the same time,
\begin{align*}
F_{\tnu_p}(x) =\,& p\cdot 1 + (1-p)\cdot F_{\tnu}(x)\\
=\,&F_{\tnu}(x)+ p \cdot (1-F_{\tnu}(x)) \\
\ge\,& F_{\Phi_{\lambda}[\tnu]}(x) + p \cdot (1-F_{\tnu}(x)).
\end{align*}
Subtracting, we get 
\beqn{diff}{
F_{\tnu_p}(x) - F_{\Phi_{\lambda}[\tnu_p]}(x) \ge p \cdot \left( (1-F_{\tnu}(x))  - 
4 ( F_{\Phi'_{\lambda}[\tnu]}(x) - F_{\Phi_{\lambda}[\tnu]}(x) ) \right) + O(p^2).
}
As we have already mentioned, both $F_{\tnu}(x)$ and  $F_{\Phi_{\lambda}[\tnu]}(x)$ tend to $0$ as $x\to 0$.
Hence, there exists $x_0>0$ such that
\[
F_{\tnu}(x_0)< \tfrac{1}{6},\quad F_{\Phi_{\lambda}[\tnu]}(x_0) < \tfrac{1}{6}.
\]
Substituting it into the right hand side of~\eqref{eq:diff}, we get 
\[
(1-F_{\tnu}(x))  - 4 ( F_{\Phi'_{\lambda}[\tnu]}(x) - F_{\Phi_{\lambda}[\tnu]}(x) ) \ge 1-\tfrac{5}{6} = 
\tfrac{1}{6},
\]
which gives
\[
F_{\tnu_p}(x) - F_{\Phi_{\lambda}[\tnu_p]}(x) > \tfrac{1}{6} p + O(p^2).
\]
As $O(p^2)$ is uniform in $x$ (it corresponds to at least two edges being collapsed), this implies that 
for sufficiently small $p$ we have $F_{\tnu_p}(x) > F_{\Phi_{\lambda}[\tnu_p]}(x)$ for all $x\in [0,x_0]$.
The proof of the Key Lemma is now over.
\end{proof}

\section{Continuity of the critical parameter at $\ddelta_1$}

In this very short section we prove Proposition~\ref{prop:dirac-perturbed}. The idea is to describe the 
supercritical set $\Lambda$ with a quantitative version of Lemma~\ref{l:Lambda}. We start with the lower 
semi-continuity for it is easier to prove.

\begin{lem}[Lower bound]\label{l:lower}
For any $\eps>0$ there exists an open neighbourhood $U$ of $\ddelta_1$ in $\cP_0$ such that 
$\lcr(m)>\tfrac{1}{2}-\eps$ for any $m\in U$, for which $\lcr$ is defined.
\end{lem}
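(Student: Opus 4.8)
The plan is to estimate $\lcr(m)$ from below by exhibiting a value $\lambda_\star$ slightly less than $\tfrac12$ which fails to be supercritical for every $m$ close enough to $\ddelta_1$. Fix $\eps>0$; if $\eps\ge 1$ there is nothing to prove since $\lcr>0\ge\tfrac12-\eps$, so assume $0<\eps<1$ and set $\lambda_\star:=\tfrac12-\tfrac{\eps}{2}\in(0,\tfrac12)$. Since $\Lambda(m)=(\lcr(m),+\infty)$ whenever $\lcr$ is defined (Lemmas~\ref{l:Lambda} and~\ref{l:open}), it suffices to find an open neighbourhood $U$ of $\ddelta_1$ in $\cP_0$ with $\lambda_\star\notin\Lambda(m)$ for every $m\in U$; this yields $\lcr(m)\ge\lambda_\star>\tfrac12-\eps$. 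By Lemma~\ref{l:out}, to get $\lambda_\star\notin\Lambda(m)$ it is enough to rule out the existence of a $\lambda_\star$-zooming out measure with no atom at $0$ — and any such measure, being supported on a bounded interval, lies in $\cP_0$.

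The central step is to produce an explicit super-solution, exactly as in the proof of Lemma~\ref{l:Lambda}. Fix once and for all a $q\in(p_{cr},1)$; because $\theta$ is increasing on $[0,1]$ with $1$ attracting and $p_{cr}$ repelling, one has $\theta(q)>q$, i.e.\ $q/\theta(q)<1$. Set $\mu_-:=q\,\ddelta_1+(1-q)\,\ddelta_\infty$. A direct computation shows that, for $X_1,\dots,X_4$ i.i.d.\ of law $\mu_-$ and $\xi$ of law $m$, the two inner minima $\min(X_1,X_2)$ and $\min(X_3,X_4)$ in $R_{\lambda_\star}(X_1,\dots,X_4;\xi)$ are \emph{both} finite with probability exactly $\theta(q)$, conditionally on which $R_{\lambda_\star}=2\lambda_\star\xi$; hence $F_{\Phi_{\lambda_\star}[\mu_-]}(x)=\theta(q)\,m([0,\tfrac{x}{2\lambda_\star}])$ for $0\le x<\infty$ and $F_{\Phi_{\lambda_\star}[\mu_-]}(\infty)=1$. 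Comparing with $F_{\mu_-}$ (which vanishes on $[0,1)$, equals $q$ on $[1,\infty)$ and $1$ at $\infty$), the inequality $\Phi_{\lambda_\star}[\mu_-]\lo\mu_-$ reduces to the single condition $\theta(q)\,m([0,\tfrac{1}{2\lambda_\star}])\ge q$, that is,
\[
m\!\left(\left[0,\tfrac{1}{1-\eps}\right)\right)>\frac{q}{\theta(q)}
\]
(I ask for the slightly stronger strict inequality on the open interval, which is reused below).

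This last inequality is where the proximity to $\ddelta_1$ and the gap $\lambda_\star<\tfrac12$ enter: since $\tfrac{1}{1-\eps}>1$, the set $[0,\tfrac{1}{1-\eps})$ is an \emph{open} set containing the atom of $\ddelta_1$, so $m\mapsto m([0,\tfrac{1}{1-\eps}))$ is lower semicontinuous for the weak-$*$ topology and equals $1$ at $\ddelta_1$; therefore $U:=\{m\in\cP_0:m([0,\tfrac{1}{1-\eps}))>q/\theta(q)\}$ is an open neighbourhood of $\ddelta_1$ in $\cP_0$ on which $\Phi_{\lambda_\star}[\mu_-]\lo\mu_-$ holds. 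To conclude $\lambda_\star\notin\Lambda(m)$ for $m\in U$, I would repeat verbatim the scaling-and-quantile argument of the proof of Lemma~\ref{l:Lambda}: as $\Upsilon_c$ commutes with $\Phi_{\lambda_\star}$ and preserves $\lo$, the rescaled measures $\mu_c:=\Upsilon_c[\mu_-]$ satisfy $\Phi_{\lambda_\star}[\mu_c]\lo\mu_c$ for all $c>0$; given any $\mu\in\cP_0$ supported on a bounded interval, letting $c>0$ be its $q$-quantile gives $F_\mu\ge F_{\mu_c}$ pointwise, hence $\Phi_{\lambda_\star}[\mu]\lo\Phi_{\lambda_\star}[\mu_c]\lo\mu_c$; if $\mu$ were $\lambda_\star$-zooming out we would also have $\mu\lo\Phi_{\lambda_\star}[\mu]$, and taking left limits at $c$,
\[
F_\mu(c^-)\ \ge\ F_{\Phi_{\lambda_\star}[\mu_c]}(c^-)\ =\ F_{\Phi_{\lambda_\star}[\mu_-]}(1^-)\ =\ \theta(q)\,m\!\left(\left[0,\tfrac{1}{1-\eps}\right)\right)\ >\ q,
\]
contradicting $F_\mu(c^-)\le q$. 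Hence no $\lambda_\star$-zooming out measure exists, $\lambda_\star\notin\Lambda(m)$, and $\lcr(m)\ge\lambda_\star>\tfrac12-\eps$ for all $m\in U$.

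The computations are routine; the only genuinely delicate point is the stability of the inequality $m([0,\tfrac{1}{1-\eps}))>q/\theta(q)$ under the weak-$*$ topology, which works precisely because $\lambda_\star$ is kept strictly below the critical value $\tfrac12$ of $\ddelta_1$, so the relevant cut-off $\tfrac{1}{2\lambda_\star}$ stays strictly above the atom. This is also why the statement is only a one-sided (lower semicontinuity) bound, the matching upper estimate requiring a different and more subtle argument.
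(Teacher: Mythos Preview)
Your proof is correct and follows essentially the same approach as the paper: both use the super-solution $\mu_-=q\,\ddelta_1+(1-q)\,\ddelta_\infty$ with $q\in(p_{cr},1)$, compute $F_{\Phi_{\lambda}[\mu_-]}(x)=\theta(q)\,F_m(x/2\lambda)$, and invoke the scaling-and-quantile argument from the proof of Lemma~\ref{l:Lambda} to rule out any $\lambda$-zooming out measure. You are in fact slightly more careful than the paper on two points: you take $\lambda_\star=\tfrac12-\tfrac{\eps}{2}$ rather than $\tfrac12-\eps$ so as to obtain the \emph{strict} inequality $\lcr>\tfrac12-\eps$, and you make explicit the lower semicontinuity of $m\mapsto m([0,\tfrac{1}{1-\eps}))$ that guarantees $U$ is open in the weak-$*$ topology.
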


\begin{proof}
Set $\lambda=\frac{1}{2}-\eps$. Fix an arbitrary $q\in(p_{cr},1)$ and consider 
$\mu_-=q\cdot \ddelta_1+(1-q)\cdot \ddelta_\infty$. The measure $\mu_-$ 
(that we have already seen in the second half of the proof of Lemma~\ref{l:Lambda}) corresponds 
to the situation when the $\In\Out$-distance is equal to~$1$ with probability $q$ and is infinite 
otherwise. Glueing together four such independent ``edges'' we get an $\In\Out$-distance that is 
equal to~$2$ with probability~$\theta(q)$ and is infinite otherwise. Finally, the multiplicative 
convolution with $m$ and the rescaling by $\lambda$ give us the measure $\Phi_{\lambda}[\mu_-]$, 
whose distribution function is equal to
\[
\Phi_{\lambda}[F_{\mu_-}] (x) = \theta(q) \cdot F_m (\tfrac{x}{2\lambda}).
\]
As $F_{\mu_-}=q\cdot \mathbf{1}_{[1,\infty]},$ if $m$ is sufficiently close to $\ddelta_1$ so 
that $m((0,\tfrac{1}{2\lambda}))>\frac{q}{\theta(q)}$, we have 
\[
\Phi_{\lambda}[F_{\mu_-}](1)>\theta(q) \cdot \frac{q}{\theta(q)} = q= F_{\mu_-}(1),
\]
and hence
\[
\Phi_{\lambda}[\mu_-] \lo \mu_-.
\]
We then see that $\lambda$ is not supercritical, and hence $\lcr>\frac{1}{2}-\eps$, in the same way as in the 
proof of Lemma~\ref{l:Lambda}.
\end{proof}

The upper bound turns out to be trickier. In the proof of Lemma~\ref{l:lower}, glueing together distances that
are equal to $1$ or $\infty$ leads (before random rescaling) to distances that take only two values, $2$ 
and~$\infty$, and thus are of the same type. Meanwhile, glueing together distances that are equal to $0$ 
or~$1$ (as we would like to do to obtain an upper estimate) leads to distances that take three values, 
$0$, $1$ and $2$, and we would not get a direct way of $\lo$-comparing the measure with its image here. 
However the same ideas work with some modification.

\begin{lem}[Upper bound]\label{l:upper}
For any $\eps>0$ there exists an open neighbourhood $U$ of $\ddelta_1$ in $\cP_0$ such that
$\lcr(m)<\tfrac{1}{2}+\eps$ for any $m\in U$ for which $\lcr$ is defined.
\end{lem}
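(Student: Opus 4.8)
The plan is to reduce first, via the $\preccurlyeq$-monotonicity of the construction in $m$, to a single explicit reference law. The coupling argument of Lemma~\ref{lem:order} shows that if $m\preccurlyeq m'$ then $\Phi_{\lambda,m}[\mu]\preccurlyeq\Phi_{\lambda,m'}[\mu]$ for every $\mu$ and $\lambda$; combined with the monotone construction of $\nu_{A,\lambda}$ this gives $\nu_{A,\lambda}(m)\preccurlyeq\nu_{A,\lambda}(m')$, hence $\Lambda(m)\supseteq\Lambda(m')$ and $\lcr(m)\le\lcr(m')$. So it is enough to find, below a prescribed neighbourhood of $\ddelta_1$, a fixed law $\preccurlyeq$-smaller than every $m$ in it for which $\tfrac12+\eps$ is supercritical. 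Choose $\delta>0$ so small that $\lambda':=(1-\delta)(\tfrac12+\eps)>\tfrac12$, set $\eps':=\lambda'-\tfrac12>0$, and fix a small $\eta_0>0$. Since any $m$ close to $\ddelta_1$ in the weak-$*$ topology has $m([0,1-\delta))$ small (test against a continuous function that is $\ge\I_{[0,1-\delta)}$ and vanishes at $1$), the set $U:=\{m\in\cP_0:\ m([0,1-\delta))<\eta_0\}$ is an open neighbourhood of $\ddelta_1$, and for $m\in U$ one has $m\succcurlyeq\mu^\star$, where $\mu^\star:=\eta_0\,\ddelta_0+(1-\eta_0)\,\ddelta_{1-\delta}$. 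It thus suffices to prove $\tfrac12+\eps\in\Lambda(\mu^\star)$, and by Lemma~\ref{l:out} this means exhibiting a $(\tfrac12+\eps)$-zooming out measure for $\mu^\star$.

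Conditioning on the factor $\xi\in\{0,1-\delta\}$ drawn from $\mu^\star$ yields the exact splitting
\[
\Phi_{\frac12+\eps,\ \mu^\star}[\mu]\ =\ \eta_0\,\ddelta_0\ +\ (1-\eta_0)\,\Phi_{\lambda',\,\ddelta_1}[\mu],
\]
so the problem is reduced to a single deterministic rescaling $\lambda'>\tfrac12$ plus an $\eta_0$-atom at $0$ --- in particular all negative moments of the non-zero part of $\mu^\star$ equal $(1-\delta)^{-\alpha}$ and cause no trouble. I would then look for a zooming out measure of power type, $\mu=p\,\ddelta_0+(1-p)\,\rho$ with $\rho$ the law of $b\,U^{1/\alpha}$ on $[0,b]$ (distribution function $(x/b)^\alpha$), where $p$ is small enough that $\eta_0+(1-\eta_0)\theta(p)<p$ (possible once $\eta_0$ is small, since $\theta(p)=o(p)$) and $\alpha$ is large enough that $(1+2\eps')^\alpha>4$. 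The atom at $0$ provides a strict buffer: the image has an atom of mass $\eta_0+(1-\eta_0)\theta(p)<p$ at $0$, so $F_{\Phi[\mu]}(x)=\bigl(\eta_0+(1-\eta_0)\theta(p)\bigr)+O(x^\alpha)$ stays strictly below $F_\mu(x)=p+\Theta(x^\alpha)$ for $x$ small; the point of $\alpha$ being large is that the ``one collapsed pair'' contribution $2s(1-s)\,G(x/\lambda')$ (with $s:=1-(1-p)^2$ and $G:=1-(1-(x/b)^\alpha)^2$), which carries mass $\asymp p$ at scale $\asymp\lambda' b<b$, together with the ``no collapsed pair'' contribution $(1-s)^2\,\mathbf{P}(\min_1+\min_2\le x/\lambda')$, are both dominated by $F_\rho$ after the $\lambda'$-rescaling --- here one uses the elementary bound $\mathbf{P}(\min_1+\min_2\le z)\le 2\,\mathbf{P}(\min_1\le z/2)\le 4\cdot 2^{-\alpha}(z/b)^\alpha$ (for $z\le 2b$) and the inequality $(1+2\eps')^\alpha>4$. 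Finally, for $x$ in a left-neighbourhood of $b$ the bound $F_{\Phi[\mu]}(x)<1=F_\mu(x)$ holds with a uniform gap because $\lambda'>\tfrac12$ pushes a positive amount of mass above $b$. Putting the three regions together gives $\mu\preccurlyeq\Phi_{\frac12+\eps,\mu^\star}[\mu]$ with $\mu\neq\ddelta_0$ compactly supported, so $\tfrac12+\eps$ is supercritical for $\mu^\star$, and therefore $\lcr(m)\le\lcr(\mu^\star)<\tfrac12+\eps$ for every $m\in U$.

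The main obstacle is precisely the construction of the zooming out measure for $\mu^\star$: one has to choose $p$ and $\alpha$ simultaneously --- $p$ small to keep the atom-at-$0$ buffer, $\alpha$ large (of order $1/\eps'$) for the overflow estimate --- and then verify $F_{\Phi_{\frac12+\eps,\mu^\star}[\mu]}\le F_\mu$ on all of $[0,b]$. The delicate range is a neighbourhood of the scale $x\approx\lambda' b$, where the mass $\asymp p$ released by a single collapsed pair piles up exactly where the power-law profile $F_\mu$ is itself of size only $\asymp p$, so that one must balance the $O(p)$ and $O((x/b)^\alpha)$ error terms against the buffer $p-\eta_0-(1-\eta_0)\theta(p)$; this is where the precise and somewhat tedious bookkeeping lives. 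The only other point to check --- immediate from the coupling in Lemma~\ref{lem:order} and the monotone definition of $\nu_{A,\lambda}$ --- is the $\preccurlyeq$-monotonicity of $m\mapsto\nu_{A,\lambda}(m)$ used in the first step.
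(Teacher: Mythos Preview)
Your approach differs from the paper's in two respects. The stochastic-domination reduction to a fixed two-atom law $\mu^\star$ is a valid idea, but note a direction slip in your exposition: from $m\preccurlyeq m'$ and $\nu_{A,\lambda}(m)\preccurlyeq\nu_{A,\lambda}(m')$ one gets $\Lambda(m)\subseteq\Lambda(m')$ (if the \emph{smaller} limit is nontrivial, so is the larger), hence $\lcr(m)\ge\lcr(m')$; your application with $\mu^\star\preccurlyeq m$ then correctly yields $\lcr(m)\le\lcr(\mu^\star)$. The paper does not use this reduction: it builds a single $\mu$ that is $\lambda_0$-zooming out simultaneously for every $m$ near~$\ddelta_1$, by arranging a uniform margin $\delta>0$ in~\eqref{eq:p-zoom-ineq} that survives a small perturbation of~$m$.

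The more substantial difference is the zooming-out measure itself, and here your power-law ansatz has a difficulty you underestimate. At $x=\lambda' b$ the ``one collapsed pair'' event alone already places mass $1-(1-s)^2=1-(1-p)^4\approx 4p$ entirely inside $[0,\lambda' b]$, whereas $F_\mu(\lambda' b)=p+(1-p)(\lambda')^\alpha$. The atom-buffer $p-\eta_0-(1-\eta_0)\theta(p)\approx p$ absorbs only $p$ of this $4p$; the remaining $\approx 3p$ (plus the no-collapse contribution $\lesssim 4\cdot 2^{-\alpha}$) must be covered by $(\lambda')^\alpha$. So the actual constraint is $p\lesssim(\lambda')^\alpha$ --- with $(2\lambda')^\alpha>4$ this forces $p$ exponentially small in $1/\eps'$ --- which is compatible with the other choices but is not the balancing mechanism you describe. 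The paper sidesteps all of this with a different ansatz: it takes $\mu_p=p\,\mu'+(1-p)\,\ddelta_1$ with $\mu'=\tfrac{1}{Z}\sum_{j=0}^N 4^{j}\,\ddelta_{x_j}$, where $x_0=0$, $x_{j+1}=\lambda(1+x_j)$ for some $\lambda\in(\tfrac12,\lambda_0)$, and $N$ is the last index with $x_N<1$. The geometric weights $4^j$ are the whole point: the first-order-in-$p$ part of $\Phi_{\lambda}[\mu_p]$ (three edges of length~$1$, one drawn from~$\mu'$) is $4p\,f_*\mu'$ with $f(x)=\lambda(1+x)$, and $4\cdot 4^{j}=4^{j+1}$ converts the factor-of-$4$ amplification into a pure index shift $x_j\mapsto x_{j+1}$. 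The comparison with $p\,\mu'$ then reduces to two boundary terms --- loss of the atom at $x_0=0$, gain of one at $x_{N+1}\ge 1$ --- giving a clean uniform margin $p/(2Z)$ with no asymptotic bookkeeping.
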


\begin{proof}
Similarly to the previous lemma, take an arbitrary $\lambda_0=\tfrac{1}{2}+\eps$. We are going to prove 
that there is a neighbourhood of $\ddelta_1$ in which there exists a $\lambda_0$-zooming out measure, 
thus proving that in this neighbourhood $\lcr<\lambda_0$. In fact, we will construct a measure $\mu$
that is $\lambda_0$-zooming out simultaneously for any measure $m$ in such a neighbourhood.

To ensure this, it suffices to verify the following properties:
\begin{enumerate}
\item the measure $\mu$ is supported on $[0,1]$;
\item the Euclidean image $\Phi_{\lambda_0}[\mu]$, associated to $m=\ddelta_{1}$, satisfies 
$\Phi_{\lambda_0}[\mu]\go\mu$;
\item\label{i:delta} there exists $\delta>0$ such that the above inequality can be strengthened 
to the following one: at any point $x\in (0,1]$, one has 
\begin{equation}\label{eq:p-zoom-ineq}
\Phi_{\lambda_0}[\mu]((0,x]) +\delta \le \mu((0,x)).
\end{equation}
\end{enumerate}
Indeed, the last two properties ensure that $\lambda_0$-zooming out of the measure $\mu$ will be preserved 
by a small perturbation of the measure~$m$. 

Choose now an arbitrary $\lambda$ in the interval $(1/2,\lambda_0)$; we will first consider the ``smaller'' 
random glueing image, corresponding to $\lambda$ instead of $\lambda_0$. We will be looking for the 
desired measure of the form $\mu_p=p\cdot\mu'+(1- p) \cdot\ddelta_1$, with a fixed probability measure $\mu'$, 
supported on $[0,1)$, and with very small value $p$ (tending to~$0$). Note that the $\Phi_{\lambda}$-image 
(with no random factor) of such a measure is composed of: 
\begin{itemize}
\item the atomic measure $\ddelta_{2\lambda}$, coming with the weight $(1-p)^4$ (glueing all the four 
intervals of length~$1$);
\item the image $f_* \mu'$ of the measure $\mu'$ under the map $f:x\mapsto \lambda(1+x)$, coming with 
the weight $4p-O(p^2)$; this comes from glueing three intervals of length~$1$ and one of (random) length chosen 
with respect to~$\mu'$;
\item a remaining part of total mass $O(p^2)$, corresponding to at least two intervals being picked with 
respect to~$\mu'$.
\end{itemize}

Now let the sequence $\{x_n\}$ be defined by 
\[
x_0=0, \quad x_{n+1}=f(x_n).
\]
This sequence tends to $\lim_{n\to \infty} x_n=\frac{\lambda}{1-\lambda}>1$, so there exists $N\in\N$ 
such that $x_N<1\le x_{N+1}$. Take 
\[
\mu':=\frac{1}{Z} \sum_{j=0}^N 4^j\, \ddelta_{x_j},
\]
where $Z:=\sum_{j=0}^N 4^j$ is the normalization constant.

Then, the image $\Phi_{\lambda}[\mu']$ (corresponding to $m=\ddelta_1$) consists up to $O(p^2)$, 
of $(1-4p)\,\ddelta_{2\lambda}$ and of 
\[
4 p\, f_* \mu' = 4p \sum_{j=0}^{N} 4\, \ddelta_{f(x_j)} = 
p\, \mu' - p( \tfrac{1}{Z} \ddelta_0 - \tfrac {1}{Z} 4^{N+1} \ddelta_{x_{N+1}})
\]
Hence, as $x_{N+1}\ge 1$, for any $x\in [0,1]$ we have 
$\Phi_{\lambda}[\mu']([0,x]) +\frac{p}{Z} \le \mu'([0,x])+O(p^2)$, with $O(p^2)$ being uniform in~$x$. 
In particular, we can choose and fix sufficiently small $p>0$ such that for any $x\in [0,1]$ 
\[
\Phi_{\lambda}[\mu']([0,x]) +\frac{p}{2Z} \le \mu'([0,x]).
\]
Finally, replacing $\lambda$ by $\lambda_0>\lambda$, we ensure that all the atoms are pushed even 
further away, thus obtaining the property~\eqref{eq:p-zoom-ineq}, with $\delta=\tfrac{p}{2Z}$. 
This concludes the proof.
\end{proof}

\section{Convergence in law and uniqueness of the stationary measure}\label{s:convergence}

The purpose of this section is to show that, under suitable assumptions, the 
operator~$\Pc$ defines a contraction on $\cP$. This is the content of 
Theorem~\ref{t:conv}.

Fix a stationary measure $\bm$ from the statement of Theorem~\ref{t:conv}, let $\Fbm$ be 
its distribution function. For any $\alpha\in(0,1)$ denote by $\kal$ the $\alpha$-quantile of 
the measure $\bm$: let $\kal:=F_{\bm}^{-1}(\alpha)$. Note that these quantiles are well-defined.
Indeed, $\bm$ is a stationary measure and hence~$\Fbm=\Pc[\Fbm]$; as 
the measure~$m$ is absolutely continuous with continuous positive density,~$\Fbm$ is 
a~$C^1$ function with positive derivative on~$\R_+$ (we operate a multiplicative 
convolution with $m$ as the last calculation needed for obtaining its $\Phi_\lambda$-image).

Throughout this section, we prefer to work with the space of distribution functions (cf.~Remark~\ref{r:distribution}).

\begin{dfn}
Denote by $\mF$ the space of monotonic non-decreasing right-continuous functions on the extended half-line 
$[0,+\infty]$, satisfying $0\le F\le 1$ and $F(+\infty)=1$. We will denote by $\mF_0$ the subspace of 
distribution functions of probability measures in $\cP_0$ (and so verifying $F(0)=0$ and 
$\lim_{x\rightarrow\infty}F(x)=1$).
\end{dfn}

\subsection{The class $\mC_{\alpha,\delta}$ and contraction}\label{ssc:classCad}

\begin{dfn}
A distribution function $F\in\mF$ is of  \emph{class $(\alpha,\delta)$} (where $\delta>0$ 
and $\alpha\in(0,1/2)$) if 
\begin{itemize}
\item $F(x)\le \Fbm(x)$ on $[\kal,\kone]$,
\item $F(x)\le \Fbm(x)+\delta$ on $[0,\kal)\cup (\kone,+\infty]$.
\end{itemize}
We denote the set of such functions by $\mC_{\alpha,\delta}$.
\end{dfn}

We also consider the family of rescaling operators on $\mF$ induced by the operators $\Upsilon_c$: 
\begin{dfn}
For any $r\in\R$ let 
\[
T_r: \mF \to \mF, \quad T_{r}[F](x)=F(e^{-r}x).
\]
In other words, if $F=F_\mu$ is the distribution function of a measure $\mu$, then $T_r[F]$ is 
the distribution function of the rescaled measure $\Upsilon_{c}[\mu]$, with $\log c=r$.
\end{dfn}

The main result in this part is the following key proposition which gives a good family of rescaling operators 
``improving'' the class of a given distribution function:
\begin{prop}\label{l:shift}
There exist constants $L>0$ and $\alpha\in(0,1/2)$ such that for every $\delta\in (0,\alpha]$ the operator 
$T_{L\delta}\Pc$ sends $\mC_{\alpha,\delta}$ in $\mC_{\alpha,\delta/2}$.
\end{prop}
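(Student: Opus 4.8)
The plan is to reduce the statement to a single \emph{extremal} distribution function, then to control to first order in $\delta$ how much $\Pc$ can push it above $\Fbm$, and finally to absorb the surplus with the rescaling $T_{L\delta}$.

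\emph{Reduction to an extremal function.} If $F\in\mC_{\alpha,\delta}$ then $F(\kal)\le\Fbm(\kal)=\alpha$, so on $[0,\kal)$ one has $F\le\min(\Fbm+\delta,\alpha)$, on $[\kal,\kone]$ one has $F\le\Fbm$, and on $(\kone,+\infty]$ one has $F\le\min(\Fbm+\delta,1)$. Hence every $F\in\mC_{\alpha,\delta}$ is dominated pointwise by the (right-continuous, non-decreasing) function
\[
F^\sharp:=\min(\Fbm+\delta,\alpha)\cdot\I_{[0,\kal)}+\Fbm\cdot\I_{[\kal,\kone]}+\min(\Fbm+\delta,1)\cdot\I_{(\kone,+\infty]},
\]
which itself lies in $\mC_{\alpha,\delta}$. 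Since $\Pc$ and each $T_r$ are order-preserving (Lemma~\ref{lem:order} and the remark after it) and $T_{L\delta}$ only pushes a distribution function down, it suffices to prove $T_{L\delta}\Pc[F^\sharp]\le\Fbm$ on $[\kal,\kone]$ and $T_{L\delta}\Pc[F^\sharp]\le\Fbm+\tfrac\delta2$ on $[0,\kal)\cup(\kone,+\infty]$.

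\emph{A uniform first-order estimate.} As a distribution function, $F^\sharp$ differs from $\Fbm$ only through a little mass displaced towards $0$ (the $\bm$-mass just to the left of $\kal$, of total weight $\delta$, collapsed to an atom at $0$) and towards $\kone$ (the $\bm$-mass above $\kappa_{1-\delta}$, collapsed to an atom at $\kone$). Decomposing the glueing of four independent $F^\sharp$-edges according to how many of them carry one of these two atoms — exactly as in the proof of Lemma~\ref{l:x-delta} — one gets, uniformly in $x\in[0,+\infty]$,
\[
\Pc[F^\sharp](x)-\Fbm(x)\ \le\ 4\,\delta\,E(x)\ +\ C\,\delta^2,
\]
where $C=C(\bm)$ and $E\ge0$: the contribution of two or more displaced edges is $O(\delta^2)$ uniformly (here, as in Lemma~\ref{l:comp}, one uses that a displaced edge can change the shortest $\In\Out$-path only if it is the \emph{active} one, so in particular the atom at $\kone$ contributes only at order $\delta^2$, requiring \emph{both} edges of a pair to be very long), while the single-edge contribution, using the stationarity $\Pc[\Fbm]=\Fbm$, equals $4\delta\bigl(\Phi'_{\lcr}[\Fbm](x)-\Fbm(x)\bigr)$ up to $O(\delta^2)$. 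The function $E$ is continuous and bounded, satisfies $0\le E(x)\le 1-\Fbm(x)$ (the trivial bound $\Pc[F^\sharp]-\Fbm\le 1-\Fbm$ together with $\Phi'_{\lcr}[\Fbm]-\Fbm\le 1-\Fbm$), tends to $0$ as $x\to0^+$ (collapsing a single edge does not suffice to create a short $\In\Out$-distance, so $\Phi'_{\lcr}[\Fbm](x)\to0$, exactly as in the Key Lemma), and — crucially — $\sup_{[0,\kal]}E\to0$ as $\alpha\to0$ (since there $E\le\Phi'_{\lcr}[\Fbm]$ and $\Phi'_{\lcr}[\Fbm](\kal)\to0$ as $\kal\to0$).

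\emph{Choice of constants and conclusion.} Fix $\alpha\in(0,1/2)$ small enough that $\sup_{[0,\kal]}E<\tfrac1{16}$ and $8\alpha\le\tfrac12$. On $(\kone,+\infty]$ the estimate gives $\Pc[F^\sharp]-\Fbm\le4\alpha\delta+C\delta^2\le\tfrac\delta2$ for $\delta\le\alpha$ small; on $[0,\kal)$ it gives $\Pc[F^\sharp]-\Fbm\le\tfrac14\delta+C\delta^2\le\tfrac\delta2$; in both regions $T_{L\delta}$ only helps, so the required bound holds there with no use of the rescaling. On $[\kal,\kone]$ one has $x\Fbm'(x)\ge c>0$ for a constant $c=c(\bm,\alpha)$, hence $\Fbm(x)-\Fbm(e^{-L\delta}x)\ge\tfrac12Lc\,\delta$ for $\delta$ small; choosing $L=L(\alpha)$ with $\tfrac12Lc>4\sup_{[0,\kone]}E+1$ gives, for all $\delta\le\alpha$,
\[
T_{L\delta}\Pc[F^\sharp](x)=\Pc[F^\sharp](e^{-L\delta}x)\le\Fbm(e^{-L\delta}x)+4\sup_{[0,\kone]}E\cdot\delta+C\delta^2\le\Fbm(x),
\]
which is the remaining inequality. (The verification that all the ``$\delta$ small'' conditions above follow from $\delta\le\alpha$ — so that $\alpha$ and $L=L(\alpha)$ work as stated — is exactly the point where one checks that near the two endpoints of $[\kal,\kone]$ the surplus $4E\delta$ and the density $x\Fbm'(x)$, and hence the gain available from $T_{L\delta}$, become small in a compatible way.)

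\emph{Main obstacle.} The hard part is the uniform first-order estimate above: getting a remainder $C\delta^2$ that is genuinely uniform down to $x=0$ and up to $x=+\infty$, and pinning down the coefficient $E$ with the correct behaviour at $0$, at $+\infty$, and at the endpoints of $[\kal,\kone]$. This is where the two devices already developed in the paper are reused — the three-term decomposition of the $\Pc$-image from the proof of Lemma~\ref{l:x-delta} and the ``only an active (hence long) edge matters'' coupling of Lemma~\ref{l:comp} — together with the quantitative control of the quantiles $\kal,\kone$ against the density of $\bm$. Once that is settled, the bookkeeping of $\alpha$, $L$ and $\delta$ is routine.
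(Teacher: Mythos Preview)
Your approach is essentially the paper's, repackaged as a first-order expansion in~$\delta$. Both reduce to the extremal function $F^\sharp=\Fbmd$, couple it to~$\bm$ edge by edge, and absorb the surplus on a central interval via~$T_{L\delta}$. Two points deserve comment.

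First, the claim that the atom at~$\kone$ contributes only $O(\delta^2)$ is not correct: a single edge displaced to~$\kone$ affects the shortest path exactly when its parallel edge exceeds~$\kone$, which has probability~$\alpha$, so the contribution is of order~$\alpha\delta$, not~$\delta^2$. This does not break your argument --- on $[0,\kal)$ and $(\kone,\infty]$ you only need the total surplus to be at most~$\delta/2$, and $4\alpha\delta$ is fine once $\alpha$ is small --- but it means your coefficient $E$ is not literally $\Phi'_{\lcr}[\Fbm]-\Fbm$, and the paper's crude global bound $8\delta$ together with the refined Lemma~\ref{l:delta2} is the honest version of your estimate.

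Second, your absorption step on $[\kal,\kone]$ uses $x\Fbm'(x)\ge c$ there, but for $x$ near~$\kal$ the shifted point $e^{-L\delta}x$ drops below~$\kal$, where you have no such lower bound. The paper decouples this: its Lemma~\ref{l:delta2} first locates a \emph{fixed} interval $[s_0,t_0]$ (independent of~$\alpha$) outside which the surplus is already~$\le\delta/2$; only then is $\alpha$ chosen so that $[s_0,t_0]\subset[\kal,\kone]$, the Lipschitz constant of $\log\Fbm^{-1}$ is taken on the larger interval $[\kappa_{\alpha/2},\kappa_{1-\alpha}]$, and the boundary case $e^{-L\delta}x<\kappa_{\alpha/2}$ is dispatched directly by the outer bound (giving $F\le\alpha/2+\delta/2\le\alpha$). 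Your closing parenthetical is exactly this gap; it is fixable along the same lines, but as written the ``compatibility near the endpoints'' is asserted rather than shown.
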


\begin{rem}\label{r:class}
Although not every distribution function $F\in\mF$ belongs to some class 
$\mC_{\alpha,\delta}$, every distribution function in the smaller space $\mF_0$ can be 
rescaled by some $T_r$ to belong to a certain class. Moreover, for any such distribution 
function $F$ and for \emph{any} class $\mC_{\alpha,\delta}$ there exists a rescaling 
$T_{r}[F]$ that belongs to this class. It is also clear that there is no reason to consider 
$\delta>\alpha$: the class $\mC_{\alpha,\delta}$ coincides with 
$\mC_{\alpha,\alpha}$.
\end{rem}

Using Proposition~\ref{l:shift}, we fix corresponding constants $\alpha$ and $L$ (note that the 
value of $L$ does not depend on~$\delta$). The following remark gives us a way of 
obtaining upper bounds for the asymptotic behaviour of the iterates $\Pc^n[F]$, comparing 
them to a rescaling of the distribution function of a stationary measure.
\begin{prop}\label{c:iterate}
Let $F\in \mF$ be a function of class $(\alpha,\delta)$, then for any $n$ the function 
$T_{2L\delta} \Phi^n_{\lcr}[F]$ belongs to $\mC_{\alpha,\delta/2^n}$.
\end{prop}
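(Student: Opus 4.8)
The statement is a direct iteration of Proposition~\ref{l:shift}; the only mild care needed is to accumulate the successive rescalings and to replace the sharp (but $n$-dependent) total shift by the uniform bound $2L\delta$. First I would record two elementary facts. (i) Since the rescaling operators $\Upsilon_c$ commute with $\Phi_\lambda$, their distribution-function counterparts satisfy $T_r T_{r'}=T_{r+r'}$ and $T_r\Pc^k=\Pc^k T_r$ for all $r,r'\in\R$ and $k\ge 0$. (ii) For $s\ge 0$ the operator $T_s$ is \emph{pointwise decreasing}, $T_s[G](x)=G(e^{-s}x)\le G(x)$; consequently $T_s$ maps each class $\mC_{\alpha,\delta'}$ into itself, because membership in $\mC_{\alpha,\delta'}$ is defined purely by upper bounds of the form $G\le\Fbm$ on $[\kal,\kone]$ and $G\le\Fbm+\delta'$ on the complement. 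By Remark~\ref{r:class} there is also no loss in assuming $\delta\le\alpha$, so that every $\delta_j:=\delta/2^j$ lies in $(0,\alpha]$ and Proposition~\ref{l:shift} is applicable with parameter $\delta_j$ at each step.

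Next I would run the induction on $n$. Set $\sigma_n:=\sum_{k=0}^{n-1}2^{-k}=2\,(1-2^{-n})$, so that $\sigma_0=0$ and $\sigma_{n+1}=\sigma_n+2^{-n}$, and I claim that $T_{L\delta\sigma_n}\Pc^n[F]\in\mC_{\alpha,\delta_n}$ for every $n$. For $n=0$ this is exactly the hypothesis $F\in\mC_{\alpha,\delta}$. Assuming the claim for $n$, Proposition~\ref{l:shift} applied with parameter $\delta_n$ gives
\[
T_{L\delta_n}\,\Pc\bigl(T_{L\delta\sigma_n}\Pc^n[F]\bigr)\in\mC_{\alpha,\delta_{n+1}},
\]
and by fact (i) the left-hand side equals $T_{L\delta_n+L\delta\sigma_n}\Pc^{n+1}[F]=T_{L\delta\sigma_{n+1}}\Pc^{n+1}[F]$, since $\delta_n=\delta\,2^{-n}$ and $\sigma_n+2^{-n}=\sigma_{n+1}$. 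This closes the induction.

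Finally I would absorb the gap. As $\sigma_n<2$, the real number $2L\delta-L\delta\sigma_n=L\delta(2-\sigma_n)$ is nonnegative, so writing $T_{2L\delta}\Pc^n[F]=T_{L\delta(2-\sigma_n)}\bigl(T_{L\delta\sigma_n}\Pc^n[F]\bigr)$ and using fact (ii) together with the inductive conclusion $T_{L\delta\sigma_n}\Pc^n[F]\in\mC_{\alpha,\delta_n}$, we obtain $T_{2L\delta}\Pc^n[F]\in\mC_{\alpha,\delta/2^n}$, as required. There is no genuine obstacle in this argument: everything substantial is contained in Proposition~\ref{l:shift}; the only points worth stating explicitly are the commutation of $T_r$ with $\Pc$ and the monotone stability of the classes under rightward rescalings, which is precisely what allows the use of the $n$-independent shift $2L\delta$ in place of the exact value $L\delta\sigma_n$.
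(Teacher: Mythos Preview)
Your proof is correct and follows essentially the same route as the paper's own argument: iterate Proposition~\ref{l:shift}, commute the accumulated rescalings through $\Pc$, and then use that an additional nonnegative shift only decreases a distribution function and hence preserves membership in $\mC_{\alpha,\delta'}$. Your write-up is in fact more explicit than the paper's (you spell out the commutation, the monotone stability of the classes under $T_s$ for $s\ge 0$, and the induction via $\sigma_n$), but there is no substantive difference.
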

\begin{proof}
We have 
\begin{align*}
T_{L\delta}\Pc[F] & \in \mC_{\alpha,\delta/2},\\
T_{L\delta/2}\Pc T_{L\delta}\Pc[F] & \in \mC_{\alpha,\delta/4},\\
&\textrm{etc.,}\\
T_{L\delta/2^n}\Pc \dots T_{L\delta}\Pc[F] & \in \mC_{\alpha,\delta/2^n}.
\end{align*}
Since the operator $\Pc$ commutes with the rescaling ones we have 
\[
T_{\frac{L\delta}{2^n}+ \dots + \frac{L\delta}{2} + L\delta} \, \Pc^n[F]\in 
\mC_{\alpha,\delta/2^n}
\]
and the result easily follows, for $T_{\frac{L\delta}{2^n}+ \dots + \frac{L\delta}{2} + 
L\delta} \, \Pc^n[F]\ge T_{2L\delta} \, \Pc^n[F]$.
\end{proof}
Described in a different way, the previous proposition reads
\begin{prop}\label{c:upper}
In the assumptions of Proposition~\ref{c:iterate}, for any $x\in \R_+$ and $n\ge 1$ we have
\[\Pc^n[F](x)\le T_{-2L\delta}[\Fbm](x)+\delta/2^n.\]
\end{prop}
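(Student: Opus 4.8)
The statement is essentially a translation of Proposition~\ref{c:iterate} into a pointwise estimate for distribution functions, so the plan is just to unwind the definitions of the class $\mC_{\alpha,\delta}$ and of the rescaling operators $T_r$.

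First I would apply Proposition~\ref{c:iterate}: since $F$ has class $(\alpha,\delta)$, the function $G:=T_{2L\delta}\,\Pc^n[F]$ lies in $\mC_{\alpha,\delta/2^n}$. Then I would record the elementary remark that every $G\in\mC_{\alpha,\delta/2^n}$ satisfies the uniform bound
\[
G(x)\le \Fbm(x)+\frac{\delta}{2^n}\qquad\text{for all }x\in[0,+\infty],
\]
since on $[0,\kal)\cup(\kone,+\infty]$ this is precisely the second defining inequality of the class, while on $[\kal,\kone]$ the first defining inequality $G(x)\le\Fbm(x)$ is even stronger.

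Applying this to $G=T_{2L\delta}\,\Pc^n[F]$ and using the definition $T_r[F](x)=F(e^{-r}x)$, I obtain
\[
\Pc^n[F]\!\left(e^{-2L\delta}x\right)\le \Fbm(x)+\frac{\delta}{2^n}\qquad\text{for all }x\ge 0.
\]
Substituting $x\mapsto e^{2L\delta}x$ turns the left-hand side into $\Pc^n[F](x)$ and the right-hand side into $\Fbm\!\left(e^{2L\delta}x\right)+\delta/2^n$; since $T_{-2L\delta}[\Fbm](x)=\Fbm\!\left(e^{2L\delta}x\right)$, this is exactly the claimed inequality, valid for every $x\in\R_+$.

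There is no genuine difficulty in this argument; the only point that deserves a moment of care is the sign convention in the definition of $T_r$, which has to be tracked so that undoing the rescaling by $T_{2L\delta}$ on $\Pc^n[F]$ produces the rescaling $T_{-2L\delta}$ applied to $\Fbm$ on the right-hand side. One could also avoid the change of variables altogether and simply write $\Pc^n[F](x)=G\!\left(e^{2L\delta}x\right)\le\Fbm\!\left(e^{2L\delta}x\right)+\delta/2^n=T_{-2L\delta}[\Fbm](x)+\delta/2^n$ in one line, which is the same computation.
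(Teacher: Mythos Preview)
Your argument is correct and matches the paper's intent: the paper presents Proposition~\ref{c:upper} as nothing more than a reformulation of Proposition~\ref{c:iterate}, and your proof simply makes that reformulation explicit by unwinding the definition of $\mC_{\alpha,\delta/2^n}$ and tracking the sign in $T_r$.
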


The rest of this section is devoted to the proof of Proposition~\ref{l:shift}.

\subsubsection{The function $\Fbmd$}
 
A useful remark is that any class $\mC_{\alpha,\delta}$ has a greatest element $\Fbmd$ 
(corresponding to the smallest probability measure with respect to the stochastic domination) defined by
\[\Fbmd(x)=\begin{cases}
\Fbm(x)+\delta &\text{on }[0,\kappa_{(\alpha-\delta)})\cup [\kone,\kappa_{(1-\delta)}), \\
\alpha & \text{on }[\kappa_{(\alpha-\delta)},\kal), \\
\Fbm(x) & \text{on }[\kal,\kone), \\
1 & \text{on }[\kappa_{(1-\delta)},+\infty).
\end{cases}
\]
That is, a function $F\in \mF$ verifies $F\le\Fbmd$ if and only if 
it belongs to $\mC_{\alpha,\delta}$.
\begin{figure}[ht]
\[
\includegraphics[width=.7\textwidth]{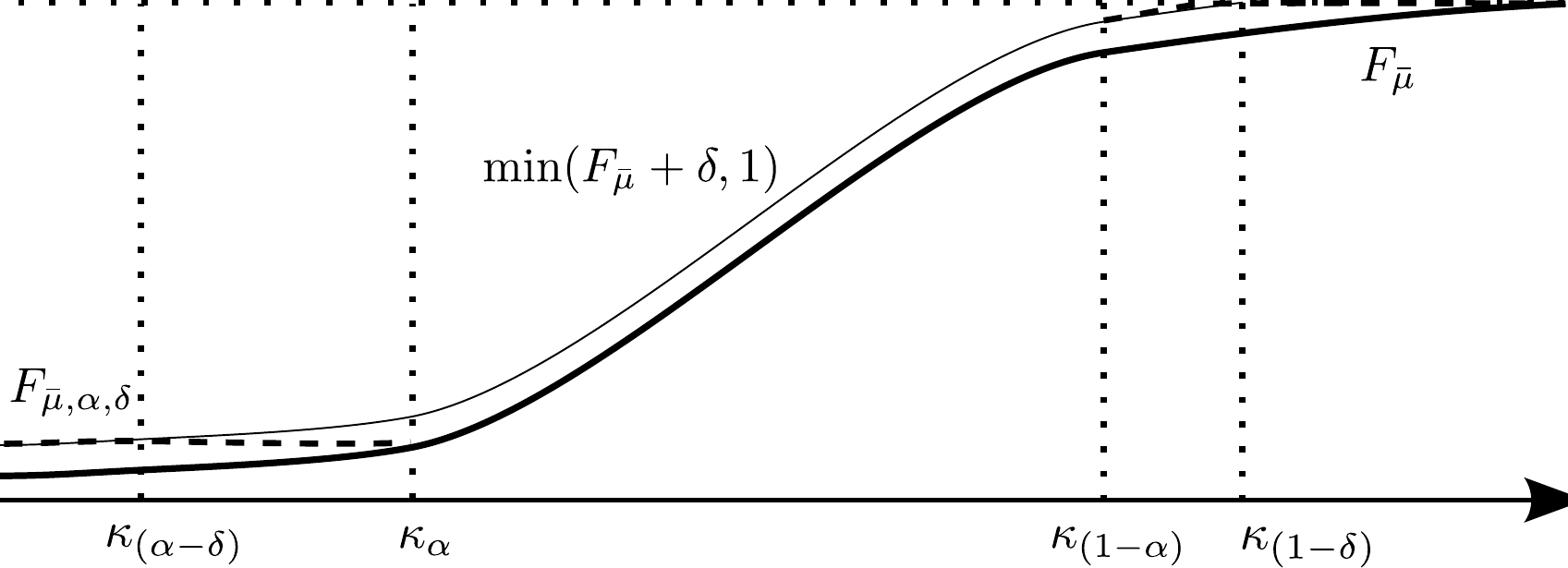}
\]
\caption{The distribution functions $\Fbm$ and $\Fbmd$.}
\end{figure}

\smallskip

By monotonicity of the operator $\Pc$, it is sufficient to prove Proposition~\ref{l:shift} for the 
function~$\Fbmd$: we have to show the existence of a constant $L$ such that the image $T_{L\delta}\Pc[\Fbmd]$ 
is of class $\mC_{\alpha,\delta/2}$. This requires an accurate description of how far the image $\Pc[\Fbmd]$ 
is from $\Fbm$. We will first give a \emph{global} upper bound and then
study the neighbourhoods of $0$ and $+\infty$.

\subsubsection{Coupling $\Fbm$ and $\Fbmd$}

Once more, it will be more convenient to work with a
good coupling between the laws of $\Fbm$ and $\Fbmd$.

So for the rest of this section, we fix a probability space $(\Omega,\mathbf{P})$. Given a random 
variable $X$ whose law is $\bm$, we define a new 
random variable $X^{\alpha,\delta}$ by
\begin{equation}\label{eq:couplingdelta}
X^{\alpha,\delta}(\omega)=
\begin{cases}
0 & \textrm{if }X(\omega)\in[\kappa_{(\alpha-\delta)},\kal ),\\
\kone & \textrm{if }X(\omega)\in[\kappa_{(1-\delta)},+\infty), \\
X(\omega) & \textrm{otherwise}.
\end{cases}
\end{equation}
It is easy to remark that the distribution function of $X^{\alpha,\delta}$ is exactly the function 
$\Fbmd$: the $\delta$-jumps of $\Fbmd$ at $0$ and $\kone$ correspond to the first two lines in 
\eqref{eq:couplingdelta}. Moreover the event $\{X^{\alpha,\delta}\neq X\}$ is given by the first 
two possibilities in~\eqref{eq:couplingdelta} and its probability equals $2\delta$. 

Let us apply the operator $\Pc$ to the distribution function $\Fbmd$. To do so, draw four 
independent random variables $X_1,\ldots,X_4$ distributed with respect to
the law $\bm$ and define 
four further random variables $X_1^{\alpha,\delta},\ldots,X_4^{\alpha,\delta}$ according 
to~\eqref{eq:couplingdelta}. Take also
a random variable $\xi$ of law~$m$ and independent of the eight previous ones. We set
\begin{align*}
Y=&\,R_{\lcr}(X_1,\ldots,X_4;\xi), \\
Y^{\alpha,\delta}=&\,R_{\lcr}(X_1^{\alpha,\delta},\ldots,X_4^{\alpha,\delta};\xi).
\end{align*}
Since the law $\bm$ is stationary, the law of~$Y$ also coincides with~$\bm$. On the other 
hand the probability of $Y^{\alpha,\delta}\neq Y$ is upper bounded by the sum of the probabilities 
of four events $\{X_j\neq X_j^{\alpha,\delta}\}$ and does not exceed $4\cdot 2\delta=8\delta$.

Then for any $\alpha\in(0,1/2)$ and $\delta\in (0,\alpha]$ we have the inequality
\begin{equation}\label{eq:4delta}
\Phi_{\lcr}[\Fbmd]\le \min(\Fbm+8\delta,1).
\end{equation}

\smallskip

However this bound is quite rough. For our purposes we need to be a little 
more careful: we know that the function $\Pc[\Fbmd]$ takes the value 
$\theta(\delta)=O(\delta^2)$ at zero, which is pretty better than~$8\delta$. Henceforth, we will give a bound 
on the values $s_0$ and $t_0$
for which the distance between $\Fbm$ and~$\Pc[\Fbmd]$ is less than $\delta/2$ 
outside the interval $[s_0,t_0]$.

\begin{lem}\label{l:delta2}
There exist two points $s_0$, $t_0\in \R_+$ such that for every $\alpha\in (0,1/80)$ the following estimates 
hold. For any $\delta\in(0,\alpha]$, $s\in [0,s_0)$ and $t\in (t_0,+\infty)$ we have
\begin{align}\label{eq:delta2a}
\mathbf{P}(Y^{\alpha,\delta}\le s<Y)&\le \delta/2,\\
\label{eq:delta2b}
\mathbf{P}(Y^{\alpha,\delta}< t<Y)&\le \delta/2.
\end{align}
\end{lem}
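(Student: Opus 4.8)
The plan is to bound the two probabilities by splitting according to which of the four edges $X_j$ gets modified by the coupling \eqref{eq:couplingdelta}, and then exploiting the fact that a modified edge only affects the $\In\Out$-distance of the glued graph if the relevant shortest path actually passes through it. First I would observe that $\{Y^{\alpha,\delta}\le s<Y\}$ and $\{Y^{\alpha,\delta}<t<Y\}$ are both contained in the event $\{Y^{\alpha,\delta}\neq Y\}\subseteq\bigcup_{j=1}^4\{X_j\neq X_j^{\alpha,\delta}\}$. Each $\{X_j\neq X_j^{\alpha,\delta}\}$ has probability exactly $2\delta$ and splits into the "collapsing" part $\{X_j\in[\kappa_{(\alpha-\delta)},\kal)\}$ (probability $\delta$, here $X_j^{\alpha,\delta}=0$) and the "truncating" part $\{X_j\in[\kappa_{(1-\delta)},+\infty)\}$ (probability $\delta$, here $X_j^{\alpha,\delta}=\kone$). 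On the collapsing part, $Y^{\alpha,\delta}$ can only decrease relative to $Y$, so this part may contribute to both \eqref{eq:delta2a} and \eqref{eq:delta2b}; on the truncating part, $Y^{\alpha,\delta}\le Y$ still but the decrease is bounded, so it will mostly matter for \eqref{eq:delta2b} near $t$ large. The factor-of-two improvement over the crude bound $8\delta$ must come from the geometry: at least one of the \emph{two parallel} edges must also be large for the modification of edge $j$ to register in the shortest $\In\Out$-path.

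Next I would carry out the estimate for \eqref{eq:delta2a}. Fix $j$ and consider the collapsing part $E_j=\{X_j\in[\kappa_{(\alpha-\delta)},\kal)\}$. On the intersection $\{Y^{\alpha,\delta}\le s\}\cap E_j$, the distance $R_{\lcr}(X_1^{\alpha,\delta},\dots;\xi)$ is at most $s$; since $s<s_0$ is small, this forces $\lcr\xi\cdot(\min$ of the two parallel edges on the side \emph{not} containing $j$, whatever it is$)$ to be small, together with $\lcr\xi$ times the minimum on $j$'s side, which after collapsing $X_j$ is just $\lcr\xi\cdot\min(0,X_i)=0$. So $\{Y^{\alpha,\delta}\le s\}$ is automatically satisfied on the $j$'s side, and the constraint $Y>s$ on the original graph forces that without collapsing, $\lcr\xi\min(X_j,X_i)>0$ is not the binding issue — rather it is that the \emph{other} parallel pair is small while on $j$'s side the un-collapsed minimum $\min(X_j,X_i)$ is at least $\kappa_{(\alpha-\delta)}$-ish but $X_i$ could still be tiny. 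The point: for the event to occur we additionally need $X_i$ (the edge parallel to $j$) to be at least comparable to $X_j$, i.e.\ $X_i\ge \kappa_{(\alpha-\delta)}$ roughly; choosing $s_0$ and $t_0$ small enough (independently of $\delta$, using only the fixed stationary $\bm$ and continuity of $\Fbm$ at $0$ and $+\infty$) one gets, on $E_j$, the extra conditional probability $\mathbf P(X_i\ge \kappa_{(\alpha-\delta)}\mid E_j)\le 1-(\alpha-\delta)\le 1$, which is not yet enough — so one needs instead to require $X_i$ \emph{large} (say $\ge\kone$, probability $\le\delta$ on the truncating scale) \emph{or} a further smallness on the other pair that is itself $O(\delta)$. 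Summing the four such refined events each of probability $\le \tfrac\delta8$ gives the bound $\delta/2$. The choice of the absolute constant $\alpha<1/80$ enters precisely to make the crude cross-terms (two edges simultaneously modified, probability $O(\delta^2)\le O(\alpha\delta)$) negligible against $\delta/8$.

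The estimate \eqref{eq:delta2b} for $t$ large is handled symmetrically but is slightly easier: $\{Y^{\alpha,\delta}<t<Y\}\cap\{X_j\neq X_j^{\alpha,\delta}\}$ requires that replacing $X_j$ by $X_j^{\alpha,\delta}\in\{0,\kone\}$ strictly \emph{lowers} the shortest $\In\Out$-distance below $t$ while the original one exceeds $t$; since $\kone$ and $0$ are both fixed finite numbers, for $t>t_0:=2\lcr\kone\cdot(\text{sup of }\xi\text{-support issues}\dots)$ — more precisely, choosing $t_0$ so that the event $\{Y>t\}$ already forces one of the parallel pairs to be very large — one again gets a $1/4$-type conditional factor from requiring the parallel edge $X_i\ge x_1$ for a suitable quantile $x_1$, shrinking each contribution to $\le\delta/8$ and summing to $\delta/2$.

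The main obstacle I anticipate is making the "shortest path passes through edge $j$" reduction quantitatively airtight near $x=0$: unlike in Lemma~\ref{l:comp}, where $x_1$ was the $\tfrac34$-quantile and the parallel-edge condition gave a clean factor $\tfrac14$, here $s$ ranges over a whole neighbourhood $[0,s_0)$ and one must choose $s_0$ (and the auxiliary quantiles) \emph{uniformly in} $\delta\in(0,\alpha]$ and uniformly in the absolute constant $\alpha<1/80$, using only the continuity and strict positivity of the fixed density of $\bm$ at $0$ and $\infty$ (which, as noted in the excerpt, follows from $\Fbm=\Pc[\Fbm]$ being $C^1$ with positive derivative). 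Getting the bookkeeping of the finitely many "one edge modified, one parallel edge large or the far pair small" events to land at exactly $\delta/2$ rather than some larger multiple of $\delta$ is the delicate part, and is what dictates the specific numerology ($1/80$, the split into $\tfrac\delta8$ pieces).
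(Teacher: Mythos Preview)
Your overall architecture is right: reduce by symmetry to one edge, say $j=4$, and split $\{X_4\neq X_4^{\alpha,\delta}\}$ into the collapsing part $E_1=\{X_4\in[\kappa_{(\alpha-\delta)},\kal)\}$ and the truncating part. But the mechanism you describe for extracting the extra small factor is confused, and in particular it does not work for the collapsing case.

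For the truncating part the parallel-edge argument is indeed the right one, and the paper makes this explicit: if $X_4\ge\kappa_{(1-\delta)}$ gets truncated to $\kone$, this can only change the shortest path if the parallel edge satisfies $X_3>\kone$, an event of probability~$\alpha$. Thus the truncating contribution is at most $4\cdot\alpha\delta\le \tfrac{4}{80}\delta=\tfrac{\delta}{20}$, and this is exactly where the constant $1/80$ enters. Your remark that the parallel edge should be $\ge\kone$ ``with probability $\le\delta$'' is a slip; the probability is~$\alpha$, and it is this $\alpha$ (not a second~$\delta$) that does the work.

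The genuine gap is in the collapsing part $E_1$. When $X_4^{\alpha,\delta}=0$ the parallel edge gives you nothing: $\min(X_3,0)=0$ regardless of $X_3$, so no condition on $X_3$ can be extracted. Your attempt to squeeze a factor out of ``$X_i\ge\kappa_{(\alpha-\delta)}$'' or out of the constraint $Y>s$ does not lead anywhere. The paper's idea is different and is the missing ingredient: conditionally on~$E_1$, one has exactly $Y^{\alpha,\delta}=\lcr\xi\,\min(X_1^{\alpha,\delta},X_2^{\alpha,\delta})$, which depends only on the \emph{other} pair and on~$\xi$. One then chooses $s_0$ (depending only on~$\bm$ and~$m$, not on $\alpha$ or~$\delta$) so that $\mathbf P(\lcr\xi\min(X_1,X_2)\le s_0)\le\tfrac{1}{20}$; together with the crude bound $\mathbf P(\min(X_1^{\alpha,\delta},X_2^{\alpha,\delta})\neq\min(X_1,X_2))\le 4\delta\le\tfrac{1}{20}$, this gives $\mathbf P(Y^{\alpha,\delta}\le s\mid E_1)\le\tfrac{1}{10}$, hence an $E_1$-contribution of $4\cdot\tfrac{1}{10}\cdot\delta$. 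Summing with $\tfrac{\delta}{20}$ gives $<\tfrac{\delta}{2}$.

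For~\eqref{eq:delta2b} your suggestion of defining $t_0$ via a supremum of the support of~$\xi$ cannot work, since $m$ is fully supported on~$\R_+$. The argument is genuinely symmetric to the one above (now choosing $t_0$ so that $\mathbf P(\lcr\xi\max(X_1,X_2)>t_0)$ is small), not ``easier''.
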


\begin{proof}
We only prove the estimate \eqref{eq:delta2a}, since the estimate \eqref{eq:delta2b} can be obtained in a 
similar manner.

\smallskip

The condition $Y^{\alpha,\delta}<Y$ implies that at least one of the $X_{i}^{\alpha,\delta}$'s is different 
from $X_i$, and that this edge is not short-cut by its parallel of length~$X_j$. By symmetry, we have
\begin{equation}\label{eq:YsY}
\mathbf{P}(Y^{\alpha,\delta}\le s<Y)\le 4\cdot \mathbf{P}(Y^{\alpha,\delta}\le s
<Y,\, X^{\alpha,\delta}_4<\min(X_3,X_4)).
\end{equation}
The event $\{X^{\alpha,\delta}_4<\min(X_3,X_4)\}$ is covered by the two events
\[E_1:=\{X_4\in [\kappa_{(\alpha-\delta)},\kal)\}
\text{ and }
E_2:=\{X_4\ge \kappa_{(1-\delta)},X_3>\kappa_{(1-\alpha)}\},\]
whose probabilities are respectively $\mathbf{P}(E_1)=\delta$ and $\mathbf{P}(E_2)=\alpha\delta$.
Then we can write from~\eqref{eq:YsY}:
\begin{equation}\label{eq:contribution}
\mathbf{P}(Y^{\alpha,\delta}\le s<Y)\le
 4\cdot \mathbf{P}(Y^{\alpha,\delta}\le s 
<Y\mid E_1)\,\mathbf{P}(E_1)+4\cdot \mathbf{P}(Y^{\alpha,\delta}\le s <Y\mid E_2)\,\mathbf{P}(E_2)
\end{equation}
The rightmost summand in~\eqref{eq:contribution} (the one involving $E_2$) does not 
exceed~$4\cdot \frac{1}{80} \delta=\frac{\delta}{20}$. Roughly speaking, in our coupling the probability 
that one of the $X_i$'s is different from the corresponding~$X_i^{\alpha,\delta}$ is approximately $8\delta$, 
but if $X_i$ is large, most probably this edge will be short-cut by a parallel one, and this leaves us 
only with at most $\frac{\delta}{20}$ of contribution.

Unfortunately the same estimate for the summand in~\eqref{eq:contribution} involving $E_1$ does 
not work: it  leads to the 
contribution of $4\delta$, that is much larger than the total $\frac{\delta}{2}$ that we should obtain.
There is a good reason for that: if $X_i^{\alpha,\delta}=0$ no parallel short-cutting edge can change 
the difference. To handle the contribution of this set, we recall that we are asking for the total distance 
$Y^{\alpha,\delta}$ not only to be different from $Y$, but also to be less than~$s\in [0,s_0]$; choosing 
$s_0$ sufficiently small will allow us to impose an additional restriction, reducing the probability as desired. 

Namely, since the measures $m$ and $\bm$ have no atoms, we can choose $s_0$ such that 
\[
\mathbf{P}(\lcr\xi\min(X_1,X_2)\le s_0)\le  \frac{1}{20}.
\]
We claim that this choice for $s_0$ is fine enough.

\smallskip

Indeed, when $E_1$ holds, we have
\[
Y^{\alpha,\delta}= \lcr\xi\min(X^{\alpha,\delta}_1,X^{\alpha,\delta}_2).
\]
Hence, conditionally on $E_1$, the probability of $\{Y^{\alpha,\delta}\le s\}$ does not exceed the sum 
of two probabilities, the one of 
$\{\lcr\xi\min(X_1,X_2) \le s\}$ and the one of the event
\[B=\left \{\lcr\xi\min(X^{\alpha,\delta}_1,X^{\alpha,\delta}_2)\neq \lcr\xi\min(X_1,X_2)\,\right \}\]
Using the same argument as before, we observe that the probability of $B$ is at most $4\delta$. We 
then obtain the estimate
\begin{align*}
\mathbf{P}(Y^{\alpha,\delta}\le s <Y\mid E_1)\le \,&
\,\mathbf{P}(B)+\mathbf{P}(\lcr\xi\min(X_1,X_2) \le s) \\
\le \,& 4\delta+\frac{1}{20} \le \frac{1}{10}.
\end{align*}
Plugging in this last estimate into~\eqref{eq:contribution}, we can now conclude:
\begin{align*}
\mathbf{P}(Y^{\alpha,\delta}\le s<Y)\le\, &
4\cdot \mathbf{P}(Y^{\alpha,\delta}\le s <Y,\, X^{\alpha,\delta}_4<\min(X_3,X_4)) \\
\le \,& 4\cdot \mathbf{P}(Y^{\alpha,\delta}\le s 
<Y\mid E_1)\,\mathbf{P}(E_1)+4\cdot \mathbf{P}(Y^{\alpha,\delta}\le s <Y\mid E_2)\,\mathbf{P}(E_2) \\
\le \,&4\delta\cdot \frac{1}{10}+\frac{\delta}{20}< \frac{\delta}{2}.
\end{align*}
\end{proof}

The previous lemma implies that for any $z\in [0,s_0]\cup[t_0,+\infty)$, the functions $\Pc[\Fbmd]$ and 
$\Fbm$ differ at most by $\delta/2$ at the point $z$. The key fact is that the points $s_0$ and $t_0$
do not depend on $\alpha$. In particular we can choose $\alpha$ sufficiently small in such a way that 
$[s_0,t_0]\subset[\kappa_{\alpha},\kone]$. We use this result to end the proof of Proposition~\ref{l:shift},
which follows directly from the next easy lemma.

\begin{lem}
For any $\alpha<1/80$ such that $[s_0,t_0]\subset[\kappa_{\alpha},\kone]$ 
there exists a constant $L_0$ with the following property.  
Take $\delta\in(0,\alpha]$, and assume that the distribution function $F$ satisfies
\[
F(x)\le \begin{cases}
\Fbm(x)+\delta/2 & \text{on } [0,s_0)\cup (t_0,+\infty),\\
\Fbm(x)+8\delta & \text{on } [s_0,t_0].
\end{cases}
\]
Then $T_{8L_0\delta}[F]\in\mC_{\alpha,\delta/2}$.
\end{lem}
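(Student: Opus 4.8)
The plan is to play the gap between the weak deficit $\delta/2$ (available away from $[s_0,t_0]$) and the strong deficit $8\delta$ (available on $[s_0,t_0]$) against the \emph{multiplicative} left shift $x\mapsto e^{-8L_0\delta}x$ performed by $T_{8L_0\delta}$. The only structural input is the fact recorded at the beginning of this section: since $\bm=\Pc[\bm]$ and $m$ is absolutely continuous with continuous positive density, $\Fbm$ is $C^1$ on $\R_+$ with $\Fbm'>0$, hence $\Fbm'$ is bounded below by a positive constant on every compact subinterval of $(0,+\infty)$. Write $r:=8L_0\delta$; the goal is to show that $G(x):=T_r[F](x)=F(e^{-r}x)$ satisfies $G\le\Fbm$ on $[\kal,\kone]$ and $G\le\Fbm+\delta/2$ on $[0,\kal)\cup(\kone,+\infty]$, using only the pointwise hypothesis on $F$ evaluated at the point $e^{-r}x$.

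The gain mechanism is the mean value inequality: for $x$ ranging in a compact interval $[a,b]\subset(0,+\infty)$ and $0<r\le 1$,
\[
\Fbm(x)-\Fbm(e^{-r}x)\ \ge\ \Big(\min_{[e^{-1}a,\,b]}\Fbm'\Big)\,x\,(1-e^{-r})\ \ge\ \tfrac12\Big(\min_{[e^{-1}a,\,b]}\Fbm'\Big)\,a\,r,
\]
while for $r\ge 1$ the left-hand side is at least $\big(\min_{[e^{-1}a,\,b]}\Fbm'\big)\,a\,(1-e^{-1})$. Applying this with $[a,b]$ a compact neighbourhood of $[s_0,t_0]$ (enlarged so as to reach up to $\kone$ and down to $\kal$), and recalling $r=8L_0\delta$, this gain exceeds $8\delta$ once $L_0$ is large enough, the threshold depending only on $\Fbm$, $s_0$, $t_0$ and on the already-fixed $\alpha$ (for $\delta$ comparable to $\alpha$ one also uses, as in Lemma~\ref{l:delta2}, that $\alpha$ is small).

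It then remains a finite case analysis according to where $x$ and $e^{-r}x$ sit relative to $\kal$, $s_0$, $t_0$, $\kone$. On $[0,\kal)$ one has $e^{-r}x<x<s_0$, so $G(x)=F(e^{-r}x)\le\Fbm(e^{-r}x)+\delta/2\le\Fbm(x)+\delta/2$, which is all that is needed; the symmetric painless subcase is $x>\kone$ with $e^{-r}x>t_0$. On each of $[\kal,s_0)$, $[s_0,t_0]$, $(t_0,\kone]$ one needs the sharper conclusion $G(x)\le\Fbm(x)$: there $F(e^{-r}x)\le\Fbm(e^{-r}x)+8\delta$ if $e^{-r}x\in[s_0,t_0]$ and $F(e^{-r}x)\le\Fbm(e^{-r}x)+\delta/2$ otherwise, and subtracting the gain estimate (with $L_0$ chosen to beat the worse value $8\delta$) yields $F(e^{-r}x)\le\Fbm(x)$; points $x$ for which $e^{-r}x$ crosses an endpoint $s_0$ or $t_0$ are covered by the same estimate over a slightly enlarged compact interval. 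Finally, for $x>\kone$ with $e^{-r}x\in[s_0,t_0]$ one does not use the gain: $F(e^{-r}x)\le\Fbm(t_0)+8\delta$ while $\Fbm(x)\ge\Fbm(\kone)=1-\alpha$, so $G(x)\le\Fbm(x)+\delta/2$ follows from having chosen $\alpha$ small enough that $\Fbm(t_0)+8\alpha\le 1-\alpha$ (recall $\delta\le\alpha$); for $r$ large, $e^{-r}x$ drops below $s_0$ and this subcase collapses to the painless one.

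The delicate part — and the reason the smallness of $\alpha$ and the $\alpha$-independence of $s_0,t_0$ matter — is controlling the transition zones near $0$ and near $+\infty$: a left shift can carry a point of the ``$8\delta$-zone'' $[s_0,t_0]$ to some $x\notin[\kal,\kone]$ where only the weaker budget $\Fbm+\delta/2$ is allowed and, $\Fbm$ being already close to $1$, no multiplicative gain is available. This is exactly what the crude inequality $\Fbm(\kone)-\Fbm(t_0)\ge 8\alpha\ge 8\delta$ takes care of, and it is the one place where the argument is not pure scaling/MVT bookkeeping. Once this is in place, one checks that the finitely many lower bounds imposed on $L_0$ (one per region, each a constant depending only on $\Fbm$ and the fixed $\alpha$) are mutually compatible, so any $L_0$ above their maximum works; this produces the constant $L:=8L_0$ of Proposition~\ref{l:shift}.
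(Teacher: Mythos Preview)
Your overall strategy is right, and the case analysis for $x\notin[\kal,\kone]$ is in fact simpler than you make it: since $T_{8L_0\delta}[F](x)=F(e^{-8L_0\delta}x)\le F(x)$ by monotonicity of~$F$, and $x\notin[s_0,t_0]$, you immediately get $T_{8L_0\delta}[F](x)\le\Fbm(x)+\delta/2$ with no need to track where $e^{-r}x$ lands. The paper does exactly this; your subcase ``$x>\kone$ with $e^{-r}x\in[s_0,t_0]$'' and the extra smallness requirement $\Fbm(t_0)+8\alpha\le 1-\alpha$ are unnecessary.

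The genuine gap is in your gain estimate on $[\kal,\kone]$. Your mean-value bound in the \emph{linear} coordinate gives
\[
\Fbm(x)-\Fbm(e^{-r}x)\ \ge\ m\,x\,(1-e^{-r}),\qquad m:=\min_{[e^{-1}\kal,\kone]}\Fbm',
\]
which for $r\ge 1$ is at most $m\,\kone$ and in particular does \emph{not} grow with $L_0$. Your claim that ``this gain exceeds $8\delta$ once $L_0$ is large enough'' is therefore false for the $r\ge 1$ branch: the gain there is bounded by a constant independent of $L_0$, and you would need $m\,\kal\gtrsim 8\alpha$, which the hypotheses of the lemma do not provide (e.g.\ if $\Fbm'$ is small near~$0$ one has $m\kal\ll\alpha$). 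Since $r=8L_0\delta$ can reach $8L_0\alpha$, and $L_0$ must be large to handle small~$\delta$, this regime is unavoidable.

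The paper sidesteps this by working in the \emph{logarithmic} coordinate: letting $L_0$ be the Lipschitz constant of $g:=\log\Fbm^{-1}$ on $[\alpha/2,1-\alpha]$ gives directly $\Fbm(x)-\Fbm(x')\ge(\log x-\log x')/L_0=8\delta$ whenever $x,x'\in[\kappa_{\alpha/2},\kone]$, linearly in~$r$ for \emph{all}~$r$. The residual case $x':=e^{-r}x<\kappa_{\alpha/2}$ is then dispatched by the elementary bound $F(x')\le\Fbm(\kappa_{\alpha/2})+\delta/2=\alpha/2+\delta/2\le\alpha\le\Fbm(x)$. Your argument can be repaired by either of these devices: switch the MVT to the $\log x$ scale (equivalently, bound $x\Fbm'(x)$ below on the compact), or add the explicit subcase $e^{-r}x<\kappa_{\alpha/2}$; once you do, the appeal to ``$\alpha$ small'' is no longer needed and $L_0$ depends only on $\Fbm$ and~$\alpha$ as stated.
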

\begin{proof}
We have already observed that the function $\Fbm$ is $C^1$ regular with positive derivative on
$(0,+\infty)$. Hence, the function $g:=\log \Fbm^{-1}$ is well defined and locally Lipschitz on $(0,1)$.
Let $L_0$ be the Lipschitz constant of $g$ on the interval $[\kappa_{\alpha/2}, \kappa_{1-\alpha}]$. We shall show that
this choice is fine enough.

Namely, for $x\notin [\kappa_{\alpha},\kappa_{1-\alpha}]\supset [s_0,t_0]$ we have 
\[
T_{8L_0\delta}[F](x)\le F(x)\le \Fbm(x)+\delta/2,
\]
so we have to check only the inequality $T_{8L_0\delta}[F](x)\le \Fbm(x)$ for all $x\in [\kappa_{\alpha},\kappa_{1-\alpha}]$. 

To do so, for any such $x$ consider the point $x'=e^{-8L_0\delta}x$. Assume first that $x'\ge \kappa_{\alpha/2}$. Then, the values $y:=F_{\bm}(x)$ and $y':=F_{\bm}(x')$ both belong to $[\alpha/2,1-\alpha]$, and the Lipschitz condition for the function $g$ on this interval gives us 
\[
8L_0\delta= \log x- \log x'=\log F_{\bm}^{-1}(y)- F_{\bm}^{-1}(y')\le L_0 (y-y') = L_0 (F_{\bm}(x)-F_{\bm}(x')).
\]
Hence, $F_{\bm}(x')\le F_{\bm}(x)-8\delta$ and 
\[
T_{8L_0\delta}[F](x)=F(x')\le F_{\bm}(x')+8\delta \le F_{\bm}(x)-8\delta+8\delta=F_{\bm}(x).
\]
Finally, if $x'\le \kappa_{\alpha/2}$, we have 
\[
T_{8L_0\delta}[F](x)=F(x') \le F(\kappa_{\alpha/2})\le F_{\bm}(\kappa_{\alpha/2})+\delta/2 \le \alpha =F_{\bm}(\kappa_{\alpha}) \le F_{\bm}(x).
\]
\end{proof}

\subsection{Asymptotic upper bounds and convergence}

With Proposition~\ref{c:upper} at our disposal, we shall now detect the good rescaled measure to which $\mu$ 
(or~$F$) converges under the iterations of $\Pc$. We first make a guess and then prove that it is correct.

\begin{dfn}
The rescaling $T_r[\Fbm]$ \emph{asymptotically upper bounds $F$} if 
for every $\eps>0$ it is possible to find $n_0\in \N$ such that for any $n\ge n_0$ and $x\in \R_+$ we have
\[ 
T_r[\Fbm](x) +\eps \ge \Pc^n[F](x).
\]
In the same way, the rescaling $T_r[\Fbm]$ \emph{asymptotically lower bounds $F$} if 
for every $\eps>0$ it is possible to find $n_0\in \N$ such that for any $n\ge n_0$ and $x\in \R_+$ we have
\[
T_r[\Fbm](x) -\eps \le \Pc^n[F](x).
\]
\end{dfn}
\begin{dfn}
Given any function $F\in \mF_0$ we define the sets
\begin{align*}
R_+=R_+(F)&=\{r\in \R\mid T_r[F_{\bm}] \text{ asymptotically upper bounds } F \},
\\
R_-=R_-(F)&=\{r\in \R\mid T_r[F_{\bm}] \text{ asymptotically lower bounds } F \}.
\end{align*}
\end{dfn}

We have already observed in Remark~\ref{r:class} that for any $F\in \mF_0$ 
there exists a rescaling of $F$ that belongs to the class $(\alpha,\alpha)$. So
Proposition~\ref{c:upper} implies that~$R_+(F)$ is nonempty. 
We can repeat the arguments in Section~\ref{ssc:classCad} with the class of functions $F\in \mF$ 
verifying the reversed conditions:
\begin{enumerate}
\item $F(x)\ge \Fbm(x)$ on $[\kal,\kone]$,
\item $F(x)\ge \Fbm(x)-\delta$ on $[0,\kal)\cup (\kone,+\infty]$.
\end{enumerate}
Then we see that the 
set~$R_-(F)$ is nonempty as well. Finally, for any $r_+\in R_+$ and $r_-\in R_-$ it is easy to see 
that~$r_+\le r_-$, hence the set $R_+$ is nonempty and right bounded. 

\smallskip

This allows to consider the value 
\[
r_0=r_0(F):=\sup R_+(F).
\]
The proof of Theorem~\ref{t:conv} will be concluded once we prove the following
\begin{prop}\label{p:limit}
For any $F\in \mF_0$ the iterates $\Pc^n[F]$ converge to $T_{r_0(F)}[\Fbm]$ as $n$ tends to $\infty$.
\end{prop}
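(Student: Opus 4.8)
The plan is to show that $\Pc^n[F]$ converges to $T_{r_0}[\Fbm]$ by squeezing it between rescalings arbitrarily close to $T_{r_0}[\Fbm]$ from above and from below, using Proposition~\ref{c:upper} for the upper bound and the symmetric version of Section~\ref{ssc:classCad} for the lower bound. First I would record the basic monotonicity properties: the maps $R_+(F)$, $R_-(F)$ are invariant in the right way under $\Pc$, because $\Pc$ commutes with the rescalings $T_r$ and is order-preserving (Lemma~\ref{lem:order}); in fact $r\in R_+(F)$ iff $T_r[\Fbm]$ asymptotically upper bounds $F$, and applying $\Pc$ once does not change this, so $R_\pm(\Pc[F])=R_\pm(F)$ and $r_0(\Pc[F])=r_0(F)$. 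I would also note $R_+$ is a closed-from-below ``up-set'' and $R_-$ an ``down-set'' meeting only possibly at $r_0$, and that $r_0$ is finite (nonempty and bounded as observed just before the statement).

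Next comes the upper bound. Fix $\eps>0$. Since $r_0=\sup R_+(F)$, there is $r\le r_0$, $r>r_0-\eps'$ (for a small $\eps'$ to be chosen) with $r\in R_+(F)$, i.e. eventually $\Pc^n[F]\le T_r[\Fbm]+\eps''$. I would then argue that, after one more rescaling, a tail $\Pc^{n}[F]$ lies in some class $\mC_{\alpha,\delta}$ relative to the stationary function — more precisely, $T_{-r}\Pc^{n_1}[F]$ is, for suitable $n_1$ and small $\delta$, of class $(\alpha,\delta)$ (this uses that $\Fbm$ is $C^1$ with positive derivative, so the quantile function is locally Lipschitz, converting a uniform $\eps''$-closeness on a compact quantile window into membership in a class; outside the window the $+\delta$ slack is automatic). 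Then Proposition~\ref{c:upper} applied to $G:=T_{-r}\Pc^{n_1}[F]$ gives, for all $k$,
\[
\Pc^{k}[G](x)\le T_{-2L\delta}[\Fbm](x)+\delta/2^k,
\]
hence, undoing the rescaling, $\Pc^{n_1+k}[F](x)\le T_{r-2L\delta}[\Fbm](x)+\delta/2^k$. Choosing $\delta$ small and then $k$ large, the right-hand side is $\le T_{r_0}[\Fbm](x)+\eps$ for all $x$ (here I use $r\le r_0$ and $T_s[\Fbm]$ depends continuously and monotonically on $s$). Thus $\limsup_n \Pc^n[F]\le T_{r_0}[\Fbm]+\eps$ pointwise, and $\eps\to0$ gives $\limsup_n\Pc^n[F](x)\le T_{r_0}[\Fbm](x)$.

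For the lower bound I would run the mirror-image argument: define the reversed classes (functions $\ge\Fbm$ on the middle quantile interval, $\ge\Fbm-\delta$ outside), prove the analogue of Proposition~\ref{l:shift} — that $T_{-L\delta}\Pc$ maps a reversed class of parameter $\delta$ into one of parameter $\delta/2$ — and deduce an analogue of Proposition~\ref{c:upper}: $\Pc^n[F](x)\ge T_{2L\delta}[\Fbm](x)-\delta/2^n$ for $F$ in a reversed class. This yields, for any $r\in R_-(F)$, that eventually $\Pc^n[F](x)\ge T_{r+2L\delta}[\Fbm](x)-\eps$, hence $\liminf_n\Pc^n[F](x)\ge T_{\inf R_-(F)}[\Fbm](x)$. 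The remaining point is that $\inf R_-(F)=r_0(F)$: indeed $r_+\le r_-$ for all $r_+\in R_+$, $r_-\in R_-$ gives $\inf R_- \ge \sup R_+ = r_0$; and if $\inf R_->r_0$ one could, as for the upper bound, push the tail of $\Pc^n[F]$ into a reversed class with a rescaling parameter strictly between $r_0$ and $\inf R_-$, contradicting the definition of $r_0$ as the supremum of $R_+$ (the upper bound just proved shows that same rescaling must belong to $R_+$). Combining, $\liminf_n\Pc^n[F]\ge T_{r_0}[\Fbm]\ge\limsup_n\Pc^n[F]$, so $\Pc^n[F]\to T_{r_0}[\Fbm]$ pointwise on $\R_+$ — which is the asserted convergence, and in particular forces equality $\inf R_-(F)=\sup R_+(F)=r_0(F)$.

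The main obstacle I anticipate is the bookkeeping that turns ``eventual $\eps$-closeness to some rescaling $T_r[\Fbm]$'' into ``membership of a rescaled tail in a class $\mC_{\alpha,\delta}$'', so that Proposition~\ref{c:upper} applies: one must ensure the class parameters $(\alpha,\delta)$ can be taken uniformly (with $\alpha$ the fixed constant from Proposition~\ref{l:shift}), control the behaviour near $0$ and $+\infty$ where the stationary density decays, and check that the Lipschitz constant of $\log\Fbm^{-1}$ on the relevant quantile interval is finite — all of which rely on the regularity of $\Fbm$ coming from the absolute continuity and positivity assumptions on $m$. The reversed-class analogue of Proposition~\ref{l:shift} should go through verbatim with signs flipped, but verifying the key estimate near $0$ (the analogue of Lemma~\ref{l:delta2}) is where one must be a little careful, since there the asymmetry between ``collapsing an edge to $0$'' and ``sending an edge to the $\kone$-quantile'' reappears with roles exchanged.
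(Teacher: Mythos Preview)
Your upper bound is more laborious than needed: the paper obtains $\limsup_n \Pc^n[F](x)\le T_{r_0}[\Fbm](x)$ in one line from the definition of $R_+$ (any $r<r_0$ lies in $R_+$, hence $\limsup_n\Pc^n[F]\le T_r[\Fbm]$ pointwise, then let $r\uparrow r_0$), without invoking Proposition~\ref{c:upper} at all. Your bookkeeping to squeeze a tail into a class $\mC_{\alpha,\delta}$ is not wrong, just unnecessary here.

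The genuine gap is in the lower bound. Your reversed-class machinery does yield $\liminf_n\Pc^n[F]\ge T_{r_1}[\Fbm]$ with $r_1:=\inf R_-$, but your argument that $r_1=r_0$ does not go through as written. The reversed contraction only lets you place $T_{-s'}\Pc^n[F]$ in a reversed class for parameters $s'$ on the $R_-$ side (i.e.\ $s'\ge r_1$, after the small corrective shift), never for $s'$ strictly between $r_0$ and $r_1$; so there is no way to ``push the tail into a reversed class with a rescaling parameter strictly between $r_0$ and $\inf R_-$'' from the contraction alone, and nothing so far forces any $r>r_0$ to lie in $R_+$. The two bounds $T_{r_1}[\Fbm]\le\liminf\le\limsup\le T_{r_0}[\Fbm]$ are perfectly compatible with $r_0<r_1$.

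What is missing is a \emph{propagation} statement: if $G\in\mC_{\alpha,\delta}$ and $G(x_0)<\Fbm(x_0)-\eps$ at a \emph{single} point, then $T_r[\Fbm]$ asymptotically upper bounds $G$ for some fixed $r=r(\eps)>0$. This is exactly the paper's Lemma~\ref{l:lower2}, and it is the heart of the proof. With it, the paper dispenses with the reversed classes entirely: from the easy upper bound one shows that $T_{-s}\Pc^n[F]\in\mC_{\alpha,\delta}$ for $s$ arbitrarily close to $r_0$ and all large $n$; if moreover $\liminf_n \Pc^n[F](x_0)<T_{r_0}[\Fbm](x_0)$ at some $x_0$, then along a subsequence $T_{-s}\Pc^n[F](x_0)<\Fbm(x_0)-\eps$, and Lemma~\ref{l:lower2} yields $s+r(\eps)\in R_+$ with $s$ as close to $r_0$ as one likes --- contradicting $r_0=\sup R_+$. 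Your symmetric route can also be completed, but only by proving the mirror image of Lemma~\ref{l:lower2} and then exploiting that some $r\in(r_0,r_1)$ fails to lie in $R_+$; the reversed contraction (the analogue of Proposition~\ref{l:shift}) by itself cannot close the gap.
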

\begin{proof}
The upper bound is easy. Indeed for any $r<r_0$, the definition of $r_0$ itself gives
\[
\limsup_{n\to\infty} \Pc^n[F](x) \le T_{r}[\Fbm](x),
\]
and passing to the limit as $r\uparrow r_0$, we obtain the desired
\begin{equation}\label{eq:asympt-upper}
\limsup_{n\to\infty} \Pc^n[F](x) \le T_{r_0}[\Fbm](x).
\end{equation}
From~\eqref{eq:asympt-upper} we automatically have 
\begin{equation}\label{eq:asympt}
\liminf_{n\to\infty} \Pc^n[F](x) \le T_{r_0}[\Fbm](x).
\end{equation}
For the desired equality $\lim_{n\to\infty} \Pc^n[F](x) = T_{r_0}[\Fbm](x)$ we have to show that 
it is impossible to have a \emph{strict} inequality in~\eqref{eq:asympt} at any point~$x\in \R_+$. 
For this we shall need the following:
\begin{lem}\label{l:lower2}
For any $\eps>0$ there exist $\delta$, $r>0$ such that if $F\in\mF$ is of class 
$(\alpha,\delta)$ and for some~$x\in \R_+$ one has $F(x)<F_{\bm}(x)-\eps$, then 
$T_{r}[F_{\bm}]$ asymptotically upper bounds~$F$.
\end{lem}

Indeed, if a strict inequality takes place in~\eqref{eq:asympt} at some point $x$, 
set 
\[
\eps:=\frac{1}{2} \left(T_{r_0}[\Fbm](x)-\liminf_{n\to\infty}\Pc^n[F](x)\right).
\]  
Then we can apply Lemma~\ref{l:lower2} and find the corresponding $r$ and $\delta$; the 
inequality~\eqref{eq:asympt-upper} then implies that for every sufficiently large $n$ the functions 
$T_{-r_0}\Pc^n[F]$ are of class $(\alpha,\delta)$. Taking $n$ such that 
\[
\Pc^n[F](x)<T_{r_0}[\Fbm](x)- \eps,
\]
we have from the conclusion of Lemma~\ref{l:lower2} that $T_{r}[F_{\bm}]$ asymptotically upper
bounds $T_{-r_0}\Pc^n[F]$, and hence $r_0+r$ belongs to $R_+(F)$.

\smallskip

The above arguments are driven by the following idea: if the sequence of iterations $\Pc^n[F](x)$ 
was asymptotically upper bounded by $T_{r_0}[\Fbm](x)$, but with a strict inequality somewhere, the 
glueing iteration would then ``disperse'' this inequality everywhere, allowing to reduce~$r_0$ further. 
With Lemma~\ref{l:lower2} we formalize this rough statement.

\begin{proof}[Proof of Lemma~\ref{l:lower2}]
Let us first reduce (at the cost of replacing $\eps$ by $\eps/2$) the possible set of values of 
$x$ that we have to consider from $\R_+=(0,\infty)$ to some interval bounded away from $0$ and 
from~$\infty$. Indeed if $x>\kappa_{1-{\eps}/{2}}$ we have
\[
F(\kappa_{1-{\eps}/{2}})<F(x)<F_{\bm}(x)-\eps<1-\eps=
(1-{\eps}/{2})-{\eps}/{2}= F_{\bm}(\kappa_{1-{\eps}/{2}})-{\eps}/{2},
\]
so the assumptions of the lemma are also satisfied for $\eps':={\eps}/{2}$ and 
$x':=\kappa_{(1-\eps')}$.

On the other hand, $F(x)<F_{\bm}(x)-\eps$ implies that $F_{\bm}(x)\ge \eps$ and hence 
$x\ge \kappa_{\eps}$. It is therefore sufficient to prove the lemma under the additional 
assumption that $x$ stays in the interval $[\kappa_{\eps},\kappa_{(1-\eps)}]$.

\medskip

Before pursuing the proof in full generality, we establish a weaker statement. Namely, 
let us assume that the map $F$ satisfies $F(x)\le \Fbm(x)$ for all $x$ (in other words, that $\delta=0$)
and let us show the existence of a positive $r=r(F)$ for which $T_r[\Fbm]$ asymptotically upper bounds~$F$. 
This statement is much weaker than the desired one, not only because it corresponds to 
$\delta=0$, but also because we establish an existence of $r>0$ for each $F$ instead of finding a uniform 
$r$ for all functions~$F$ satisfying our assumptions; nevertheless, it clarifies the arguments that we 
shall use later (that would be otherwise too technical).

\smallskip

As $\Pc[\Fbm]=\Fbm$ and the measure $m$ is of full support on $\R_+$, after applying $\Pc$ once we get 
(from the convolution operation) the strict inequality 
\begin{equation}\label{eq:strict}
\Pc[F](x)<\Fbm(x)\quad \text{for any }x\in \R_+\,.
\end{equation}
Due to the strict inequality~\eqref{eq:strict} it is possible to find, for any compact 
interval $J\subset \R_+$ (and in particular for $[\kappa_{\eps},\kappa_{(1-\eps)}]$), a sufficiently 
small $r'>0$ such that
\[
T_{-r'}\Pc[F](x) < \Fbm (x)\quad\textrm{for every }x\in J.
\]
Hence, for such an $r'$
the function $T_{-r'}\Pc[F]$ belongs to some class $(\alpha,\delta_1)$, 
with $\delta_1=\delta_1(r')<r'$ and $\alpha$ depending on $J$ (for example $\alpha<\eps$ if 
$J=[\kappa_{\eps},\kappa_{(1-\eps)}]$).
 
Using Proposition~\ref{c:upper}, this implies that $T_{-r'}[F]$ is 
asymptotically upper bounded by $T_{-2L\delta_1}[\Fbm]$ and hence $F$ is 
asymptotically upper bounded by
$T_{r'-2L\delta_1}[\Fbm]$.

\smallskip

The final part of 
the argument, that will conclude the consideration of this particular case, is that it is possible 
to make a choice for $\delta_1$ such that
\begin{equation}\label{eq:small}
\delta_1(r')=o(r') \quad \text{as } r'\to 0. 
\end{equation}
Indeed, once this estimate is established, we deduce that for a sufficiently small $r'$ the value 
$r:=r'-2L\delta_1(r')$ is strictly positive, which is exactly what we required.

To establish~\eqref{eq:small}, let us pass to the logarithmic scale, considering the coordinate 
$y=\log x$. After this change of variable, we have that for the function 
$\varphi(y):=\Fbm(e^y)$, the derivative tends to $0$ at $\pm\infty$. Indeed, $\bm$ is 
equal to the multiplicative convolution of $m$ with the law of $R_{\lcr}(X_1,\dots,X_4;1)$, 
where the $X_j$'s are i.i.d.~with law~$\bm$. Hence the distribution function of 
$\log_* \bm$ on $\R$ is a convolution of the distribution function of the measure $\log_* m$ 
with the measure $\nu=\law(\log R_{\lcr}(X_1,\dots,X_4;1))$. By the assumptions on $m$, the distribution function of 
the measure $\log_* m$ is a function of class $C^1$ with  derivative that tends to zero at 
$\pm\infty$, hence the same holds for its convolution with an arbitrary measure~$\nu$.

\smallskip

Now, the slope of the function $\varphi$ tends to 0 at $\pm\infty$. Hence for an arbitrarily small $s>0$ 
we can find a compact interval $[y_1,y_2]\subset \R$, 
such that for any point $y\notin [y_1,y_2]$ and any shift parameter $0<r'<1$, the increment 
$\varphi(y+r')-\varphi(y)$ does not exceed $sr'$. In other words, we have 
\[
\varphi(y+r')<\varphi(y)+sr'.
\]
Coming back to the initial coordinates 
we get that outside the interval $J:=[e^{y_1},e^{y_2}]$ one has 
\[
T_{-r'}\Pc[F]\le T_{-r'}[\Fbm]<\Fbm+sr'
\]
for all sufficiently small~$r'>0$. Since for the interval $J$, for all sufficiently small $r'>0$ one has the strict 
inequality $T_{-r'}\Pc[F]|_J<\Fbm|_J$, we eventually conclude that $T_{-r'}\Pc[F]\in \mC_{\alpha,sr'}$ for 
some $\alpha$. As $s>0$ was arbitrary, we obtain the desired $\delta_1(r')=o(r')$ as $r'\to 0$.

\medskip

Let us now modify the above arguments to return to the full general assumptions of the lemma. 
We set $s:=1/6L$; due to the above arguments, there exists an interval $[y_1,y_2]\subset\R$  such that for any 
$y\notin [y_1,y_2]$ and any $r'\in (0,1)$ we have $\varphi(y+r')<\varphi(y)+s r'$. 
Furthermore, without loss of generality we can assume $y_1<\log\kal$ and $y_2>\log\kone$.

Observe that if for a function~$F\in \mC_{\alpha,\delta}$ it is possible to find a point 
$x_0\in [\kappa_{\eps},\kappa_{(1-\eps)}]$ such that $F(x_0) < \Fbm(x_0) -\eps$, then $F$ is bounded from 
above (see~Figure~\ref{f:lemma-9}) by the function 
\[
F_{\alpha,\delta,\eps,x_0,\bm}(x):=
\min (F_{\bm,\alpha,\delta}, (\Fbm(x_0)-\eps)\cdot \mathbf{1}_{x\le x_0} + \mathbf{1}_{x>x_0}).
\]
\begin{figure}[ht]
\[
\includegraphics[width=.7\textwidth]{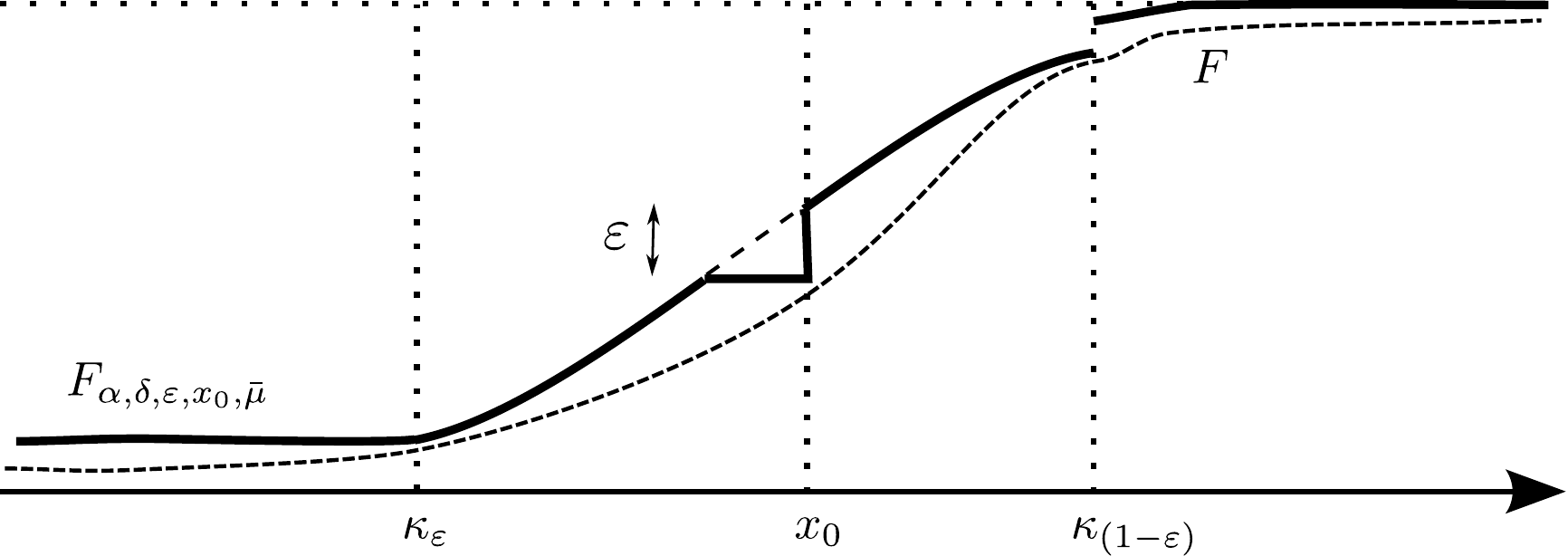}
\]
\caption{The distribution functions $F$ and $F_{\alpha,\delta,\eps,x_0,\bm}$.}\label{f:lemma-9}
\end{figure}

Note that this family of functions defines a family of measures on $[0,+\infty]$, depending 
continuously on $\delta$ and $x_0$. Hence, the family of functions 
$\Pc[F_{\alpha,\delta,\eps,x_0,\bm}]$ depends $C^0$-continuously on $\delta$ and $x_0$ 
(due to the multiplicative convolution with~$m$). Then the function
\[
H(x,x_0,\delta,r')=T_{-r'}\Pc[F_{\alpha,\delta,\eps,x_0,\bm}](x)-\Fbm(x)
\]
considered on $[e^{y_1},e^{y_2}]\times [\kappa_{\eps},\kappa_{1-\eps}]\times [0,\alpha] \times [0,1]$
is also continuous. It is strictly negative when $\delta$ and $r'$ are $0$ and so, due to the compactness 
of $[e^{y_1},e^{y_2}]\times [\kappa_{\eps},\kappa_{1-\eps}]$, the same holds for sufficiently 
small $\delta$ and $r'$. Thus, there exist $\delta_2>0$ and $r'_2>0$ such that for every $\delta<\delta_2$
and $r'<r'_2$ one has
\[
H(x,x_0,\delta,r')<0 \, \text{ for any } x\in [e^{y_1},e^{y_2}], \, x_0\in [\kappa_{\eps},\kappa_{1-\eps}].
\]
For any such $\delta$ and  $r'$, using the inequality between $F_{\alpha,\delta,\eps,x_0,\bm}$ 
and $F$, we have 
\begin{equation}\label{eq:2s}
T_{-r'}\Pc[F](x)<\Fbm(x) \quad \textrm{for any } x\in [e^{y_1},e^{y_2}].
\end{equation}
On the other hand $F$ is of class $(\alpha,\delta)$ and so the inequality~\eqref{eq:4delta} holds:
\[
\Pc[F]\le \Fbm+8\delta.
\]
Take any $r'<\min(1,r'_2)$ and  $\delta<\min(\delta_2,r'/48L)$. Then for such 
$r'$ and $\delta$ and any function $F$ of class $\mC_{\alpha,\delta}$ we have the inequality
\[
T_{-r'}\Pc[F]\le \Fbm \quad\textrm{on } [e^{y_1},e^{y_2}]\supset [\kal,\kone]
\]
due to~\eqref{eq:2s}, while on the complement $[0,e^{y_1}]\cup [e^{y_2},\infty]$,
\[
T_{-r'}\Pc[F]\le T_{-r'}[\Fbm + 8\delta] \le \Fbm + \frac{r'}{6L} + 8 \,\frac{1}{48L} r' \le \Fbm + \frac{r'}{3L},
\]
giving that $T_{-r'}\Pc[F]$ is a function of class $(\alpha,r'/3L)$. Using Proposition~\ref{c:upper}, 
we obtain that $T_{-r'}[F]$ is asymptotically upper bounded by $T_{-2r'/3} [\Fbm]$. 
Eventually we apply $T_{r'}$: then $F$ is asymptotically upper bounded by $T_{r'}T_{-2r'/3}[\Fbm]$ 
and this proves the lemma with $r={r'}/{3}$.
\end{proof}
We have established Lemma~\ref{l:lower2}, and this concludes the proof of Proposition~\ref{p:limit}
due to the above arguments, and thus ends the proof of Theorem~\ref{t:conv}.
\end{proof}

\section{Studying the random metric on the limit object}\label{s:metric}

\subsection{First properties}
In order to study the constructed random metric space~$\mX$, it is crucial to have tail estimates of the stationary
measures. Actually, it is not conceptually difficult to have good asymptotics of tail decreasing, but it is
however rather technical and their discussion here would be perhaps confusing. The reader will find the 
statements and their proofs in \S\ref{app:tails}.

\begin{proof}[Proof of Theorem~\ref{t:metric}] ~

We shall keep the notations introduced in \S\ref{ss:random-metric}, as well as the spirit of the interpretation by means of the invariant RTP.

\paragraph{Subcritical case: $\gamma_{\BRW}+\log\lcr<0$. }
We want to prove that the random metric space $\mX$ is homeomorphic to $\Gamma_\infty$. 
The two inclusions of $V_\infty$ into the random metric space $\mX$ and into $\Gamma_\infty$ define a random 
``identity'' map $\iota=\iota(\mX):V_\infty\subset \Gamma_\infty\to V_\infty\subset\mX$. 
The subspace $V_\infty$ is dense in both spaces and $\Gamma_\infty$ is a compact metric space: if we prove that
the map $\iota$ is continuous, it will then follow that $\iota$ extends to a homeomorphism between $\Gamma_\infty$ 
and $\mX$.

It is enough to look at the images of level $n$ copies in $\Gamma$, for $n$ sufficiently large: we want to show 
that their sizes in $\mX$ are (uniformly) small. More precisely, for any $t\in \T$ we denote by $\mX_t$ the closure 
of $\iota(V_\infty\cap \Gamma_t)$ as a subspace of $\mX$, and for any $n\in \N$ we set
\[
D_n:=\max_{t: \, \| t \|=n} \diam(\mX_t).
\]
We claim that the limit of $D_n$ as $n$ goes to $\infty$ is $0$ almost surely. 

Recall, from the notations introduced in \S\ref{ss:random-metric}, that the random variables $Y_t$'s
defined by \eqref{eq:Y-rec}
\[Y_t=X_t\cdot \prod_{j=1}^{\|n\|} (\lcr\xi_{p^j(t)}),\]
give the $\In\Out$-distance inside $\mX_t$. We write
\[
D'_n:=\max_{t: \, \| t \|=n} Y_t.
\]
Cutting the geodesic paths in $\mX_t$ at dyadic points, we have the bound $D_n\le 2\sum_{j=n}^{\infty} D_j'$. 
Hence, it is enough to show that the series $\sum_{n\in\N}D'_n$ is summable: in turn, we shall show that, 
almost surely, the sequence $D'_n$ tends to zero exponentially fast.

Using the definition \eqref{eq:Y-rec}, we have the following upper bound for the maximum $D_n'$:
\begin{equation}\label{eq:Dnprime}
D_n'=\max_{t: \, \| t \|=n} Y_t =\max_{t: \, \| t \|=n} X_t\cdot \prod_{j=1}^{n} (\lcr \xi_{p^j(t)}) \le  
\max_{t: \, \| t \|=n} X_t\cdot \max_{t: \, \| t \|=n} \prod_{j=1}^{n} (\lcr \xi_{p^j(t)}).
\end{equation}
We keep the notation of the introduction and define
\[
M_n=\max_{t:\,\|t\|=n}\sum_{j=1}^n\log\xi_{p^j(t)}.
\]
Recall (see~\eqref{eq:BRWconstant}) that the Hammersley-Kingman-Biggins Theorem gives a constant 
$\gamma_{\BRW}$ such that $M_n=\gamma_{\BRW}\,n+o(n)$ almost surely. Finally for the stationary measure
$\bm$, as  we will see in \S\ref{app:tails}, we have exponential tail bounds. In particular, Proposition \ref{p:ln} 
there claims:
 \[L_n= \max_{t:\,\|t\|=n} \log X_t=o(n).\]
Taking the logarithm on both 
sides of \eqref{eq:Dnprime}, we thus obtain
\beqn{estdiam}{
\log D_n'\le (\gamma_{\BRW}+\log\lcr)\,n+o(n).
}
As $\gamma_{\BRW}+\log\lcr<0$ by hypothesis, we have that $D'_n$ decreases exponentially fast, as wanted.

\smallskip

As a by-product of the proof just given, we can obtain a rough bound on the Hausdorff dimension of the random 
metric space $\mX$ in this subcritical case. Indeed, the arguments that we have just explained show that 
the extended random identity map $\iota:\Gamma_{\infty}\to\mX$ is not only continuous, but \emph{H\"older} 
continuous of any exponent 
\begin{equation}\label{eq:holderexp}
\alpha<\frac{|\gamma_{\BRW}+\log\lcr|}{\log 2}.
\end{equation}
Hence, since the Hausdorff dimension of $\Gamma_\infty$ is $2$, the Haudorff dimension of 
$\iota(\Gamma_{\infty})=\mX$ is at most~$2/\alpha$ for any $\alpha$ satisfying~\eqref{eq:holderexp}. 
Choosing $\alpha$ closer and closer to $\frac{|\gamma_{\BRW}+\log\lcr|}{\log 2}$, we have 
$\frac{2\log 2}{|\gamma_{\BRW}+\log\lcr|}$ as an upper bound of the Hausdorff dimension of $\mX$.

Let us prove~\eqref{eq:holderexp}. First, from~\eqref{eq:estdiam}, for any $\beta<|\gamma_{\BRW}+\log\lcr|$ we have:
\[
\log D_n'\le -\beta n \quad \text{for all sufficiently large $n$.}
\]
We have then an upper bound for all such~$n$: 
\[D_n \le 2\sum_{j=n}^{\infty} D_j' \le 2 \sum_{j=n}^{\infty} \exp(-\beta j) = c \cdot \exp(-\beta n),
\]
where $c=\tfrac{2}{1-\exp(-\beta)}$ is a constant.

Taking into account that the diameter of any small copy $\Gamma_t\subset \Gamma_{\infty}$ is
\[\diam(\Gamma_t)=2^{-\|t\|},\]
we have, for all such $n$:
\begin{equation}\label{eq:max}
\max_{\|t\|=n}\frac{\diam\left (\iota(\Gamma_t)\right )}{\diam(\Gamma_t)^\alpha}=\frac{D_n}{2^{-\alpha n}}
\le \frac{c \cdot \exp(-\beta n)}{2^{-\alpha n}}=c\cdot \exp((\alpha \log 2- \beta) n).
\end{equation}
For any $\alpha$ satisfying~\eqref{eq:holderexp} we can take 
$\beta\in (\alpha\cdot \log 2, |\gamma_{\BRW}+\log\lcr|)$, ensuring that the maximum in~\eqref{eq:max}
stays bounded uniformly in~$n$. This guarantees that the inclusion map $\iota$ is H\"older continuous
of exponent~$\alpha$.

\paragraph{Supercritical case: $\gamma_{\BRW}+\log\lcr>0$. } We want to prove that the diameter of $\mX$
is almost surely unbounded.
Clearly the diameter $\diam(\mX)$ admits any of $\diam(\mX_t)$, $t\in\T$, as a lower bound. In particular,
looking at the distance between dyadic points of first depth in $\mX_t$, we have the bound
\[
\diam(\mX)\ge \min(Y_{t_1},\dots,Y_{t_4})\quad\text{for every }t\in\T.
\]
Using the definition \eqref{eq:Y-rec}, we have (with the abuse of notation $p^0=id$)
\begin{equation}\label{eq:BIO}
\min(Y_{t_1},\dots,Y_{t_4}) = \left( \prod_{j=0}^{\|t\|} \lcr \xi_{p^j(t)} \right) \cdot   \min(X_{t_1},\dots,X_{t_4}).
\end{equation}
For any $t\in \T$ the two marked factors in \eqref{eq:BIO} are independent.
In particular we can choose the (random) vertex $t=t^n_{\max}$, among the vertices of depth $n$, that maximizes the first factor:
\[
\prod_{j=0}^{n} \lcr \xi_{p^j(t_{\max}^n)} = \max_{t: \, \|t\|=n} \prod_{j=0}^{n} \lcr \xi_{p^j(t)}.
\]
The logarithm of this factor represents the maximum of the $\BRW$ with increments of law $\mN(\log\lcr,\sigma^2)$, 
as in the previous case, but has now a positive drift, and so this factor diverges (at an exponential rate). 
Since the second factor
\[
\min(X_{t_1},\dots,X_{t_4})
\]
has a law that does not depend on the vertex $t$ (and on $n$, in particular), this divergence guarantees that 
the diameter of $\mX$ is almost surely infinite, as wanted.
\end{proof}

We focus now our attention on the supercritical case, when $\gamma_{\BRW}+\log \lcr>0$. In this situation, as 
the Theorem~\ref{t:metric} states, the diameter of $\mX$ is almost surely infinite; moreover, the space $\mX$ 
must be of unbounded diameter at any scale, in the sense that any $\mX_t$ is of infinite diameter almost surely. 
However, Theorem~\ref{t:geodesic} claims that the space $\mX$ is connected. Before passing to its proof, we 
describe how it should look like with the help of a toy example.

\subsection{A toy model: percolation with replacement}
Let us analyse the following percolation-type problem: we shall discuss its relation to our model just afterwards.
\begin{ex}\label{ex:percolation}
Considering the the figure eight-graph $(\Gamma,\In,\Out)$ we construct a recursive family of \emph{random} 
graphs $\{(G_n,\alpha,\omega)\}_{n\in\N}$. 

We fix a parameter $p\in[0,1]$. The starting graph $G_0$ has two vertices $\{\alpha,\omega\}$ and one 
edge connecting them. Suppose that $(G_n,\alpha,\omega)$ has been constructed, then we obtain 
$(G_{n+1},\alpha,\omega)$ by running independently, for any edge of $G_n$ the following (random) operation:
\begin{itemize}
\item either, with probability $p$, replacing it by $\Gamma$, attaching the vertices $\In$ and $\Out$ 
at the endpoints of the edge,
\item or removing it, with probability $(1-p)$.
\end{itemize}
The marked vertices $\alpha$ and $\omega$ remain the same.
The construction is stopped if at some random step $N$ the vertices $\alpha$ and $\omega$ are no more 
connected by a path in $G_N$. In case it never happens, we set $N=\infty$.  

For any $n\in\N$ set $q_n=q_n(p)=\P(N>n)$ and $q_\infty=\P(N=\infty)$. Using the percolation function 
$\theta$ introduced in Definition~\ref{dfn:psi}, we have the recursive relation
\[
\begin{cases}
q_0=1, \\
q_{n+1}=\psi_p(q_n),
\end{cases}
\]
with $\psi_p(q)=p\cdot \theta(q)$.
Indeed, for $n=1,\ldots,N$, we must have $G_1=\Gamma$ (contributing with the factor $q_1=p$), and then 
we shall look for a $\alpha\omega$-path in $G_{n+1}$, knowing that any of the four edges in the figure 
eight is open with probability $q_n$.

In particular, $q_\infty$ must verify
\[q_\infty=p\cdot\theta(q_\infty).\]
Let us find $q_{\infty}$ (almost) explicitly. Namely, note that $\psi_p:[0,1]\to[0,p]$ is a homeomorphism; 
hence the point 
\[q_{\infty}=\lim_{n\to\infty}q_n=\lim_{n\to\infty}\psi_p^n(1)\]
is the largest among the points fixed by $\psi_p$. A fixed point of $\psi_p$ is either $0$, or a point 
$x\in(0,1]$ such that
\[\frac{\psi_p(x)}{x}=\frac{1}{p}.\]
The function $x\mapsto \dfrac{\psi_p(x)}{x}$ takes its maximum value $\tfrac{32}{27}$ on $(0,1]$ at 
the point $x_*=\tfrac{2}{3}$. Hence, $q_{\infty}(p)=0$ if 
$p<p_*=\tfrac{27}{32}$ whereas $q_{\infty}(p)$
is the unique solution in $[\tfrac{2}{3},1]$ of the equation $\tfrac{\theta(x)}{x}=\tfrac{1}{p}$ if $p\ge p_*$.
Note that $x_*$ is larger than $p_{cr}$ (see Figure~\ref{f:theta}).
\begin{figure}[ht]
\[
\includegraphics[width=.3\textwidth]{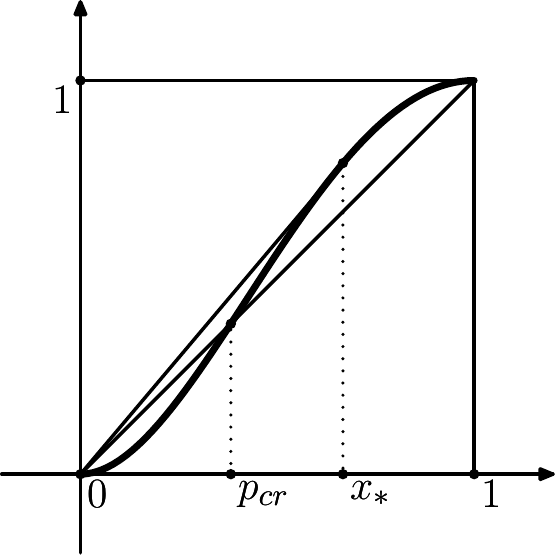}
\]
\caption{The function $\theta$, the percolation critical probability $p_{cr}=\varphi^{-2}$ and the limit probability 
$x_*=\tfrac23$ for the replacement 
procedure corresponding to the critical probability $p_*=\tfrac{27}{32}$.}\label{f:theta}
\end{figure}
\end{ex}

The relation between the previous problem and our setting is rather straightforward: when considering the law 
$m$ of $\xi_t$ to be $m=p\,\ddelta_{1/2}+ (1-p)\,\ddelta_{\infty}$, namely
\[
\xi_t=\begin{cases}
1/2 & \textrm{with probability } p,\\
\infty & \textrm{with probability } 1-p
\end{cases}
\]
(here the factor $\infty$ means that the passage by this edge becomes forbidden), the probability $q_\infty$ 
corresponds to the probability that there exists a path between $\In$ and $\Out$. We modify slightly this 
model in the following example.

\begin{ex}\label{ex:geodesics}
We consider now the measure $m= p_*\,\ddelta_{1/2} + (1-p_*)\, \ddelta_{10^3}$, in other words, let the 
random factors be
\[
\xi_t=\begin{cases}
1/2 & \textrm{with probability } p_*, \\
10^3 & \textrm{with probability }1-p_*.
\end{cases}
\]
Then taking $\lambda=1$, from Example~\ref{ex:percolation} we realize that with probability $x_*=\frac{2}{3}>0$ 
there exists an $\In\Out$-path along which the factor~$10^3$ never appears. Moreover as~$x_*>p_{cr}$, there
exists almost surely a path along which only a finite number of multiplications by~$10^3$ intervenes. Indeed,
the probability that there is a path along which there has been no multiplication by~$10^3$ after the first~$n$
steps is at least~$\theta^n(x_*)$, and this probability tends to~$1$ as~$n$ goes to~$\infty$.

When $\lambda=1$, the sequence of $\In\Out$-distances $d_n$ after $n$ steps is monotone (the replacement
never decreases the length of an edge), and its limit is finite almost surely  due to the above arguments.
We can then apply Proposition~\ref{p:metric} and define the associated random metric space (even though the
measure $m$ does not satisfy the assumptions of Theorem~\ref{t:stat}). In this case the random metric
space will be of infinite diameter almost surely: not only the corresponding 
$\gamma_{\BRW}+\log\lambda=\gamma_{\BRW}$ is positive, but even for any infinite branch in $\T$, the associated 
infinite product diverges to $\infty$: we have 
\[\mathbf{E}\log \xi=\log\left(1000^{1-p_*}\cdot (1/2)^{p_*}\right) >0.\]
Summarizing, the random metric space $\mX$ admits the following (almost sure) description. First, \emph{there is} 
an $\In\Out$-geodesic, that is, an isometric embedding of some interval in~$\mX$, such that its endpoints 
are mapped to $\In$ and $\Out$ respectively. Second, there are four ``local geodesics'' joining the vertices 
from $V_1$ inside the four level $1$ copies of $\mX$ (with two of these local geodesics that are the halves
of the $\In\Out$-geodesic). Next, there are sixteen ``local geodesics'' joining the vertices from $V_2$ 
inside the sixteen level $2$ copies of $\mX$ (eight of those local geodesics are the halves of the four 
local geodesics of the previous step). And so on, while we observe that the \emph{maximal length} of
the local geodesic in level~$n$ copies increases (exponentially) with~$n$.
\end{ex}

\begin{figure}[ht]
\[\includegraphics[width=.6\textwidth]{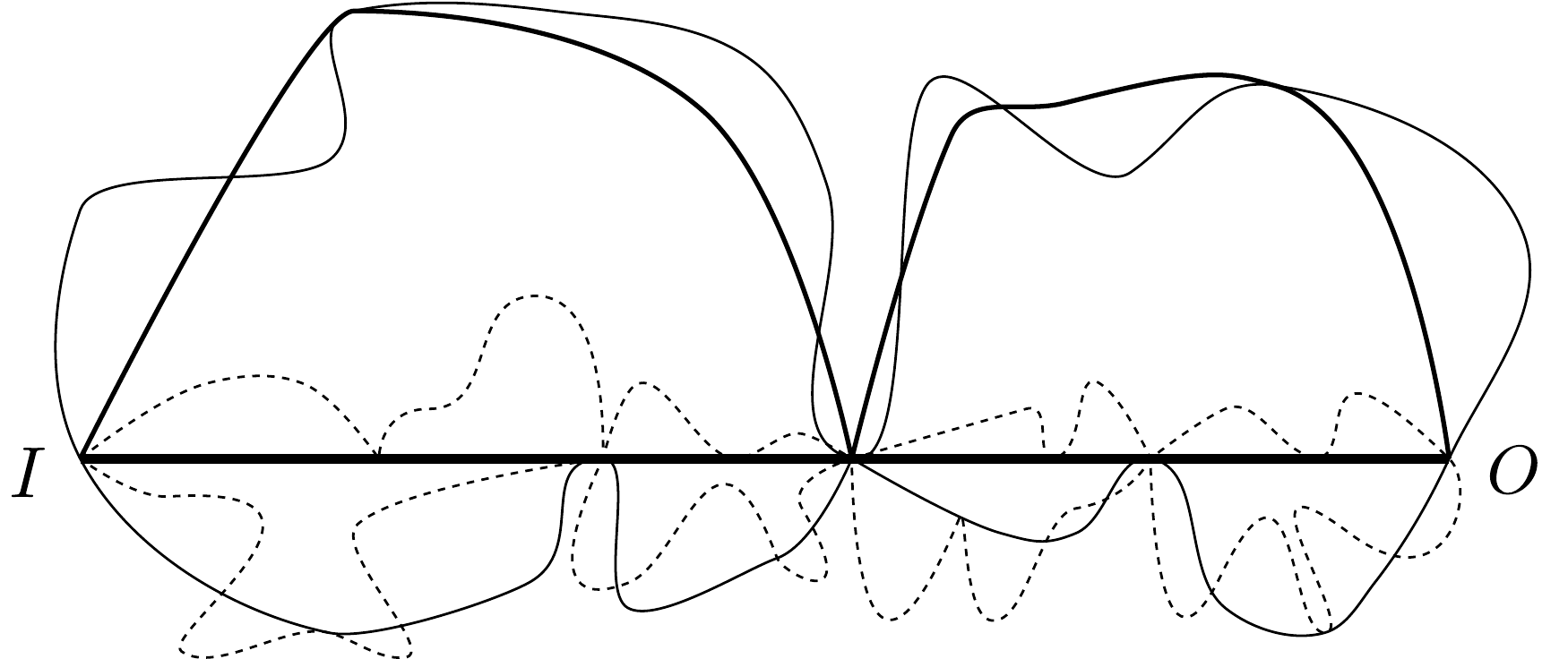}\]
\caption{The $\In\Out$-geodesic and some local geodesics.}
\end{figure}

\subsection{Existence of geodesic paths}\label{ssc:existence-geodesic}
Theorem~\ref{t:geodesic} claims that the random metric space $\mX$ is almost surely connected. The main 
reason for this to hold, as it will emerge during the proof, is that the situation analysed in 
Example~\ref{ex:geodesics} persists in the more complicated framework. In particular, even though we are
in the supercritical case of Theorem~\ref{t:metric}, the space will result to be the closure of the union 
of (longer and longer) local geodesic paths.

Note that in order to establish such a description it suffices to show that almost surely there exists
a geodesic path between $\In$ and $\Out$. Indeed, the stationarity of the random metric space implies 
the existence of local geodesics corresponding to edges of arbitrary depth. The union of all such geodesics
is a path connected space, containing $V_{\infty}$. This implies that the space $\mX$ is 
almost surely connected: the closure of a path connected space is connected.

\medskip

The distances between the points in $V_{\infty}$ -- that is, the RTP 
$\left \{\left (X_t,\xi_t\right )\right \}_{t\in\T}$ -- can be generated by the 
following ``top-to-bottom'' Markovian procedure.

We start by sampling an $\In\Out$-distance $X_{\bullet}$ with respect to the measure $\bm$. Then 
we sample the factor $\xi_{\bullet}$ and the four glued first-level distances $X_1,\ldots,X_4$
conditionally on the given value of $X_{\bullet}$. Then, conditionally on the four lengths 
$X_1,\ldots,X_4$, we sample the corresponding four factors $\xi_1,\ldots,\xi_4$ and sixteen
distances $X_{11},\ldots,X_{44}$, and so on.

\smallskip

Now, let $\mathcal D$ be a rooted dyadic infinite tree and consider the two-branching process
\[\left \{\left (\tX_s,\xi_s \right)\right \}_{s\in\mathcal{D}}=
\left \{\left (X_{t_s},\xi_{t_s}\right )\right \}_{s\in\mathcal D}\]
that selects at each level the distances corresponding to the two halves of the shortest path: we
start with $\tX_\bullet=X_{t_\bullet}=X_{\bullet}$, then
\[\tX_1=X_{t_1}=\min\left (X_1,X_2\right ),\quad \tX_2=X_{t_2}=\min\left (X_3,X_4\right ),\]
where $t_1$ and $t_2$ correspond to the two (random) vertices providing this respective minima; then we define
\[
\tX_{11}=X_{t_{11}}=\min\left (X_{t_11},X_{t_12}\right ),\quad \tX_{12}=X_{t_{12}}=\min\left (X_{t_13},X_{t_14}\right ),
\]
and so on. Remark that the collection of vertices $\left \{t_s\right \}_{s\in\mathcal D}$ form a (random) dyadic 
subtree of the quaternary tree~$\T$. 

By construction the values 
\[\tilde{Y}_s:=\prod_{j=1}^n\xi_{p^j(t_s)}\cdot \tX_s\]
are the lengths of the edges forming the $\In\Out$-geodesic after a given number of glueings. In particular,
\[\sum_{\|s\|=n}\tilde{Y}_s=\tilde{Y}_{\bullet}=d(\In,\Out)=X_{\bullet}.\]

\smallskip

Take the (random) interval $[0,X_{\bullet}]$ and for any $n$ divide it with $2^n-1$ points into $2^n$ 
intervals of consecutive lengths $\left \{\tilde{Y}_s\right \}_{\|s\|=n}$: they correspond to the
$\In\Out$-path passing through the vertices of $V_n$. The next $(n+1)^{th}$ partition is then a 
subpartition of the $n^{th}$ one (as $V_n\subset V_{n+1}$).

The existence of an $\In\Out$-geodesic is now equivalent to the fact that the limit of such
partitions is dense in $[0,X_{\bullet}]$.
If it is not the case, then there exists an infinite branch $\{s_n\}_{n\in\N}$ in the dyadic
tree~$\mathcal{D}$, along which $\tilde Y_s$ does not tend to zero. In particular, as for any 
$s$ we have $\tilde Y_s=\tilde Y_{s1}+\tilde Y_{s2}$, along this branch
\beqn{branch}{
\lim_{n\rightarrow\infty}
\frac{\max\left (\tilde Y_{s_n1},\tilde Y_{s_n2}\right )}{\min\left (\tilde Y_{s_n1},\tilde Y_{s_n2}\right )}=
\lim_{n\rightarrow\infty}\frac{\max\left ( \tX_{s_n1}, \tX_{s_n2}\right )}{\min\left ( \tX_{s_n1}, \tX_{s_n2}\right )}=
\infty,}
and starting from some vertex $s_{n_0}$, $\tilde{Y}_{s_{n+1}}$ is the largest among $\tilde Y_{s_n1}$ and 
$\tilde Y_{s_n2}$, and the quotient of the largest of the two by the smallest is at least $2$.

This finally leads us to the following \emph{non-branching} process, indexed by a (random) infinite branch 
in $\mathcal{D}$ and hence in $\T$: from a vertex $s$ and the associated $\tX_s$, we select $\tX_{s1}$ or $\tX_{s2}$ 
depending on which is the largest (note that $\tX_{s1}>\tX_{s2}$ if and only if $\tilde Y_{s1}>\tilde Y_{s2}$, as 
the forehead factor composed of $\xi$'s is the same for both).

The values $Z_{\|s\|}=\tX_s$, associated to the selected vertices, form a \emph{Markov process}. Note that this 
process is stationary; given $Z_n$, we find $Z_{n+1}$ by sampling $\left (\xi,X_1^{(n)},\ldots,X_4^{(n)}\right )$
conditionally on 
\[Z_n=\lcr \xi\left (\min\left (X_1^{(n)},X_{2}^{(n)}\right )+\min\left (X_3^{(n)},X_4^{(n)}\right )\right )\] 
and taking 
\[Z_{n+1}=\max\left (\min\left (X_1^{(n)},X_{2}^{(n)}\right ),\min\left (X_3^{(n)},X_4^{(n)}\right )\right  ).\]

The scenario arising when $\left \{\tilde{Y}_{s_n}\right  \}_{n\in\N}$ does not tend to zero can be realized 
with positive probability only if we have \eqref{eq:branch}. Notice that the law of the quotient
$\frac{\max(\tilde Y_{s_n1},\tilde Y_{s_n2})}{\min(\tilde Y_{s_n1},\tilde Y_{s_n2})}$, 
conditionally on the past, is completely determined by $Z_n$ (the rescaling factors do not change it).

At the same time, for any compact interval $J\subset\R_+$,
\[
\min_{z\in J}\mathbf{P}\left (\frac{\max(\tilde Y_{s_n1},\tilde Y_{s_n2})}{\min(\tilde Y_{s_n1},\tilde Y_{s_n2})}\le
2\,\middle\vert\, Z_n=z\right )=:p_J>0
\]
as the density of the law of $\frac{\max(\tilde Y_{s_n1},\tilde Y_{s_n2})}{\min(\tilde Y_{s_n1},\tilde Y_{s_n2})}$ 
conditionally on $Z_n=z$ is given by an explicit formula, is positive and continuous, and hence bounded away from
zero once $z$ belongs to $J$. So as soon as we know there exists an interval $J$ such that $Z_n$ visits $J$ 
almost surely infinitely many times, the associated $\tilde Y_{s}$ has infinitely many attempts (with 
probability at least $p_J$ at every time) to be split into two subintervals, whose lengths are in proportion 
less than $2:1$, and hence almost surely goes to zero. Finding such an interval $J$ constitutes the final step
and the most delicate part of the proof of Theorem~\ref{t:geodesic}: we concentrate it in 
Proposition~\ref{p:markov} of~\S\ref{app:tails}.

\subsection{Tail estimates for the stationary measures}\label{app:tails}

The stationarity of the law $\bm$ imposes a strong behaviour on the tails of its distribution, especially 
when the reference measure $m$ has a distribution of fast decrease, as it is the case
with~$m=\exp_*\mN(0,\sigma^2)$ (as all along this section). The main reason is that we can write the 
property that~$\bm$ is a fixed point, $\Pc[\bm]=\bm$, as
\begin{equation}\label{eq:stat-conv1}
\log_*\bm=\nu*\mN(\log\lcr,\sigma^2),
\end{equation}
where $\nu$ is the law of the random variable $\log R_{1}(X_1,\dots,X_4;1)$, with the variables $X_i$'s 
independent and distributed as $\bm$. In other terms, $\nu=(R_1)_*(\bm^{\otimes 4}\otimes\ddelta_1)$.

Roughly speaking, a passage from a measure~$\mu$ to $(R_1)_*(\mu^{\otimes 4}\otimes\ddelta_1)$ makes 
the tails of a distribution decrease faster: 
for $\log R_{1}(X_1,\dots,X_4;1)$ to be large we need at least \emph{two} (``parallel'') large $X_i$'s,
as well as for it to be small we need at least \emph{two} (``consecutive'') small ones. Finally, the 
normal law has tails that decrease sufficiently fast, so the convolution with it does not slow down 
the decrease of $(R_1)_*(\mu^{\otimes 4}\otimes\ddelta_1)$ too strongly. 

All the technical lemmas presented within this section are based on this leading idea, at different levels
of depth.

\subsubsection{Notations}
As we have already explained at the beginning of Section~\ref{s:convergence}, as a consequence of 
\eqref{eq:stat-conv1} the measure~$\log_*\bm$ has a continuous and everywhere positive density with 
respect to the Lebesgue measure, since~$\mN(\log\lcr,\sigma^2)$ has. Keeping the notation from the 
statement of Theorem \ref{t:conv}, we denote by $\rho$ the density of $\log_*m$,
\[\rho(s)=\frac{1}{\sqrt{2\pi\sigma^2}}e^{-\frac{1}{2\sigma^2} s^2},\]
and by $\rho_1$ the density of $\log_*\bm$. The stationarity equation \eqref{eq:stat-conv1} gives the relation
\begin{equation}\label{eq:rho-convolution}
\rho_1(s)=(\nu*\rho)(s-\log\lcr)=\int_{-\infty}^\infty \rho(s-\log\lcr-t)\,d\nu(t).
\end{equation}
We shall first need to work with the tail distribution function of $\log_*\bm$, so we set
\begin{equation}\label{eq:f(s)}
f(s)= \mathbf{P}\left( \log X>s\right)=\int_s^\infty \rho_1(t)\,dt,
\end{equation}
where $X$ is a random variable of law $\bm$. For the same purpose, we define $g$ to be the tail distribution 
function of the random variable $\log R_{1}(X_1,\dots,X_4;1)$, where, as before, the variables $X_i$'s are 
independent and distributed as $\bm$.
The stationarity \eqref{eq:stat-conv1} now reads as
\begin{equation}\label{eq:conv-stat}
f=g*\mN(\log\lcr, \sigma^2).
\end{equation}

\subsubsection{Fast decrease of the tail distribution function}
The first technical result was needed in the proof of Theorem~\ref{t:metric}:
\begin{prop}\label{p:ln}
Let $m=\exp_*\mN(0,\sigma^2)$ be a $\log$-normal distribution and $\{(X_t,\xi_t)\}_{t\in\T}$ the associated 
invariant RTP. Then the random variable $L_n:=\max_{t:\,\|t\|=n}\log X_t$ is almost surely sublinear, 
namely~$L_n=o(n)$.
\end{prop}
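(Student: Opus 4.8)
The plan is to control the maximum $L_n=\max_{\|t\|=n}\log X_t$ by a first-moment (union bound) argument over the $4^n$ vertices at level $n$, using tail estimates on the single stationary variable $\log X$ whose law is $\log_*\bm$. The key point is that, by the stationarity relation $f=g*\mN(\log\lcr,\sigma^2)$ from \eqref{eq:conv-stat}, the tail distribution function $f(s)=\mathbf{P}(\log X>s)$ must decrease faster than Gaussian: indeed, $g$ is the tail of $\log R_1(X_1,\dots,X_4;1)$, and for $R_1(X_1,\dots,X_4;1)=\min(X_1,X_2)+\min(X_3,X_4)$ to be large we need \emph{two} parallel $X_i$'s to be large, so heuristically $g(s)\lesssim (2f(s))^2$ for large $s$; convolving with a Gaussian (which has a lighter-than-any-exponential tail in the relevant sense) does not destroy super-exponential decay. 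So first I would make this rigorous: show that $f$ decays faster than $e^{-cs^2}$ for every $c<\frac{1}{2\sigma^2}$ — more precisely, obtain a bound of the form $f(s)\le \exp(-\kappa s^2)$ for all $s$ large, for some $\kappa>0$ that can be taken arbitrarily close to $\frac{1}{2\sigma^2}$ (or even a bootstrap giving faster decay), by iterating the inequality $f \preceq$ (Gaussian tail convolved with $(2f)^{\odot 2}$) starting from the trivial bound $f\le 1$. The bootstrap: if $f(s)\le e^{-a s^\beta}$ then $g(s)\le 4 e^{-2 a (s/2)^\beta}$-type bound (care with the two summands each of size $\sim s/2$), and convolution with the $\mN(\log\lcr,\sigma^2)$ density gives $f(s)\le C e^{-a' s^{\beta'}}$ with an improved exponent; after finitely many steps one reaches at least quadratic decay $\beta=2$, which is what is needed.

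Next, with such a superlinear-in-the-exponent tail bound in hand, the union bound finishes the job: for any $\eps>0$,
\[
\mathbf{P}(L_n>\eps n)\le 4^n\,\mathbf{P}(\log X>\eps n)\le 4^n \exp(-\kappa \eps^2 n^2),
\]
since the $X_t$ with $\|t\|=n$ are all distributed according to $\bm$ (they need not be independent, but the union bound does not require independence). The right-hand side is summable in $n$, so by Borel--Cantelli, almost surely $L_n\le \eps n$ for all large $n$. As $\eps>0$ is arbitrary, this gives $L_n=o(n)$ almost surely (noting also the easy lower bound $L_n\ge \log X_{\bullet}$ along any fixed branch together with the analogous control from below, or simply $L_n/n\to 0$ from above combined with $L_n$ being bounded below in probability — in fact for the statement as used in Theorem~\ref{t:metric} only the upper bound $L_n\le o(n)$ matters, so I would just record $\limsup_n L_n/n\le 0$, and symmetrically handle $-L_n$ if a two-sided statement is wanted).

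The main obstacle is the first step: rigorously establishing the super-Gaussian (at least quadratic-exponential) decay of the tail $f$ from the fixed-point relation \eqref{eq:conv-stat}. The subtlety is that convolution with a Gaussian of variance $\sigma^2$ can only worsen the decay back down toward $e^{-s^2/2\sigma^2}$, so one must check that the "squaring" coming from the two parallel edges strictly beats this loss — concretely, that if $f(s)= e^{-a s^2}$ with $a<\tfrac{1}{2\sigma^2}$ then the composed operation returns $f(s)\le e^{-a' s^2}$ with $a'>a$, so the iteration pushes $a$ up to (and past) $\tfrac{1}{2\sigma^2}$, at which point further iteration produces genuinely faster-than-Gaussian (e.g. Gaussian-squared, then cubed, \dots) decay. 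Handling the two summands $\min(X_1,X_2)$ and $\min(X_3,X_4)$ — each needing both its arguments large — and tracking constants through the Laplace/saddle-point estimate of the convolution integral $\int \rho(s-\log\lcr-t)\,d\nu(t)$ is where the real work lies; everything after that is a routine Borel--Cantelli argument.
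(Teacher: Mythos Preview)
Your overall architecture matches the paper exactly: a union bound over the $4^n$ vertices at depth~$n$, a tail estimate on the single stationary variable $\log X$, and Borel--Cantelli. The difference is in the tail estimate. You aim for super-Gaussian decay $f(s)\le e^{-\kappa s^2}$ via a bootstrap on the exponent~$\beta$ in $e^{-as^\beta}$, whereas the paper (Lemma~\ref{l:exp-decr}) proves only the weaker superexponential bound $f(s)=o(e^{-As})$ for every $A>0$, which already suffices: $4^n e^{-A\eps n}$ is summable once $A>\tfrac{\log 4}{\eps}$. The paper's argument is also cleaner than your bootstrap: from $g(s+\log 2)\le c\,f(s)^2$ and the convolution $f=g*\rho(\cdot-\log\lcr)$, it compares $f(s+C)$ to $f(s)$ directly by splitting the convolution integral at $t=-\log 2$, obtaining $f(s+C)\le f(s)\bigl(cf(s)+\sup_{t\ge -\log(2\lcr)}\rho(t+C)/\rho(t)\bigr)$; choosing $C$ large makes the Gaussian ratio $<e^{-AC}$, and then for $s$ large enough that $cf(s)$ is negligible one gets $f(s+C)\le e^{-AC}f(s)$, hence $f(s)=O(e^{-As})$.

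Your bootstrap sketch has a soft spot: passing from $f(s)\le e^{-as^\beta}$ to an \emph{improved exponent} $\beta'>\beta$ does not follow from the squaring $g\lesssim f^2$ alone (squaring doubles the constant $a$, not the power~$\beta$), and convolution with the Gaussian can only \emph{slow} decay toward $e^{-s^2/2\sigma^2}$, never speed it up. So the iteration improves the constant in front of $s^\beta$ but does not by itself raise $\beta$; getting genuine super-Gaussian decay would require a more careful argument. Since superexponential decay is already enough for the Borel--Cantelli step, I would recommend dropping the quest for $e^{-\kappa s^2}$ and proving $f(s)=o(e^{-As})$ instead --- the paper's ratio trick does this in a few lines.
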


In order to prove Proposition~\ref{p:ln}, first we must verify that the random variables $\log X_t$ are not 
too dispersed. Formally, this will be done in the following lemma:

\begin{lem}\label{l:exp-decr}
Let $m=\exp_*\mN(0,\sigma^2)$ be a $\log$-normal distribution, $\lcr$ the associated normalizing constant 
and $\bm$ any non-trivial $\Phi_{\lcr}$-stationary measure. Given a random variable $X$ of law $\bm$, we have 
the superexponential bound
\begin{equation}\label{eq:log-tail}
\mathbf{P}\left( |\log X|>s\right)=o\left(e^{-As}\right) \quad \text{as }s\to\infty,\text{ for any }A>0.
\end{equation}
\end{lem}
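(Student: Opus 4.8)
The plan is to bootstrap from the stationarity relation \eqref{eq:conv-stat}, $f = g * \mathcal{N}(\log\lcr,\sigma^2)$, where $f$ is the upper tail of $\log X$ and $g$ is the upper tail of $\log R_1(X_1,\dots,X_4;1)$. The key mechanism, as the authors flagged, is that forming $R_1$ requires \emph{two} large (parallel) coordinates to be large, so $g(s) \le \binom{4}{2}^{\!*} f(s/2 - c)^2 \le 6\, f(s/2-c)^2$ for a suitable constant $c$ absorbing the effect of adding two such terms (this is just a union bound over which pair of parallel edges both have length exceeding half the total, up to a constant shift). Similarly, the lower tail of $\log R_1$ is controlled by the lower tail of $\log X$ squared, since one needs two \emph{consecutive} short edges; and the Gaussian convolution shifts things by only an additive constant and contributes a tail that is itself superexponentially small. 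So the first step is to record these two quadratic self-improvement inequalities for the upper and lower tails of $\log X$, plus the trivial bound that convolving with a Gaussian does not slow the decay (for any fixed shift $T$ and any rate $A$, if $h(s) = o(e^{-As})$ then $(h * \mathcal{N})(s-T) = o(e^{-A's})$ for any $A' < A$, by splitting the convolution integral at $s/2$).

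The second step is the bootstrap itself. Start from the crude bound: by stationarity and the fact that $m$ has finite exponential moments of every order, one gets \emph{some} exponential tail bound $\mathbf{P}(|\log X| > s) = O(e^{-a_0 s})$ for some $a_0>0$ — here one can invoke Proposition~\ref{c:upper}/Lemma~\ref{l:delta2}-type tail estimates, or argue directly that since $\log X \le \log\lcr + \log\xi + \log(2\max_i X_i)$ and the two-out-of-four structure already forces at least logarithmic improvement, iterating the relation $f(s) \lesssim 6 f(s/2-c)^2 * \rho$ a bounded number of times from any starting exponential rate $a_0$ produces a \emph{strictly larger} rate. Concretely, plugging $f(s) = O(e^{-a s})$ into $g(s) \le 6 f(s/2-c)^2 = O(e^{-a(s-2c)})$ and then convolving with the Gaussian loses only an arbitrarily small amount, so one passes from rate $a$ to rate $a - \epsilon$... which is not yet an improvement. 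The actual gain must come from using that the Gaussian tail $e^{-s^2/2\sigma^2}$ beats \emph{every} exponential: write $f = g*\rho_{\log\lcr}$ and estimate $\int g(s-\log\lcr - t)\rho(t)\,dt$ by splitting at $t = s/2$; on $t < s/2$ we use $g(s/2) \le 6 f(s/4-c)^2$, on $t > s/2$ we use the Gaussian tail. This shows $f(s) \le C\big(f(s/4-c)^2 + e^{-s^2/8\sigma^2}\big)$, and now the quadratic term genuinely improves the rate: iterating $a \mapsto 2a$ (up to the bounded shift, which after iteration becomes negligible) drives the exponential rate to infinity, which is exactly \eqref{eq:log-tail}.

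The main obstacle I expect is making the bootstrap iteration rigorous near $s = 0$ and controlling the accumulated additive shifts: each application of $g(s) \le 6 f(s/2 - c)^2$ introduces a shift, and one must check that after $k$ iterations the shift stays comparable to $c$ (because the argument halves each time, the shifts form a geometric series $c + c/2 + c/4 + \cdots$ and so remain bounded) rather than blowing up. A secondary technical point is justifying the quadratic bound $g(s) \le 6 f(s/2-c)^2$ itself: one needs that if $\min(X_1,X_2) + \min(X_3,X_4) > e^s$ then at least one of the two parallel pairs has both its members exceeding, say, $\tfrac{1}{2}e^s = e^{s - \log 2}$, which is the pigeonhole observation $\min(X_1,X_2) > \tfrac12 e^s$ or $\min(X_3,X_4) > \tfrac12 e^s$, together with independence giving $\mathbf{P}(\min(X_1,X_2) > \tfrac12 e^s) = f(s-\log 2)^2$. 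Once these bookkeeping issues are handled, the superexponential decay follows by a clean induction on the rate.
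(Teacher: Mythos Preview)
Your plan has the right ingredients but the bootstrap, as written, does not close. There are two intertwined problems.

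First, the quadratic bound is misstated. You write $g(s)\le 6\,f(s/2-c)^2$, but the pigeonhole argument you yourself spell out at the end gives the correct inequality $g(s)\le 2\,f(s-\log 2)^2$: if $\min(X_1,X_2)+\min(X_3,X_4)>e^s$ then one of the two minima exceeds $e^{s-\log 2}$, which forces \emph{both} edges in that pair above $e^{s-\log 2}$. The argument of $f$ shifts by $\log 2$, not by $s/2$.

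Second, and more seriously, splitting the convolution at $t=s/2$ throws away exactly the gain from squaring. With your displayed recursion $f(s)\le C\bigl(f(s/4-c)^2+e^{-s^2/8\sigma^2}\bigr)$, plugging in $f(s)=O(e^{-as})$ gives $f(s/4-c)^2=O(e^{-as/2})$, which is \emph{worse} than $e^{-as}$; the map on rates is $a\mapsto a/2$, not $a\mapsto 2a$. Even with the correct bound $g(s)\le 2f(s-\log 2)^2$, splitting at $s/2$ yields $f(s)\le 2f(s/2-c)^2+O(e^{-s^2/8\sigma^2})$, and now $f(s/2-c)^2=O(e^{-as})$: the rate is merely preserved, never improved. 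Your ``$a\mapsto 2a$'' iteration does not follow from any of the inequalities you wrote down.

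The paper sidesteps this by not splitting at a point that scales with $s$. Instead it compares $f(s+C)$ with $f(s)$ for a large fixed $C$: writing $f(s+C)=\int g(s-t)\,\rho(t-\log\lcr+C)\,dt$ and cutting at the \emph{fixed} point $t=-\log 2$, one gets
\[
f(s+C)\le c\,f(s)^2 + f(s)\cdot \sup_{t\ge -\log(2\lcr)}\frac{\rho(t+C)}{\rho(t)},
\]
where the Gaussian ratio on the right is explicitly $\exp(-C(C-2\log(2\lcr))/2\sigma^2)$. Choosing $C$ so that this ratio is below $e^{-AC}$ and then $s_0$ so large that $cf(s)<e^{-AC}$ for $s\ge s_0$, one obtains $f(s+C)\le e^{-AC}f(s)$ directly. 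This gives $f(s)=O(e^{-As})$ without any seed rate $a_0>0$ and without a rate-doubling bootstrap; your appeal to Proposition~\ref{c:upper} or Lemma~\ref{l:delta2} for an initial exponential bound is neither justified nor needed. If you want to rescue the bootstrap route, you must either keep the split point fixed (as the paper does) or let it grow sublinearly in $s$ so that the halving of the argument does not cancel the squaring.
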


\begin{proof}[Proof of Lemma~\ref{l:exp-decr}]
For simplicity, we check the asymptotics \eqref{eq:log-tail} only for the function
$f$ defined by \eqref{eq:f(s)}, the case of $ \mathbf{P}\left( \log X<-s\right)$ being analogous. In fact, 
in view of Proposition~\ref{p:ln} we shall only need to estimate the tail distribution function $f$, though the
other one will be used for the proof of Proposition~\ref{p:markov}.

\smallskip

As explained at the beginning of this part (\S\ref{app:tails}) we use the passage from $\bm$ to $\nu=(R_1)_*(\bm^{\otimes 4}\otimes\ddelta_1)$ 
in order to have a good estimate on the decrease of the tail distribution function $g$: it is easy to notice that 
there exists $c>0$ such that 
\begin{equation}\label{eq:g(s)}
g(s+\log 2)\le 1-\theta(1-f(s))\le c f(s)^2.
\end{equation}
Indeed it suffices to have an $\In\Out$-path composed of edges of length $\le e^s$ to ensure that the 
$\In\Out$-distance is no larger than $2e^s$, so that we obtain the first inequality (this argument is 
essentially the same as in Lemma~\ref{l:psi}). The second inequality in \eqref{eq:g(s)}, can be deduced 
expanding the polynomial function~$\theta$ at~$1$ (up to order $2$): recall that $1$ is an attracting fixed 
point for the function $\theta$ with $\theta'(1)=0$.

\smallskip

The next step is to pass from $g$ to $f$, using the convolution by the Gaussian density.
Let us fix some (large) $C>0$ and compare $f(s)$ to $f(s+C)$: by \eqref{eq:conv-stat} we have
\[
f(s)=\int_{-\infty}^{\infty} g(s-t) \rho(t-\log\lcr) \, dt, \quad f(s+C)= 
\int_{-\infty}^{\infty} g(s-t) \rho(t-\log\lcr+C) \, dt,
\]
Decomposing the integral for $f(s+C)$ at the point $t=-\log 2$, we get the following estimate:
\begin{align*}
f(s+C)&\,=\int_{-\infty}^{\infty} g(s-t) \rho(t-\log\lcr+C) \, dt \\
&\, = \int_{-\infty}^{-\log 2} g(s-t) \rho(t -\log\lcr +C) \, dt 
+ \int_{-\log 2}^{\infty} g(s-t) \rho(t -\log\lcr +C) \, dt  \\ 
&\,\le g(s+\log 2)  \int_{-\infty}^{-\log 2} \rho(t -\log\lcr +C) \, dt +
\int_{-\log 2}^{\infty} g(s-t) \rho(t) \, dt  \sup_{t\ge -\log (2\lcr)} \frac{\rho(t +C)}{\rho(t)}  \\
&\,\le c f(s)^2 + f(s) \sup_{t\ge -\log (2\lcr)} \frac{\rho(t+C)}{\rho(t)} \\
&\,=f(s) \cdot \left(cf(s) + \sup_{t\ge -\log (2\lcr)} \frac{\rho(t+C)}{\rho(t)}\right),\numberthis\label{eq:f(s+C)}
\end{align*}
where in the first inequality we have used that $g$ is non-increasing and in the second one that inequality 
\eqref{eq:g(s)} holds. The second summand in the last factor of \eqref{eq:f(s+C)} can be explicitly computed:
\begin{align*}
\sup_{t\ge -\log (2\lcr)} \frac{\rho(t+C)}{\rho(t)} = 
&\, \sup_{t\ge -\log (2\lcr)} \exp\left(-\frac{(t+C)^2 - t^2}{2\sigma^2} \right)  \\ 
=&\, \sup_{t\ge -\log (2\lcr)} \exp\left( -\frac{2tC + C^2}{2\sigma^2} \right) \\
=&\, \exp\left( -\frac{C-2\log (2\lcr) }{2\sigma^2} \cdot C \right).
\end{align*}
Choose and fix $C$ such that $\frac{C-2\log (2\lcr) }{2\sigma^2}>A$. Then
\[
\sup_{t\ge -\log (2\lcr)} \frac{\rho(t+C)}{\rho(t)} < e^{-AC}.
\]
Since $f(s)\to 0$ as $s\to\infty$ and the function $f$ is non-increasing, there exists $s_0$ such that for any~$s>s_0$
\[
cf(s)+ \sup_{t\ge -\log (2\lcr)} \frac{\rho(t+C)}{\rho(t)} < e^{-AC}.
\]
Then for any $s\ge s_0$ we have $f(s+C)\le e^{-AC} f(s)$, which implies that $f(s)=O\left(e^{-As}\right)$ and 
actually, as $A$ was arbitrary, $f(s)=o\left(e^{-As}\right)$, providing us with the desired~\eqref{eq:log-tail}.
\end{proof}

\begin{proof}[Proof of Proposition \ref{p:ln}]
Fix $\eps>0$, then for any $n$ we have
\[
\mathbf{P}\left(L_n>\eps n\right)\le 4^n\,\mathbf{P}\left( \log X>\eps n\right).
\]
The tail estimate \eqref{eq:log-tail} gives the bound
\[
\mathbf{P}\left(L_n>\eps n\right)\le 4^ne^{-A\eps n}\quad \text{for any }A>0.
\]
In particular, choosing $A>\frac{\log 4}{\eps}$, we have an exponential decrease and applying the 
Borel-Cantelli lemma, we have that, almost surely, $L_n<\eps n$ for $n$ sufficiently large. 
Since $\eps>0$ was arbitrary, we have $L_n=o(n)$, as wanted. 
\end{proof}

\subsubsection{Gaussian control for the decay of the density}
If on the one hand the convolution by the Gaussian measure does not slow down the decrease of the tail 
distribution function $f$, on the other it gives a Gaussian lower bound on the speed, as the following easy
lemma explains:

\begin{lem}\label{l:rho-low}
Let $m=\exp_*\mN(0,\sigma^2)$ be a $\log$-normal distribution, $\lcr$ the associated normalizing 
constant and $\bm$ any non-trivial $\Pc$-stationary measure. Let $\rho_1$ be the density function of 
$\log_*\bm$, then we have the Gaussian lower bound
\[
\rho_1(s)=\Omega\left(e^{-\frac{1}{\sigma^2}s^2}\right) \quad \text{ as } |s|\to\infty.
\]
\end{lem}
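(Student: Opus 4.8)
The plan is to read the Gaussian lower bound directly off the convolution identity \eqref{eq:rho-convolution}, which presents $\rho_1$ as the Gaussian density $\rho$ (shifted by $\log\lcr$) averaged against the probability measure $\nu=(R_1)_*(\bm^{\otimes 4}\otimes\ddelta_1)$. The guiding point is that a Gaussian density has a genuinely Gaussian lower tail, and averaging it against \emph{any} probability measure that puts positive mass on a bounded set cannot destroy this: the mass near a fixed bounded window keeps pushing $\rho_1(s)$ above a Gaussian in $s$.

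First I would use tightness of the probability measure $\nu$ on $\mathbf{R}$ to pick a bounded interval $[a,b]$ with $\nu([a,b])\ge\tfrac12$, and set $K:=|\log\lcr|+\max(|a|,|b|)$. Then for every $s\in\mathbf{R}$ and every $t\in[a,b]$ one has $|s-\log\lcr-t|\le |s|+K$, and since $\rho$ is decreasing in $|x|$,
\[
\rho(s-\log\lcr-t)\ \ge\ \frac{1}{\sqrt{2\pi\sigma^2}}\exp\!\Bigl(-\tfrac{1}{2\sigma^2}(|s|+K)^2\Bigr).
\]
Plugging this into \eqref{eq:rho-convolution} and restricting the integral to $[a,b]$ gives
\[
\rho_1(s)\ \ge\ \int_{[a,b]}\rho(s-\log\lcr-t)\,d\nu(t)\ \ge\ \frac{1}{2\sqrt{2\pi\sigma^2}}\exp\!\Bigl(-\tfrac{1}{2\sigma^2}(|s|+K)^2\Bigr).
\]
It then remains to absorb the linear and constant terms produced by expanding $(|s|+K)^2=s^2+2K|s|+K^2$: since $\tfrac{1}{2\sigma^2}(2K|s|+K^2)\le \tfrac{1}{2\sigma^2}s^2$ for all sufficiently large $|s|$, we get $\tfrac{1}{2\sigma^2}(|s|+K)^2\le \tfrac{1}{\sigma^2}s^2$ for such $s$, hence $\rho_1(s)\ge \tfrac{1}{2\sqrt{2\pi\sigma^2}}\,e^{-s^2/\sigma^2}$ for $|s|$ large, which is exactly the claimed $\rho_1(s)=\Omega(e^{-s^2/\sigma^2})$.

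I do not expect any real obstacle here; this is the ``easy lemma'' advertised in the text. The only inputs are that $\nu$ is a probability measure (automatic, being a pushforward of one) and the identity \eqref{eq:rho-convolution}, already established from \eqref{eq:stat-conv1}. The slack between $\tfrac1{2\sigma^2}$ and $\tfrac1{\sigma^2}$ in the exponent is precisely what makes it unnecessary to track the linear correction term carefully, which is why the statement is phrased with $\tfrac1{\sigma^2}$ rather than with the sharper $\tfrac1{2\sigma^2}$.
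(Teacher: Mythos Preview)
Your proof is correct and follows essentially the same approach as the paper: both restrict the convolution integral \eqref{eq:rho-convolution} to a bounded window in $t$, bound $\rho(s-\log\lcr-t)$ below by $\rho(|s|+K)$ there, and then absorb the linear cross-term into the slack between $\tfrac{1}{2\sigma^2}$ and $\tfrac{1}{\sigma^2}$. The only cosmetic difference is that the paper uses the concrete interval $[-1-\log\lcr,\,1-\log\lcr]$ (relying on $\nu$ having full support) whereas you invoke tightness to get an interval of mass $\ge\tfrac12$, which is arguably cleaner.
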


\begin{proof}
From the stationary relation \eqref{eq:rho-convolution}, we have the estimate
\[
\rho_1(s)\ge \int_{-1-\log\lcr}^{1-\log\lcr}\rho(s-\log\lcr-t)d\nu(t)
\ge \rho(|s|+1)\cdot\nu([-1-\log\lcr,1-\log\lcr])\]
and the last expression is of the order of $e^{-\frac{1}{2\sigma^2}(|s|+1)^2}$, giving us the desired lower bound.
\end{proof}

\subsubsection{Fast decrease of the density}
Going beyond the asymptotics obtained in Lemma \ref{l:exp-decr} for the tails of the distribution 
function of $\log_*\bm$, we are going to obtain the same kind of estimate for the associated density 
function $\rho_1$.

\begin{lem}\label{l:rho-exp}
Let $m=\exp_*\mN(0,\sigma^2)$ be a $\log$-normal distribution, $\lcr$ the associated normalizing constant 
and $\bm$ any non-trivial $\Pc$-stationary measure. Let $\rho_1$ be the density function of $\log_*\bm$, 
then we have the superexponential bound
\[
\rho_1(s)=o\left(e^{-A|s|}\right) \quad \text{ as } |s|\to\infty, \text{ for any }A>0.
\]
\end{lem}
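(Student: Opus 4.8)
The plan is to upgrade the tail estimate of Lemma~\ref{l:exp-decr} from the tail distribution function $f$ to the density $\rho_1$, using the convolution identity~\eqref{eq:rho-convolution} together with the already-established superexponential decay of $f$ (hence of $g$). The starting point is the observation that~\eqref{eq:rho-convolution} expresses $\rho_1$ as the convolution of the \emph{density} $\rho$ of a centred Gaussian (shifted by $\log\lcr$) with the \emph{measure} $\nu=\law(\log R_1(X_1,\dots,X_4;1))$. The crucial feature is that $\rho$ itself decays superexponentially, and $\nu$ has superexponentially decaying tails: indeed $\nu([t,\infty))=g(t)=o(e^{-At})$ for any $A>0$ by~\eqref{eq:g(s)} combined with Lemma~\ref{l:exp-decr}, and an analogous bound holds for $\nu((-\infty,-t])$ (the left-tail case of Lemma~\ref{l:exp-decr}, which is stated there to hold as well). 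A convolution of a superexponentially decaying density with a superexponentially decaying measure is again superexponentially decaying; this is the whole content of the lemma.

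Concretely, first I would fix $A>0$ and split the convolution integral
\[
\rho_1(s)=\int_{-\infty}^{\infty}\rho(s-\log\lcr-t)\,d\nu(t)
\]
at the midpoint $t=\tfrac12(s-\log\lcr)$ (for $s$ large and positive; the $|s|$ case with $s\to-\infty$ is symmetric). On the region $t\le \tfrac12(s-\log\lcr)$ the Gaussian argument $s-\log\lcr-t$ is at least $\tfrac12(s-\log\lcr)$, so $\rho(s-\log\lcr-t)\le \rho\bigl(\tfrac12(s-\log\lcr)\bigr)$, which is $O(e^{-\frac{1}{8\sigma^2}(s-\log\lcr)^2})=o(e^{-As})$; integrating against the probability measure $\nu$ (total mass $1$) keeps this bound. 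On the complementary region $t\ge \tfrac12(s-\log\lcr)$ I bound $\rho\le \|\rho\|_\infty=(2\pi\sigma^2)^{-1/2}$ and use $\nu\bigl([\tfrac12(s-\log\lcr),\infty)\bigr)=g\bigl(\tfrac12(s-\log\lcr)\bigr)$, which by the superexponential decay of $g$ is $o(e^{-A s})$ as well (after absorbing the linear change of variable $s\mapsto\tfrac12 s$ into the arbitrariness of $A$). Adding the two pieces gives $\rho_1(s)=o(e^{-As})$; since $A$ was arbitrary this is the claim for $s\to+\infty$, and replacing $g$ by the left-tail distribution function of $\log R_1(X_1,\dots,X_4;1)$ handles $s\to-\infty$.

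I do not expect any genuine obstacle here; the only point requiring minor care is making sure that the left tail of $\nu$ also decays superexponentially, i.e.\ that the ``analogous'' statement alluded to in the proof of Lemma~\ref{l:exp-decr} (the bound on $\mathbf{P}(\log X<-s)$, and correspondingly that $\log R_1$ is unlikely to be very negative, which needs at least \emph{two} consecutive short edges) is indeed available; this is exactly parallel to the right-tail argument via the super-attracting fixed point $0$ of $\theta$ in place of $1$, together with $\theta(p)=O(p^2)$ near $0$. A second routine point is that the midpoint split can be done uniformly, so that the error terms are genuinely $o(e^{-A|s|})$ and not merely $O$; this is automatic because $A$ is arbitrary, so one first proves an $O(e^{-As})$ bound for every $A$ and then upgrades to $o$ by applying it with a slightly larger exponent, exactly as at the end of the proof of Lemma~\ref{l:exp-decr}.
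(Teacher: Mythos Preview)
Your argument is correct, but it takes a different route from the paper's. You work directly with the convolution identity~\eqref{eq:rho-convolution}, split the integral at the midpoint, and control the two pieces separately using the Gaussian decay of $\rho$ on one side and the superexponential tail of $\nu$ (inherited from Lemma~\ref{l:exp-decr} via~\eqref{eq:g(s)} and its left-tail analogue) on the other. The paper instead observes a \emph{quasi-convexity} property of the Gaussian density, namely $\int_{s-\sigma}^{s+\sigma}\rho(u)\,du\ge \tfrac{\sigma}{10}\rho(s)$, which is automatically inherited by any convolution $\rho_1=\nu*\rho(\cdot-\log\lcr)$. This yields $\rho_1(s)\le \tfrac{10}{\sigma}\int_{s-\sigma}^{s+\sigma}\rho_1(u)\,du\le \tfrac{10}{\sigma}\,\mathbf P(|\log X|>|s|-\sigma)$, reducing the density bound \emph{directly} to the tail bound of Lemma~\ref{l:exp-decr} in one line. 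Your approach is more elementary and self-contained (no auxiliary inequality needed), while the paper's trick is shorter and more transferable: it converts any distribution-tail estimate into a density estimate without reopening the convolution, and in particular does not require separately invoking the superexponential decay of $g$ or of the left tail of~$\nu$.
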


\begin{proof}
Note that the Gaussian density $\rho(s)$ has the following property of \emph{quasi-convexity}:
\begin{equation}\label{eq:qconv}
\text{for any }s\in\R,\quad \int_{s-\sigma}^{s+\sigma}\rho(u)\,du\ge \frac{\sigma}{10}\,\rho(s).
\end{equation}
Indeed, if $|s|\ge \sigma$, on one of the two half intervals $[{s-\sigma},{s}]$ and $[{s},{s+\sigma}]$, 
we have $\rho\ge \rho(s)$; otherwise, the integral $\int_{s-\sigma}^{s+\sigma}\rho$ is lower bounded by
$\int_{0}^{\sigma}\rho(u) du \ge \sigma \rho(\sigma) =\frac{e^{-1/2}}{\sqrt{2\pi}} $, while 
$\frac{\sigma}{10}\rho(s)$ does not exceed~$\frac{1}{10\sqrt{2\pi}}$.

\smallskip

Now, \eqref{eq:qconv} automatically holds also for a convolution of $\rho$ with any measure, in particular 
for $\rho_1=\nu*\rho(\,\cdot\,-\log\lcr)$. Hence, for any $|s|\ge \sigma$ we have 
\[\rho_1(s)\le \frac{10}{\sigma}\int_{s-\sigma}^{s+\sigma}\rho_1(u)\,du\le 
\frac{10}{\sigma}\,\mathbf P\left (|\log X|>|s|-\sigma\right ).\]
Applying, for any $A>0$, the result of Lemma~\ref{l:exp-decr}, we have, for all $s\in\R$
\[\rho_1(s)\le \frac{10}{\sigma}
\cdot o\left (e^{-A(|s|-\sigma)}\right )=o\left (e^{-A|s|}\right ),\]
hence $\rho_1(s)=o\left (e^{-A|s|}\right )$ for any $A>0$ as $|s|\to\infty$.
\end{proof}

\subsubsection{Drift to the origin}
In order to conclude the proof of Theorem~\ref{t:geodesic}, as we explained at 
the very end of \S\ref{ssc:existence-geodesic}, we need to show that the stationary Markov process 
$(\log Z_n)_{n\in\N}$  visits some compact interval $J$ infinitely many times. We will see in the next paragraph
that the key argument is quite tricky and it is based on a technical estimate that we shall prove now:

\begin{prop}\label{l:exponentialZ}
There exists a compact interval $J\subset \R_+$
such that on any value $z_0$ of $Z_0$ with $z_0\notin J$, the following conditional probability verifies
\begin{equation}\label{eq:Zn}
\mathbf{P}\left (\rho_1(\log Z_{1})\le e\cdot\rho_1(\log z_0)\,\big\vert\,Z_0=z_0\right )\le 
\exp\left (-\sqrt{|\log \rho_1(\log z_0)|}\right ).
\end{equation}
\end{prop}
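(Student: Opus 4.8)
The plan is to exploit the explicit description of the transition from $Z_0$ to $Z_1$ and the Gaussian-type bounds on the density $\rho_1$ obtained above. Recall that, given $Z_0=z_0$, the pair $(\xi, X_1^{(0)},\dots,X_4^{(0)})$ is sampled conditionally on
\[
z_0=\lcr\,\xi\,\bigl(\min(X_1^{(0)},X_2^{(0)})+\min(X_3^{(0)},X_4^{(0)})\bigr),
\]
and then $Z_1=\max\bigl(\min(X_1^{(0)},X_2^{(0)}),\min(X_3^{(0)},X_4^{(0)})\bigr)$. Writing $U=\min(X_1^{(0)},X_2^{(0)})$ and $W=\min(X_3^{(0)},X_4^{(0)})$, we have $Z_1=\max(U,W)$ and $U+W=z_0/(\lcr\xi)$, so $Z_1\in[z_0/(2\lcr\xi),\,z_0/(\lcr\xi)]$. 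Thus $\log Z_1$ lies within $\log 2$ of $\log z_0-\log\lcr-\log\xi$, and since $\log\xi$ has density $\rho$ (a Gaussian), the conditional law of $\log Z_1$ has a Gaussian-like tail relative to $\log z_0$. The event $\{\rho_1(\log Z_1)\le e\cdot\rho_1(\log z_0)\}$ should therefore be a rare event when $|\log z_0|$ is large, because $\rho_1$ decays superexponentially (Lemma \ref{l:rho-exp}) while having a Gaussian lower bound (Lemma \ref{l:rho-low}).

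First I would make this quantitative. By Lemma \ref{l:rho-exp}, for every $A>0$ we have $\rho_1(s)=o(e^{-A|s|})$, so $|\log\rho_1(s)|$ grows faster than any linear function of $|s|$; in particular, for $|s|$ large, $|\log\rho_1(s)|\ge A|s|$ for the chosen $A$. Conversely, by Lemma \ref{l:rho-low}, $|\log\rho_1(s)|\le \tfrac{1}{\sigma^2}s^2 + O(1)$. The inequality $\rho_1(\log Z_1)\le e\,\rho_1(\log z_0)$ forces, via the lower bound, $\tfrac{1}{\sigma^2}(\log Z_1)^2 + O(1)\ge |\log\rho_1(\log z_0)| - 1$, i.e.\ $|\log Z_1|\ge \sigma\sqrt{|\log\rho_1(\log z_0)|} - O(1)=:R$. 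So it suffices to bound $\mathbf P(|\log Z_1|\ge R\mid Z_0=z_0)$. Here I would use the representation $\log Z_1 = \log z_0-\log\lcr-\log\xi + O(1)$ together with the fact that, conditionally on $z_0$, the density of $\log\xi$ is $\rho(\cdot)$ reweighted by the conditional law of $(U,W)$ given their sum; since $\rho$ is Gaussian and the reweighting only redistributes mass within a bounded window, the conditional tail $\mathbf P(|\log\xi - m_0|\ge r)$ for an appropriate center $m_0$ is bounded by $C\exp(-r^2/(2\sigma^2 C'))$ for constants depending only on $\sigma$ and $\lcr$. To make $\log Z_1$ (hence $\log\xi$) deviate by $R$ requires such a Gaussian-large deviation, giving a bound of the form $\exp(-c R^2)$ with $R^2\asymp \sigma^2|\log\rho_1(\log z_0)|$; as long as $z_0$ is outside a fixed compact $J$ (so that $|\log\rho_1(\log z_0)|$ is large and the $O(1)$ corrections are absorbed), this is dominated by $\exp(-\sqrt{|\log\rho_1(\log z_0)|})$, which is what we want — indeed $\exp(-c'|\log\rho_1|)\le\exp(-\sqrt{|\log\rho_1|})$ once $|\log\rho_1|\ge 1/c'^2$.

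The main obstacle I anticipate is a careful handling of the conditioning: the law of $\log\xi$ given $Z_0=z_0$ is not simply Gaussian — it is the Gaussian $\rho$ tilted by the (explicit but messy) density of $U+W$ where $U,W$ are minima of pairs of i.i.d.\ $\bm$-variables. I would control this by noting that the density of $\log(U+W)$ is bounded above (by the quasi-convexity/boundedness arguments behind Lemmas \ref{l:rho-low}--\ref{l:rho-exp}, since minima of $\bm$-variables also have superexponentially decaying density tails), so the tilting factor is uniformly bounded on the relevant range, and the tilted measure inherits a Gaussian upper tail with slightly worse constants — harmless since we only need a single crude Gaussian large-deviation estimate, not sharp constants. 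A secondary technical point is choosing $J$ large enough that simultaneously (i) the lower bound of Lemma \ref{l:rho-low} and the upper bound of Lemma \ref{l:rho-exp} are in their asymptotic regime, (ii) all additive $O(1)$ terms above are absorbed, and (iii) the final comparison $\exp(-cR^2)\le\exp(-\sqrt{|\log\rho_1(\log z_0)|})$ holds; all three are satisfied for $|\log z_0|$ exceeding an explicit constant depending only on $\sigma,\lcr$, which fixes the compact interval $J$ and completes the proof.
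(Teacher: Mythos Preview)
Your reduction via the Gaussian lower bound on $\rho_1$ is correct: the event $\rho_1(\log Z_1)\le e\,\rho_1(\log z_0)$ does force $|\log Z_1|\ge R$ with $R\approx \sigma\sqrt{|\log\rho_1(\log z_0)|}$, and the representation $\log Z_1=\log z_0-\log\lcr-\log\xi+O(1)$ is also right.

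The gap is in the conditional tail bound. Writing $S=\log\xi$, $T=\log(U+W)$ and $a_0=\log z_0-\log\lcr$, the conditional density of $S$ given $Z_0=z_0$ is proportional to $\rho(s)\,q(a_0-s)$, where $q$ is the density of~$T$. The tilting factor $q(a_0-\cdot)$ has full support, and it is precisely this factor --- not the Gaussian $\rho$ --- that makes $S$ concentrate near~$a_0$ (equivalently $T$ near~$0$); your assertion that ``the reweighting only redistributes mass within a bounded window, and the tilted measure inherits a Gaussian upper tail'' is not justified, and the bound $\exp(-cR^2)$ does not follow. A crude attempt --- bounding $\rho(a_0-t)\le\rho(0)$ and using only $P(|T|\ge R')=o(e^{-AR'})$ from Lemma~\ref{l:exp-decr} --- actually fails: after dividing by $\rho_1(\log z_0)$ one gets roughly $e^{-AR'+|\log\rho_1(\log z_0)|}=e^{-AR'+(R'/\sigma)^2}$, which blows up for large~$R'$ whatever fixed~$A$ one takes. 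The paper circumvents this by integrating in the other order. It fixes $z=U+W$ and bounds the $\nu$-density $I_{\delta_0}(z)$ of the event $\{U+W=z,\ \rho_1(\log Z_1)\le\delta_0\}$ pointwise in~$z$. Since $Z_1\in[z/2,z]$, the constraint $\rho_1(\log Z_1)\le\delta_0$ makes $z$ itself extreme: $I_{\delta_0}(z)=0$ unless $|\log z|$ is at least of order $\sqrt{|\log\delta_0|}$, and on that range one has $\beta_0\ge e^{-k|\log z|/\sigma}$ for a bounded~$k$; the superpolynomial decay of $\tfrac{z}{\delta_0}I_{\delta_0}(z)$ in~$|\log z|$ (obtained from Lemmas~\ref{l:exp-decr} and~\ref{l:rho-exp} via the explicit convolution formula for~$\hat\rho$) then yields the bound, uniformly in~$\delta_0$. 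Your route could likely be completed, but it needs exactly this two-regime analysis rather than a single Gaussian tail estimate.
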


Before passing to its proof, let us try to explain why such an estimate should hold. Given a compact interval 
$J\subset\R_+$, we shall informally say that $z_0$ is ``large'' if $z_0$ belongs to the connected component of 
$\R_+\setminus J$ containing arbitrary large numbers. If $z_0\in\R_+\setminus J$ belongs to the other connected
component, then we say that $z_0$ is ``small''.

If the value of
\[Z_0=\lcr\xi\left(\min(X_1,X_2)+\min(X_3,X_4)\right)\]
is ``large'', it is very likely that this is due to the fact that the factor $\xi$ is relatively large,
whereas the quantity
\[Z_1=\max\left(\min(X_1,X_2),\min(X_3,X_4)\right)\]
is very likely to be considerably smaller: this is expressed quantitatively by~\eqref{eq:Zn}.

In the same way, if $Z_0$ is ``small'', it is very likely that this is due to the fact that the factor~$\xi$ is 
relatively small, and $Z_1$ is very likely to be considerably larger.

\begin{rem}\label{r:statMarkov}
By the stationarity of the Markov process $(\log Z_n)_{n\in\N}$, we can replace the couple
$Z_0$ and $Z_1$ in the statement with $Z_n$ and $Z_{n+1}$ respectively.
\end{rem}

\begin{proof}[Proof of Proposition~\ref{l:exponentialZ}]
For this time it is convenient to work also with the density of $\bm$ in the original coordinates, and we shall denote it by $\rho_{\bm}$. It is related to the density~$\rho_1$ by a logarithmic change of coordinates: $\rho_{\bm}(z)=\dfrac{1}{z}\rho_1(\log z)$, for any $z\in\R_+$.

Once again we shall use the stationarity relation $\Phi_{\lcr}[\bm]=\bm$, under the form~\eqref{eq:stat-conv1}:
\[
\log_*\bm=\log_*\nu*\mathcal{N}(\log\lcr,\sigma^2).
\]

We first work with the probability measure $\nu=(R_1)_* (\bm^{\otimes 4} \otimes \ddelta_1)$ which is the law of 
the non-rescaled distance $R_1(X_1,\dots,X_4;1)$, 
where the~$X_i$'s are i.i.d.~random variables, distributed with respect to~$\bm$. Using the work done previously, 
we can easily find \emph{global} good estimates when dealing with its density~$\hat{\rho}$. Then we can upgrade 
the bounds on the density~$\hat{\rho}$ to bounds on the density~$\bm$, since we pass from one to the other 
by convolution in the logarithmic coordinates $s=\log z$: the relation~\eqref{eq:stat-conv1} writes 
\begin{equation}\label{eq:mult-conv}
\rho_1  = (e^s \hat{\rho}(e^s) ) * \rho(\cdot - \log \lcr).
\end{equation}

Before proceeding further, let us remark that the density~$\hat{\rho}$ can be expressed in a simple form using 
the density~$\rho_{\bm}$:
\begin{align*}
\hat{\rho}(z)&=8 \int_{x_1+x_3=z, \, x_1\ge x_3} \rho_{\bm}(x_1) (1-F_{\bm}(x_1)) \cdot \rho_{\bm}(x_3) (1-F_{\bm}(x_3)) \, dx_1\\
&=\int_{z/2}^z\rho_{\bm}(x_1) (1-F_{\bm}(x_1)) \cdot \rho_{\bm}(z-x_1) (1-F_{\bm}(z-x_1)) \, dx_1,
\end{align*}
where the expression under the integral corresponds to the particular configuration 
\[
X_1\ge X_3,\quad X_1\le X_2,\quad X_3\le X_4,
\]
the factor~$8$ to the fact that there are $8$ possible configurations, symmetric  each other, and the factors 
$(1-F_{\bm}(x_1))$ and $(1-F_{\bm}(x_3))$ correspond to the inequalities $X_1\le X_2$ and $X_3\le X_4$ 
respectively. Remark that on this particular configuration we have $Z_1=X_1$.

For any $z_0\in\R_+$, let us write~$\delta_0$ for $e\cdot \rho_1(\log z_0)$
and~$\beta_0$ for $\exp\left( -\sqrt{|\log \rho_1(\log z_0)|}\right )$. The quantity
$\frac1e \delta_0=\rho_1(\log z_0)$ is the $\bm$-density in the logarithmic
coordinates of the event $\{Z_0=z_0\}$, on which we are taking the conditional
probability in~\eqref{eq:Zn}. Using the convolution relation~\eqref{eq:mult-conv},
the inequality~\eqref{eq:Zn} is satisfied as soon as we have:
\[
zI_{\delta_0}(z)\le \frac{1}{e}\delta_0\,\beta_0,\quad\text{for any }z\in\R_+,
\]
where the function $I_{\delta_0}$ is defined by
\begin{equation}\label{eq:Iz}
I_{\delta_0}(z):=8\int_{x_1+x_3=z, \, x_1\ge x_3 \atop \rho_1(\log x_1)\le \delta_0}\rho_{\bm}(x_1) 
(1-F_{\bm}(x_1)) \cdot \rho_{\bm}(x_3) (1-F_{\bm}(x_3)) \, dx_1.
\end{equation}
This integral corresponds indeed to the $\nu$-density (in the original coordinates) of the event
\[
\{\min(X_1,X_2)+\min(X_3,X_4)=z,\,\rho_1(\log Z_1)\le \delta_0\}.
\]
Therefore the proof of Proposition~\ref{l:exponentialZ} will be over with the following:

\begin{lem}
With the notations introduced above, there exists a compact interval $J\subset \R_+$ such that for any 
$z_0\notin J$, we have
\[
zI_{\delta_0}(z)\le \frac{1}{e}\delta_0\,\beta_0,\quad\text{for any }z\in\R_+.
\]
\end{lem}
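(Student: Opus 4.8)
The plan is to estimate $zI_{\delta_0}(z)$ by three complementary bounds and then check that together they cover all $z>0$. Throughout, write $s_0=\log z_0$ and $M:=|\log\rho_1(s_0)|=-\log\rho_1(s_0)$, so that $\delta_0=e\,e^{-M}$ and $\gamma:=\tfrac{\beta_0}{16e}=\tfrac1{16e}e^{-\sqrt M}$. Since $\rho_1\to0$ at $\pm\infty$ (Lemma~\ref{l:rho-exp}), the quantity $M$ is large as soon as $z_0$ leaves a large compact subinterval of $\R_+$, and this is the only regime we will need.

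First I would record two crude bounds, valid for every $z>0$. On the domain of integration in $I_{\delta_0}(z)$ one has $x_1\in[z/2,z]$, $x_3=z-x_1\in[0,z/2]$ and $\rho_1(\log x_1)\le\delta_0$, hence $\rho_{\bm}(x_1)=\rho_1(\log x_1)/x_1\le 2\delta_0/z$. Using in addition $1-\Fbm(x_1)\le 1-\Fbm(z/2)$ and $\int_0^{z/2}\rho_{\bm}(x_3)(1-\Fbm(x_3))\,dx_3\le1$ gives $zI_{\delta_0}(z)\le 16\delta_0\,(1-\Fbm(z/2))$; using instead $1-\Fbm(x_1),\,1-\Fbm(x_3)\le1$ and $\int_0^{z/2}\rho_{\bm}(x_3)\,dx_3=\Fbm(z/2)$ gives $zI_{\delta_0}(z)\le16\delta_0\,\Fbm(z/2)$. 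The third and crucial observation is that $I_{\delta_0}(z)=0$ as soon as $\rho_1(\log x)>\delta_0$ for every $x\in[z/2,z]$, simply because then the domain of integration is empty. Hence, if $\min(\Fbm(z/2),\,1-\Fbm(z/2))\le\gamma$ we are done at once, since $zI_{\delta_0}(z)\le16\delta_0\gamma=\tfrac1e\delta_0\beta_0$; whereas if $\min(\Fbm(z/2),\,1-\Fbm(z/2))>\gamma$ then $z/2\in[\kappa_\gamma,\kappa_{1-\gamma}]$, so $[z/2,z]\subseteq[\kappa_\gamma,\,2\kappa_{1-\gamma}]$, and it remains only to prove that
\[
\rho_1(s)>\delta_0\qquad\text{for every }s\in\bigl[\log\kappa_\gamma,\ \log2+\log\kappa_{1-\gamma}\bigr].
\]

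To control that interval, I would invoke the superexponential tail bound of Lemma~\ref{l:exp-decr}: fix once and for all $A:=3/\sigma$ (any $A>2/\sigma$ would do). For $M$ large enough one has $\mathbf{P}(\log X>s)\le e^{-As}$ and $\mathbf{P}(\log X<-s)\le e^{-As}$ for all $s\ge\tfrac1A\log\tfrac1\gamma$, whence $\log\kappa_{1-\gamma}\le\tfrac1A\log\tfrac1\gamma$ and $\log\kappa_\gamma\ge-\tfrac1A\log\tfrac1\gamma$; since $\log\tfrac1\gamma=\sqrt M+\log(16e)$, for $M$ large the interval above lies in $[-\tfrac2A\sqrt M,\tfrac2A\sqrt M]$. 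On the other hand, the Gaussian lower bound $\rho_1(s)=\Omega(e^{-s^2/\sigma^2})$ of Lemma~\ref{l:rho-low}, together with positivity and continuity of $\rho_1$ on compacts, furnishes a constant $c>0$ with $\rho_1(s)\ge c\,e^{-s^2/\sigma^2}$ for all $s\in\R$. Therefore, on the interval in question,
\[
\rho_1(s)\ \ge\ c\,\exp\!\Bigl(-\tfrac{4M}{A^2\sigma^2}\Bigr)\ =\ c\,\exp\!\bigl(-\tfrac49M\bigr),
\]
which exceeds $\delta_0=e\,e^{-M}$ precisely when $c\,e^{5M/9}>e$, i.e. for every $M$ larger than some threshold $M_0$.

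Finally I would take $J$ to be any compact interval of $\R_+$ containing the set $\{z\in\R_+:\rho_1(\log z)\ge e^{-M_0}\}$, enlarging $M_0$ beforehand so that it also dominates all the ``for $M$ large'' thresholds used above; this set is compact and bounded away from $0$ and $+\infty$ because $\rho_1\to0$ at $\pm\infty$. Then $z_0\notin J$ forces $M=-\log\rho_1(\log z_0)>M_0$, so the displayed inequality $\rho_1>\delta_0$ holds on $[\log\kappa_\gamma,\log2+\log\kappa_{1-\gamma}]$, and the three bounds together yield $zI_{\delta_0}(z)\le\tfrac1e\delta_0\beta_0$ for all $z>0$. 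The delicate point — the step to watch — is the balance of exponents in the last display: it works only because the choice $\beta_0=\exp(-\sqrt{|\log\rho_1(\log z_0)|})$ makes the window $[\log\kappa_\gamma,\log\kappa_{1-\gamma}]$ of half-width $O(\sqrt M)$ rather than $O(M)$, and because the superexponential decay of the tails lets us shrink this half-width by the free factor $1/A$ with $A$ as large as we please; were $\beta_0$ constant, or of order $\rho_1(\log z_0)$, the estimate would fail.
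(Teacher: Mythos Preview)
Your argument is correct and rests on the same two pillars as the paper's proof—the Gaussian lower bound on $\rho_1$ (Lemma~\ref{l:rho-low}), which makes the integration domain $\mathbf A_z$ empty for ``central'' $z$, and the superexponential tail estimate (Lemma~\ref{l:exp-decr}) for ``extreme'' $z$—but the decomposition is organized differently. The paper splits on whether $c\,e^{-2(\log z)^2/\sigma^2}>\delta_0$: if so, $\mathbf A_z=\varnothing$; if not, it first proves a uniform superpolynomial decay $\tfrac{z}{\delta_0}I_{\delta_0}(z)=o(e^{-A|\log z|})$ and only then links $|\log z|$ back to $\beta_0$ via the negation of the first condition. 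You instead fix $\gamma=\beta_0/(16e)$ from the start and split on whether $\min(F_{\bm}(z/2),1-F_{\bm}(z/2))\le\gamma$: if so, the crude bound $zI_{\delta_0}(z)\le 16\delta_0\min(F_{\bm}(z/2),1-F_{\bm}(z/2))$ finishes immediately; if not, quantile localization together with the Gaussian lower bound shows $\mathbf A_z=\varnothing$. Your route is somewhat shorter—it bypasses the intermediate decay claim~\eqref{eq:superpoldecay}—while the paper's detour yields a standalone estimate that could in principle be reused.
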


\begin{proof}
Within the set 
\[\mathbf A_z:=\{z/2\le x_1\le z,\,\rho_1(\log x_1)\le \delta_0\},\]
over which we compute the integral defining $I_{\delta_0}$ in~\eqref{eq:Iz}, the variable~$x_1$ is 
linearly comparable to~$z$.

This implies, when~$\delta_0$ is sufficiently small, that the set~$\mathbf A_z$ is empty for relatively small 
values of~$|\log z|$, and thus for such values one has $zI_z(\delta_0)=0$.
This is a plain consequence of the fact that the density function~$\rho_1$ is continuous and tends to 
zero at~$\pm\infty$: for sufficiently small $\delta_0$ and small $|\log z|$, we cannot have simultaneously
\[z/2\le x_1\le z\quad\text{and}\quad\rho_1(\log x_1)\le \delta_0.\]
A key fact is that we can quantify this statement using the Gaussian control for the decay of~$\rho_1$: 
by Lemma~\ref{l:rho-low} and the fact that $z$ and $x_1$ are linearly comparable, we have the lower bound
\[
\rho_1(\log x_1)=\Omega\left (e^{-\frac{2}{\sigma^2}(\log z)^2}\right )\quad\text{as }|\log z|\to\infty.
\]
This implies that there exists some constant $c>0$, which does not depend on~$\delta_0$, such that if 
$z\in\R_+$ verifies
\begin{equation}\label{eq:lowdelta}
c\cdot e^{-\frac{2}{\sigma^2}(\log z)^2}> \delta_0,
\end{equation}
then $\mathbf{A}_z$ is empty.

\smallskip

In the other case, when~\eqref{eq:lowdelta} does not hold, we need a different strategy:
we need to use the bounds on~$f$ and~$\rho_1$ previously obtained,
in order to study the asymptotic behaviour of $\frac{z}{\delta_0}I_{\delta_0}(z)$ 
as~$|\log z|\to\infty$. We claim that it has superpolynomial decay, uniformly on~$\delta_0>0$: for any $A>0$ we have
\begin{equation}\label{eq:superpoldecay}
\frac{z}{\delta_0}I_{\delta_0}(z)=o\left (e^{-A|\log z|}\right )\quad\text{as }|\log z|\to\infty,
\text{ for any }\delta_0>0.
\end{equation}
This estimate is enough to conclude the proof of this lemma. Indeed, from
\[
\beta_0=\exp\left (-\sqrt{|\log \delta_0-1|}\right ),
\]
when $|\log z|$ is sufficiently large, so that~\eqref{eq:lowdelta} does not hold, we have
\begin{equation}
\label{eq:boundbeta}
\beta_0>\exp\left (-\frac{k}{\sigma}|\log z|\right ),
\end{equation}
where $k>0$ is a constant which is bounded from above as $\delta_0$ goes to $0$ (for instance, for any $\delta_0<1$ we have $k<\sqrt{2/\log c}$).

If we had $\frac{z_n}{\delta_0}I_{\delta_0}(z_n)>\frac{1}{e}\beta_0$ for some sequence $\{z_n\}_n$ such that $\lim_{n\to\infty}|\log z_n|=\infty$, then for every sufficiently large~$n$ we would have
\[
\frac{z_n}{\delta_0}I_{\delta_0}(z_n)>\frac{1}{e}\exp\left (-\frac{k}{\sigma}|\log z_n|\right )
\]
because of~\eqref{eq:boundbeta}, and this contradicts the superpolynomial decay~\eqref{eq:superpoldecay} of the left hand side. Therefore there exists a compact interval $K\subset\R_+$ which does not depend on~$\delta_0$ such that 
\[
zI_{\delta_0}(z)\le \frac{1}{e}\delta_0\beta_0\quad\text{for every }z\notin K.
\]
We can choose~$\delta_1>0$ such that for every $\delta_0<\delta_1$ the inequality~\eqref{eq:lowdelta} holds for every~$z\in K$, and therefore for all these values of~$\delta_0$ we have
\[I_{\delta_0}(z)=0\quad\text{on }K.\]
For fixed $\delta_1>0$, there exists a compact interval $J\subset\R_+$ such that the condition $\delta_0=e\cdot\rho_1(\log z_0)< \delta_1$ is satisfied for all $z_0\notin J$. This is exactly the compact interval we wanted.

\medskip

It remains to verify the claim~\eqref{eq:superpoldecay}. Let us make a preliminary observation: within the set~$\mathbf A_z$, 
we have $\rho_1(\log x_1)\le \delta_0$, or equivalently $\rho_{\bm}(x_1)\le \frac{\delta_0}{x_1}$. 
Moreover on $\mathbf{A}_z$ we have $x_1\ge z/2$, and so 
\begin{equation}
\label{eq:boundrho}
\rho_{\bm}(x_1)\le 2\frac{\delta_0}{z}.
\end{equation}
With this we easily obtain a first bound:
\begin{equation}\label{eq:Iz2}
\frac{z}{\delta_0}I_{\delta_0}(z)\le 16\int_{z/2}^z(1-\Fbm(x_1))\cdot \rho_{\bm}(x_3)\,dx_1,
\end{equation}
where the factor $(1-\Fbm(x_3))$ has been upper bounded by $1$ and $\rho_{\bm}(x_1)$ by~\eqref{eq:boundrho}.
We now proceed in two different ways, depending on whether $\log z$ is positive or negative.

As $\log z\to+\infty$, the factor $(1-\Fbm(x_1))=f(\log x_1)$, appearing in the integral~\eqref{eq:Iz2}, 
has superpolynomial decay, after Lemma~\ref{l:exp-decr}.
On the other hand, as $\log z\to-\infty$, it is the density function~$\rho_{\bm}(x_3)$ that has 
superpolynomial decay, after Lemma~\ref{l:rho-exp} and because of the inequality~$x_3\le z$.
In either case, the decay~\eqref{eq:superpoldecay} is proved, and so the lemma.
\end{proof}
This concludes the proof of Proposition~\ref{l:exponentialZ}, as well.
\end{proof}

\subsubsection{End of the proof of Theorem~\ref{t:geodesic}}

\begin{prop}\label{p:markov}
There exists an interval $J\subset\R_+$ such that for any initial value, the process $(Z_n)_{n\in\N}$ 
almost surely visits $J$ infinitely many times.
\end{prop}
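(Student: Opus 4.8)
The plan is to derive Proposition~\ref{p:markov} from the one-step drift estimate of Proposition~\ref{l:exponentialZ} (which, by Remark~\ref{r:statMarkov}, may be applied to the transition from $Z_n$ to $Z_{n+1}$ for every $n$). Set $V_n:=-\log\rho_1(\log Z_n)$; since $\rho_1$ is the density of $\log_*\bm$, given by the convolution~\eqref{eq:stat-conv1}, it is continuous, everywhere positive and tends to $0$ at $\pm\infty$ (by Lemma~\ref{l:rho-exp}), so the set $\{z\in\R_+:\rho_1(\log z)\ge e^{-M_0}\}$ is, for every $M_0$, a compact subset of $(0,\infty)$. First I would fix a large constant $M_0$ and \emph{enlarge} the interval $J$ of Proposition~\ref{l:exponentialZ} to a compact interval, still denoted $J$, that also contains this set; then $z\notin J$ forces simultaneously $V(z):=-\log\rho_1(\log z)>M_0$ and the validity of the bound of Proposition~\ref{l:exponentialZ} at $z$. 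Writing $\mathcal{F}_n=\sigma(Z_0,\dots,Z_n)$ and $B_n$ for the event $\{\rho_1(\log Z_{n+1})\le e\,\rho_1(\log Z_n)\}$, we thus have, on $\{Z_n\notin J\}$,
\[
\mathbf{P}(B_n\mid\mathcal{F}_n)\le e^{-\sqrt{V_n}},\qquad\text{while on }B_n^{\,c}\text{ one has }V_{n+1}<V_n-1 .
\]
Call $n$ a \emph{good step} on $B_n^{\,c}$ and a \emph{bad step} on $B_n$; thus a good step decreases $V$ \emph{strictly by more than one}, which is precisely what the factor $e$ in Proposition~\ref{l:exponentialZ} provides. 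Since $(Z_n)$ is a time-homogeneous Markov chain, it suffices to show that, for every initial value $z$, the hitting time $\tau:=\inf\{n\ge0:Z_n\in J\}$ is almost surely finite: indeed, for each $N$ the Markov property gives $\mathbf{P}(Z_n\notin J\ \forall n\ge N)=\mathbf{E}\bigl[\mathbf{P}_{Z_N}(\tau=\infty)\bigr]=0$, whence $(Z_n)$ visits $J$ infinitely often.

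Fix $z\notin J$ and argue on the event $\{\tau=\infty\}$, on which $Z_n\notin J$ and $V_n>M_0$ for \emph{every} $n$. If, from some time on, all steps were good, then $V_n$ would decrease by more than one at each step and tend to $-\infty$, contradicting $V_n>M_0$; hence there are infinitely many bad steps, say at times $\sigma_1<\sigma_2<\cdots$. Between two consecutive ones, the steps $\sigma_j+1,\dots,\sigma_{j+1}-1$ are good, so the values $V_{\sigma_j+1}>V_{\sigma_j+2}>\cdots>V_{\sigma_{j+1}}$ form a strictly decreasing block, all of whose terms exceed $M_0$ and whose consecutive gaps exceed $1$. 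Consequently the $i$-th term of a block of length $m$ exceeds $M_0+(m-i)$, and
\[
\sum_{n=\sigma_j+1}^{\sigma_{j+1}}e^{-\sqrt{V_n}}\;<\;\sum_{i=1}^{m}e^{-\sqrt{M_0+m-i}}\;\le\;\sum_{\ell\ge0}e^{-\sqrt{M_0+\ell}}\;=:\;\varepsilon(M_0),
\]
\emph{uniformly in $m$} — in particular regardless of how large the upward jump of $V$ at the bad step $\sigma_j$ was. Since $\sum_{\ell\ge1}e^{-\sqrt\ell}<\infty$, we fix $M_0$ large enough that $\varepsilon(M_0)<1$.

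The conclusion then follows by a geometric iteration over the bad-step times. Let $E_k$ be the event that $\sigma_1,\dots,\sigma_k$ are all finite and $V_n>M_0$ for all $n\le\sigma_k+1$; then $\{\tau=\infty\}\subseteq\bigcap_kE_k$, and $E_k$ belongs to the stopped $\sigma$-algebra $\mathcal{F}_{\sigma_k+1}$ (note $\sigma_k+1$ is an $(\mathcal{F}_n)$-stopping time). Conditioning on $\mathcal{F}_{\sigma_k+1}$, decomposing according to the position $\sigma_{k+1}=\sigma_k+m$ of the next bad step, bounding $\mathbf{P}(B_{\sigma_k+m}\mid\mathcal{F}_{\sigma_k+m})\le e^{-\sqrt{V_{\sigma_k+m}}}$ (licit since $V_{\sigma_k+m}>M_0$ on $E_{k+1}$), and summing using the block estimate above, one obtains $\mathbf{P}(E_{k+1}\mid\mathcal{F}_{\sigma_k+1})\le\varepsilon(M_0)$; hence $\mathbf{P}(E_{k+1})\le\varepsilon(M_0)\,\mathbf{P}(E_k)$, so $\mathbf{P}(E_k)\le\varepsilon(M_0)^{\,k-1}\to0$ and $\mathbf{P}_z(\tau=\infty)=0$, as needed. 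I expect the genuine difficulty to lie precisely in the upward jumps of $V$ at bad steps, which can a priori be arbitrarily large: the crude bound $\sum_ne^{-\sqrt{V_n}}\le(\text{block length})\cdot e^{-\sqrt{M_0}}$ over a block is useless, and it is only the \emph{strict} one-step decrease $V_{n+1}<V_n-1$ at good steps that makes the $V$-values in a block more-than-unit-spaced and hence the sum uniformly bounded by $\varepsilon(M_0)$. Granting that observation, the reduction through the Markov property and the geometric iteration are routine.
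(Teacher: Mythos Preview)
Your proof is correct and follows essentially the same strategy as the paper's own argument: both enlarge $J$ so that $V(z):=-\log\rho_1(\log z)$ exceeds a threshold outside $J$, exploit that a ``good'' step strictly decreases $V$ by more than~$1$, use the summability of $e^{-\sqrt{k}}$ to bound the probability that a single descent block ends in a bad step rather than in $J$, and then iterate geometrically over successive bad steps via the Markov property. The only cosmetic difference is that the paper obtains the per-block bound by an induction on the integer part $K=[V(z)]$ (showing the hitting probability is at least $1-\sum_{k=k_0}^{K}e^{-\sqrt{k}}>\tfrac12$), whereas you obtain the equivalent bound $\varepsilon(M_0)=\sum_{\ell\ge 0}e^{-\sqrt{M_0+\ell}}$ by the pathwise block-sum estimate; these are two presentations of the same computation.
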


\begin{proof}
The process $(Z_n)_{n\in\N}$ is a stationary Markov process and as we have already observed with Remark~\ref{r:statMarkov},
the statement of Proposition~\ref{l:exponentialZ} 
automatically implies the estimate~\eqref{eq:Zn}, when we look at~$Z_{n+1}$ conditionally on $Z_n=z$, with no dependence on~$n$.

\smallskip

The series $\sum_{k=1}^{\infty}e^{-\sqrt{k}}$ converges, so we can choose $k_0\in\N$ such that the tail 
$\sum_{k=k_0}^{\infty}e^{-\sqrt{k}}$ is less than $\tfrac12$.

At the price of extending the interval $J$ provided by Proposition~\ref{l:exponentialZ}, we can assume the bound 
$|\log\rho_1(s)|\ge n_0$ outside $\log J$ (recall that, after Lemma~\ref{l:rho-exp}, $\rho_1(s)$ tends 
to zero as $s$ goes to $\pm\infty$).

\smallskip

We claim the following estimate: conditionally on any given $Z_0=z$, the probability of hitting $J$ is at least
\begin{equation}\label{eq:hit}
1-\sum_{k=k_0}^{[|\log\rho_1(\log z)|]}e^{-\sqrt{k}}.
\end{equation}
We prove this claim by induction on $K=[|\log\rho_1(\log z)|]$. If $K=k_0-1$, there is nothing to prove: 
we immediately have $Z_0\in J$.  Suppose that we have already proved the claim up to $K$. Then, 
assuming $[|\log\rho_1(\log Z_0)|]=K+1$ we have $\rho_1(\log Z_{1})\ge e\cdot\rho_1(\log Z_0)$ unless for 
an event of probability at most $\exp(-\sqrt{K+1})$. Thus, $[|\log\rho_1(\log Z_1)|]\le K$ and the induction 
assumption applies.

\smallskip

For any initial value $Z_0\not\in J$ consider the segment $Z_0,\ldots,Z_{n_1}$, cut at the first time 
$n_1$ when either 
\begin{enumerate}
\item $\rho_1(\log Z_{n_1+1})\le e\cdot \rho_1(\log Z_{n_1})$, or
\item $Z_{n_1}\in J$.
\end{enumerate}
The second outcome has probability at least $\tfrac12$ after~\eqref{eq:hit} and due to the choice of~$k_0$.

On the other hand, if the first outcome has taken place, we then consider the next segment
$Z_{n_1+1},\ldots, Z_{n_2}$. Then, if $Z_{n_2}\not\in J$, the segment $Z_{n_2+1},\ldots, Z_{n_3}$, and so on. 
At each step we have probability at least $\tfrac12$ of hitting $J$, therefore the process $(Z_n)_{n\in\N}$ 
almost surely hits $J$, sooner or later.

This concludes the proof of Proposition~\ref{p:markov} and thus of Theorem~\ref{t:geodesic}.
\end{proof}

\section{Hierarchical graphs}
\label{s:hierarchical}

Hierarchical graphs have been studied so long due to their nice self-similar structure that it is impossible 
for us to give an exhaustive list of references on them (see \cite{HK} as a recent example, some historical 
references can be found therein).

It should be clear that what we have done for the figure eight-graph, naturally 
generalizes to hierarchical graphs built with different ``bricks''. We will provide details on how to proceed, 
putting more care in those cases -- the pivotal graphs -- for which some of the arguments need to be modified,
and the hypotheses
on the measure~$m$ strengthened.

\subsection{Notations}\label{ssc:def_hier}
We consider a finite oriented graph $\G=(V,E)$ with two marked 
distinct points $I$ and $O$. We define the corresponding hierarchical graph in the following way:

\begin{dfn}[Hierarchical graphs]
\label{dfn:hierarchical}
Define the sequence of graphs $(\G_n,\In,\Out)$ inductively. We start with $\G_0$ which is simply an interval 
with endpoints $\In$ and $\Out$, oriented from $\In$ to $\Out$. For the next step, when $\G_{n-1}$ is already 
constructed, we replace each oriented edge $e=(x,y)$ 
with a copy of the graph $\G$ in such a way that $x$ coincides with $I$ and $y$ with $O$. 

As for the hierarchical figure eight-graph, the vertices $V_{n-1}$ of $\G_{n-1}$ are naturally included into 
the vertices $V_n$ of $\G_n$. 
The \emph{generation} of a vertex $x$ is the minimum $i$ such that $x\in V_i$.
The \emph{in} and \emph{out} vertices of $\G_n$ are the vertices $I$ and $O$ of the $0^{th}$ generation 
respectively.
\end{dfn}

When the graph distance $r=d_\G(I,O)>1$, there is a non-trivial (``Euclidean'') geometry 
on~$\G_\infty$ which is the limit of the $(\G_i,\frac{1}{r^n}d_{\G_n})$'s in the Gromov-Hausdorff 
topology. Indeed, rescaling the distance function $d_{\G_i}$ by the factor 
$r^{-n}$ guarantees that the rescaled $\In\Out$-distance stays equal to~$1$ at every step. We want to show 
that even for 
\emph{random metrics}, we are able to define such a limit in law, in the same way that we 
managed for the figure eight-graph.

\subsection{$(\min,+)$-type recursive distributional equations}

Given any finite graph $(\G,I,O)$ as in the previous section, we can associate to it a function~$\rho_\G$ 
of~$\# E$ variables $x_1,\ldots,x_{\# E}$ involving only $\min$ and $+$ 
operations in the way that we explain hereafter.

\begin{dfn}
A path $\pi$ in $\G$ from $I$ to $O$ is a \emph{simple path} if it crosses every edge at most 
once.

Let us fix a labelling $\{1,\ldots,\# E\}$ for the edges, then to any simple path $\pi$ we 
associate the linear function 
$\ell(\pi)(x_1,\ldots,x_{\# E})=\sum_{i=1}^{\# E} \mathbf{1}_{i\in \pi}\, x_i$.

Finally we define the function $\rho_\Gamma$ as 
$\rho_\G=\min_{\pi\textrm{ simple path}} \ell(\pi)$.
\end{dfn}

Sometimes the function $\rho_\G$ can have a nicer expression, for instance the reader may 
think of the function $\rho$ given by the figure eight-graph: the definition of $\rho$ 
simplifies to~$\rho(x_1,x_2,x_3,x_4)=\min(x_1,x_2)+\min(x_3,x_4)$.

Given any probability measure $m$ on $(0,+\infty)$, we define 
the \emph{recursive distributional equation}
\beqn{rde}{
Y=\xi\cdot \rho_\G (X_i),
}
where $\xi$ is a random variable of law $m$, the $X_i$'s are i.i.d.~random 
variables whose law $\mathcal{L}$ is the same as that of $Y$ and is the unknown of the problem.

We shall suggest the reader to compare \eqref{eq:rde} with equations 
\eqref{eq:Phi}-\eqref{eq:iter}.

\begin{rem}
Conversely, it is sometimes possible to associate a ``brick'' graph to $(\min,+)$-type RDEs: for instance if
\[
Y=\xi\cdot \rho (X_i)
\]
is the RDE we want to study, then a sufficient condition is that the function $\rho$ makes 
the variables~$X_i$'s appear only once (see Figure~\ref{f:RDE-example}). 
\end{rem}

\begin{figure}[ht]
\[
\includegraphics[width=.18\textwidth]{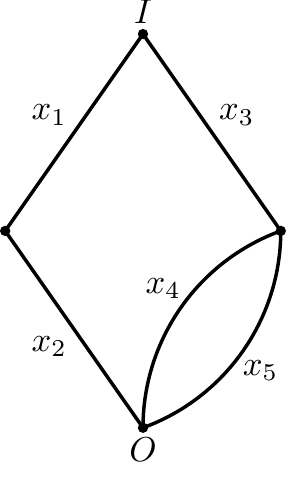}
\]
\caption{The ``brick graph'' associated to $\rho(x_i)=\min(x_1+x_2,x_3+\min(x_4,x_5))$. The orientation 
for the edges can be chosen arbitrarily.}\label{f:RDE-example}
\end{figure}

\subsection{Bond percolation on hierarchical graphs and stationary random distances}\label{ssc:pivotal}

\subsubsection{General setting}

In the proof of Theorem~\ref{t:stat}, and hence of Theorem~\ref{p:metric}, the percolation 
function~$\theta$ introduced in Definition~\ref{dfn:psi} played a crucial r\^ole. 
Such a function can be associated to any ``brick graph'' $(\G,\In,\Out)$:
\begin{dfn}
Given any $p\in [0,1]$, we consider the Bernoulli percolation of parameter $p$ on~$\G$. 
This defines the following function $\theta_\G$ of variable $p$
\[
\theta_{\G}(p)=\P(I\to O),
\]
where $I\to O$ denotes the event that $I$ is connected to $O$.
\end{dfn}
It is easy to observe that $\theta_{\G}$ defines a monotone continuous function, fixing $0$ and 
$1$: $\theta_{\G}$ is a homeomorphism of the closed interval $[0,1]$ (defined by a polynomial 
expression in $p$).

We have already remarked that a key feature of the function $\theta_{\G}$ associated to the figure eight-graph 
is the nature of the fixed points of $\theta_{\G}$: $0$ and $1$ are super-attracting and there is only another 
one in~$(0,1)$. Indeed, this is the only property of $\theta_{\G}$ that we really needed for the proofs of 
Theorem~\ref{t:stat} and hence of Theorem~\ref{p:metric}. 

Though, such a behaviour is not a special feature of the figure eight-graph, but it is peculiar for graphs 
that do not possess \emph{pivotal edges}:
\begin{dfn}
An edge in the graph $(\Gamma,\In,\Out)$ is \emph{pivotal} if it is a straight $\In\Out$-edge (\emph{shortcut}), or if its 
removal disconnects $\In$ from $\Out$ (\emph{bridge}). Otherwise it is called \emph{non-pivotal}. A graph 
is \emph{pivotal} if it possesses at least one pivotal edge, and \emph{non-pivotal} otherwise.
\end{dfn}
Indeed, it is easy to see that for any non-pivotal graph $\G$ 
the points~$0$ and $1$ are super-attracting for the 
percolation function $\theta_{\G}$:
\begin{prop}
Let $(\G,I,O)$ be any finite graph with two distinct marked points and no pivotal edges. 
Then $0$ and $1$ are super-attracting points for the dynamics induced by the map 
$\theta_{\G}$ associated to~$\G$ (\emph{i.e.}~the first derivative of $\theta_{\G}$ at $0$ and at $1$ is equal $0$). 
This infers that $\theta$ has at least \emph{three} fixed points on $[0,1]$.
\end{prop}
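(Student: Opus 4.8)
The plan is to show that for a non-pivotal graph the percolation polynomial $\theta_\G$ has vanishing derivative at both endpoints $0$ and $1$; the conclusion that there are at least three fixed points then follows from a soft topological argument. Write $\theta_\G(p)=\sum_{k}a_k p^k$, where $a_k$ is (up to the obvious combinatorial bookkeeping) related to the number of $\In\Out$-connecting subgraphs with exactly $k$ open edges. The two endpoint behaviours are dual to each other, so I would treat $p\to 0$ in detail and indicate the symmetric treatment for $p\to 1$.

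For the behaviour near $p=0$: the linear coefficient $\theta_\G'(0)$ counts the contribution of configurations in which a single open edge already connects $\In$ to $\Out$. But a single edge whose presence alone connects $\In$ and $\Out$ is precisely a straight $\In\Out$-edge, i.e.\ a \emph{shortcut}; since $\G$ is non-pivotal, no such edge exists. Hence $\theta_\G(p)=O(p^2)$ as $p\to 0$, that is $\theta_\G'(0)=0$. (More formally: $\theta_\G(p)=\mathbf P(\In\to\Out)\le \sum_{e}\mathbf P(e\text{ open}) \mathbf{1}[\{e\}\text{ connects }\In,\Out\}]+\mathbf P(\text{at least two edges open})$, and the first sum is empty.) Dually, near $p=1$ one looks at $1-\theta_\G(p)$ as a function of $q=1-p$: the linear term in $q$ counts configurations where closing a single edge disconnects $\In$ from $\Out$, i.e.\ where that edge is a \emph{bridge}; non-pivotality forbids this, so $1-\theta_\G(p)=O(q^2)$, giving $\theta_\G'(1)=0$.

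For the existence of the third fixed point: $\theta_\G$ is a continuous map of $[0,1]$ into itself fixing $0$ and $1$, and by the two derivative computations $0$ and $1$ are (super)attracting. If $\theta_\G$ had no fixed point in the open interval, then by the intermediate value theorem applied to $\theta_\G(p)-p$ this difference would have a constant sign on $(0,1)$; combined with $\theta_\G'(0)=\theta_\G'(1)=0<1$ one gets a contradiction, since an attracting fixed point at $0$ forces $\theta_\G(p)<p$ for small $p>0$ while an attracting fixed point at $1$ forces $\theta_\G(p)>p$ for $p$ close to $1$, so $\theta_\G(p)-p$ changes sign and therefore vanishes somewhere in $(0,1)$. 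This produces the desired third fixed point.

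The only genuinely non-routine point is the first one: making precise that the coefficient $a_1$ of $p$ in $\theta_\G$ equals the number of shortcut edges (and dually for $q$ and bridges). This is really a statement about the reliability polynomial of a two-terminal network and is standard, but it is worth stating carefully because it is exactly where the hypothesis ``no pivotal edge'' enters. Everything after that — the vanishing of the derivatives and the sign-change argument for the interior fixed point — is elementary. I would also remark that the interior fixed point need not be unique in general (for the figure eight it happens to be, at $\varphi^{-2}$), which is why the statement only claims ``at least three''.
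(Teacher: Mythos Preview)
Your proof is correct and follows essentially the same approach as the paper: both arguments show that any connecting configuration needs at least two open edges (no shortcuts) and any disconnecting configuration needs at least two closed edges (no bridges), hence the orders of vanishing of $\theta_\G$ at $0$ and of $1-\theta_\G$ at $1$ are each at least two. Your treatment of the $p=1$ case is actually slightly more direct than the paper's---you use the definition of a bridge immediately, while the paper goes through the existence of two edge-disjoint $\In\Out$-paths---and you spell out the intermediate-value argument for the interior fixed point, which the paper leaves to the reader.
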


\begin{proof}
The weight of every subgraph $\Delta\in \{0,1\}^E$ is $p^{\#_1\Delta}(1-p)^{\#_0\Delta}$. 
A path connecting $I$ to $O$ must pass at least through $d_\G(I,O)$ edges: if 
$\Delta\in (\io)$ then $\#_1\Delta\ge d_\G(I,O)$. Hence, the order of the zero of $\theta_{\G}(p)$ at $p=0$ 
is $d_\G(I,O)$.

Furthermore, if in the configuration $\Delta$ there is no $\In\Out$-path, 
we must have $\#_0\Delta\ge 2$, for the absence of bridges implies the existence in $\G$ of at least 
two disjoint possible $\In\Out$-paths, each of which should be cut. Hence, the order of the zero of 
$1-\theta_{\G}(p)$ at $p=1$ is at least two.
\end{proof}

\begin{ex}
The smallest non-pivotal graphs are the figure eight-graph and its dual, the diamond-graph (see 
Figure~\ref{f:8diamond}).
\end{ex}

For simplicity reasons we will assume that any edge of $\G$ belongs to at least one simple $\In\Out$-path 
(otherwise it can be removed), and we assume that the graph $\G$ is not a single $\In\Out$-edge.

A theorem of Moore and Shannon~\cite{MS} says that after the change of variable 
$s=\log \frac{p}{1-p}\in\R$ the map~$\theta_{\G}$ becomes expanding and hence possesses a 
unique repelling fixed point:

\begin{thm*}[Moore~--~Shannon, \cite{MS}]\label{thm:Moore-Shannon}
With the previous notations, if there are no pivotal edges, the function $\theta_{\G}$ has exactly 
three fixed points on $[0,1]$.
\end{thm*}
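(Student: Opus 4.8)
The plan is the one announced just before the statement: work in the logarithmic coordinate $s=\log\tfrac{p}{1-p}$, and exploit that in this coordinate $\theta_{\G}$ becomes expanding. First, observe that $0$ and $1$ are fixed points of $\theta_{\G}$, since $\theta_{\G}(0)=0$ and $\theta_{\G}(1)=1$. Moreover, by the preceding Proposition these two fixed points are super-attracting, so $\theta_{\G}(p)<p$ on a right neighbourhood of $0$ and $\theta_{\G}(p)>p$ on a left neighbourhood of $1$; by the intermediate value theorem $\theta_{\G}$ then has at least one fixed point in $(0,1)$, hence at least three on $[0,1]$. It remains to prove that there is \emph{at most} one fixed point in $(0,1)$.

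To this end, recall that $\phi\colon p\mapsto\log\tfrac{p}{1-p}$ is an increasing real-analytic bijection from $(0,1)$ onto $\R$, and that $0<\theta_{\G}(p)<1$ for every $p\in(0,1)$ (using that $\G$ admits a simple $I$--$O$ path and an $I$--$O$ cutset, both non-empty). Thus one can define the conjugated map $\widetilde\theta:=\phi\circ\theta_{\G}\circ\phi^{-1}\colon\R\to\R$, again real-analytic; and since $\phi$ is injective, the fixed points of $\theta_{\G}$ in $(0,1)$ correspond bijectively, via $\phi$, to the fixed points of $\widetilde\theta$ on $\R$. So it suffices to show that $\widetilde\theta$ has at most one fixed point.

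The crucial input — this is the theorem of Moore and Shannon \cite{MS} — is that $\widetilde\theta$ is \emph{uniformly expanding}: $\widetilde\theta'(s)\ge 2$ for all $s\in\R$. Let me indicate the mechanism. Write $A$ for the event $\{\io\}$ and $N$ for the number of open edges, both under Bernoulli($p$) percolation $\P$. A direct differentiation yields $p(1-p)\,\theta_{\G}'(p)=\mathrm{Cov}_p\!\big(N,\I_{A}\big)$ (a covariance form of Russo's formula), and hence, using the chain rule together with $\mathrm{Cov}_p(N,\I_A)=\P(A)\,\P(A^{c})\,\big(\E_p[N\mid A]-\E_p[N\mid A^{c}]\big)$,
\[
\widetilde\theta'(s)=\frac{p(1-p)\,\theta_{\G}'(p)}{\theta_{\G}(p)\big(1-\theta_{\G}(p)\big)}=\E_p[N\mid A]-\E_p[N\mid A^{c}],\qquad p=\phi^{-1}(s).
\]
The combinatorial heart of \cite{MS} is the bound of this difference of conditional expectations from below by $\min\big(d_\G(I,O),c_\G\big)$, where $c_\G$ is the least number of edges in an $I$--$O$ cutset; it is proved by inspecting the coefficients of the reliability polynomial $\theta_{\G}(p)=\sum_k a_k\,p^k(1-p)^{\#E-k}$, exploiting that $a_k=0$ for $k<d_\G(I,O)$ while $a_k=\binom{\#E}{k}$ for $k>\#E-c_\G$. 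The non-pivotality hypothesis is used precisely here: having no shortcut forces $d_\G(I,O)\ge 2$, and having no bridge forces $c_\G\ge 2$, so $\widetilde\theta'(s)\ge 2$ everywhere.

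With this, the conclusion is immediate: the map $s\mapsto\widetilde\theta(s)-s$ has derivative $\ge 1>0$ on all of $\R$, hence is strictly increasing and has at most one zero, i.e.\ $\widetilde\theta$ has at most one fixed point; therefore $\theta_{\G}$ has at most one fixed point in $(0,1)$, and combined with the first paragraph this gives exactly three fixed points on $[0,1]$. The single genuine obstacle here is the Moore--Shannon estimate $\widetilde\theta'\ge\min(d_\G(I,O),c_\G)$ — the reduction above recasts it as an appealing inequality about a monotone event in Bernoulli percolation, but its proof is the combinatorial argument of \cite{MS}, which we take as given; the rest is just the change of variables and a one-line monotonicity argument.
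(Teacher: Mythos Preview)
Your proposal is correct and follows precisely the approach the paper indicates: the paper does not give its own proof of this statement --- it is quoted as a theorem of Moore and Shannon with the one-line explanation that ``after the change of variable $s=\log\frac{p}{1-p}$ the map $\theta_{\G}$ becomes expanding and hence possesses a unique repelling fixed point.'' You have correctly unpacked that sentence: the conjugated map $\widetilde\theta$ satisfies $\widetilde\theta'(s)\ge\min(d_\G(I,O),c_\G)\ge 2$ under the non-pivotality hypothesis (no shortcut gives $d_\G(I,O)\ge 2$, no bridge gives $c_\G\ge 2$), hence $s\mapsto\widetilde\theta(s)-s$ is strictly increasing with at most one zero, while the previous Proposition supplies the two super-attracting endpoints and hence at least one interior fixed point. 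Your covariance rewriting of $\widetilde\theta'$ as $\E_p[N\mid A]-\E_p[N\mid A^{c}]$ is also correct and is a nice way to see why the lower bound should involve the shortest-path and min-cut sizes; as you rightly note, the actual inequality is the combinatorial content of~\cite{MS} and is taken as input here.
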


Let us summarize the different behaviours of the function $\theta_{\G}$ according to the properties of the graph $\G$:

\begin{enumerate}
\item The graph $\G$ has no pivotal edges, then $\theta_{\G}$ has three fixed points in the interval~$[0,1]$:
one repelling inside, and the endpoints $0$ and $1$ that are super-attracting.
\item The graph $\G$ has a shortcut, that is, an $\In\Out$-edge. Then we have the inequality $\theta_{\G}(p)>p$ 
everywhere inside $(0,1)$, and the only fixed points of~$\theta_{\G}$ are the endpoints $0$ and $1$, with $1$ 
super-attracting, and $0$ (topologically) repelling.
\item The graph $\G$ has a bridge, that is, an edge that any $\In\Out$-path is obliged to cross. Then, we have 
the inequality $\theta_{\G}(p)<p$ everywhere inside $(0,1)$, and the only fixed points of~$\theta_{\G}$ are
the endpoints $0$ and $1$, with $0$ super-attracting, and $1$ (topologically) repelling.
\end{enumerate}

\subsubsection{Non-pivotal graphs}

The arguments from Sections~\ref{s:existence} through~\ref{s:metric} generalize verbatim to the case 
of non-pivotal graph $\Gamma$, providing us with analogues of 
Theorems \ref{p:metric} through~\ref{t:conv}. 
For instance, Theorems \ref{t:stat} and \ref{t:conv} translate as follows: 
\begin{thm}\label{t:non-pivotal}
Given any non-pivotal finite graph $(\G,I,O)$ as in \S\ref{ssc:def_hier}, for 
any non-atomic, fully supported probability distribution $m$ on $\R_+$ there exists a 
normalizing constant $\lcr\in\R_+$ and a non-atomic probability distribution $\bm$ on 
$\R_+$ such that $\bm$ is a solution of the modified RDE \eqref{eq:rde}:
\[
Y=\lcr\,\xi\cdot\,\rho_\G (X_i).
\]
This also implies the existence of a $\mPhi_{\lcr;\G}$-stationary random metric on the hierarchical graph 
associated to $\G$, where $\mPhi_{\lcr;\G}$ is the glueing operator defined for the graph $\G$.

In addition, if $m$ is absolutely continuous with the density as in Theorem~\ref{t:conv}, then $\bm$ 
is  unique up to rescaling. Moreover, for any probability measure $\mu$ on $\R_+$ there exists a
constant $c>0$ such that the iterations of this measure converge to the $c$-rescaled
measure~$\Upsilon_c[\bm]$:
\[
\Phi_{\lcr;\G}^n(\mu)\to \Upsilon_c[\bm] \quad \text{as } n\to \infty,
\]
where $\Phi_{\lcr;\G}$ is the glueing operator corresponding to the graph~$\G$.
\end{thm}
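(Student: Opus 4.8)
The plan is to verify that every step of the proof of Theorems~\ref{t:stat} and~\ref{t:conv} uses only structural features of the figure eight-graph that persist for an arbitrary non-pivotal $\G$, after performing two substitutions: the projected glueing map $R_\lambda$ of~\eqref{eq:Phi} is replaced by $(X_1,\dots,X_{\#E};\xi)\mapsto \lambda\,\xi\cdot\rho_\G(X_i)$, and the percolation function $\theta$ of Definition~\ref{dfn:psi} is replaced by $\theta_\G$. Several blocks transfer immediately. The monotonicity statements (Lemma~\ref{lem:order}, and the order-preservation of the cut-off operator) hold verbatim, because $\rho_\G$ is built from $\min$ and $+$, both nondecreasing in each variable, and because the multiplicative convolution with $\xi$ and the truncation $\min(\,\cdot\,,A)$ are monotone. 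The link with percolation, Lemma~\ref{l:psi}, becomes $p_0(\Phi_{\lambda;\G}[\mu])=\theta_\G(p_0(\mu))$ and $1-p_\infty(\Phi_{\lambda;\G}[\mu])=\theta_\G(1-p_\infty(\mu))$, with the identical proof (a zero-length, resp. finite-length, $\In\Out$-path in the glued copy is exactly an $\In\Out$-connection after deleting the ``long'', resp. ``infinite'', edges). The only qualitative fact about $\theta$ that the figure-eight argument really exploits is that $\theta$ has exactly three fixed points on $[0,1]$: two super-attracting endpoints and one interior repelling point; for a non-pivotal $\G$ this is precisely the Moore--Shannon theorem quoted above together with the elementary order-of-vanishing computation, so one simply invokes it with $\theta_\G$ in place of $\theta$.

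Granting this, the existence part follows the proof of Theorem~\ref{t:stat} step by step. The cut-off operators $\Phi_{A,\lambda;\G}=\min(\lambda\xi\rho_\G(\cdot),A)$ generate the monotone sequence starting from $\ddelta_\infty$, whose limit $\nu_{A,\lambda}$ inherits the scaling relation, the continuity in $(A,\lambda,\mu)$, and (through the RTP representation) the Kolmogorov $0$--$1$ dichotomy. One then shows that the supercritical set $\Lambda$ is an open half-line: non-emptiness of $\Lambda$ and of its complement come, as in the proof of Lemma~\ref{l:Lambda}, from the test measures $p\,\ddelta_0+(1-p)\,\ddelta_1$ with $p$ below the interior fixed point of $\theta_\G$ (and $\lambda$ large) and $q\,\ddelta_1+(1-q)\,\ddelta_\infty$ with $q$ above it (and $\lambda$ small), using only the local dynamics of $\theta_\G$. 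The Key Lemma (openness of $\Lambda$) and Stage~2 of the proof of Theorem~\ref{t:stat} (non-atomicity of the subsequential limit $\bm$) rest on decomposing $\Phi_{\lcr;\G}[\bm_p]$ according to how the collapsed edges are distributed among the $\#E$ edges; the masses $\theta_\G(p)=O(p^{d_\G(\In,\Out)})$, $(1-p)^{\#E}$ and the remainder $\#E\,p+O(p^2)$ play the roles of $\theta(p)$, $(1-p)^4$ and $4p+O(p^2)$, and the essential mechanism of Lemmas~\ref{l:x-delta}--\ref{l:comp} survives: conditionally on ``some edge is collapsed but the collapsed edges do not form an $\In\Out$-path'', contracting all collapsed edges leaves a graph with $\In\neq\Out$ (because $\G$ has no shortcut), so the conditional $\In\Out$-distance still uses at least one non-collapsed edge and has a distribution function vanishing at $0$. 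The $O(p)+O(p^2)$ expansion then gives the strict inequality between $F_{\bm_p}$ and its image near $0$, the truncation-at-infinity construction of Lemma~\ref{l:comp} (entirely one-dimensional) turns it into a $\lcr$-zooming out measure, and the contradiction follows. Finally, the analogue of Proposition~\ref{p:equiv} reconstructs a $\mPhi_{\lcr;\G}$-stationary metric-space-valued law $\mbm$ from $\bm$ through the invariant RTP; that proof only used that the copies are glued at the single vertices $\In$, $\Out$ and that the induced first-passage-percolation metrics are consistent across levels.

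For the uniqueness and convergence statement, the contraction scheme of Section~\ref{s:convergence} transfers in the same manner. The regularity $\Fbm\in C^1$ with positive derivative comes from the final multiplicative convolution with $m$ and is graph-independent; the classes $\mC_{\alpha,\delta}$, their greatest elements $\Fbmd$, the rescaling operators $T_r$, and the ``asymptotic upper/lower bound'' formalism culminating in Propositions~\ref{c:upper} and~\ref{p:limit} and Lemma~\ref{l:lower2} are statements about one-dimensional distribution functions and the dynamics of $\Phi_{\lcr;\G}$, hence survive intact and yield $\Phi_{\lcr;\G}^n(\mu)\to\Upsilon_c[\bm]$. What needs genuine care is the coupling estimate of Lemma~\ref{l:delta2}: the crude combinatorial constant $8\delta$ becomes $2\#E\,\delta$, and the ``parallel short-cutting edge'' argument must be replaced by its correct generalization — a single anomalously large (or, after logarithmic reversal, small) value on a non-pivotal edge $e$ is, conditionally, unlikely to affect the $\In\Out$-distance, because, $e$ being non-pivotal, there is an alternative $\In\Out$-route avoiding $e$ realized with probability bounded away from $0$. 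Carrying this out forces a choice of $\alpha$ small enough, depending only on $\G$, so that the analogue of the bound $\delta/2$ in~\eqref{eq:delta2a}--\eqref{eq:delta2b} stays below the required threshold.

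The main obstacle is exactly this last point: the only places where the figure-eight proof is combinatorial rather than purely one-dimensional are the fine coupling estimates in the Key Lemma and in Lemma~\ref{l:delta2}, where a naive union bound must be beaten. For the figure eight this is the observation that a short parallel edge erases the effect of a long one; for a general non-pivotal $\G$ it is the statement that a perturbation localized on one edge rarely influences the $\In\Out$-geodesic, which is precisely where the absence of bridges and shortcuts enters, and why the pivotal graphs have to be treated separately with stronger hypotheses on $m$. Everything else in the transfer is routine bookkeeping, replacing $4$ by $\#E$, $\theta$ by $\theta_\G$, and $R_\lambda$ by $\lambda\xi\rho_\G$.
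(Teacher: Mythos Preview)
Your transfer argument is correct and is exactly what the paper intends: the paper itself offers no proof beyond the sentence that the figure-eight arguments ``generalize verbatim'', and you have carried out that verification, correctly isolating the Moore--Shannon three-fixed-point structure of~$\theta_\G$ and the two ways non-pivotality enters (no shortcut for the collapsed-edge expansions, no bridge for the coupling refinements that beat the naive union bound).

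One point of attribution should be corrected. The ``short parallel edge erases a long one'' estimate that you flag as the main obstacle is the inequality~\eqref{eq:tX-diff} inside Lemma~\ref{l:comp}, not anything in the Key Lemma~\ref{l:open}. The Key Lemma needs only the $O(p)$ collapsed-edge expansion around~\eqref{eq:diff} and the fact that $F_{\Phi'_\lambda[\tnu]}(x)\to 0$ as $x\to 0$ (no shortcut); no further coupling bound is required there, because $\tnu_p$ is already supported on~$[0,A']$ and no compactification step is needed. So your phrase ``entirely one-dimensional'' for Lemma~\ref{l:comp} understates things: the \emph{construction} is one-dimensional, but the bound~\eqref{eq:tX-diff} is precisely where the figure-eight parallel-edge trick sits, and where your no-bridge replacement must go. Concretely, one replaces the event $\{X_{\mathrm{par}(j)}\ge x_1\}$ by $\{d_{-j}(X_{-j})>x_1\}$, where $d_{-j}$ denotes the $\In\Out$-distance in $\G\setminus\{j\}$ (finite because $j$ is not a bridge), telescopes the comparison of $R_\lambda(X_i;\xi)$ and $R_\lambda(\wX_i;\xi)$ edge by edge, and chooses $x_1$ large enough---instead of the fixed $\tfrac34$-quantile---so that $\sum_j\mathbf{P}\bigl(d_{-j}(X_{-j})>x_1\bigr)\le 1$. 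This costs nothing in the application, since Lemma~\ref{l:x-delta} supplies the hypothesis of Lemma~\ref{l:comp} on $[0,x_1]$ for \emph{any} $x_1$.
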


Indeed, as we have already mentioned, the proofs of the theorems regarding the hierarchical figure eight-graph, 
use in fact the behaviour of the associated function $\theta$ (or, what is the same, the fact that one needs at 
least two very long -- parallel -- edges to form a very long $\In\Out$-distance, and at least two very short 
ones for it to be very short).

\subsubsection{Graphs with bridge edges}

When the graph $\G$ is pivotal, we cannot obtain such results under the same general hypotheses on the 
measure~$m$, and we shall impose additional assumptions. In this work we restrict 
ourselves to the analogues of~Theorems~\ref{p:metric} and \ref{t:stat}: even if it is possible to study 
the problem of convergence to the stationary measure, the analysis required for this is more involved than 
what we did throughout Section~\ref{s:convergence}.

We start with the case of a graph with a bridge; this case (as we will see in Section~\ref{s:sierpinski}) is
highly similar to the one for the Sierpi\'nski Gasket and for this reason it takes some priority here.

\begin{thm}\label{t:pivotal-bridge}
Given any pivotal finite graph $(\G,I,O)$ with a bridge edge (as in \S\ref{ssc:def_hier}), for 
any non-atomic, fully supported probability distribution $m$ on $\R_+$ with finite first moment, there exists a 
normalizing constant $\lcr\in\R_+$ and a non-atomic probability distribution $\bm$ on 
$\R_+$ such that $\bm$ is a solution of the modified RDE \eqref{eq:rde}:
\[
Y=\lcr\,\xi\cdot\,\rho_\G (X_i).
\]
This also implies the existence of a $\mPhi_{\lcr;\G}$-stationary random metric on the hierarchical graph 
associated to $\G$, where $\mPhi_{\lcr;\G}$ is the glueing operator defined for the graph $\G$.
\end{thm}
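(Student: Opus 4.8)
The plan is to retrace the cut-off argument of Section~\ref{s:existence}, adapting it to the presence of a bridge. The key structural difference is that for a graph $\G$ with a bridge, the percolation function $\theta_\G$ has $0$ as a \emph{super-attracting} fixed point, $1$ as a topologically repelling one, and \emph{no} interior fixed point: $\theta_\G(p)<p$ for all $p\in(0,1)$. Consequently the dichotomy of Lemma~\ref{l:psi} still gives that $p_0(\bm)$ and $1-p_\infty(\bm)$ are fixed points of $\theta_\G$, but now the only possible values are $0$ and $1$, which makes the ``no atom at~$0$'' part of the argument \emph{easier} (there is no intermediate value $p_{cr}$ to rule out) --- the price being that we lose $p_{cr}$ as the clean threshold separating zooming-out from non-zooming-out measures, and we must instead use the finite first moment of $m$ to control the metric. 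First I would set up the marginal operator $\Phi_{\lambda}=\Phi_{\lambda;\G}$ attached to $\rho_\G$ exactly as in \S\ref{sc:morenotations}, observe that Lemma~\ref{lem:order} (order-preservation, monotonicity in $\lambda$) and the definition of the cut-off operators $\Phi_{A,\lambda}$, together with the monotone construction of $\nu_{A,\lambda}$ (Lemmas~\ref{l:nu}, \ref{l:G-0-1}) and the scaling relation~\eqref{eq:scaling}, go through verbatim since they used nothing about $\theta$ beyond its being a continuous homeomorphism of $[0,1]$ fixing the endpoints. Then I would define the supercritical set $\Lambda$ as before.

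The heart of the adaptation is the analogue of Lemma~\ref{l:Lambda}. For the \emph{non-emptiness of $(0,+\infty)\setminus\Lambda$} (small $\lambda$ collapses the metric) I would \emph{use the finite first moment}: the sub-multiplicativity argument of Nicolas Curien recalled in the introduction shows $\E[d_n(\In,\Out)]\le(\E[d_1(\In,\Out)])^n$ for the unrenormalized glueing, hence for $\lambda$ small enough that $\lambda\,\E[\xi]\,\rho_\G(1,\dots,1)<1$ the sequence $\E_{\Phi_{A,\lambda}^n[\ddelta_\infty]}[\min(\cdot,A)]$ (which only decreases under the extra cut-off) tends to $0$, forcing $\nu_{A,\lambda}=\ddelta_0$; this is the more direct route alluded to in the Remark after Lemma~\ref{l:Lambda}. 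For the \emph{non-emptiness of $\Lambda$} (large $\lambda$) I would pick $q$ close to $1$ (playing the role that $q\in(p_{cr},1)$ played before --- here any $q$ with $\theta_\G(q)<q$ works, i.e.\ every $q\in(0,1)$, but one wants $q$ near $1$ so that the rescaling does not immediately kill the mass), set $\mu_-=q\,\ddelta_1+(1-q)\,\ddelta_\infty$, and run the same computation: $\lim_{x\to\infty}\Phi_1[F_{\mu_-}](x)=\theta_\G(q)$ together with the fact that $\rho_\G$ evaluated on four copies of a constant equals $d_\G(\In,\Out)$ times that constant, shows that for $\lambda$ large one gets $\Phi_\lambda[\mu_-]\succcurlyeq\mu_-$, i.e.\ a zooming-in behaviour that, run from $\ddelta_\infty$ inside a large cut-off $A$, produces a non-trivial $\nu_{A,\lambda}$. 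Openness of $\Lambda$ (the Key Lemma~\ref{l:open}) I would re-derive exactly as in \S\ref{ss:openness}: the only place $\theta$ entered was the expansion $F_{\Phi_\lambda[\tnu_p]}(0)=\theta_\G(p)$ near $p=0$, and since $0$ is still super-attracting ($\theta_\G'(0)=0$, indeed $\theta_\G$ vanishes to order $d_\G(\In,\Out)\ge 2$ at $0$, using that a bridge graph has $d_\G(\In,\Out)\ge 2$ — or if $d_\G(\In,\Out)=1$ the bridge edge is itself a shortcut, contradiction), the correction term $F_{\Phi'_\lambda[\tnu]}-F_{\Phi_\lambda[\tnu]}$ still tends to $0$ at $0$ and the perturbation argument around $x=0$ closes. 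Thus $\Lambda=(\lcr,\infty)$ with $\lcr>0$.

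From here the proof of the non-trivial stationary measure is identical to Stage~1 of the proof of Theorem~\ref{t:stat}: Lemma~\ref{l:to-zero} ($\nu_{A,\lambda}\to\ddelta_0$ as $\lambda\searrow\lcr$), the quantile normalization with $A(\lambda)=1/\kappa_{\hf}(\lambda)$, and extraction of a subsequential limit $\bm$ which is $\Pc$-stationary by continuity (Remark~\ref{r:cont-three}) and satisfies $\bm([0,1])=\hf$. It remains to show $\bm$ is non-atomic. Since $m$ is non-atomic and $\bm$ is $\Pc$-stationary, $\bm$ has no atom on $(0,\infty)$; Lemma~\ref{l:psi} gives $p_0(\bm),\,1-p_\infty(\bm)\in\{0,1\}$ (the only fixed points of $\theta_\G$ for a bridge graph), and the inequalities $p_0(\bm)\le\bm([0,1])=\hf\le 1-p_\infty(\bm)$ immediately force $p_0(\bm)=0$ and $p_\infty(\bm)=0$ --- here there is \emph{no} need for the long Stage~2 argument, precisely because the bridge kills the intermediate fixed point. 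Finally, Proposition~\ref{p:equiv}, whose proof (\S\ref{ss:random-metric}) uses only the RTP machinery and not the specific brick, upgrades the stationary marginal $\bm$ to a $\mPhi_{\lcr;\G}$-stationary random metric on the hierarchical graph of $\G$, completing the proof. The main obstacle, as flagged above, is the collapse direction (small $\lambda$): without the super-attracting interior fixed point one cannot argue purely combinatorially as in the figure-eight case, and one genuinely needs the finite-first-moment hypothesis to bound the expected $\In\Out$-distance and conclude that iterated cut-offs drive the measure to $\ddelta_0$.
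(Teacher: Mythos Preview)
Your overall strategy matches the paper's: cut-off process, supercritical set $\Lambda$, first-moment bound for $(0,+\infty)\setminus\Lambda$, Key Lemma via super-attraction of $0$, diagonal limit, and the immediate no-atoms conclusion from $\theta_\G$ having no interior fixed point. Most of this is correct and essentially what the paper does.

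There is, however, a genuine error in your argument for the \emph{non-emptiness of $\Lambda$}. You propose to use $\mu_-=q\,\ddelta_1+(1-q)\,\ddelta_\infty$ and claim that for large $\lambda$ one has $\Phi_\lambda[\mu_-]\go\mu_-$. This inequality is false: since $m$ is fully supported on $\R_+$, with probability at least $q^{\#E}$ all the $X_i$'s equal $1$, so $\rho_\G(X_i)=d_\G(\In,\Out)$ is finite, and then $\lambda\xi\cdot d_\G(\In,\Out)$ lands in $(0,1)$ with positive probability. Hence $F_{\Phi_\lambda[\mu_-]}(x)>0=F_{\mu_-}(x)$ for every $x\in(0,1)$, and the comparison breaks down there. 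Even setting this aside, $\mu_-$ charges $\infty$ and so is not compactly supported; it does not meet the definition of a $\lambda$-zooming-out measure, and the barrier argument $\mu\lo\mu_n^{A,\lambda}$ fails already at $n=1$ (since $\mu_1^{A,\lambda}=\ddelta_A$ and $\mu_-\not\lo\ddelta_A$).

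You have swapped the roles of the two test measures in Lemma~\ref{l:Lambda}. The paper simply observes that the \emph{original} construction for non-emptiness of $\Lambda$, using $\mu=p\,\ddelta_0+(1-p)\,\ddelta_1$, still works verbatim: one needs $\theta_\G(p)<p$, and for a bridge graph this holds for \emph{every} $p\in(0,1)$ (indeed $0$ is super-attracting). The step that genuinely fails and must be replaced by the first-moment argument is the \emph{other} half (non-emptiness of the complement), which in the non-pivotal case relied on $1$ being attracting for $\theta_\G$; you have correctly identified the fix for that half.
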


Before passing to its proof, we consider the most basic example of such graphs:
\begin{ex}\label{ex:Galton-Watson}
Let $(\G,I,O)$ be the interval formed by $d$ edges in a row, then the associated RDE is associated to a 
Galton-Watson process (it should be thought also as a 1D Mandelbrot Multiplicative Cascade)
\[
R_{\lambda}(X_1,\ldots,X_d;\xi)=\lambda\xi\sum_{i=1}^dX_i.
\]
When $\xi$ has finite $\alpha$-moment, with $\alpha>1$, it is classical that there is $\lcr>0$ such that 
there is a unique (up to rescaling) stationary probability measure and the convergence is exponential 
(the contraction method applies~\cite{RDE}).
\end{ex}

\begin{proof}[Sketch of the proof of Theorem~\ref{t:pivotal-bridge}]
In this case, the function $\theta_{\G}$ has $1$ as  (topologically) repelling fixed point, and $0$ as  
super-attracting fixed point. We  can still launch the cut-off procedure, defining the supercritical set 
$\Lambda$ in the same way, and the arguments of the non-emptiness of $\Lambda$ used in Lemma~\ref{l:Lambda}
still work. 

However, the non-emptiness of $(0,+\infty)\setminus\Lambda$ cannot  be ensured any more by the same arguments: 
the point~$1$ is not attracting for $\theta_{\G}$. Instead we will use some analogy with the Galton-Watson 
process considered in Example~\ref{ex:Galton-Watson}. More precisely, let us write with abuse of notation 
$\E[m]$ for the first moment of a random variable of law~$m$, we will show that if 
$\lambda<\frac{1}{d_{\G}(\In,\Out) \cdot\E [m]}$, then $\lambda$ cannot be supercritical.

Indeed, consider the associated cut-off operator $\Phi_{A,\lambda;\G}$ and the sequence of measures 
$\mu^{A,\lambda}_n$ defined analogously to~\eqref{eq:def-G}. Then $\mu^{A,\lambda}_1=\ddelta_{A}$ and 
all the measures $\mu^{A,\lambda}_n$'s with $n\ge 1$ are supported on~$[0,A]$. Using one $\In\Out$-path 
for an upper bound, we obtain
\[
\E[ \mu^{A,\lambda}_{n+1}]\le d_{\G}(\In,\Out)\cdot \lambda \E[ m] \cdot \E [\mu^{A,\lambda}_{n}],
\]
and hence 
\[
\E[ \mu^{A,\lambda}_{n+1}]\le A \cdot (d_{\G}(\In,\Out)\cdot \lambda \E [m])^n \to 0 \quad\text{as } n\to \infty.
\]
Thus, $\nu_{A,\lambda}=\lim_{n\to\infty} \mu^{A,\lambda}_{n}$ is the Dirac measure concentrated at~$0$, 
and $\lambda$ is not supercritical.

The proof of the Key Lemma still works in the same way (the point $0$ is super-attracting for $\theta_{\G}$), 
and we still define $\lcr=\inf \Lambda$. From the openness of~$\Lambda$ 
one constructs the stationary measure~$\bm$ as a ``diagonal'' limit in the same way as in 
Lemma~\ref{l:subseq-diagonal}. Finally, since the map $\theta_{\G}$ does not have fixed points 
inside $(0,1)$, we get immediately from the stationarity (and non-triviality) of $\bm$ that $\bm$ has 
no atoms, neither at~$0$ nor at infinity.
\end{proof}

\subsubsection{Graphs with shortcut edges}

The case when there is a ``shortcut'' edge linking $\In$ to $\Out$ directly is slightly different. A first remark is that, as we will see, we have to vary our cut-off procedure:
we replace the upper cut-off by the lower one. However the analogue of Theorem~\ref{t:stat} remains valid in this setting:
\begin{thm}\label{t:pivotal-shortcut}
Given any pivotal finite graph $(\G,I,O)$ with a shortcut $\In\Out$-edge (as in \S\ref{ssc:def_hier}),
for any non-atomic, fully supported probability distribution $m$ on $\R_+$ with finite first negative moment, 
there exists a normalizing constant $\lcr\in\R_+$ and a non-atomic probability distribution $\bm$ on 
$\R_+$ such that $\bm$ is a solution of the modified RDE \eqref{eq:rde}:
\[
Y=\lcr\,\xi\cdot\,\rho_\G (X_i).
\]
This also implies the existence of a $\mPhi_{\lcr;\G}$-stationary random metric on the hierarchical graph 
associated to $\G$, where $\mPhi_{\lcr;\G}$ is the glueing operator defined for the graph $\G$.
\end{thm}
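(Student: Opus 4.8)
The plan is to run the cut-off scheme of Sections~\ref{s:existence}--\ref{ss:openness} in its \emph{dual} form. Since $\G$ has a shortcut $\In\Out$-edge, $\theta_\G(p)>p$ on $(0,1)$, with $1$ super-attracting and $0$ (topologically) repelling, and in particular $\theta_\G$ has no fixed point inside $(0,1)$; moreover $\G$ has no bridge (a bridge $\neq s$ would stay unused once $s$ is open, while $s$ itself is a bridge only if $\G$ reduces to a single $\In\Out$-edge, which is excluded), so the order of vanishing of $1-\theta_\G$ at $p=1$ is at least $2$. Accordingly I would replace the upper cut-off $\min(R_\lambda,A)$ by the \emph{lower} one: for $a\in\R_+$ let $\Phi^{a,\lambda}$ send $\mu\in\cP$ to the law of $\max(R_\lambda(X_1,\dots,X_{\# E};\xi),a)$, where $R_\lambda=\lambda\xi\,\rho_\G$, the $X_i$'s are i.i.d.\ of law $\mu$, and $\xi$ has law $m$. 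This operator preserves $\lo$, and by homogeneity $\Upsilon_{a/a'}$ conjugates $\Phi^{a,\lambda}$ to $\Phi^{a',\lambda}$ while fixing $\ddelta_\infty$. I would then define $\nu^{a,\lambda}$ as the \emph{increasing} limit of $\mu^{a,\lambda}_0=\ddelta_0$, $\mu^{a,\lambda}_{n+1}=\Phi^{a,\lambda}[\mu^{a,\lambda}_n]$ (monotone since $\ddelta_0$ is the smallest element of $\cP$ and $\Phi^{a,\lambda}$ is order-preserving); it is $\Phi^{a,\lambda}$-stationary and supported on $[a,\infty]$. The geometric description of Proposition~\ref{p:cut-off} dualizes verbatim (everything deeper than level $n$ is declared to have length $0$ instead of $\infty$), so $\{D_\infty(\rt)=+\infty\}$ is a tail event and Kolmogorov's $0$--$1$ law gives $p_\infty(\nu^{a,\lambda})\in\{0,1\}$, independently of $a$. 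Setting $\Lambda=\{\lambda>0:\nu^{a,\lambda}\neq\ddelta_\infty\}$, monotonicity in $\lambda$ shows $\Lambda$ is downward closed, hence of the form $(0,\lcr)$, $(0,\lcr]$, $(0,+\infty)$ or $\emptyset$.

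Next I would show $\Lambda$ is a non-trivial interval. Call $\mu$ \emph{$\lambda$-zooming in} if $\Phi_\lambda[\mu]\lo\mu$, $\mu\neq\ddelta_\infty$, and $\mu$ is supported on $[x,+\infty]$ for some $x\in\R_+$. Exactly as in Lemma~\ref{l:out}, the existence of such a $\mu$ (chosen with $x\ge a$, harmless by scaling) forces $\mu^{a,\lambda}_n\lo\mu$ for all $n$, hence $\nu^{a,\lambda}\lo\mu\neq\ddelta_\infty$ and $\lambda\in\Lambda$. For $\mu=(1-p)\,\ddelta_1+p\,\ddelta_\infty$ with $p\in(0,1)$, one has $p_\infty(\Phi_\lambda[\mu])=1-\theta_\G(1-p)<p$, and as $\lambda\to 0$ the finite part of $\Phi_\lambda[\mu]$ concentrates near $0$, so $\Phi_\lambda[\mu]([0,1])\to\theta_\G(1-p)>1-p$; for $\lambda$ small this yields $\Phi_\lambda[\mu]\lo\mu$, proving $\Lambda\neq\emptyset$. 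For the complement, the finite first negative moment of $m$ enters: there is a combinatorial constant $c_\G$ with the pointwise bound $1/\rho_\G(X_1,\dots,X_{\# E})\le c_\G\sum_i 1/X_i$ (sum $1/\ell(\pi)\le\sum_{i\in\pi}1/X_i$ over the finitely many simple $\In\Out$-paths). Hence, if $Y_n$ has law $\mu^{a,\lambda}_n$,
\[
\E[\,Y_{n+1}^{-1}\,]\ \le\ \E[\,R_\lambda(X_1,\dots,X_{\# E};\xi)^{-1}\,]\ =\ \tfrac1\lambda\,\E[\xi^{-1}]\,\E[\,\rho_\G^{-1}\,]\ \le\ \tfrac{c_\G\,\# E}{\lambda}\,\E[\xi^{-1}]\,\E[\,Y_n^{-1}\,],
\]
with $\E[Y_1^{-1}]\le a^{-1}<\infty$. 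Thus for $\lambda>c_\G\,\# E\,\E[\xi^{-1}]$ one gets $\E[Y_n^{-1}]\to 0$, so $Y_n\to+\infty$ in probability, $\mu^{a,\lambda}_n\to\ddelta_\infty$, and $\lambda\notin\Lambda$.

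Finally I would carry over the Key Lemma and the diagonal construction. For the openness of $\Lambda$ --- which upgrades it to $(0,\lcr)$ --- I would mimic Lemma~\ref{l:open}: from the $\Phi^{a,\lambda}$-fixed point $\nu^{a,\lambda}$ with $\lambda\in\Lambda$, \emph{decreasing} the cut-off to $a'<a$ and iterating once gives $\wmu=\Phi^{a',\lambda}[\nu^{a,\lambda}]$ whose distribution function is strictly above that of $\Phi_\lambda[\wmu]$ on $(0,+\infty)$; this survives replacing $\lambda$ by $\lambda+\eps$ on compact subsets of $(0,+\infty)$, is automatic near $0$ (the lower cut-off keeps $F_{\wmu}\equiv 0$ there), and near $+\infty$ is restored by mixing with $\ddelta_\infty$ and expanding $1-\theta_\G$ to second order at $p=1$. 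This last step --- the exact dual of the treatment of the neighbourhood of $0$ in Lemma~\ref{l:open}, the role of the operator $\Phi'_{\lambda}$ there being played by the fact that, there being no bridge, disconnecting $\In$ from $\Out$ costs at least two edges --- is the step I expect to be the main obstacle. Granting it, $\lambda+\eps\in\Lambda$ and $\Lambda=(0,\lcr)$; as $\lambda\nearrow\lcr$ the double-limit argument of Lemma~\ref{l:to-zero} gives $\nu^{a,\lambda}\to\ddelta_\infty$. Taking $a=1$ and $a(\lambda):=1/\kappa_{\hf}(\lambda)$ (with $\kappa_{\hf}(\lambda)$ the $\hf$-quantile of $\nu^{1,\lambda}$, so $a(\lambda)\to 0$), the rescaled measures satisfy $\nu^{a(\lambda),\lambda}([0,1])=\hf$; any weak-$*$ subsequential limit $\bm$ along $\lambda_j\nearrow\lcr$ is $\Pc$-stationary (by continuity of $(a,\lambda,\mu)\mapsto\Phi^{a,\lambda}[\mu]$, with $a=0$ giving $\Phi_\lambda$) and satisfies $\bm([0,1])=\hf$, hence is non-trivial. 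Since $m$ is non-atomic, $\bm$ has no atom on $(0,+\infty)$; since $\theta_\G$ has no fixed point inside $(0,1)$, Lemma~\ref{l:psi} forces $p_0(\bm),\,1-p_\infty(\bm)\in\{0,1\}$, and non-triviality then forces $p_0(\bm)=p_\infty(\bm)=0$ --- so $\bm$ is automatically non-atomic, without the lengthy second stage needed for the figure eight. The passage to a $\mPhi_{\lcr;\G}$-stationary random metric is then as in Proposition~\ref{p:equiv}.
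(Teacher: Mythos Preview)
Your approach is essentially the same as the paper's: replace the upper cut-off by the lower one $\max(R_\lambda,a)$, iterate from $\ddelta_0$, define the subcritical set $\hat\Lambda$ (your $\Lambda$), prove it is a bounded open interval, and take a diagonal limit as $\lambda\nearrow\lcr$, $a(\lambda)\to 0$. Two small points are worth noting.

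First, a sign slip: you write that $F_{\wmu}$ is ``strictly above'' $F_{\Phi_\lambda[\wmu]}$, but in fact the inequality goes the other way. From $\wmu=\Phi^{a',\lambda}[\nu^{a,\lambda}]$ one has $F_{\wmu}=F_{\Phi_\lambda[\nu^{a,\lambda}]}$ on $[a',\infty)$, while $F_{\wmu}>F_{\nu^{a,\lambda}}$ on $(a',a)$; applying $\Phi_\lambda$ and using full support of $m$ gives $F_{\Phi_\lambda[\wmu]}>F_{\Phi_\lambda[\nu^{a,\lambda}]}=F_{\wmu}$ on $[a',\infty)$. This is precisely the zooming-\emph{in} inequality $\Phi_\lambda[\wmu]\lo\wmu$ you need, and the rest of your argument (perturb to $\lambda+\eps$; automatic near $0$ since $F_{\wmu}\equiv 0$ there; mix with $\ddelta_\infty$ near $+\infty$) is stated consistently with that direction, so this is only a slip of wording.

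Second, for the non-emptiness of the complement of $\Lambda$ you use $1/\rho_\G\le c_\G\sum_i 1/X_i$ via summing over all simple paths, whereas the paper uses the sharper $\rho_\G(X_i)\ge\min(X_1,\dots,X_d)$ over the $d$ edges incident to $\In$, yielding $\E[1/\hat\mu^{a,\lambda}_{n+1}]\le (d/\lambda)\,\E[1/m]\,\E[1/\hat\mu^{a,\lambda}_n]$. Both give geometric decay once $\lambda$ exceeds the relevant constant times $\E[1/m]$, so either works. Your explicit observation that a graph with a shortcut has no bridge (hence $1$ is super-attracting for $\theta_\G$, which is what makes the mixing-with-$\ddelta_\infty$ step go through) is a detail the paper leaves implicit.
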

\begin{rem}\label{r:geometry-shortcut}
This implies the analogue of Theorem~\ref{p:metric}, in the same way as before, by applying Proposition~\ref{p:equiv}.
However the stationary
random metric space has no chance of being homeomorphic to the ``Euclidean'' version (in contrast to what happens in the 
subcritical case of Theorem~\ref{t:metric} for non-pivotal graphs). Indeed, in the ``Euclidean''
limit there still is an $\In\Out$-shortcut edge, which is obtained by the sequence of replacements by shortcut edges. However, to
this shortcut it is associated an infinite product of i.i.d.~ non-constant random variables. 
There is no chance for this product to converge to a finite non-zero limit. Hence the ``length'' of this edge should 
be either zero, or infinite. But if it was zero, the $\In\Out$-distance would be collapsed, 
and hence by stationarity all the distances would be collapsed, too. Therefore the shortcut 
edge should be of infinite length in the sense of the limit metric, and so this edge should be ``cut'' (in the topological sense).
\end{rem}

Again, before passing to the proof, consider the most basic example: it is the case of the extrema of a branching 
random walk.
\begin{ex}\label{ex:BRW}
When $(\G,I,O)$ consists of two parallel edges, the RDE is associated (using logarithmic coordinates) to the
extrema of a deterministic $\BRW$:
\[R_{\lambda}(X_1,X_2;\xi)=\lambda\xi\,\min(X_1,X_2).\]
When the random variable $\xi$ has some finite positive moment, it is classical~\cite{hammersley}
that a critical parameter~$\lcr$ exists and it is actually given by $\exp(-\gamma_{\BRW})$, where the constant~$\gamma_{\BRW}$ is
the one defined in~\eqref{eq:BRWconstant}, also corresponding to the unique solution $\lambda$ of
\[
\inf_{\theta\ge 0}\E[(\lambda\xi)^\theta]=\tfrac12
\]
(cf.~\cite[equation (24)]{RDE}). In order to ensure the existence of a stationary measure, and possibly the uniqueness (up to scalar
multiplication), stronger conditions on the random variable~$\xi$ are needed and this, for general $m$,
still constitutes an active subject of the current research. Also, the convergence to the stationary measure 
can be established under some assumptions, for example if the distribution function of $\log \xi$ has 
superexponential decay~\cite{BZ}; in any case, the convergence is not exponential (cf.~Remark~\ref{r:critical}).
For further reading, see also~\cite{BRW,aid}.
\end{ex}

\begin{rem}\label{r:conj-pivotal}
Commenting further on the previous example, we expect that the relevant difference between these two examples is 
the presence of the ``$+$'' operation in the first RDE, which intertwines the core part with the tails of the 
distribution. We conjecture that the convergence to the stationary measure is exponential when $d_\G(I,O)\ge 2$ 
(and when there is some reasonable moment condition on~$\xi$).
\end{rem}

\begin{proof}[Sketch of the proof of Theorem~\ref{t:pivotal-shortcut}]
The presence of a shortcut $\In\Out$-edge in $\Gamma$ makes $0$ a topologically repelling fixed point for the 
map~$\theta_{\G}$. Due to this, instead of considering the \emph{upper cut-off}, that is, considering the 
law of $\min(A, R_{\lambda}(X_i;\xi))$, we consider the \emph{lower cut-off}, defining the operator 
$\hat{\Phi}_{a,\lambda;\G}$ that sends a measure $\mu$ to the law of $\max(a, R_{\lambda}(X_i;\xi))$, 
where the $X_i$'s are distributed with respect to~$\mu$. Likewise, we change the definition of the measures 
$\mu^{A,\lambda}_n$, considering the sequence $\left\{\hat{\mu}^{a,\lambda}_n\right\}$, starting with 
$\hat{\mu}^{a,\lambda}_0=\ddelta_0$ and then recursively
\[
\hat{\mu}^{a,\lambda}_n = \hat{\Phi}_{a,\lambda;\G} [\hat{\mu}^{a,\lambda}_{n-1}]
\]
(cf.~\eqref{eq:def-G}). Passing to the (monotone) limit, we are able to define 
$\hat{\nu}_{a,\lambda}=\lim_{n\to\infty} \hat{\mu}^{a,\lambda}_n$ (as in Definition~\ref{def:nu}).

\medskip

We consider then the \emph{subcritical} set $\hat{\Lambda}$ defined by 
\[
\hat{\Lambda}=\{\lambda>0 \mid \forall\,a\in\R_+\text{ one has }\hat{\nu}_{a,\lambda}\neq \ddelta_{\infty}\}.
\]
Adapting the arguments given in the proof of Lemma~\ref{l:Lambda}, it is not difficult to show that $\hat{\Lambda}$ 
is nonempty (as the point $1$ is super-attracting for~$\theta_{\G}$). On the other hand, as the point~$0$ is 
(topologically) repelling, we need a different strategy to show the non-emptiness of 
$(0,+\infty)\setminus\hat{\Lambda}$. Similarly to the case of graphs with bridge edges, we shall compare the 
dynamics defined by $\hat{\Phi}_{a,\lambda;\G}$ with the one of the simplest example, that is, the extrema of 
a branching random walk (Example~\ref{ex:BRW}).

\smallskip

Indeed, let $d$ be the degree of the vertex $\In$ in $\G$, that is, the number of edges having $\In$ as one 
of the endpoints. Given a measure $\mu$, we denote by $X_1,\ldots, X_d, X_{d+1}, \ldots, X_{\# E}$ a 
collection of independent random lengths for the edges of $\G$, of law $\mu$. Without loss of generality, we can
suppose that the first $d$ lengths $X_i$'s are associated to the edges attached to $\In$. Since any 
$\In\Out$-path must contain one of these $d$ edges, we have the inequality
\begin{equation}\label{eq:compBRW}
\rho_\G(X_i)\ge \min(X_1,\ldots, X_d).
\end{equation}
Consider now $\lambda>d\cdot \E[\frac1m]$ (again, with abuse of notation, $\E[\frac1m]$ denotes the first negative 
moment of a random variable of law~$m$); we want to show that $\lambda$ is supercritical, that is
$\lambda\not\in \hat\Lambda$. For this purpose, note that if $Y_1,\ldots,Y_d$ are i.i.d.~positive 
random variables with finite first moment, the plain inequality $\max(Y_1,\dots,Y_d)\le Y_1+\dots+Y_d$ implies that
\[\E[\max(Y_1,\ldots,Y_d)]\le d\cdot\E[Y_1].\]
Hence, looking at the inverses of the $ \hat{\mu}^{a,\lambda}_n$-random $\In\Out$-distances, using the 
inequality \eqref{eq:compBRW}, we have
\[\E\left[\frac{1}{\hat{\mu}^{a,\lambda}_{n+1}}\right]\le
\frac{d}{\lambda}\cdot\E\left[\frac1m\right]\cdot \E\left[\frac{1}{\hat{\mu}^{a,\lambda}_{n}}\right]\]
and therefore
\[
\E\left[\frac{1}{\hat{\mu}^{a,\lambda}_{n+1}}\right]\le
\frac1a \cdot\left(\frac{d}{\lambda} \cdot \E\left[\frac1m\right]\right)^n\to 0 \quad\text{as }  n\to\infty.
\]
This implies that the limit $\hat{\nu}_{a,\lambda}=\lim_{n\to\infty}\hat\mu^{a,\lambda}_n$ is the Dirac 
measure concentrated at $\infty$, and $\lambda$ is supercritical.

\medskip

The statement of the Key Lemma still holds in this case (the proof is absolutely analogous, with some 
natural changes of relations: choosing $a'<a$ instead of $A'>A$, and so on). Defining $\lcr:=\sup \hat{\Lambda}$ 
allows us to find the stationary measure $\bm$ as a diagonal (subsequential) limit in the same way as in 
Lemma~\ref{l:subseq-diagonal} (again, with the natural changes of relations: this time $\lambda\nearrow \lcr$,
which implies $a(\lambda)\to 0$).

Finally, as the map $\theta_{\G}$ does not have fixed points inside $(0,1)$, we get immediately from 
the stationarity (and non-triviality) of $\bm$ that $\bm$ has no atoms neither at~$0$, nor at infinity.
\end{proof}

\subsubsection{Additional remarks}

\begin{rem}
It is also possible to study hierarchical graphs built out of 
\emph{more than one kind of ``brick''}, say $\G^1,\G^2,\ldots,\G^d$. For example, at 
every step in the construction, one can choose randomly to insert one graph among the 
$\G^i$'s. These different processes are still defined recursively and the stationary 
solutions satisfy some RDEs that can be treated with the tools introduced throughout this work.
\end{rem}

\begin{rem}
Let us suppose that the graph $\G$ is planar and 
that its embedding into the plane is chosen and fixed. 
Then we can define the dual graph $(\G^*,I^*,O^*)$: the 
graph $\G^*$ is the dual graph of $\G$, but we must be a little cautious defining $I^*$ and 
$O^*$.
Let $e_{IO}$ be an additional edge between $I$ and $O$, in such a way that in the chosen 
embedding it stays outside the graph: just to fix notations we make this edge pass 
through the point at infinity. Then there are well defined left and right hand 
sides of the graph. We set them to correspond respectively to $I^*$ and $O^*$.

In the planar case, the construction of the hierarchical graph preserves the duality between 
graphs:~$(\G_i)^*=(\G^*)_i$. Remark that the percolation function
of a graph $\G$ is conjugated to the one associated to the dual graph $\G^*$:
\[\theta_{\G^*}(p)=1-\theta_\G(1-p).\] 
Interestingly, there is no apparent relation between solutions of the RDEs defined for a planar graph 
$(\G,I,O)$ and $(\G^*,I^*,O^*)$. 
\end{rem}

\begin{figure}[ht]
\[
\includegraphics[width=.4\textwidth]{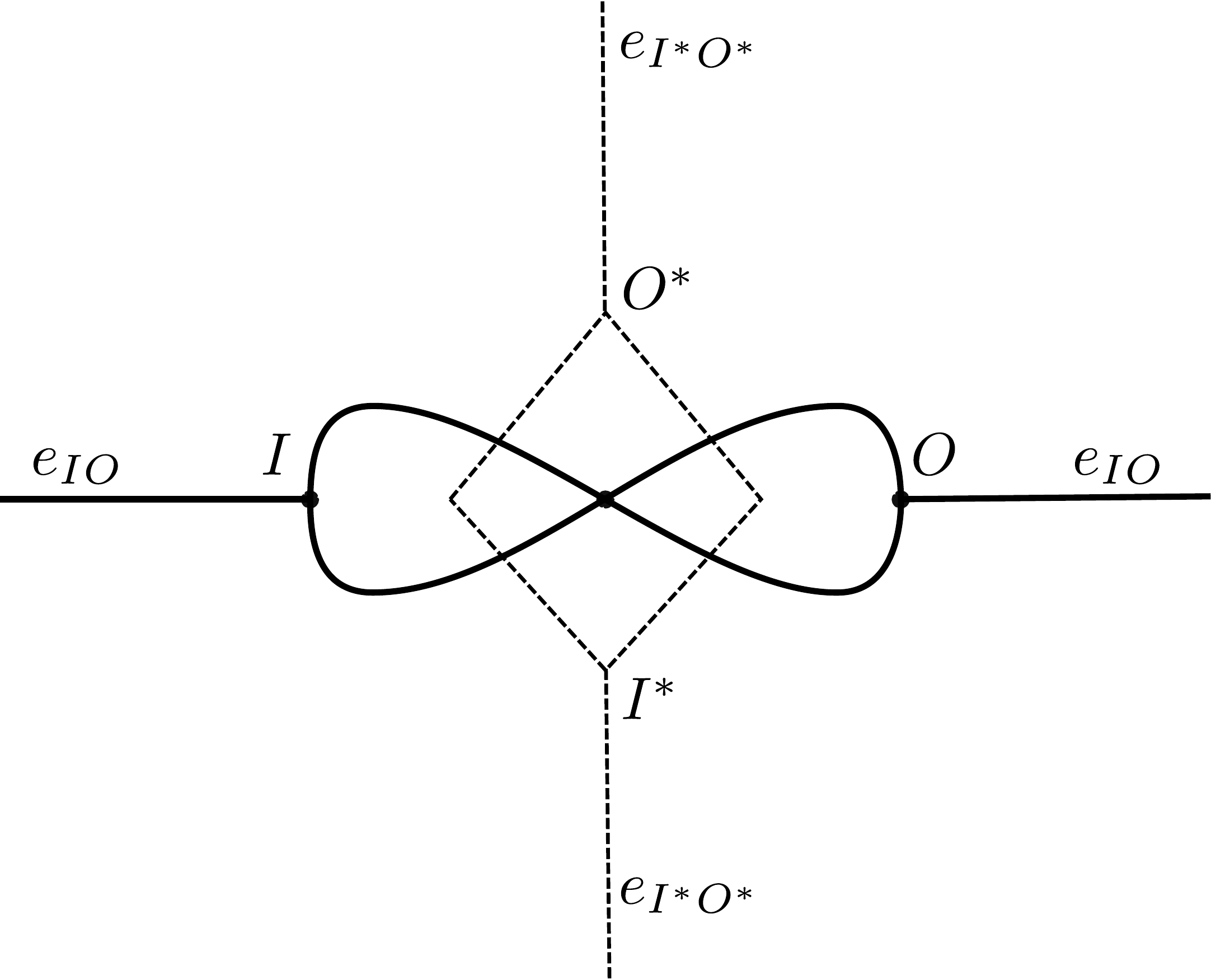}
\]
\caption{The figure eight-graph and its dual graph, the diamond.}\label{f:8diamond}
\end{figure}

\section{Stationary random metrics on the Sierpi\'nski Gasket}\label{s:sierpinski}

Although generalizing our main results to hierarchical graphs is rather immediate, some care is needed 
when studying further classes of self-similar space, which include for example the well-known \emph{Sierpi\'nski Gasket}.

To this extent, we recall (see \cite{BBI}) that a metric space $(X,d)$ is a \emph{length space} if the distance
between any two points is equal to the infimum of the lengths of the paths joining them. We call $d$ a
\emph{length metric} on $X$ if $(X,d)$ is a length space.

\begin{dfn}
A compact metric length space $(X,d)$ is a \emph{self-similar length space} if there exist finitely many 
scalar contractions $\phi_i:(X,d)\to (X,d)$, $i=1,\ldots,N$ such that 
\begin{enumerate}
\item $X=\bigcup_{i=1}^N\phi_i(X)$,
\item there exists an open dense set $O\subset X$ such that $O\supset\bigcup_{i=1}^N \phi_i(O)$, and this 
union is disjoint.
\end{enumerate}
\end{dfn}

Formally, we will be working with the very peculiar self-similar spaces $(X,d_0)$ for which the intersections 
$\phi_i(X)\cap \phi_j(X)$ is at most one point (hierarchical graphs satisfy this hypothesis). Without getting 
into the broadest possible setting, we will consider the illustrative example of the \emph{Sierpi\'nski Gasket} 
$\Sigma$. In the recursive construction we have the three distinguished vertices $\vs_1$, $\vs_2$ and $\vs_3$.
\begin{figure}[ht]
\[
\includegraphics[width=.2\textwidth]{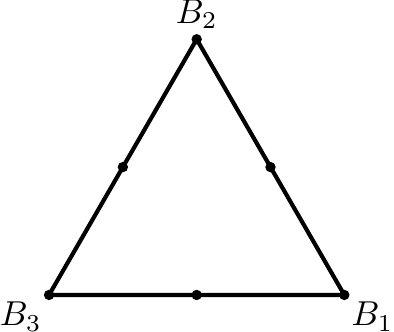}
\qquad
\includegraphics[width=.2\textwidth]{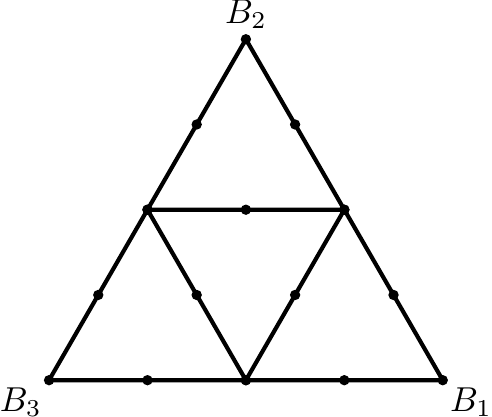}
\qquad
\includegraphics[width=.2\textwidth]{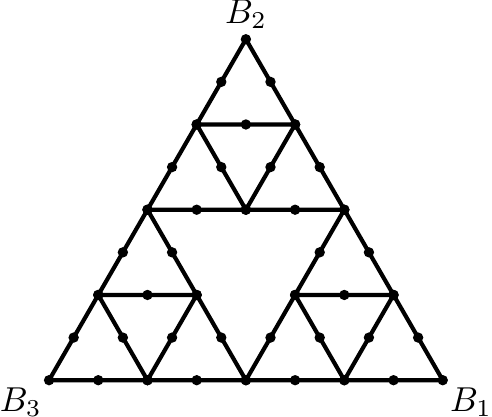}
\]
\caption{The Sierpi\'nski Gasket}\label{fig:Sierpinski}
\end{figure}

As in the case of hierarchical graphs,
instead of studying directly random infinite-dimensional objects (\emph{i.e.}~\emph{random metrics}~$d$),
it is simpler to deal first with random ``marginal'' vectors of dimension~$3$: triples of 
\emph{random distances} $d({\vs_1,\vs_2})$, $d({\vs_2,\vs_3})$, $d({\vs_3,\vs_1})$ between any two of the
three vertices. 

Studying the dynamics for these marginal vectors and finding a stationary measure in the way that will 
be described below, we then can return to the construction of a stationary random metric in 
the same way as it was done in Proposition~\ref{p:equiv}. 

\subsection{Notations}

We shall keep most of the notations introduced for the figure eight-hierarchical graph. That is, let $\Sigma_0$ be the usual triangle (Figure~\ref{fig:Sierpinski}, left), $\Sigma_1$ the three triangles glued by their vertices (the first step of the construction of the Sierpi\'nski Gasket, Figure~\ref{fig:Sierpinski}, centre), and for any $n$ let $\Sigma_{n+1}$ be obtained from $\Sigma_n$ by replacing each small triangle (which is a copy of~$\Sigma_0$) by a copy of $\Sigma_1$. Denoting by $V_n$ the set of vertices
of $\Sigma_n$, we have a natural inclusion $V_n\hookrightarrow V_{n+1}$. The set~\mbox{$V_\infty=\bigcup_{n}V_n$} in
the Sierpi\'nski Gasket $\Sigma=\Sigma_{\infty}=\lim_{n\to\infty} \Sigma_n$ is a dense subset, which corresponds to the set of
``dyadic'' points for the interval or hierarchical figure-eight graph. 

Then we define~$\MG$ to be the set of complete metric spaces $(\mX,d)$ that 
contain~$V_\infty$ as a dense subset and for any $\lambda>0$ we have the map
\[\mathcal{R}_\lambda\,:\,\MG^3\times \R_+\to \MG\]  
which defines a new metric via the glueing.

Given a factor $\xi>0$ and the marginal vectors $X_1, X_2, X_3\in\R_+^3$, associated to the three 
metrics $d_1, d_2, d_3\in \MG$, we denote by $R_\lambda(X_1,X_2,X_3;\xi)$ the marginal vector 
of $\mathcal{R}_\lambda(d_1,d_2,d_3;\xi)$. It is not complicated to obtain an explicit definition 
for the map $R_\lambda$ (see Figure~\ref{fig:sierpinski-paths}). Writing 
$X_i=(d_i(\vs_1,\vs_2),d_i(\vs_2,\vs_3),d_i(\vs_3,\vs_1))=(x_i,y_i,z_i)$, we have
\begin{equation}\label{eq:R3}
R_\lambda(X_1,X_2,X_3;\xi)=\lambda\xi\cdot\begin{pmatrix}
\min(z_2+x_1+y_3, x_2+x_3) \\
\min(x_3+y_2+z_1, y_3+y_1) \\
\min(y_1+z_3+x_2, z_1+z_2)
\end{pmatrix}.
\end{equation}

\begin{figure}[ht]
\[
\includegraphics[width=.28\textwidth]{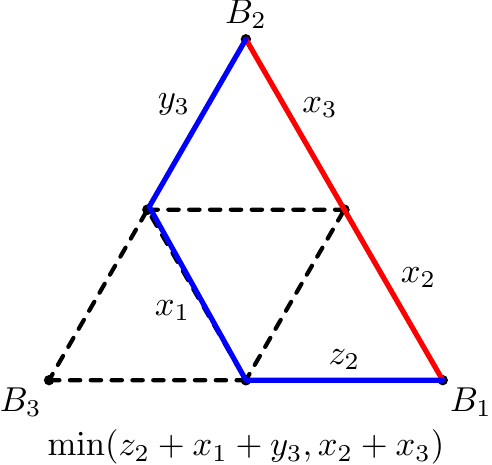}
\qquad
\includegraphics[width=.28\textwidth]{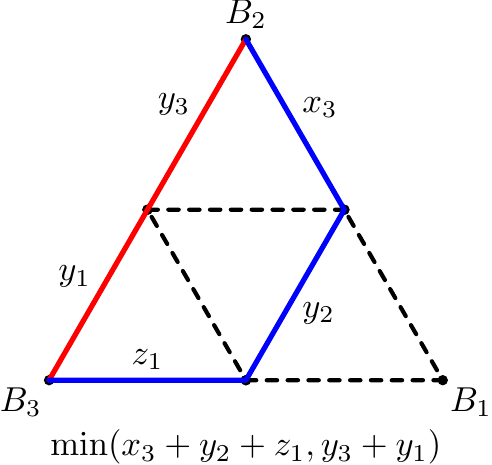}
\qquad
\includegraphics[width=.28\textwidth]{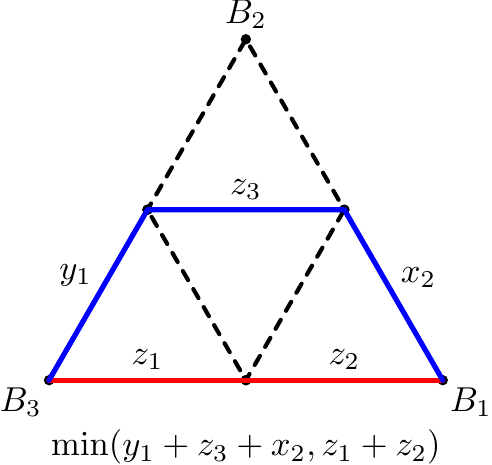}
\]
\caption{The combinatorics giving an expression for the function $R_\lambda$.}
\label{fig:sierpinski-paths}
\end{figure}

As for the hierarchical graphs case, it will be useful to consider distances that can be zero or infinite; the definition~\eqref{eq:R3} naturally extends to the map $R_{\lambda}$ from $([0,+\infty]^3)^3\times \R_+$ to~$[0,+\infty]^3$.

Actually, the three coordinates of the vectors $X_1$, $X_2$, $X_3$ and $R_\lambda(X_1,X_2,X_3;\xi)$
must verify the triangular inequality. Thus it is natural to introduce the subspace 
$\Delta\subset \R_+^3$ of triples of points $(x,y,z)$ such that
\[
x+y\ge z,\quad y+z\ge x,\quad z+x\ge y,
\]
as well as its closure $\bDe\subset [0,+\infty]^3$. For this reason we will sometimes 
consider $R_\lambda$ as a function from $\Delta^3\times \R_+$ to~$\Delta$ or from $\bDe^3\times \R_+$ to~$\bDe$.

Denoting by $\cP$ the space of Radon probability measures on $\bDe$, by $\cP_0$ its subset consisting of measures that do not charge $0$- or $\infty$-faces of $[0,+\infty]^3$. Then, the transformation
on the space of Radon probability measure on $\MG$ 
\[
\mathbf{\Phi}_{\lambda;\Sigma}\,:\,\mathbf{m}\in\cP(\MG)\mapsto 
(\mathcal{R}_\lambda)_*\left(\mathbf{m}^{\otimes 3}\otimes m\right)\in\cP(\MG)
\]
induces an operator $\Phi_{\lambda;\Sigma}$ of $\cP_0$ which is given by
\[
\Phi_{\lambda;\Sigma}[\mu]=\left (R_\lambda\right )_*\left (\mu^{\otimes 3}\otimes m\right ),
\]
and that extends naturally to an operator on~$\cP$.

More explicitly, $\Phi_{\lambda;\Sigma}[\mu]$ is the law of $R_{\lambda}(X_1,X_2,X_3;\xi)$, where the 
$X_i$'s and $\xi$ are independent, distributed with respect to $\mu$ and $m$ respectively.

Note that the glueing process for the Sierpi\'nski Gasket is analogous to the one for a hierarchical graph with 
pivotal bridge edge. Namely, for one of the vertices 
to be very far from both the other ones in the glued triangle, it suffices that it is far away from both 
in only \emph{one} small triangle (the one corresponding to this vertex, see Figure~\ref{f:pivotal}).

\begin{figure}[ht]
\[
\includegraphics[height=6cm]{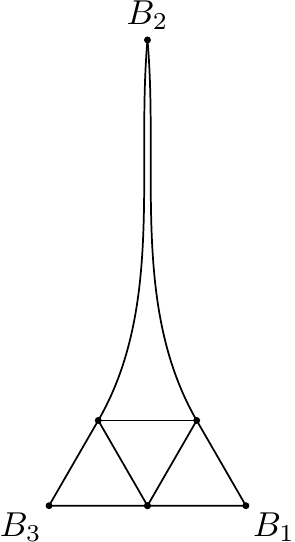} \qquad \includegraphics[height=6cm]{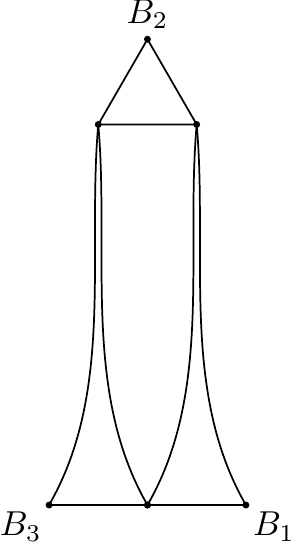}
\qquad \qquad
\includegraphics[height=6cm]{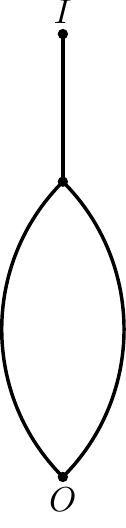}
\]
\caption{The pivotal behaviour for the Sierpi\'nski Gasket, compared to the racket graph}\label{f:pivotal}
\end{figure}

As in Theorem \ref{t:pivotal-bridge}, we claim the existence of a non-trivial self-similar random distance vector.
\begin{thm}\label{t:stat_Sierpinski}
For any non-atomic, fully supported probability distribution $m$ on $\R_+$ with 
finite first moment, there exists a normalizing constant $\lcr>0$ and a non-atomic probability 
distribution~$\bm$ on~$\Delta$ such that~$\bm$ is a fixed point for the operator~$\Phi_{\lcr;\Sigma}$.
\end{thm}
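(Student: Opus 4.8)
The plan is to mimic as closely as possible the cut-off strategy carried out for the figure eight-graph in Section~\ref{s:existence}, adapting it to the three-dimensional ambient space $\bDe$, and to exploit the analogy with the bridge-edge case (Theorem~\ref{t:pivotal-bridge}) already pointed out just before the statement. First I would set up the partial order on $\cP=\cP(\bDe)$: for $\mu,\nu\in\cP$ write $\mu\lo\nu$ if there is a coupling $((X,Y,Z),(X',Y',Z'))$ with marginals $\mu,\nu$ and $X\le X'$, $Y\le Y'$, $Z\le Z'$ almost surely (coordinatewise domination). The key point is that $R_\lambda$ in \eqref{eq:R3} is built only from $\min$ and $+$, hence is coordinatewise monotone in the entries of $X_1,X_2,X_3$ and in $\xi$; therefore $\Phi_{\lcr;\Sigma}$ (and the cut-off variants below) are order-preserving, exactly as in Lemma~\ref{lem:order}, and $\Phi_{\lambda;\Sigma}\lo\Phi_{\lambda';\Sigma}$ for $\lambda\le\lambda'$. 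The greatest element of $\cP$ for this order is $\ddelta_{(\infty,\infty,\infty)}$.

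Next I would introduce the cut-off operator: fix $A\in\R_+$ and let $\Phi_{A,\lambda;\Sigma}[\mu]$ be the law of the vector obtained by applying coordinatewise $\min(\,\cdot\,,A)$ to $R_\lambda(X_1,X_2,X_3;\xi)$ --- note that truncating a valid triangle-inequality triple coordinatewise still yields a valid one, so this stays inside $\bDe$. Starting from $\mu_0^{A,\lambda}=\ddelta_{(\infty,\infty,\infty)}$ and iterating produces a $\lo$-decreasing sequence whose weak-$*$ limit $\nu_{A,\lambda}$ (using monotone convergence of the three marginal distribution functions) is $\Phi_{A,\lambda;\Sigma}$-stationary, by continuity of $\Phi_{A,\lambda;\Sigma}$ in $(A,\lambda,\mu)$. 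The scaling relation $\Upsilon_{A/A'}\Phi_{A',\lambda;\Sigma}=\Phi_{A,\lambda;\Sigma}\Upsilon_{A/A'}$ holds verbatim. One then defines the supercritical set $\Lambda=\{\lambda>0:\nu_{A,\lambda}\neq\ddelta_{(0,0,0)}\text{ for some }A\}$, which is an up-set. For non-emptiness of $\Lambda$ I would use the analogue of the percolation function: glueing three triangles with all coordinates equal to $1$ or $\infty$ produces, because of the combinatorics in \eqref{eq:R3}, distances taking finitely many values, and for $\lambda$ large one checks a $\lambda$-zooming-out measure exists (here the relevant super-attracting behaviour at the ``all finite'' end is what is needed; the Sierpi\'nski glueing behaves like a bridge-type graph, so the role played by $p_{cr}$ for the figure eight is played by the analogous repelling/super-attracting structure). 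For non-emptiness of $(0,\infty)\setminus\Lambda$, exactly as in the proof of Theorem~\ref{t:pivotal-bridge}, I would use the finite first moment of $m$: estimating the $\In\Out$-type distances along a single path in \eqref{eq:R3} gives $\E[\mu_{n+1}^{A,\lambda}]\le C\lambda\,\E[m]\,\E[\mu_n^{A,\lambda}]$ for an explicit combinatorial constant $C$ (the length of a shortest path realizing one coordinate), so for $\lambda<1/(C\E[m])$ the expectation of each coordinate of $\mu_n^{A,\lambda}$ goes to $0$, forcing $\nu_{A,\lambda}=\ddelta_{(0,0,0)}$.

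The heart of the argument is then the analogue of the Key Lemma~\ref{l:open}: openness of $\Lambda$. Here I would copy the scheme of \S\ref{ss:openness}: given $\lambda\in\Lambda$, take $\nu_{A,\lambda}$ nontrivial, enlarge the cut-off to $A'>A$ to obtain $\widetilde\nu=\Phi_{A',\lambda;\Sigma}[\nu_{A,\lambda}]$ with a \emph{strict} inequality $F_{\Phi_{\lambda;\Sigma}[\widetilde\nu]}<F_{\widetilde\nu}$ (coordinatewise, where $F$ now denotes the three one-dimensional marginal distribution functions, or the joint one) away from the faces, using full support of $m$; then perturb $\lambda$ down to $\lambda-\eps$ on a compact region where the strict inequality between continuous functions survives. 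The delicate point, as in the figure eight case, is controlling the inequality near the coordinate hyperplanes $\{x=0\}$, $\{y=0\}$, $\{z=0\}$: there one mixes $\widetilde\nu$ with a small Dirac mass supported on the appropriate $0$-faces and runs the expansion-of-$\Phi$-image computation analogous to Lemma~\ref{l:x-delta} and the end of the proof of Lemma~\ref{l:open}, exploiting that for a coordinate of $R_\lambda(X_1,X_2,X_3;\xi)$ to be small one needs \emph{at least two} short edges among the contributing ones (read off from \eqref{eq:R3}: each coordinate is a $\min$ of two sums, each sum of at least two terms), so the ``one collapsed edge'' correction term vanishes as one approaches the face. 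I expect this face-analysis --- bookkeeping the several $0$-faces of $\bDe$ simultaneously, and verifying that the relevant attracting/repelling structure of the higher-dimensional percolation map behaves as in the bridge case --- to be the main obstacle, precisely the place where, as the authors note, ``there are no more partition functions that sometimes simplify the coupling.''

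Once $\Lambda=(\lcr,\infty)$ is established, the construction of $\bm$ follows the template of Stage~1 in the proof of Theorem~\ref{t:stat}: show $\nu_{A,\lambda}\to\ddelta_{(0,0,0)}$ as $\lambda\searrow\lcr$ by a double-limit argument, rescale by choosing $A(\lambda)$ so that a fixed quantile (say the $\tfrac12$-quantile of the sum $x+y+z$, or of one coordinate) is normalized, extract a weak-$*$ subsequential limit $\bm$ on the compact space $\bDe$, and pass to the limit in $\nu_{A(\lambda),\lambda}=\Phi_{A(\lambda),\lambda;\Sigma}[\nu_{A(\lambda),\lambda}]$ using continuity in all three arguments to get $\bm=\Phi_{\lcr;\Sigma}[\bm]$; the normalization guarantees $\bm\neq\ddelta_{(0,0,0)}$. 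Finally, for non-atomicity: since $m$ is non-atomic and the last operation producing $\Phi_{\lcr;\Sigma}[\bm]$ is multiplicative convolution by $m$, the measure $\bm$ has no atoms in the open octant $\Delta^\circ$; and because the percolation-type map for the Sierpi\'nski glueing has no interior fixed point (bridge-type behaviour), stationarity forces the masses on the $0$- and $\infty$-faces to be $0$, exactly as in the last paragraph of the proof of Theorem~\ref{t:pivotal-bridge} --- one checks that a positive mass on, say, $\{z=0\}$ would be strictly contracted by $\Phi_{\lcr;\Sigma}$, contradicting stationarity. This yields the non-atomic $\Phi_{\lcr;\Sigma}$-stationary $\bm$ on $\Delta$, proving Theorem~\ref{t:stat_Sierpinski}; the passage to a stationary random \emph{metric} on $\Sigma$ is then as in Proposition~\ref{p:equiv}.
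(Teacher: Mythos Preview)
Your overall strategy matches the paper's: cut-off operators $\Phi_{A,\lambda;\Sigma}$, the supercritical set $\Lambda$, non-emptiness of $\Lambda$ via a zooming-out measure, non-emptiness of its complement via the finite-first-moment estimate (the paper compares with $R^+_\lambda(X_1,X_2,X_3;\xi)=\lambda\xi\,(x_2+x_3,\,y_3+y_1,\,z_1+z_2)$, giving $C=2$), the diagonal limit for $\bm$, and the exclusion of mass on the $0$- and $\infty$-faces by a percolation-type argument (the paper uses the racket graph comparison).

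There is, however, a real gap in your Key Lemma sketch. You propose to verify $\widetilde\nu\lo\Phi_{\lambda-\eps;\Sigma}[\widetilde\nu]$ by checking an inequality between ``the three one-dimensional marginal distribution functions, or the joint one''. Neither suffices in dimension three: for the coordinatewise stochastic order, the inequality $\mu\lo\nu$ is \emph{not} equivalent to $F_\mu(x,y,z)\ge F_\nu(x,y,z)$ for all boxes $[0,x]\times[0,y]\times[0,z]$, nor to domination of the marginals. By Strassen's theorem one must check $\mu(M)\ge\nu(M)$ for \emph{all} monotone decreasing Borel sets $M$, and boxes are far from generating this class when $d\ge 2$. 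Your compactness/continuity perturbation argument (passing from $\lambda$ to $\lambda-\eps$ on a compact family of test objects) therefore has no leg to stand on as written.

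The paper resolves this by working directly with the collection $\mcM_0$ of open monotone subsets of $[0,+\infty)^3$. The key observation (Lemma~\ref{l:compact}) is that the ``upper boundary'' $\partial_+M$ of any $M\in\mcM_0$ is the graph of a $3$-Lipschitz function $f_M$ from the plane $\{x+y+z=0\}$ to the diagonal line; hence, by Ascoli--Arzel\`a, the subfamilies $\mcM_{\delta,A'}=\{M:f_M(0)\in[\delta,A']\}$ are compact. One proves the strict inequality $\widetilde\mu(M)>\Phi_{\lambda;\Sigma}[\widetilde\mu](M)$ for every nonempty $M\in\mcM_0$ (Lemma~\ref{l:positive}, via an explicit coupling that produces an atom on the diagonal), uses continuity of $M\mapsto\mu(M_f)$ on this compact family (Lemma~\ref{l:continuous} ensures $\partial_+M$ is null) to get a uniform gap $\eps_1$, and then handles the ``thin'' sets (those with $(\delta,\delta,\delta)\notin\overline M$) by your mixing-with-$\ddelta_{(0,0,0)}$ idea, expanding $\Phi_{\lambda;\Sigma}[\widetilde\mu_p]$ as in~\eqref{eq:S3}. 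This replaces the one-dimensional partition-function technology with the correct higher-dimensional substitute, and is precisely the missing piece in your outline.
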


The remaining and concluding part of this work is devoted to the proof of this result.

\subsubsection{Coupling and stochastic order on~$\cP$} 

\begin{dfn}
We equip $\cP$ (and hence $\cP_0$) with the stochastic order $\lo$ 
induced by the coordinate-wise partial order on $\R_+^3$:
\[(x,y,z)\le (x',y',z')\quad\textrm{if }x\le x',\, y\le y',\, z\le z'.\]
That is, we write $\mu\lo \nu$ if there is a coupling $(X,Y)$ between them (namely, 
$\law(X)=\mu$, $\law(Y)=\nu$), such that almost surely $X\le Y$ (in particular they are almost surely comparable).
\end{dfn}

We already had the opportunity to mention (\S\ref{sc:morenotations}) that there is a functional
interpretation of the stochastic domination: the relation $\mu\lo\nu$ is equivalent to the 
Strassen's condition that for any increasing bounded real valued-function~$f$ on~$[0,+\infty]^3$,
\[
\int_{[0,+\infty]^3} f\,d\mu\le \int_{[0,+\infty]^3} f\,d\nu
\]
(a function $f$ is increasing if for any $(x,y,z)\le (x',y',z')$, then $f(x,y,z)\le f(x',y',z')$).
\begin{rem}
 Quite often the Strassen's condition is interpreted as the continuous version of
 Hall's matching lemma \cite{hall} (see the sketch of 
 the argument below, or for instance \cite{dudley,kamae}).
\end{rem}

In fact, in the same way as for the distributions on the real line, we can restrict ourselves and 
compare measures of sufficiently simple sets (or what is the same, their indicator functions).
For this we introduce the following class of sets:
\begin{dfn}
A set $M\subset [0,+\infty]^3$ is \emph{monotone (decreasing)} if for any $(x,y,z)\in M$, the 
subset $\{(x',y',z')\lo (x,y,z)\}$ is in~$M$. We denote by $\mathcal{M}$ the collection of all Borel 
monotone subsets of~$[0,+\infty]^3$.
\end{dfn}
It is evident that the condition $\mu(M)\ge\nu(M)$ for any $M\in\mcM$ is a necessary condition for~$\mu\lo\nu$. 
It turns out that this condition is also sufficient. Moreover assuming that the measures $\mu$, $\nu$ do 
not charge the $\infty$-faces (namely they are supported on $[0,+\infty)^3$), it suffices to consider 
only the sets that are open inside~$[0,+\infty)^3$. This is easy to deduce, for instance from the same 
Hall's lemma: considering the sets formed by a finite union of cubes of edge length~$\eps>0$, we 
obtain a coupling between ``$\eps$-discretizations'' of the initial measures; it suffices then to pass to 
the weak limit of these couplings.

It will be useful to work with the smaller class of open monotone sets:
\begin{dfn}
We introduce $\mcM_0$ to be the collection of monotone sets $M\in\mcM$ that are proper and relatively open subsets of $[0,+\infty)^3$.
In addition, for any $M\in\mcM_0$ we write $\partial_+ M:=\partial ([0,+\infty)^3\setminus M)$.
\end{dfn}

The ``boundary'' $\partial_+M$ uniquely defines $M\in\mcM_0$. Rotating the system of coordinates we can consider $\partial_+M$
as the graph of a continuous map $f_M$ from the plane $\pi=\{(x,y,z) \mid x+y+z=0\}\subset \R^3$ to its orthogonal complement, the 
diagonal line $D=\{(s,s,s)\}_{s\in\R}$. Vice versa, when such a function defines a monotone set $M\in\mcM_0$, we denote the corresponding set by~$M_f$.

We have the following immediate lemma, which will be used to adapt the compactness arguments that we used in the one dimensional setting:
\begin{lem}\label{l:compact}
For any $M\in\mcM_0$, the function $f_M$ is $3$-Lipschitz. Moreover if the functions $f_{M_j}$'s converge to $f_M$ uniformly on compact subsets of~$\pi$,
then one has 
\[
M\subset \liminf M_j \subset \limsup M_j \subset M \cup \partial_+M.
\]
\end{lem}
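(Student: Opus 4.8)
\textbf{Proof plan for Lemma~\ref{l:compact}.}

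The plan is to split the statement into two independent assertions: (i) the $3$-Lipschitz bound for every single $f_M$, and (ii) the sandwich inclusions under uniform-on-compacts convergence $f_{M_j}\to f_M$. I would start with (i), which is purely elementary. Fix $M\in\mcM_0$ and recall that $\partial_+M$ is the graph of $f_M$ over the plane $\pi=\{x+y+z=0\}$, with values in the diagonal line $D=\{(s,s,s)\}$. Write points of $\R^3$ in coordinates adapted to the splitting $\R^3=\pi\oplus D$: a point $p$ has $\pi$-component $\bar p$ and $D$-component the scalar $s(p)$ defined by $s(p)=\tfrac1{\sqrt3}(x+y+z)$ (or, up to rescaling, the signed distance along $D$). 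The key geometric input is that monotonicity of $M$ with respect to the coordinate-wise order on $\R_+^3$ means: if $p\in M$ and $q\le p$ coordinate-wise, then $q\in M$. I would translate this into the statement that $\partial_+M$ cannot be ``too steep'': if $\bar p,\bar p'\in\pi$ and $f_M(\bar p)$ is much larger than $f_M(\bar p')$, then the point over $\bar p'$ at height $f_M(\bar p)$ would be coordinate-wise below the point over $\bar p$ on $\partial_+M$ — hence in $M$ — yet also coordinate-wise above the point over $\bar p'$ on $\partial_+M$, contradicting that the latter is a boundary point of the complement of $M$. Quantitatively, the vector $(1,1,1)$ spanning $D$ has length $\sqrt3$, so a unit step in the $D$-direction corresponds to a step of coordinate-size $\tfrac1{\sqrt3}$ in each coordinate; unwinding the order-comparison inequality gives exactly the constant $3$ (the worst case being a displacement in $\pi$ that increases two coordinates and decreases one). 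I expect the bookkeeping of which normalization of the diagonal coordinate is being used to be the only delicate point here, and I would fix it once and for all so that the constant comes out as stated.

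For (ii), assume $f_{M_j}\to f_M$ uniformly on compact subsets of $\pi$. The two nontrivial inclusions are $M\subset\liminf M_j$ and $\limsup M_j\subset M\cup\partial_+M$; the middle inclusion $\liminf\subset\limsup$ is automatic. For $M\subset\liminf M_j$: take $p\in M$, which is relatively open in $[0,+\infty)^3$, so there is $\varepsilon>0$ with the $\varepsilon$-ball around $p$ inside $M$; in the adapted coordinates this says $s(p)<f_M(\bar p)-c\varepsilon$ for a fixed $c>0$. Since $\bar p$ ranges over a compact neighbourhood and $f_{M_j}\to f_M$ uniformly there, for all large $j$ we get $s(p)<f_{M_j}(\bar p)$, i.e. $p$ lies strictly below $\partial_+M_j$, hence $p\in M_j$ (using that each $M_j$ is exactly the set of points strictly below its own boundary graph, by definition of $M_{f}$). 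Thus $p\in M_j$ eventually, i.e. $p\in\liminf M_j$. For $\limsup M_j\subset M\cup\partial_+M$: suppose $p\notin M\cup\partial_+M$; then $p$ is strictly \emph{above} the graph of $f_M$, so $s(p)>f_M(\bar p)+c\varepsilon$ for some $\varepsilon>0$; by uniform convergence on a compact neighbourhood of $\bar p$, $s(p)>f_{M_j}(\bar p)$ for all large $j$, hence $p\notin M_j$ for all large $j$, so $p\notin\limsup M_j$. Contrapositively, $\limsup M_j\subset M\cup\partial_+M$.

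The main obstacle — really the only place where care is needed — is the first part: checking that ``coordinate-wise monotone'' forces the graphing function over $\pi$ to be Lipschitz with the precise constant $3$, rather than some other dimensional constant. This requires being explicit about the linear algebra of the decomposition $\R^3=\pi\oplus D$ and identifying the extremal direction in $\pi$ along which the order constraint is tightest. Everything else (the openness/closedness manipulations in part (ii), and the fact that $M=M_{f_M}$ is literally the strict subgraph) is routine once the coordinates are set up. I would also remark, for use later in the compactness argument the lemma is meant to support, that combining (i) with Arzelà–Ascoli shows that any sequence $\{M_j\}\subset\mcM_0$ with uniformly bounded boundary graphs has a subsequence whose graphing functions converge uniformly on compacts, hence a subsequence to which (ii) applies.
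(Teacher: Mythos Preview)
Your proposal is correct. The paper states this as an ``immediate lemma'' and gives no proof, so there is nothing to compare against; your argument is exactly the standard one the reader is expected to supply. One minor remark: with the Euclidean metric on both $\pi$ and $D$, the monotonicity constraint (that the displacement $w=v+(s'-s)(1,1,1)$ between two boundary points cannot have all coordinates of the same strict sign) actually yields the sharper bound $|f_M(\bar p+v)-f_M(\bar p)|\le\sqrt{3}\,\|v\|_\infty\le\sqrt{2}\,\|v\|_2$, so the constant~$3$ in the paper is not sharp --- but this is irrelevant for the intended application (Arzel\`a--Ascoli), and your caveat about fixing the normalization to match the stated constant is well placed.
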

Denote by $\bF$ the space of functions $f:\pi\to D$ that correspond to at least one $M\in\mcM_0$, and let us equip this space
with the metric of the uniform convergence on the compact sets. Then $\bF$ is 
a complete metric space, and any subset $\bF_{A}=\{f\in\bF \mid f(0)\le A\}$ is compact for any $A\in\R_+$ (by the Ascoli--Arzel\`a theorem).

From Lemma \ref{l:compact} we have the following:
\begin{prop}
Take $\mu\in\cP_0$ and assume that for any $M\in\mcM_0$ we have $\mu(\partial_+M)=0$. Then the function $f\mapsto \mu(M_f)$ is continuous on $\bF$. 
\end{prop}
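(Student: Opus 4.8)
The plan is to show continuity of the functional $f\mapsto\mu(M_f)$ at an arbitrary point $f\in\bF$ under the stated hypothesis $\mu(\partial_+M)=0$ for all $M\in\mcM_0$, by combining the compactness/convergence statement of Lemma~\ref{l:compact} with a standard ``portmanteau''-type argument for weak convergence of the indicator functions. First I would take a sequence $f_j\to f$ in $\bF$, i.e.~converging uniformly on compact subsets of $\pi$, and observe that $f_j(0)$ stays bounded, so all the $f_j$ (and $f$) lie in a common compact set $\bF_A$; this is just to keep everything within a fixed compact family. Then I would invoke Lemma~\ref{l:compact}, which gives the set-theoretic squeeze $M_f\subset\liminf M_{f_j}\subset\limsup M_{f_j}\subset M_f\cup\partial_+M_f$.

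From this squeeze the conclusion is immediate by Fatou-type inequalities for the indicator functions. On the one hand, $\mathbf{1}_{M_f}\le\liminf_j \mathbf{1}_{M_{f_j}}$ pointwise (since each point of $M_f$ belongs to $M_{f_j}$ for all large $j$), so by Fatou's lemma
\[
\mu(M_f)\le \liminf_{j\to\infty}\mu(M_{f_j}).
\]
On the other hand, $\limsup_j \mathbf{1}_{M_{f_j}}\le \mathbf{1}_{M_f\cup\partial_+M_f}$ pointwise, and since $M_f\cup\partial_+M_f$ is contained in a set of $\mu$-measure $\mu(M_f)+\mu(\partial_+M_f)=\mu(M_f)$ by hypothesis, the reverse Fatou's lemma (applicable because all indicators are bounded by $1$ and $\mu$ is a finite measure) yields
\[
\limsup_{j\to\infty}\mu(M_{f_j})\le \mu\bigl(M_f\cup\partial_+M_f\bigr)=\mu(M_f).
\]
Putting the two inequalities together gives $\lim_{j\to\infty}\mu(M_{f_j})=\mu(M_f)$, which is exactly the asserted continuity, since $M_{f_j}=M_{f_j}$ and $\mu(M_{f_j})$ is the value of the functional at $f_j$.

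There is no serious obstacle here: Lemma~\ref{l:compact} does all the topological work, and the hypothesis $\mu(\partial_+M)=0$ is tailored precisely to kill the boundary term in the reverse Fatou step. The only minor point to check carefully is that $\partial_+M_f$ is indeed a Borel set (it is, being the boundary of a Borel set, hence in particular $\mu$-measurable) so that the hypothesis applies to it, and that the convergence in $\bF$ really does translate into uniform convergence on compacts of the corresponding graphs $\partial_+M_{f_j}$, which is how $\bF$ is metrized. Once these are noted, the two Fatou inequalities close the argument.
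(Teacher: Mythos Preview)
Your proof is correct and is exactly the natural way to fill in the details: the paper does not give a proof of this proposition, merely stating that it follows from Lemma~\ref{l:compact}, and your Fatou/reverse-Fatou argument from the set-theoretic squeeze is precisely how one makes that implication rigorous.
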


With the toolkit ready, let us go back to our problem. We have the hierarchical glueing transformation acting on~$\cP$: 
the image of a measure $\mu$ is the measure $\Phi_{\lambda;\Sigma}[\mu]$ which is the 
law of the three-dimensional random variable
\begin{equation}
\label{eq:recursionPhi}
Y=R_\lambda(X_1,X_2,X_3;\xi)=\lambda\xi\cdot\begin{pmatrix}
\min(z_2+x_1+y_3, x_2+x_3) \\
\min(x_3+y_2+z_1, y_3+y_1) \\
\min(y_1+z_3+x_2, z_1+z_2)
\end{pmatrix}
\end{equation}
where the variables $X_i=(x_i,y_i,z_i)$, $i=1$, $2$, $3$ are i.i.d.~with respect to $\mu$ and $\xi$ 
is independent of the previous variables and follows the law of $m$. In the same way as in
Lemma~\ref{lem:order}, the operator $\Phi_{\lambda;\Sigma}$ preserves the partial order~$\lo$: glueing 
the shortest (random) distances, one obtains the shortest distances.

\smallskip

For further use, we state the following easy fact:

\begin{lem}\label{l:continuous}
For any measure $\mu\in \cP_0$, any parameter $\lambda>0$ and any set $M\in\mcM_0$, the image~$\Phi_{\lambda;\Sigma}[\mu]$
does not charge~$\partial_+ M$.
\end{lem}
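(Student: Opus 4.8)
The plan is to write $\Phi_{\lambda;\Sigma}[\mu]$ as the law of $Y=R_\lambda(X_1,X_2,X_3;\xi)$, where $X_1,X_2,X_3$ are i.i.d.\ of law $\mu$ and $\xi$ is of law $m$, independent of them, and to exploit the scaling built into $R_\lambda$. Indeed, by \eqref{eq:R3} the min--sum block in $R_\lambda$ does not involve $\lambda$ or $\xi$, so $Y=\lambda\xi\cdot W$ with $W:=R_1(X_1,X_2,X_3;1)$. Since $\mu\in\cP_0$ charges neither the $0$- nor the $\infty$-faces of $[0,+\infty]^3$, almost surely every coordinate of each $X_i$ is strictly positive and finite; hence each coordinate of $W$ is a minimum of sums of strictly positive finite numbers, so $W\in(0,+\infty)^3$ almost surely. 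I will then condition on $(X_1,X_2,X_3)$, treat $W$ as a fixed vector in $(0,+\infty)^3$, and use that the law of $\lambda\xi$ is non-atomic (because $m$ is).

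The geometric heart of the argument is the following claim: \emph{for every $W\in(0,+\infty)^3$ and every $M\in\mcM_0$, the ray $\{tW:t>0\}$ meets $\partial_+M$ in at most one point.} To prove it, set $C:=[0,+\infty)^3\setminus M$; this is relatively closed (as $M$ is relatively open), so $\partial_+M=\partial C=\overline{C}\cap\overline{M}=C\cap\overline M$. Thus every $p\in\partial_+M$ lies in $C$ (hence $p\notin M$) and in $\overline M$, so there are $q_n\in M$ with $q_n\to p$ coordinatewise. Since $M$ is monotone decreasing (down-closed), for any $r$ with $0\le r_i<p_i$ for all $i$ one has $r\le q_n$ for all large $n$, whence $r\in M$; that is, the open coordinate box strictly below $p$ is contained in $M$. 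Now suppose $t_1W,t_2W\in\partial_+M$ with $0<t_1<t_2$. Applying this observation at $p=t_2W$, the open box strictly below $t_2W$ lies in $M$; but $t_1W$ belongs to this box, since $t_1W_i<t_2W_i$ for every $i$. Hence $t_1W\in M$, contradicting $t_1W\in C$ and $C\cap M=\varnothing$. This proves the claim.

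Granted the claim, the conclusion will follow by a Fubini/independence argument. Conditionally on $(X_1,X_2,X_3)$ — equivalently, on $W$ — we have $Y\in\partial_+M$ if and only if $\xi\in S_W:=\{s>0:\ \lambda sW\in\partial_+M\}$; as $s\mapsto\lambda sW$ parametrizes the ray $\{tW:t>0\}$, the set $\lambda S_W$ is, by the claim, empty or a single point, hence $m(S_W)=0$ by non-atomicity of $m$. Therefore, using the independence of $\xi$ from $(X_1,X_2,X_3)$,
\[
\Phi_{\lambda;\Sigma}[\mu](\partial_+M)=\mathbf{P}(Y\in\partial_+M)=\mathbf{E}\!\left[\,m\bigl(S_W\bigr)\,\right]=\mathbf{E}[0]=0,
\]
which is the assertion of the lemma.

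The only delicate point is the geometric claim, and the argument above shows that it rests solely on the monotonicity and relative openness of $M$ (the Lipschitz-graph description of $\partial_+M$ from Lemma~\ref{l:compact} is not even needed), together with the scale-invariance of rays through the origin. I note also that the proof uses only non-atomicity of $m$, not of $\mu$, so the statement holds for every $\mu\in\cP_0$ — including the atomic measures (such as $\ddelta_0$-type laws) that arise in the cut-off construction for Theorem~\ref{t:stat_Sierpinski}.
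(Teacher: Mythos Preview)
Your proof is correct and matches the paper's intended argument. The paper does not actually give a proof of this lemma (it is stated as an ``easy fact''), but later, in the proof of Theorem~\ref{t:stat_Sierpinski}, the same idea is spelled out in a special case: ``within the rescalings of any $(x,y,z)$, there is at most one that belongs to this part of the boundary.'' Your decomposition $Y=\lambda\xi\cdot W$, your geometric claim that each ray through the origin meets $\partial_+M$ at most once, and the Fubini step using non-atomicity of~$m$ are exactly this argument made explicit; the observation that only non-atomicity of $m$ (not of $\mu$) is used is also pertinent, since the lemma is later applied to $\nu_{A,\lambda}$ and $\tmu$, which may carry atoms.
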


\subsection{Cut-off process}
Given $A\in \R_+$, we introduce the operator $\Phi_{A,\lambda;\Sigma}$ which shortcuts every 
distance with a path of length $A$: as $\Phi_{\lambda;\Sigma}$ was defined by 
\eqref{eq:recursionPhi}, we define the image $\Phi_{A,\lambda;\Sigma}[\mu]$ of a given $\mu\in \cP$,
to be the law of the random variable
\[
Y^{A}=R_{A,\lambda}(X_1,X_2,X_3;\xi)=\begin{pmatrix}
\min(\lambda\xi\cdot \min(z_2+x_1+y_3, x_2+x_3),A) \\
\min(\lambda\xi\cdot\min(x_3+y_2+z_1, y_3+y_1),A) \\
\min(\lambda\xi\cdot\min(y_1+z_3+x_2, z_1+z_2),A)
\end{pmatrix} 
\]
where the variables $X_i=(x_i,y_i,z_i)$, $i=1,2,3$ are i.i.d.~with respect to $\mu$ and $\xi$ is 
independent of the previous variables and follows the law~$m$.

It is geometrically evident that the operator $\Phi_{A,\lambda;\Sigma}$ is monotone for any $A$.
Hence, if we start from the deterministic degenerate metric $\ddelta_{(\infty,\infty,\infty)}$, 
the sequence $\mu_n^{A,\lambda}:=\Phi_{A,\lambda;\Sigma}^{n}[\ddelta_{(\infty,\infty,\infty)}]$ is $\preccurlyeq$-decreasing
and so converges to a probability measure $\nu_{A,\lambda}$.

\subsection{Existence of a stationary law}

Following the strategy for the proof of Theorem~\ref{t:pivotal-bridge}, we define the \emph{supercritical set} 
$\Lambda$ of factors $\lambda$ such that the limit measure $\nu_{A,\lambda}$ is non-trivial for any 
(equivalently, some) $A$:
\[\Lambda=\{\lambda>0 \mid \forall\,A\in\R_+\text{ one has }\nu_{A,\lambda}\ne \ddelta_{(0,0,0)}\}.\]
We claim that this set $\Lambda$ is a left-bounded half-line, whose left extremity is the factor $\lcr$ which is 
the candidate parameter for finding a non-trivial $\Phi_{\lcr;\Sigma}$-stationary probability measure.

\smallskip

Indeed, note first that $\Lambda$ is nonempty. This can be shown using the same arguments as 
in Lemma~\ref{l:Lambda}: to ensure that $\lambda\in\Lambda$, it suffices to find a compactly supported measure 
$\mu\neq\ddelta_{(0,0,0)}$ such that $\Phi_{\lambda,\Sigma}[\mu]\go \mu$ (as earlier, we can call such a measure \emph{$\lambda$-zooming out}).
The proof of Lemma~\ref{l:out} (claiming that the existence of a $\lambda$-zooming out measure implies $\lambda\in\Lambda$) can be followed nearly verbatim: 
for sufficiently large $A$ we have $\Phi_{A;\lambda,\Sigma}(\mu)\go \mu$ and hence by induction 
\[
\mu\lo \Phi_{A,\lambda;\Sigma}^{n}[\ddelta_{(\infty,\infty,\infty)}]
\]
for all $n$. The latter implies $\mu\lo \nu_{A,\lambda}$ and hence $\nu_{A,\lambda}\neq \ddelta_{(0,0,0)}$.

In order to find such a parameter $\lambda$, we consider $\mu=p\,\ddelta_{(0,0,0)}+ (1-p)\,\ddelta_{(1,1,1)}$, with $p>0$. 
The measure $\Phi_{1,\Sigma}[\mu]$ has an atom of weight $p^3$ at the origin and charges the $0$-faces (the positive 
quarters of the coordinate planes) of $\R_+^3$ with total weight~$p'=p^2(3-2p)$. In particular, for sufficiently small $p$ we 
have $p'<p$; we fix one such~$p$.

Any point in $\R_+^3$ not belonging to the coordinate planes can be rescaled so that each of its coordinates exceeds~$1$. This gives that for sufficiently large $\lambda$ we have 
\[
\Phi_{\lambda,\Sigma}[\mu]((1,+\infty)^3)>1-p,
\]
thus implying $\Phi_{\lambda,\Sigma}[\mu]\go \mu$.

\smallskip

Next, let us check that $\lcr>0$, that is, we want to prove that $(0,+\infty)\setminus\Lambda$ is nonempty. 
To do so, we compare any image $\Phi_{\lambda;\Sigma}[\mu]$ with the image of~$\mu$ under the operator 
$\Phi_{\lambda;\Sigma}^+$ associated to the map
\[
R^+_\lambda(X_1,X_2,X_3;\xi)=\lambda\xi\cdot\begin{pmatrix}
x_2+x_3 \\
y_3+y_1 \\
z_1+z_2
\end{pmatrix}.
\]
The natural coupling gives $\Phi_{\lambda;\Sigma}[\mu]\lo \Phi_{\lambda;\Sigma}^+[\mu]$, so any 
subcritical parameter for~$\Phi_{\lambda;\Sigma}^+$ is also subcritical for~$\Phi_{\lambda;\Sigma}$. 
Following the very same argument given in the proof of Theorem~\ref{t:pivotal-bridge}, any 
$\lambda<\frac{1}{2\cdot\E[m]}$ must be subcritical.

\medskip

Once again, the crucial step in the proof of Theorem \ref{t:stat_Sierpinski} is to prove that $\lcr$ 
does not belong to $\Lambda$. 
\begin{lem}[Key Lemma]\label{l:key}
The supercritical set $\Lambda$ is open: $\lcr\not\in\Lambda$.
\end{lem}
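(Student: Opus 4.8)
The plan is to mirror as closely as possible the proof of the Key Lemma~\ref{l:open} for the figure eight-graph, now working with the three-dimensional random distance vectors and the monotone open sets of Definition of $\mcM_0$. Assume $\lambda\in\Lambda$, fix any cut-off value $A$ and let $G_{A,\lambda}$ encode the (three-dimensional) fixed distribution $\nu_{A,\lambda}$ of $\Phi_{A,\lambda;\Sigma}$; it is convenient to work with the monotone-set formulation, i.e.\ with the values $\nu_{A,\lambda}(M)$ for $M\in\mcM_0$. Since $\nu_{A,\lambda}$ is $\Phi_{A,\lambda;\Sigma}$-stationary and the measure $m$ has full support, $\Phi_{\lambda;\Sigma}[\nu_{A,\lambda}]$ does not charge any of the ``exterior'' sets $\{$some coordinate $>A\}$ in a degenerate way, so $\nu_{A,\lambda}$ coincides with $\Phi_{\lambda;\Sigma}[\nu_{A,\lambda}]$ on the region where all coordinates are~$<A$, while strictly dominating it where some coordinate is $\ge A$. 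Increasing the cut-off to $A'>A$ and setting $\tnu=\Phi_{A',\lambda;\Sigma}[\nu_{A,\lambda}]$, the full-support assumption for $m$ (the last multiplicative convolution) upgrades this to a \emph{strict} domination:
\[
\Phi_{\lambda;\Sigma}[\tnu](M)<\tnu(M)\quad\text{for every }M\in\mcM_0,
\]
exactly as in~\eqref{eq:tnu-strict}.

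Next I would decrease $\lambda$ while keeping a zooming-out measure. On every compact family of monotone sets whose boundaries $\partial_+M$ stay in a fixed compact region of $(0,\infty)^3$, the strict inequality above between the continuous functionals $M\mapsto\Phi_{\lambda-\eps;\Sigma}[\tnu](M)$ and $M\mapsto\tnu(M)$ (continuity thanks to Lemma~\ref{l:continuous} and the proposition following Lemma~\ref{l:compact}, together with the compactness of $\bF_A$) is preserved for $\eps$ small; on monotone sets reaching out past $A'$ the inequality is automatic since $\tnu$ is already concentrated inside $[0,A']^3$. The one region requiring care is a neighbourhood of the ``zero faces'' of $[0,\infty)^3$, i.e.\ where one or more of the three distances is close to $0$. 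Here I would mimic Lemma~\ref{l:x-delta} and the end of the proof of Lemma~\ref{l:open}: mix $\tnu$ with a small Dirac mass at the origin, $\tnu_p=(1-p)\tnu+p\,\ddelta_{(0,0,0)}$, decompose $\Phi_{\lambda;\Sigma}[\tnu_p]$ according to how many of the three glued triangles have a collapsed vertex, and observe that the leading correction in $p$ is of order $p$ times a quantity that \emph{vanishes as one approaches the zero faces} — because for a glued vertex to be close to two others one collapsed small triangle is never enough (one always needs at least a second short edge, as is visible from~\eqref{eq:R3}), exactly the phenomenon that made $\theta(p)=O(p^2)$ in the one-dimensional case. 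The self-collapse term contributes a genuine positive gap $\tfrac{c}{?}\,p$ near the origin while the bad term is $o(p)$ there, so for $p$ small the strict inequality $\Phi_{\lambda;\Sigma}[\tnu_p](M)<\tnu_p(M)$ holds on a whole neighbourhood of the zero faces; combined with the previous paragraph it holds on all of $\mcM_0$ after a further small decrease of $\lambda$.

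Putting the two regions together gives, for some $\eps>0$, a compactly supported measure $\tnu_p\neq\ddelta_{(0,0,0)}$ with $\Phi_{\lambda-\eps;\Sigma}[\tnu_p]\go\tnu_p$, i.e.\ a $(\lambda-\eps)$-zooming out measure; by the analogue of Lemma~\ref{l:out} (which, as noted in the text, carries over nearly verbatim) this forces $\lambda-\eps\in\Lambda$, proving that $\Lambda$ is open and hence equal to $(\lcr,+\infty)$ with $\lcr>0$ (we already know $\Lambda$ and its complement are non-empty). I expect the genuine obstacle to be the bookkeeping near the zero faces: the function $\theta$ that encapsulated the $O(p^2)$ effect in one dimension is now replaced by several face-collapse probabilities, the three coordinates interact through the triangle inequality ($\bDe$ is not a product), and the ``$o(p)$ uniformly near the faces'' statement has to be proved by an explicit but slightly delicate case analysis on~\eqref{eq:R3} — the rest is a routine transcription of the figure-eight argument in the language of $\mcM_0$ and $\bF$.
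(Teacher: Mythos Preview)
Your proposal is correct and follows essentially the same route as the paper's own proof: define $\tnu=\Phi_{A',\lambda;\Sigma}[\nu_{A,\lambda}]$ for $A'>A$, obtain the strict inequality $\Phi_{\lambda;\Sigma}[\tnu](M)<\tnu(M)$ on all of $\mcM_0$, use compactness of a suitable family of monotone sets to get a uniform gap away from the zero faces, then mix with $\ddelta_{(0,0,0)}$ and expand $\Phi_{\lambda;\Sigma}[\tnu_p]$ to first order in~$p$ (introducing the operator~$\Phi'_{\lambda;\Sigma}$ for ``one collapsed triangle'') to handle the neighbourhood of the zero faces. Two small points of precision: the paper's compact family is $\mcM_{\delta,A'}=\{M_f\mid f(0,0,0)\in[\delta,A']\}$, parametrised by where $\partial_+M$ meets the diagonal (the graphs $\partial_+M$ themselves are not compactly supported --- compactness comes from Ascoli--Arzel\`a via Lemma~\ref{l:compact}); and your ``$o(p)$'' near the zero faces should be read as ``$O(p)$ with coefficient $3\,\Phi'_{\lambda;\Sigma}[\tnu](M)\to 0$ as $M$ shrinks'', which the paper makes explicit by fixing a single $\delta_0$ with $\Phi'_{\lambda;\Sigma}[\tnu](M_{\delta_0})<\tfrac16$ and comparing the coefficients $\tfrac56$ versus $3\cdot\tfrac16$ on the $p$-terms.
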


\begin{proof}[Sketch of the proof of Theorem~\ref{t:stat_Sierpinski}]
For every $\lambda>\lcr$ sufficiently close to $\lcr$, consider the value~$y(\lambda)$ such that
$\nu_{1,\lambda}\left([0,y(\lambda)]^3 \right)= 1/2$. Remark that $y(\lambda)>0$ and  
$y(\lambda)$ goes to $0$ as $\lambda\searrow\lcr$ (this is a consequence of the Key Lemma).
Defining $A(\lambda)=\frac{1}{y(\lambda)}$, we have
\begin{equation}\label{eq:norm-sierp}
\nu_{A(\lambda),\lambda}\left([0,1]^3\right)=\tfrac{1}{2}
\end{equation}
and $A(\lambda)$ goes to $\infty$ as $\lambda\searrow\lcr$. The limit $\bm$ of a convergent 
subsequence $\left\{\nu_{A(\lambda_j),\lambda_j}\right\}_{j\in \N}$ is then~$\Phi_{\lcr;\Sigma}$-stationary.

We remark that the last operation in the operator $\Phi_{\lcr;\Sigma}$ is the multiplicative convolution. Hence 
any measure that belongs to its range, in particular $\bm=\Phi_{\lcr;\Sigma}[\bm]$, does not charge 
$[0,1]^3 \setminus [0,1)^3$: within the rescalings of any $(x,y,z)$, there is at most one that belongs 
to this part of the boundary. Thus passing to the limit in \eqref{eq:norm-sierp} we get
\begin{equation}\label{eq:limit12}
\bm([0,1]^3)=\tfrac{1}{2},
\end{equation}
so that the measure $\bm$ is non-trivial.

\smallskip

Now, let us check that the measure $\bm$ does not charge $[0,+\infty]^3\setminus (0,+\infty)^3$ 
(that is, any of the three distances $d(B_i,B_j)$'s is almost surely positive and finite). Indeed, for any measure 
$\mu$ on $[0,+\infty]^3$ let $S_1(\mu)=\mu(\{\infty\}\times [0,+\infty] \times \{\infty\})$ be the probability 
that $B_1$ is at infinite distance from both~$B_2$ and~$B_3$. Then comparing the Sierpi\'nski Gasket 
with the racket graph $\Gamma$ (see Figure~\ref{f:pivotal}), it is easy to see that for any measure $\mu$ one has
\[
S_1(\Phi_{\lcr;\Sigma}[\mu])\ge 1- \theta_{\Gamma}(1-S_1(\mu)).
\]
As for any $p\in(0,1)$ one has $\theta_{\Gamma}(p)<p$, if for the stationary measure $\bm=\Phi_{\lcr;\Sigma}[\bm]$ we had $S_1(\bm)=p\in(0,1)$, this would imply 
\begin{equation}\label{eq:S1}
p=S_1(\Phi_{\lcr;\Sigma}[\bm])\ge 1-\theta_{\Gamma}(1-p)>p,
\end{equation}
leading to a contradiction. Hence the only possible values for $S_1(\bm)$ are~$0$ and~$1$. Due to the symmetry of $\bm$ 
under the permutation of the three points $B_i$'s, if $S_1(\bm)$ was equal to~$1$, we would have $\bm=\ddelta_{(\infty,\infty,\infty)}$, 
contradicting~\eqref{eq:limit12}. Hence all the three distances are finite almost surely and $\bm$ is concentrated on $[0,+\infty)^3$.

We repeat the same arguments with the function $S_1'(\mu)=\mu((0,+\infty]\times [0,+\infty] \times (0,+\infty])$, 
expressing the probability that $B_1$ is at positive distance from both $B_2$ and $B_3$. The 
analogue of~\eqref{eq:S1} still holds, and hence $S_1'(\bm)$ is also equal to $0$ or $1$. Now if we had $S_1'(\bm)=0$, 
the symmetry would imply that $\bm=\ddelta_{(0,0,0)}$, again contradicting~\eqref{eq:limit12}. Thus $S_1'(\bm)=1$, 
and $\bm((0,+\infty)^3)=1$.

\smallskip

Finally let us check that the measure $\bm$ is non-atomic. Indeed, due to the $\Phi_{\lcr;\Sigma}$-stationarity 
the only possible atoms of $\bm$ are points that are fixed by the rescaling. However for such points all the 
three distances should be zero or infinity, and we have already checked that all the three distances $d(B_i,B_j)$'s 
are almost surely positive and finite.
\end{proof}

\begin{rem}
In fact the argument used to ensure that distances $d(B_i,B_j)$'s are finite and positive naturally leads to 
the introduction of the percolation function $\theta_{\Sigma}$ associated to the Sierpi\'nski Gasket, see 
Definition~\ref{d:theta-sierp} in the last part of this section. 
\end{rem}

\begin{proof}[Proof of the Key Lemma]
Pick any $\lambda\in \Lambda$. Following the outline of the proof of Lemma~\ref{l:open},  
we are going to show that $\lambda-\eps\in\Lambda$ for sufficiently small~$\eps>0$. As in 
the case of the hierarchical graphs, in order to do so we are going to construct a measure $\mu$ 
and show that it is $(\lambda-\eps)$-zooming out (thus concluding by the analogue of Lemma~\ref{l:out}).

\smallskip

By definition $\lambda\in\Lambda$ means that there exists $A\in\R_+$ such that $\nu_{A,\lambda}\ne \ddelta_{(0,0,0)}$.
Copying the argument for hierarchical graphs, we will consider a larger cut-off: fixing $A'>A$, 
let us define the measure $\tmu:=\Phi_{A',\lambda;\Sigma}[\mu_{A,\lambda}]$. 

\smallskip

The next remark is that, for the measure $\tmu$, an analogue of the inequality~\eqref{eq:tnu-strict} holds, with measures of the 
monotone sets replacing the partition functions. Namely, we have the following:
\begin{lem}\label{l:positive}
For any nonempty monotone set $M\in\mcM_0$ one has the strict inequality
\[
\tmu(M)>\Phi_{\lambda;\Sigma}[\tmu](M).
\]
\end{lem}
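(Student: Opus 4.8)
The plan is to reproduce in the three-dimensional setting the argument that established the strict inequality~\eqref{eq:tnu-strict} in the proof of Lemma~\ref{l:open}. Write $c_a:[0,+\infty]^3\to[0,+\infty]^3$ for the coordinate-wise cut-off $c_a(x,y,z)=(\min(x,a),\min(y,a),\min(z,a))$ and set $\mu_0:=\Phi_{\lambda;\Sigma}[\nu_{A,\lambda}]$. By the very definition of the cut-off operators one has $\nu_{A,\lambda}=(c_A)_*\mu_0$ and $\tmu=\Phi_{A',\lambda;\Sigma}[\nu_{A,\lambda}]=(c_{A'})_*\mu_0$.

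First I would record the order relations $\nu_{A,\lambda}\lo\tmu\lo\mu_0$: both are immediate from the coupling of the three laws carried by a single $X$ of law $\mu_0$, since $A<A'$ forces $c_A(X)\le c_{A'}(X)\le X$ coordinate-wise and the marginals of $(c_A(X),c_{A'}(X),X)$ are $\nu_{A,\lambda},\tmu,\mu_0$. Applying the order-preserving operator $\Phi_{\lambda;\Sigma}$ to $\nu_{A,\lambda}\lo\tmu$ gives $\mu_0=\Phi_{\lambda;\Sigma}[\nu_{A,\lambda}]\lo\Phi_{\lambda;\Sigma}[\tmu]$, so that $\tmu\lo\mu_0\lo\Phi_{\lambda;\Sigma}[\tmu]$. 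Via the Strassen-type description of $\lo$ recalled above, this already yields the weak inequality $\tmu(M)\ge\mu_0(M)\ge\Phi_{\lambda;\Sigma}[\tmu](M)$ for every $M\in\mcM$, and it remains to upgrade the middle term to $\mu_0(M)>\Phi_{\lambda;\Sigma}[\tmu](M)$ for every nonempty $M\in\mcM_0$.

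The source of strictness, exactly as in Lemma~\ref{l:open}, is that $\nu_{A,\lambda}$ and $\tmu$ genuinely differ together with the full support of $m$. Since $\nu_{A,\lambda}\neq\ddelta_{(0,0,0)}$ the non-rescaled vector $R_1(X_1,X_2,X_3;1)$ (the $X_i$ i.i.d.\ of law $\nu_{A,\lambda}$) is not almost surely zero, and as $m$ charges every neighbourhood of $+\infty$ the measure $\mu_0$ puts positive mass on $\{\max(x,y,z)>A\}$; equivalently $\mathbf{P}(c_A(X)\neq c_{A'}(X))>0$ in the coupling above, so $\nu_{A,\lambda}([0,A'')^3)=1>\mu_0([0,A'')^3)=\tmu([0,A'')^3)$ for any $A<A''<A'$ and $\tmu$ is \emph{strictly} $\lo$-larger than $\nu_{A,\lambda}$. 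Propagating through one glueing step: couple $\mu_0=\Phi_{\lambda;\Sigma}[\nu_{A,\lambda}]$ with $\Phi_{\lambda;\Sigma}[\tmu]$ using three independent copies $(U_i,V_i)$ of $(c_A(X),c_{A'}(X))$ and an independent $\xi$ of law $m$; then $R_\lambda(U_1,U_2,U_3;\xi)\le R_\lambda(V_1,V_2,V_3;\xi)$ coordinate-wise, and on a positive-probability configuration in which one copy has $U_i\neq V_i$ while the remaining five lengths are chosen so that the differing coordinate is not cancelled by a competing minimum in any output coordinate, the inequality is strict in all three coordinates. Hence $\mu_0\neq\Phi_{\lambda;\Sigma}[\tmu]$ with $\mu_0$ strictly $\lo$-smaller; and since the last operation in $\Phi_{\lambda;\Sigma}$ is the coordinate-wise multiplicative convolution by the full-support law $m$, this strict difference spreads everywhere — for fixed nonempty $M\in\mcM_0$, the set of factors $\xi$ for which a strict gap between the two pre-convolution measures is seen through $\tfrac1{\lambda\xi}M$ has positive $m$-measure — forcing $\mu_0(M)>\Phi_{\lambda;\Sigma}[\tmu](M)$. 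Combined with $\tmu(M)\ge\mu_0(M)$ this is the asserted inequality.

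The main obstacle is this last strictness step: one has to exhibit, through the $(\min,+)$-combinatorics of $R_\lambda$, an explicit positive-probability configuration on which a difference between the coupled lengths survives all the minima and makes $R_\lambda(U;\xi)<R_\lambda(V;\xi)$ strictly in every coordinate, and then (precisely as in the one-dimensional case) use the full support of $m$ to turn one strict inequality into a strict inequality over the entire family $\mcM_0$. The remaining ingredients — the coupling relations and the order-preservation and continuity of $\Phi_{\lambda;\Sigma}$ — are immediate from the material already recorded.
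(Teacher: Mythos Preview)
Your reduction to $\mu_0(M)>\Phi_{\lambda;\Sigma}[\tmu](M)$ with $\mu_0=\Phi_{\lambda;\Sigma}[\nu_{A,\lambda}]$, and the coupling framework, are exactly the paper's. The gap is in the final ``spreading'' step, which you say works ``precisely as in the one-dimensional case''. It does not. In dimension~$1$ the monotone sets are the half-lines $[0,x)$, on which rescaling acts transitively, so a single strict gap between the pre-convolution laws together with the full support of $m$ instantly yields the strict inequality for \emph{every} monotone set. In dimension~$3$ the family $\{\tfrac{1}{\lambda\xi}M:\xi>0\}$ is only a one-parameter curve inside the infinite-dimensional $\mcM_0$, and a gap visible through one monotone set need not be visible through every rescaling of another. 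Your described event even allows a single input coordinate of one $(U_i,V_i)$ to differ; but each input coordinate enters at most two of the three components of $R_\lambda$ (for instance $x_1$ appears only in the first output), so not all three output coordinates are affected, and for an $M$ depending only on an unaffected output coordinate no rescaling $\tfrac{1}{\lambda\xi}M$ separates the coupled pair at all.

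The paper closes this by exhibiting a specific atom in the coupling $(Y,Y')$ between $\nu_{A,\lambda}$ and $\tmu$: with positive probability (whenever the intermediate factor is large enough to trigger all three cut-offs) one has $(Y,Y')=((A,A,A),(A',A',A'))$. Gluing three copies of this atom produces, in the coupling between $\mu_0$ and $\Phi_{\lambda;\Sigma}[\tmu]$, a part supported on the \emph{diagonal} pair $Z=2\lambda\xi\,(A,A,A)$, $Z'=\tfrac{A'}{A}Z$. Every nonempty $M\in\mcM_0$ meets the diagonal at a unique $(s_0,s_0,s_0)\in\partial_+M$, so for $\xi$ in the open interval with $2\lambda\xi A<s_0<2\lambda\xi A'$ one has $Z\in M$, $Z'\notin M$; the full support of $m$ makes this positive probability. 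The mechanism is not ``spread one gap everywhere via $m$'' but rather ``a diagonal atom is crossed by every $\partial_+M$''. Your outline can be completed along these lines, but not by the one-dimensional analogy you invoke.
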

\begin{proof}
Note that $\tmu=\Phi_{A',\lambda;\Sigma}[\mu_{A,\lambda}]\lo\Phi_{\lambda;\Sigma}[\nu_{A,\lambda}]$, hence for any $M\in\mcM_0$ we have
$\tmu(M)\ge \Phi_{\lambda;\Sigma}[\nu_{A,\lambda}](M)$, and it suffices to prove a (stronger) inequality: for any nonempty $M\in\mcM_0$ we want
\begin{equation}\label{eq:M-i}
\Phi_{\lambda;\Sigma}[\nu_{A,\lambda}](M)>\Phi_{\lambda;\Sigma}[\tmu](M).
\end{equation}
To do so, we will use the following argument. Assume that we have a coupling $(X,X')$ between two measures $\mu$ and $\mu'$ such that $X\le X'$ almost surely.
Take three independent copies $(X_i,X_i')$ of such a coupling, take any $A_1\le A_2$, and define
\[
Y=R_{A_1,\lambda}(X_1,X_2,X_3;\xi) \quad \text{and} \quad Y'=R_{A_2,\lambda}(X_1',X_2',X_3';\xi),
\]
where $\xi$ is distributed with respect to~$m$ and is independent of the~$(X_i,X_i')$'s.
Then $(Y, Y')$ is a coupling between $\Phi_{A_1,\lambda;\Sigma}[\mu]$ and $\Phi_{A_2,\lambda;\Sigma}[\mu']$, 
such that $Y\le Y'$ almost surely.
Such a construction can be done also if we allow to take $A_1$ and $A_2$ to be $+\infty$, 
corresponding to a simple~$R_{\lambda}$ and $\Phi_{\lambda;\Sigma}$.

\smallskip

Applying this for the diagonal coupling $(X,X)$ between $\nu_{A,\lambda}$ and itself, first with $(A_1,A_2)=(A,A')$ and then with $(+\infty,+\infty)$
to the resulting coupling $(Y,Y')$, we get the relaxed inequality in~\eqref{eq:M-i}:
\[
\Phi_{\lambda;\Sigma}[\nu_{A,\lambda}]\lo \Phi_{\lambda;\Sigma}[\tmu].
\] 
Though, this explicit construction of the coupling implies more.
After the first step, we get a coupling $(Y,Y')$ between $\nu_{A,\lambda}$ and $\tmu$ that has an atom at the point $((A,A,A),(A',A',A'))$.
Indeed, it suffices that the factor $\xi$ takes a value so large that all the three distances $d(B_i,B_j)$'s exceed~$A'$.  

Next consider the \emph{part} of the coupling $(Z,Z')$ between $\Phi_{\lambda;\Sigma}[\nu_{A,\lambda}]$ and $\Phi_{\lambda;\Sigma}[\tmu]$
 that we get with the second step, when we glue together three $(Y_i,Y_i')$'s that correspond to this atom at $((A,A,A),(A',A',A'))$.
 This gives a part that is supported on the couple formed by the equilateral triangle of side $2A\xi$ and its $\tfrac{A'}{A}$-rescaled image.
 
For any nonempty monotone set $M\in\mcM_0$, take the intersection point $(s_0,s_0,s_0)$ of the diagonal $\{(s,s,s)\}$ with
$\partial_+M$. In the part of the coupling described above, the factor $\xi$ belongs to the interval $\left(s_0/2A',s_0/2A\right)$
with positive probability (as the measure $m$ is fully supported on $\R_+$), and thus with positive probability we have $Z\in M$, $Z'\notin M$.
This gives the desired strict inequality~\eqref{eq:M-i}. 
\end{proof}
 
Recall that in the hierarchical graphs case we restricted ourselves to compare partition functions on the interval $[0,A']$ only. 
To do so in the present case, we introduce the following notation and then prove an easy fact:
\begin{dfn}
For any $s>0$ set $M_s:=\{(x,y,z) \mid \min(x,y,z)< s\}$. This is a monotone set in~$\mcM_0$.
\end{dfn}

\begin{lem}
If a measure $\mu$ is supported on $[0,A']^3$, then $\mu\lo\Phi_{\lambda;\Sigma}[\mu]$ if and only if 
\begin{equation}\label{eq:Phi-out}
\mu(M)\ge \Phi_{\lambda;\Sigma}[\mu](M)
\end{equation}
for any monotone set $M\subset M_{A'}$.
\end{lem}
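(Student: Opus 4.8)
The plan is to show the two directions of the equivalence, noting that the "only if" direction is trivial and that the content lies entirely in the "if" direction. Recall that $\mu\lo\Phi_{\lambda;\Sigma}[\mu]$ is by definition equivalent to $\mu(M)\ge\Phi_{\lambda;\Sigma}[\mu](M)$ for \emph{every} $M\in\mcM_0$ (by the discussion following the definition of $\mcM_0$, the monotone open sets $\mcM_0$ form a determining class for the stochastic order, using Lemma~\ref{l:continuous} to guarantee that the images do not charge the relevant boundaries, so that no measure-zero correction is lost in the weak limit of $\eps$-discretizations). Hence the "only if" direction is immediate: if $\mu\lo\Phi_{\lambda;\Sigma}[\mu]$ then in particular $\mu(M)\ge\Phi_{\lambda;\Sigma}[\mu](M)$ for every $M\subset M_{A'}$ in $\mcM_0$.

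For the "if" direction, I would assume $\mu$ is supported on $[0,A']^3$ and that the inequality $\mu(M)\ge\Phi_{\lambda;\Sigma}[\mu](M)$ holds for every monotone $M\subset M_{A'}$, and deduce that it holds for \emph{every} $M\in\mcM_0$. The key observation is the following dichotomy for a monotone set $M\in\mcM_0$: either $M\subset M_{A'}$, in which case the hypothesis applies directly; or $M\not\subset M_{A'}$, meaning $M$ contains a point $(x,y,z)$ with $\min(x,y,z)\ge A'$. Monotonicity of $M$ then forces $[0,A']^3\subset M$ (every point with all coordinates at most $A'$ is $\lo$-below $(x,y,z)$, or below a point of $M$ arbitrarily close to it; one must be slightly careful at the boundary, but since $M$ is relatively open in $[0,+\infty)^3$ one can approximate). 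Since $\mu$ is supported on $[0,A']^3$, this gives $\mu(M)=1\ge\Phi_{\lambda;\Sigma}[\mu](M)$ trivially. Combining the two cases, $\mu(M)\ge\Phi_{\lambda;\Sigma}[\mu](M)$ for all $M\in\mcM_0$, hence $\mu\lo\Phi_{\lambda;\Sigma}[\mu]$.

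A minor technical point that needs care: to pass from "$M$ contains a point with $\min\ge A'$" to "$[0,A']^3\subset M$" I must handle the relative-openness of $M$ correctly near the faces where some coordinate equals $A'$. The clean way is: if $(x,y,z)\in M$ with $\min(x,y,z)\ge A'$, then by relative openness $M$ contains a neighbourhood, so it contains some $(x',y',z')\in M$ with $x'>A'$, $y'>A'$, $z'>A'$; then every point of $[0,A']^3$ is coordinatewise $\le(x',y',z')$, hence in $M$ by monotonicity. This covers the closed cube $[0,A']^3$, which is all that is needed since $\mu$ is supported there.

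The main "obstacle", such as it is, is purely a matter of being careful about which determining class one uses and about boundary behaviour at $\{$coordinate $=A'\}$ — there is no deep difficulty here. This lemma is an elementary but convenient reduction, playing in the Sierpi\'nski setting exactly the role that "it suffices to compare partition functions on $[0,A']$" played in the hierarchical graph arguments (compare the sentence preceding the statement), so that the subsequent application of Lemma~\ref{l:positive} — which gives strict inequality of $\tmu(M)$ and $\Phi_{\lambda;\Sigma}[\tmu](M)$ on the relevant sets — can be upgraded to $\lambda$-zooming out after mixing with a Dirac mass at the origin and perturbing $\lambda$, following the scheme of the proof of Lemma~\ref{l:open}.
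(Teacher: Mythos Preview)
Your proof is correct and follows essentially the same approach as the paper: the key observation is that a monotone set $M$ not contained in $M_{A'}$ must contain the point $(A',A',A')$, hence by monotonicity contains all of $[0,A']^3$, so $\mu(M)=1$ and the inequality is automatic. Your detour through relative openness to find a point with all coordinates strictly greater than $A'$ is unnecessary---once $(x,y,z)\in M$ with $\min(x,y,z)\ge A'$, monotonicity alone gives $(A',A',A')\in M$ and then $[0,A']^3\subset M$ directly---but the argument is otherwise the same as the paper's one-line proof.
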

\begin{proof}
A monotone set $M$ that is not contained in $M_{A'}$ necessarily contains the point $(A',A',A')$, and for 
such set~\eqref{eq:Phi-out} holds automatically, as the left hand side is equal to~$1$ in this case. 
\end{proof}

The next step is to obtain a strict inequality that cannot be destroyed by a small perturbation. To do so we set: 
\[
\mcM_{\delta,A'}:=\{M\in \mcM \mid (\delta,\delta,\delta)\in \overline{M},\, (A',A',A')\notin M\}=\{M_{f} \mid f(0,0,0)\in [\delta,A']\}.
\]
This replaces the segment $[\delta,A']$ for the case of hierarchical graphs. In the same way as before, 
we get the following lemma (that we will use as earlier, to perturb $\lambda$ to $\lambda-\eps$):
\begin{lem}\label{l:e1}
For any $\delta>0$ there exists $\eps_1>0$ such that for any monotone set $M\in \mcM_{\delta,A'}$
one has 
\[
\tmu(M)\ge \Phi_{\lambda;\Sigma}[\tmu](M) + \eps_1.
\]
\end{lem}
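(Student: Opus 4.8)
The plan is to view Lemma~\ref{l:e1} as the Sierpi\'nski counterpart of the step, in the proof of the Key Lemma~\ref{l:open}, where a strict \emph{pointwise} inequality between continuous functions on a compact set is promoted to a uniform positive gap. Here ``pointwise'' is replaced by ``for every monotone set $M$'', the strict inequality is supplied by Lemma~\ref{l:positive}, and the r\^ole of the compact interval is played by $\mcM_{\delta,A'}$ itself: under the correspondence $M\leftrightarrow f_M$, the family $\mcM_{\delta,A'}$ is $\bF_{A'}\cap\{f\mid f(0,0,0)\ge\delta\}$, a closed subset (since $f\mapsto f(0,0,0)$ is continuous for uniform convergence on compacta) of the space $\bF_{A'}$, which is compact by Ascoli--Arzel\`a. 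I would then introduce the functional
\[
\Psi(M):=\tmu(M)-\Phi_{\lambda;\Sigma}[\tmu](M),\qquad M\in\mcM_{\delta,A'}\subset\mcM_0,
\]
observe that $\Psi>0$ on $\mcM_{\delta,A'}$ by Lemma~\ref{l:positive} (each such $M$ is a \emph{nonempty} monotone set of $\mcM_0$: $f_M(0,0,0)\ge\delta>0$ forces $(\delta/2,\delta/2,\delta/2)\in M$), and prove that $\Psi$ is lower semicontinuous on the compact $\mcM_{\delta,A'}$. A lower semicontinuous function on a compact set attains its infimum, so $\eps_1:=\inf_{\mcM_{\delta,A'}}\Psi$ is a strictly positive number, which is exactly the claim.

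To establish the lower semicontinuity I would split $\Psi$ into its two summands, which behave quite differently. The subtracted one, $M\mapsto\Phi_{\lambda;\Sigma}[\tmu](M)$, is in fact \emph{continuous}: first $\tmu\in\cP_0$ -- the truncation producing $\tmu=\Phi_{A',\lambda;\Sigma}[\nu_{A,\lambda}]$ may create atoms on the faces $\{x_i=A'\}$ and at $(A',A',A')$, but it creates none on the $0$-faces, because $\nu_{A,\lambda}$ charges no $0$-face (the percolation/tail argument through the racket graph that excludes zero distances for $\bm$ in the proof of Theorem~\ref{t:stat_Sierpinski} carries over, using the $\Phi_{A,\lambda;\Sigma}$-stationarity of $\nu_{A,\lambda}$ and its non-triviality); hence, by Lemma~\ref{l:continuous}, $\Phi_{\lambda;\Sigma}[\tmu]$ assigns zero mass to $\partial_+M$ for \emph{every} $M\in\mcM_0$, so the Proposition asserting continuity of $f\mapsto\mu(M_f)$ applies to $\mu=\Phi_{\lambda;\Sigma}[\tmu]$. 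The added summand, $M\mapsto\tmu(M)$, is merely lower semicontinuous, but this is all that is required: if $f_{M_j}\to f_{M_\infty}$ uniformly on compacta, Lemma~\ref{l:compact} gives $M_\infty\subset\liminf_jM_j$, whence $\liminf_j\tmu(M_j)\ge\tmu(M_\infty)$ by Fatou's lemma -- and, crucially, this needs no information whatsoever about $\tmu$ on $\partial_+M_\infty$. Since a lower semicontinuous function minus a continuous one is lower semicontinuous, $\Psi$ is lower semicontinuous and we are done.

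The only genuinely delicate point -- and the one I expect to be the main obstacle -- is precisely this asymmetry. One must resist asking for continuity of the full map $M\mapsto\tmu(M)$, which fails because $\tmu$ does carry atoms on some boundaries $\partial_+M$ (already on the corner $(A',A',A')$); the proof succeeds only by keeping $\tmu$ strictly on the ``$+$'' side, where the cheap Fatou bound suffices, and by recognising that the ``$-$'' side is an image measure of the type covered by Lemma~\ref{l:continuous} and the continuity Proposition. The remaining ingredients -- compactness of $\bF_{A'}$ (already recorded in the excerpt), stability of $\mcM_{\delta,A'}\subset\mcM_0$ under limits together with nonemptiness of the limiting set, and the verification $\tmu\in\cP_0$ -- are routine, the last being a transcription of the zero-distance argument for $\bm$.
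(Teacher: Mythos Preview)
Your proof is correct, and it takes a genuinely different route from the paper's. Both arguments rest on the compactness of $\mcM_{\delta,A'}$ (via the embedding $M\leftrightarrow f_M$ into $\bF_{A'}$) and on the strict positivity supplied by Lemma~\ref{l:positive}; the difference is in how the troublesome term $M\mapsto\tmu(M)$ is handled. You keep this term and observe that, by Lemma~\ref{l:compact} and Fatou, it is lower semicontinuous, while the subtracted term $M\mapsto\Phi_{\lambda;\Sigma}[\tmu](M)$ is continuous thanks to Lemma~\ref{l:continuous} and the Proposition (once you have checked $\tmu\in\cP_0$); the difference is then lower semicontinuous on a compact set and attains its positive infimum. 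The paper instead reaches back into the proof of Lemma~\ref{l:positive} for the intermediate inequality $\tmu(M)\ge\Phi_{\lambda;\Sigma}[\nu_{A,\lambda}](M)$ and works with the \emph{continuous} function $M\mapsto\Phi_{\lambda;\Sigma}[\nu_{A,\lambda}](M)-\Phi_{\lambda;\Sigma}[\tmu](M)$, both terms now being images under $\Phi_{\lambda;\Sigma}$ of measures in $\cP_0$ and hence covered by Lemma~\ref{l:continuous}. Your approach is more self-contained (it does not revisit the internal structure of the proof of Lemma~\ref{l:positive}) and isolates cleanly the general principle ``lsc minus continuous on compact''; the paper's approach avoids the semicontinuity discussion altogether at the price of invoking the auxiliary measure $\Phi_{\lambda;\Sigma}[\nu_{A,\lambda}]$.
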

\begin{proof}
The set $\mcM_{\delta,A'}$ is compact (with respect to the topology introduced above). 
From Lemma~\ref{l:positive} and its proof we have for any $M\in\mcM_{\delta,A'}$ that
\[
\tmu(M)-\Phi_{\lambda;\Sigma}[\tmu](M) \ge \Phi_{\lambda;\Sigma}[\nu_{A,\lambda}](M)-\Phi_{\lambda;\Sigma}[\tmu](M)>0.
\]
The function that associates to the set $M\in\mcM_{\delta,A'}$ the difference 
$\Phi_{\lambda;\Sigma}[\nu_{A,\lambda}](M)-\Phi_{\lambda;\Sigma}[\tmu](M)$ is positive, and is also continuous 
due to Lemma~\ref{l:continuous}. Hence, it is bounded away from zero.
\end{proof}

In the same way as for the hierarchical graphs, we would like to handle the neighbourhood of $0$-faces:
when $M$ becomes thinner and thinner, both $\tmu(M)$ and $\Phi_{\lambda;\Sigma}[\tmu](M)$ tend to zero, 
as well as the difference between them. 
As in the proof of Lemma~\ref{l:open} (see \S{}\ref{ss:openness}), we will cope with this difficulty 
by mixing our initial measure $\wmu$ with the totally collapsed metric $\ddelta_{(0,0,0)}$,
and using the fact that the probability of small distances decreases superlinearly under the 
application of the glueing operator~$\Phi_{\lambda;\Sigma}$.
Namely, we consider
\[\wmu_p:=(1-p)\,\wmu + p\,\ddelta_{(0,0,0)}.\]

Take a coupling $(X,X')$ between $\wmu$ and $\wmu_p$, such that $\P(X\neq X')\le p$. For the 
associated coupling $(Y,Y')$ between $\Phi_{\lambda;\Sigma}[\wmu]$ and 
$\Phi_{\lambda;\Sigma}[\wmu_p]$, one thus has $\P(Y\neq Y')\le 3p$. Hence, for any 
$M\in\mcM_0$ we have 
\[
\wmu_p(M)-\Phi_{\lambda;\Sigma}[\wmu_p](M) \ge \left(\wmu(M)-\Phi_{\lambda;\Sigma}[\wmu](M)\right) -4p.
\]
Hence for any $\delta>0$, taking the corresponding $\eps_1$ from the conclusion of 
Lemma~\ref{l:e1} and defining $p_0:=\frac{\eps_1}{5}>0$, we have 
that for any $p<p_0$ and any monotone set $M\in\mcM_{\delta,A'}$ that 
\begin{equation}\label{eq:M-strict}
\wmu_p(M)- \Phi_{\lambda;\Sigma}[\wmu_p](M) \ge \eps_1 - 4p \ge \frac{1}{5}\eps_1.
 \end{equation}
 The delicate part is to handle the inequality for monotone sets which do not contain the point~$(\delta,\delta,\delta)$ 
 in their closure. Doing this will take us to the end of the proof of the Key Lemma.

\medskip

In the same way as in~\eqref{eq:diff}, the image $\Phi_{\lambda;\Sigma}[\wmu_p]$ decomposes as
\begin{equation}\label{eq:S3}
\Phi_{\lambda;\Sigma}[\wmu_p]=(1-3p)\,\Phi_{\lambda;\Sigma}[\wmu]+3p\, \Phi_{\lambda;\Sigma}'[\widetilde \mu]+O(p^2),
\end{equation}
with $O(p^2)$ uniform, and $\Phi_{\lambda;\Sigma}'[\mu]$ defined as the law for the result of glueing two 
independent \mbox{$\mu$-distributed triangles}, together with a third collapsed one (with all the three distances 
equal to~$0$), and where the three glued triangles have equal chances to be chosen as the one which is collapsed. 

As in Lemma~\ref{l:G-0-1}, the~$0$--$1$ law guarantees that the measure $\nu_{A,\lambda}$ 
does not charge any of the $0$-faces. Indeed, if it was concentrated on one of it, by symmetry
($\nu_{A,\lambda}$ is preserved by any permutation of the coordinates) it would be concentrated on 
their intersection, and hence we would have $\nu_{A,\lambda}=\ddelta_{(0,0,0)}$.

The same holds for its image $\wmu=\Phi_{\lambda;\Sigma}[\nu_{A,\lambda}]$ and hence for the image
$\Phi_{\lambda;\Sigma}'[\wmu]$ as well. Thus, there exists $\delta_0>0$ such that 
\[
\wmu ( M_{\delta_0})< \tfrac16, \quad \Phi_{\lambda;\Sigma}'[\wmu] ( M_{\delta_0}) < \tfrac16.
\]
Indeed, the sets $M_{\delta}$ converge to the union of the $0$-faces as $\delta\to 0$,
so the existence of such $\delta_0$ follows from the continuity of the measures~$\wmu$ 
and~$\Phi_{\lambda;\Sigma}'[\wmu]$.

\smallskip

Choose and fix such a $\delta_0$. For any nonempty monotone set $M$ that does not contain $(\delta_0,\delta_0,\delta_0)$, on the one hand
we have 
\begin{align*}
\wmu_p(M)&= p+(1-p) \wmu(M)\\
&=\wmu(M)+p\cdot (1-\wmu(M))\\
&\ge\wmu(M)+\tfrac{5}{6}p;
\end{align*}
on the other, using~\eqref{eq:S3} we get
\begin{align*}
\Phi_{\lambda;\Sigma}[\wmu_p](M)&
=(1-3p)\,\Phi_{\lambda;\Sigma}[\wmu](M)+3p\, \Phi_{\lambda;\Sigma}'[\widetilde \mu](M)+O(p^2)
\\
&=\Phi_{\lambda;\Sigma}[\wmu](M)+ 3p (\Phi_{\lambda;\Sigma}'[\wmu](M)-\Phi_{\lambda;\Sigma}[\wmu](M)) + O(p^2)
\\
&\le \wmu(M)+ 3\cdot \tfrac{1}{6}\cdot p + O(p^2).
\end{align*}
As the $O(p^2)$ here is uniform on the set $M$, there exists $p_1$ for which the right hand side does not 
exceed $\wmu(M)+\frac{4}{6}p$ for any $p\le p_1$ and any $M$ that does not contain $(\delta_0,\delta_0,\delta_0)$. 

\smallskip

The above arguments allow us to choose and fix $p_0$ for $\delta=\delta_0$ so that~\eqref{eq:M-strict} holds and we set $p:=\min(p_0,p_1)$. 

Then for any $M$ that does not contain $(\delta_0,\delta_0,\delta_0)$, one has
\[
\tmu_p(M)\ge \wmu(M)+\frac{5}{6}p= \left(\wmu(M)+\frac{4}{6}p\right)+ \frac{1}{6}p \ge \Phi_{\lambda;\Sigma}[\wmu_p](M)+ \frac{1}{6}p;
\]
for any $M\in \mcM_{\delta,A'}$ we have the inequality~\eqref{eq:M-strict}. Thus, we have strict inequality $\tmu_p(M)>\Phi_{\lambda;\Sigma}[\wmu_p](M)$ 
for all $M$ from the compact set~$\mcM_{0,A'}$. 

Again due to the continuity and compactness arguments, there exists $\eps>0$ such that 
$$
\tmu_p(M)>\Phi_{\lambda-\eps;\Sigma}[\wmu_p](M)
$$
for all monotone sets $M\in\mcM_{0,A'}$. Finally, for any monotone set $M$ that contains $(A',A',A')$ we have $\tmu_p(M)=1$, hence the desired inequality is satisfied automatically. Thus, $\tmu_p\lo \Phi_{\lambda-\eps;\Sigma}[\wmu_p]$, the measure $\tmu_p$ is $(\lambda-\eps)$-zooming out, and hence $\lambda-\eps$ is supercritical.
\end{proof}

\subsubsection{The function $\theta$}

We introduced the percolation function $\theta_\G$, associated to a hierarchical graph $\Gamma$ in order to 
understand the behaviour of the operator~$\Phi_{\lambda;\Gamma}$ at extremal values. 
It corresponds to the reduced problem, where the
distance takes only two values: ``zero'' and ``non-zero'', or, 
what is the same, ``finite'' and``infinite''. Such a function is defined on the interval $[0,1]$, its argument being the parameter 
of the Bernoulli percolation (that sets to ``zero'' the lengths of the randomly chosen edges).

In the case of the Sierpi\'nski Gasket, instead of two possible states,
we have to consider all the possible decompositions of the set of vertices into percolation clusters.
This motivates us to introduce the following:
\begin{dfn}\label{d:theta-sierp}
Let $\sigma$ be the simplex of probability measures on the set of decompositions of~$\{\vs_1,\vs_2,\vs_3\}$
into disjoint subsets, which we think as \emph{partitions into percolation clusters}. The map 
$\theta_{\Sigma}:\sigma\to\sigma$ is defined in the following way. Take three independent cluster partitions, chosen with respect to~$P$. 
Glue them together, and consider the cluster partition of the vertices of the resulting 
``large'' triangle (see Figure~\ref{f:clusters}). The distribution of the resulting partition is~$\theta_{\Sigma}(P)$.
\end{dfn}
\begin{figure}[ht]
\[
\includegraphics[width=.35\textwidth]{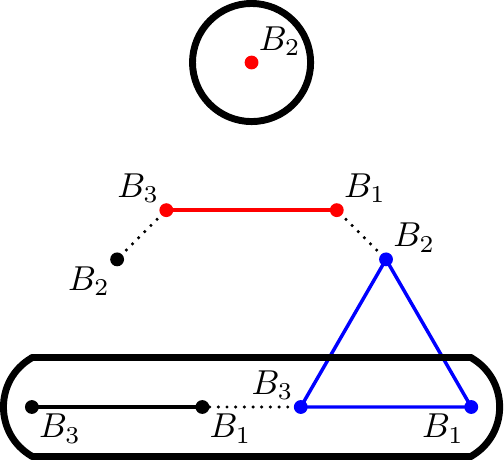}
\]
\caption{Glueing percolation clusters; clusters in the glued triangles are shown by edges, dotted lines indicate the identified points.}\label{f:clusters}
\end{figure}

As we have already discussed, the function $\theta_{\Sigma}$ behaves like the percolation 
function $\theta_\G$ associated to a pivotal graph with a bridge edge. In fact the arguments 
used in the proof of Theorem~\ref{t:stat_Sierpinski} produce also the following:
\begin{prop}\label{p:theta-sierp}
For any initial point $P\in\sigma$, the iterations $\theta_{\Sigma}^n(P)$ converge to one of the five
fixed points of $\theta_{\Sigma}$, that are all the Dirac measures concentrated on the five different
possible partitions of~$\{\vs_1,\vs_2,\vs_3\}$ (these are exactly the extremal points of the simplex $\sigma$).
\end{prop}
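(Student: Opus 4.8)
The plan is to obtain the statement as an essentially direct consequence of the comparison argument already used inside the proof of Theorem~\ref{t:stat_Sierpinski}, together with the elementary combinatorics of partitions of a three-element set. Fix $P\in\sigma$ and set $P_n=\theta_{\Sigma}^n(P)$. For $i\in\{1,2,3\}$ write $s_i(Q)$ for the probability that, under a cluster partition of law $Q$, the vertex $\vs_i$ forms a singleton cluster. This is the purely combinatorial (percolation) analogue of the function $S_1$ from that proof, and the reference measure $m$ plays no role in it. The first step is to observe that the comparison of the Sierpiński glueing with the racket graph $\Gamma$ of Figure~\ref{f:pivotal} carries over verbatim to $\theta_{\Sigma}$: for every $Q\in\sigma$ and every $i$,
\[
s_i(\theta_{\Sigma}(Q))\ \ge\ 1-\theta_{\Gamma}\!\left(1-s_i(Q)\right).
\]
Since $\Gamma$ has a bridge edge, $\theta_{\Gamma}(p)<p$ on $(0,1)$, so $f(x):=1-\theta_{\Gamma}(1-x)$ is increasing, fixes $0$ and $1$, and satisfies $f(x)\ge x$ with equality only for $x\in\{0,1\}$. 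Hence $s_i(P_{n+1})\ge f(s_i(P_n))\ge s_i(P_n)$, so the sequence $s_i(P_n)$ is non-decreasing; it converges to some $\ell_i$, and letting $n\to\infty$ in the inequality gives $\ell_i\ge f(\ell_i)$, which forces $\ell_i\in\{0,1\}$.

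Let $S:=\{i:\ell_i=1\}$. I would then conclude by a short case analysis, using only that the cluster partition of $\{\vs_1,\vs_2,\vs_3\}$ is encoded by the transitive relation ``$\vs_a\sim\vs_b$''. If $|S|=2$, say $i,j\in S$ and $k\notin S$, then $s_k(P_n)\to0$ forces $\mathbf{P}_{P_n}(\vs_k\sim\vs_i\text{ or }\vs_k\sim\vs_j)\to1$, whereas $\mathbf{P}_{P_n}(\vs_k\sim\vs_i)\le 1-s_i(P_n)\to0$ and likewise with $j$, a contradiction; so $|S|\neq2$. If $S=\{1,2,3\}$, the three events ``$\vs_i$ is a singleton'' have $P_n$-probability tending to $1$, hence so does their intersection, and $P_n$ converges to the Dirac mass on the partition $1|2|3$. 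If $S=\emptyset$, then in the limit almost surely no vertex is a singleton, and the only partition with that property is the total one, so $P_n$ converges to the Dirac mass on $123$. Finally, if $S=\{i\}$, then $s_i(P_n)\to1$ while $s_j(P_n),s_k(P_n)\to0$; the partition $i|j|k$ is contained in the event ``$\vs_j$ is a singleton'' and so has $P_n$-probability tending to $0$, whence $\mathbf{P}_{P_n}(i|jk)=s_i(P_n)-\mathbf{P}_{P_n}(i|j|k)\to1$ and $P_n$ converges to the Dirac mass on $i|jk$. These five alternatives exhaust the five extreme points of $\sigma$; each of them is indeed $\theta_{\Sigma}$-fixed, as one checks directly from the glueing combinatorics (for instance by reading off the vertex identifications from~\eqref{eq:R3}, together with the $S_3$-symmetry of $\theta_{\Sigma}$), so these are also the only fixed points.

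I expect the only genuinely non-routine ingredient to be the displayed inequality, that is, the identification of the racket graph as the ``effective'' graph governing the disconnection of $\vs_i$ from the two other vertices after one glueing step; but this is exactly the estimate established inside the proof of Theorem~\ref{t:stat_Sierpinski}, so no new work is needed there. Everything else is plain combinatorics of partitions of a three-point set, and the one mild subtlety is to combine the three dichotomies ($i=1,2,3$) correctly, which is precisely the case analysis carried out above. In short, I do not anticipate a serious obstacle: the proposition is almost a corollary of the machinery already developed for Theorem~\ref{t:stat_Sierpinski}.
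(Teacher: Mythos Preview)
Your proof is correct and follows essentially the same route as the paper. The paper's argument introduces $Q_1(P)$, the probability that $\vs_1$ is connected to at least one of the other two vertices (so $Q_1=1-s_1$ in your notation), and records the equivalent inequality $Q_1(\theta_{\Sigma}(P))\le \theta_{\Gamma}(Q_1(P))$ for the racket graph~$\Gamma$; it then concludes in one line that either $Q_1(P)=1$ or $Q_1(P_n)\to 0$, ``and naturally the same applies to all other vertices,'' leaving the identification of the limiting Dirac measure implicit. Your version spells out that final step: the monotonicity of $s_i(P_n)$, the dichotomy $\ell_i\in\{0,1\}$, the exclusion of $|S|=2$, and the three remaining cases. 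This is exactly the detail the paper omits, and your treatment is cleaner for it; the only minor caveat is that the glueing carries an \emph{a priori} cyclic $\Z_3$-symmetry rather than a full $S_3$-symmetry, so the verification that each $\delta_{i\mid jk}$ is fixed should be (and easily can be) done directly from the vertex identifications encoded in~\eqref{eq:R3}, as you indicate.
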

\begin{proof}
Let 
\[
Q_{1}(P)=P(\{(\{\vs_1,\vs_2\},\{\vs_3\}),(\{\vs_1,\vs_3\},\{\vs_2\}),(\{\vs_1,\vs_2,\vs_3\})\})
\]
be the probability that the point $\vs_1$ is connected to at least one of the two others. Then it 
is easy to see that
\[
Q_{1}(\theta_{\Sigma}(P))\le \theta_{\G}(Q_{1}(P)),
\]
where $\G$ is the ``racket''-graph (see Figure~\ref{f:pivotal}).
Indeed, this upper estimate comes from altering $P$ in such a way that $\vs_2$ and $\vs_3$ always
belong to the same cluster.

Hence, for any initial point $P\in\sigma$ either $Q_{1}(P)=1$ or $Q_1(\theta_{\Sigma}^n(P))$ goes to~$0$ 
as~$n$ goes to~$\infty$. Naturally the same applies to all other vertices.
\end{proof}

Notice that the map $\theta$ can be similarly defined for other self-similar length spaces. For the case of the Sierpi\'nski Gasket, its behaviour is pivotal-like, similar to the behaviour of $\theta_{\Gamma}$. However it is not clear what are the possible kinds of behaviour for $\theta_X$ when $X$ is a general self-similar length space. 
For instance, does an analogue of Moore-Shannon theorem hold? Is there some feature of hyperbolicity or expansivity with respect to some metric on the interior of~$\sigma$? Is it true that the number of fixed points in the interior of $\sigma$ does not exceed one? 

One can alter the Sierpi\'nski Gasket in such a way that the behaviour of the associated $\theta$ function is no longer pivotal. Indeed, we can modify the glueing procedure so that we glue six copies of the initial space: first, within both groups of three copies we glue them in the Sierpi\'nski-like, triangular way; then we identify the corresponding vertices of the two obtained spaces. It seems natural to expect the corresponding function $\theta$ to behave similarly as the function $\theta$ associated to the hierarchical diamond-graph, and in particular to have exactly one hyperbolic repelling fixed point inside the simplex~$\sigma$. All this motivates the following:

\begin{qn*}
Describe the behaviour of the function $\theta_X$ associated to a general self-similar length space~$X$.
\end{qn*}

\section{A very short summary: known and unknown results}

We conclude this paper by summarizing in Table~\ref{table} the known and conjectured properties of hierarchical spaces.

\begin{table}[h!]
\begin{center}
\begin{tabular}{|l|c|c|c|c|c|c|}
\hline 
& \includegraphics{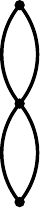} \quad \includegraphics{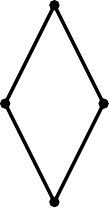}  & \includegraphics{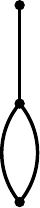} & \includegraphics{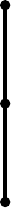} 
& \includegraphics{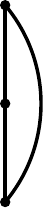} & \includegraphics{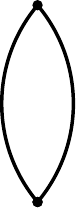} & \includegraphics{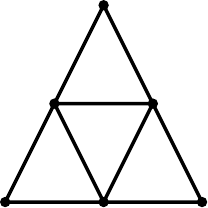}\\
& non-pivotal & bridge & interval & shortcut & BRW & Sierpi\'nski 
\\
\hline
Existence of & \checkmark & \checkmark& \checkmark & \checkmark& \checkmark& \checkmark\\
a stationary measure&Thms~\ref{p:metric}, \ref{t:stat} &Thm.~\ref{t:pivotal-bridge}&\cite{DL}&Thm.~\ref{t:pivotal-shortcut}&\cite{BRW}&Thm.~\ref{t:stat_Sierpinski}\\
\hline
Uniqueness of & \checkmark&conj. \checkmark&\checkmark& ? &\checkmark&conj.  \checkmark\\
a stationary measure &Thms.~\ref{t:uniqueness}, \ref{t:conv} &Rem.~\ref{r:conj-pivotal}&\cite{DL}&&\cite{BRW,aid}&Rem.~\ref{r:conj-pivotal}\\
\hline
Convergence to & \checkmark &conj. \checkmark&\checkmark& conj. \ding{56} & \ding{56} & conj. \checkmark \\
a stationary measure &Thm.~\ref{t:conv}  &Rem.~\ref{r:conj-pivotal}&\cite{DL}&Rem.~\ref{r:conj-pivotal}&\cite{BRW,aid}&Rem.~\ref{r:conj-pivotal}\\
\hline
Geometry of & \checkmark &conj. \checkmark&\checkmark&\ding{56}&\ding{56}& conj. \checkmark\\
the limit space &Thms.~\ref{t:metric}, \ref{t:geodesic}	&&\cite{KP,DL,BS}		&Rem. \ref{r:geometry-shortcut}	&Rem. \ref{r:geometry-shortcut}	&	\\
\hline
\end{tabular}
\end{center}
\caption{Hierarchical spaces and the properties of the associated process}\label{table}
\end{table}

\section*{Acknowledgements}

The results we presented here are at the same time a revision and a considerable extension of a 
previous version: the article has been seriously improved after the precious suggestions and the
excellent work of the two anonymous referees and that of Nicolas Curien and Fran\c cois B\'eguin 
(reviewers for the last named author's Ph.D.~thesis). In particular, we tried to answer to the 
most of a long list of interesting questions coming from one of the anonymous referees.

\smallskip

The authors would also like to express their gratitude to Dmitry Chelkak and Stanislav Smirnov
for having introduced us to the subject.
This work would not be the same without the many suggestions that Itai Benjamini gave us.
We thank Nicolas Curien for very fruitful remarks and for having enriched our knowledge on
quantum gravity, together with Jean-Fran\c{c}ois Le Gall and Bertrand Duplantier, during the
``2\`eme S\'eminaire Itzykson'' at the IH\'ES. 
This work has been carried on in various places and we thank for their hospitality the 
Chebyshev Laboratory in Saint Petersburg, the IRMAR in Rennes, the Poncelet Laboratory in 
Moscow, the IHP in Paris, the Todai University in Tokyo, the UMPA of the ENS-Lyon, the PUC 
in Rio de Janeiro, the summer school ``Contemporary Mathematics 2015'' in Dubna and the conference ``Global Dynamics Beyond
Uniform Hyperbolicity'' at Olmu\'e, Chile. 
We would also like to say \emph{merci} to the members of the UMPA at \'ENS-Lyon that have 
shown their interest since the early state of this work: Vincent Beffara, Christophe Garban, 
Gregory Miermont, Marielle Simon and in particular \'Etienne Ghys. We are also grateful to 
Thomas Duquesne for his interest in our work and the conversations that we had with him in Paris.

\smallskip

The first named author has been supported by the Chebyshev Laboratory (St.~Petersburg State University) 
under the RF Government grant 11.G34.31.0026 and by the JSC "Gazprom Neft".
The second named author has been supported by RFBR grant 13-01-00969-a, the project CSF CAPES and the 
R\'esau France-Br\'esil in Mathematics.
The third named author has been partially supported by the Grant-in-Aid for Scientific Research (S) 24224002, 
Japan Society for Promotion of Science, Japan and by the postdoctoral scholarship by CAPES, Brazil.

\begin{bibdiv}
\begin{biblist}

\bib{BRW}{article}{
	author={\scshape Addario-Berry, L.},
	author={\scshape Reed, B.},
	title={Minima in branching random walks},
	journal={Ann.~Probab.},
	year={2009},
	volume={37 (3)},
	pages={1044--1079},
}

\bib{aid}{article}{
	author = {\scshape A\"id\'ekon, E.},
	journal = {Ann. Probab.},
	number = {3A},
	pages = {1362--1426},
	title = {Convergence in law of the minimum of a branching random walk},
	volume = {41},
	year = {2013},
}

\bib{RDE}{article}{
	author={\scshape Aldous, D.J.},
	author={\scshape Bandyopadhyay, A.},
	title={A survey of $\max$-type recursive distributional equations},
	journal={Ann. Appl. Probab.},
	volume={15 (2)},
	date={2005},
	pages={1047--1110},
}

\bib{BJRV}{article}{
	author={\scshape Barral, J.},
	author={\scshape Jin, X.},
	author={\scshape Rhodes, R.},
	author={\scshape Vargas, V.},
	title={Gaussian Multiplicative Chaos and KPZ Duality},
	journal={Comm.~Math.~Phys.},
	volume={323 (2)},
	year={2013},
	pages={451--485},
}

\bib{BKNSW}{article}{
	year={2014},
	journal={Comm.~Math.~Phys.},
	volume={325 (2)},
	title={Critical Mandelbrot Cascades},
	author={\scshape Barral, J.},
	author={\scshape Kupiainen, A.},
	author={\scshape Nikula, M.},
	author={\scshape Saksman, E.},
	author={\scshape Webb, C.},
	pages={685--711},
}

\bib{B1}{article}{
   author={\scshape Benjamini, I.},
   title={Euclidean vs. Graph Metric},
   book={
   		author={Lov\'asz, L.; Ruzsa, I.; S\'os, V.T.; Palvolgyi, D. (Eds.)},
   		title={Erd\"os Centennial},
   		publisher={Springer},
   		series={Bolyai Society Mathematical Studies},
   		volume={25},
   		date={2013},
   },
   pages={35--57},
}

\bib{BS}{article}{
   author={\scshape Benjamini, I.},
   author={\scshape Schramm, O.},
   title={KPZ in one dimensional random geometry of multiplicative cascades},
   journal={Comm.~Math.~Phys.},
   volume={289 (2)},
   date={2009},
   pages={653--662},
}

\bib{biggins}{article}{
	author={\scshape Biggins, J.D.},
	title={The First- and Last-Birth Problems for a Multitype Age-Dependent Branching Process},
	journal={Advances in Applied Probability},
	volume={8},
	number={3},
	year={1976},
	pages={446--459},
}

\bib{BZ}{article}{
	author = {\scshape Bramson, M.},
	author = {\scshape Zeitouni, O.},
	journal = {Ann. Probab.},
	number = {2},
	pages = {615--653},
	title = {Tightness for a family of recursion equations},
	volume = {37},
	year = {2009},
}

\bib{BBI}{book}{
   author={\scshape Burago, D.},
   author={\scshape Burago, Yu.},
   author={\scshape Ivanov, S.},
     title = {A course in metric geometry},
    series = {Graduate Studies in Mathematics},
    volume = {33},
 publisher = {AMS, Providence, RI},
      year = {2001},
     pages = {xiv+415},
}

\bib{david}{article}{
	author={\scshape David, F.},
	title={Conformal Field Theories Coupled to 2-D Gravity in the Conformal Gauge},
	journal={Mod. Phys. Lett. A},
	volume={3},
	pages={1651--1656},
	year={1988},
}

\bib{DKRV}{article}{
	author={\scshape David, F.},
	author={\scshape Kupiainen, A.},
	author={\scshape Rhodes, R.},
	author={\scshape Vargas, V.},
	title={Liouville Quantum Gravity on the Riemann sphere},
	journal={arXiv preprint 1410.7318 },
	year={2014},
}

\bib{distler-kawai}{article}{
	author={\scshape Distler, J.},
	author={\scshape Kawai, H.},
	title={Conformal Field Theory and 2-D Quantum Gravity or Who's Afraid of Joseph Liouville?}, 
	journal={Nucl. Phys. },
	volume={B321},
	pages={509--517},
	year={1989},
}

\bib{dudley}{article}{
   author={\scshape Dudley, R.M.},
   title={Distances of probability measures and random variables},
   journal={Ann. Math. Statist},
   volume={39},
   date={1968},
   pages={1563--1572},
}

\bib{DMS}{article}{
	author={\scshape Duplantier, B.},
	author={\scshape Miller, J.},
	author={\scshape Sheffield, S.},
	title={Liouville quantum gravity as a mating of trees},
	journal={arXiv preprint 1409.7055},
	year={2014},
}

\bib{DS}{article}{
   author={\scshape Duplantier, B.},
   author={\scshape Sheffield, S.},
   title={Liouville Quantum Gravity and KPZ},
   journal={Inv.~Math.},
   volume={185 (2)},
   date={2011},
   pages={333--393},
}

\bib{DRSV}{article}{
	year={2014},
	journal={Comm. Math. Phys.},
	volume={330 (1)},
	title={Renormalization of Critical Gaussian Multiplicative Chaos and KPZ Relation},
	author={\scshape Duplantier, B.},
	author={\scshape Rhodes, R.},
	author={\scshape Sheffield, S.},
	author={\scshape Vargas, V.},
	pages={283--330},
}

\bib{DL}{article}{
	year={1983},
	journal={Z. Wahrscheinlichkeitstheorie verw. Gebiete},
	volume={64},
	number={3},
	pages={275--301},
	author={\scshape Durrett, R.},
	author={\scshape Liggett, T.M.},
	title={Fixed Points of the Smoothing Transformation},
}

\bib{erbin}{article}{
	author={\scshape Erbin, H.},
	year={2014},
	title={Notes on $2d$ quantum gravity and Liouville theory},
	journal={preprint},
}

\bib{ginsparg}{article}{
	author={\scshape  Ginsparg, P.},
	author={\scshape Moore, G.},
	title={Lectures on 2D gravity and 2D string theory},
	book={
   		author={Harvey, J.; Polchinski, J. (Eds.)},
   		title={Recent direction in particle theory, Proceedings of the 1992 TASI},
   		publisher={World Scientific},
   		date={1993},
   },
}

\bib{hall}{article}{
  author={\scshape Hall, P.},
  title={On Representatives of Subsets},
  volume={10},
  journal={J. London Math. Soc.},
  number={1},
  year={1935},
  pages={26--30},
 }

\bib{HK}{article}{
   author={\scshape Hambly, B.M.},
   author={\scshape Kumagai, T.},
   title={Diffusion on the scaling limit of the critical
percolation cluster in the diamond
hierarchical lattice},
   journal={Comm.~Math.~Phys.},
   volume={295},
   date={2010},
   pages={29--69},
}

\bib{hammersley}{article}{
	author = {\scshape Hammersley, J. M.},
	journal = {The Annals of Probability},
	number = {4},
	pages = {652--680},
	title = {Postulates for Subadditive Processes},
	volume = {2},
	year = {1974},
}

\bib{bourbaki}{article}{
   author={\scshape Garban, C.},
   title={Quantum gravity and the KPZ formula [after Duplantier-Sheffield]},
   journal={S\'em.~Bourbaki, 64e ann\'ee},
   volume={1052},
   date={2011--2012},
   pages={},
}

\bib{LBM}{article}{
	author={\scshape Garban, C.},
	author={\scshape Rhodes, R.},
	author={\scshape Vargas, V.},
   title={Liouville Brownian Motion},
   journal={arXiv preprint 1301.2876},
   year={2013},
   pages={},
}

\bib{K}{article}{
	author={\scshape Kahane, J.-P.},
	title={Sur le chaos multiplicatif},
	journal={Ann.~Sci.~Math.~Qu\'ebec},
	volume={9 (2)},
	pages={105--150}, 
	year={1985},
}

\bib{KP}{article}{
	author={\scshape Kahane, J.-P.},
	author={\scshape Peyri\`ere, J.},
	title={Sur certaines martingales de Benoit Mandelbrot},
	journal={Advances in Math.},
	volume={22 (2)},
	pages={131--145}, 
	year={1976},
}

\bib{kamae}{article}{
   author={\scshape Kamae, T.},
   author={\scshape Krengel, U.},
   author={\scshape O'Brien, G.L.},
   title={Stochastic inequalities on partially ordered spaces},
   journal={Ann. Probability},
   volume={5},
   date={1977},
   number={6},
   pages={899--912},
}

\bib{kingman}{article}{
	author = {\scshape Kingman, J. F. C.},
	journal = {The Annals of Probability},
	number = {5},
	pages = {790--801},
	title = {The First Birth Problem for an Age-dependent Branching Process},
	volume = {3},
	year = {1975}
}

\bib{KPZ}{article}{
   author={\scshape Knizhnik, V.G.},
   author={\scshape Polyakov, A.M.},
   author={\scshape Zamolodchikov, A.B.},
   title={Fractal structure of 2D quantum gravity},
   journal={Modern Phys. Lett A},
   volume={3 (8)},
   date={1988},
   pages={819--826},
}

\bib{lig}{article}{
	author = {\scshape Liggett, T.M.},
	journal = {Ann. Probab.},
	number = {4},
	pages = {1279--1285},
	title = {An Improved Subadditive Ergodic Theorem},
	volume = {13},
	year = {1985},
}

\bib{QLE}{article}{
	author={\scshape Miller, J.},
	author={\scshape Sheffield, S.},
	title={Quantum Loewner Evolution},
	journal={arXiv preprint 1312.5745},
	year={2013},
}

\bib{MS}{article}{
   author={\scshape Moore, E.F.},
   author={\scshape Shannon, C.E.},
   title={Reliable circuits using less reliable relays. I},
   journal={J. Franklin Inst.},
   volume={262},
   date={1956},
   pages={191--208},
}

\bib{nakayama}{article}{
   author={\scshape Nakayama, Y.},
   title={Liouville field theory: a decade after the revolution},
   journal={Int.~J.~Mod.~Phys.~A},
   volume={19},
   date={2004},
   pages={2771--2930},
}

\bib{polyakov1}{article}{
   author={\scshape Polyakov, A.M.},
   title={Quantum geometry of bosonic strings},
   journal={Phys. Lett. B},
   volume={103 (3)},
   date={1981},
   pages={207--210},
}

\bib{polyakov2}{book}{
   author={\scshape Polyakov, A.M.},
   title={Gauge fields and strings},
	year={1987},
	publisher={Harwood Academic Publishers},
	pages={301},
}

\bib{RV-KPZ}{article}{
	  author={\scshape Rhodes, R.},
	  author={\scshape Vargas, V.},
	  title={KPZ formula for log-infinitely divisible multifractal random measures}, 
	  journal={ESAIM Probability and Statistics}, 
	  volume={15},
	  year={2011},
	  pages={358},
}

\bib{RV}{article}{
  author={\scshape Rhodes, R.},
  author={\scshape Vargas, V.},
  title={Gaussian multiplicative chaos and applications: a review},
  journal={Prob. Surveys},
  volume={11},
  year={2014},
  pages={315--392},
}

\bib{sheffield-GFF}{article}{
	author={\scshape  Sheffield, S.},
	title={Gaussian free fields for mathematicians},
	journal={Probab. Th. Rel. Fields}, 
	volume={139},
	pages={521--541},
	year={2007},
}

\bib{strassen}{article}{
   author={\scshape Strassen, V.},
   title={The existence of probability measures with given marginals},
   journal={Ann. Math. Statist.},
   volume={36},
   date={1965},
   pages={423--439},
}

\bib{teschner}{article}{
   author={\scshape Teschner, J.},
   title={Liouville theory revisited},
   journal={Quantum Grav.},
   volume={18 (23)},
   date={2001},
}

\bib{zeitouni_BRW}{article}{
	author={\scshape Zeitouni, O.},
	title={Branching random walks and Gaussian fields},
	journal={lecture notes available at \url{http://www-users.math.umn.edu/~zeitouni/technion/pdf/notesBRW.pdf}},
	year={2012},
}

\end{biblist}
\end{bibdiv}

\end{document}